\documentclass[10pt]{article}

\usepackage{macro}

\usepackage[most]{tcolorbox}

\AtEveryBibitem{%
    \clearname{editor}%
    \clearname{editora}%
    \clearname{editorb}%
    \clearname{editorc}%
}

\hypersetup{
    colorlinks=true,
    linkcolor=purple,
    citecolor=forestgreen,
}

\title{Eigenvalue and Eigenvector Approximation for Random Matrices Using Low-Degree Polynomials}

\author{
    Yihan Zhang\thanks{
        School of Mathematics,
        University of Bristol. 
        Email: \href{mailto:yihan.zhang@bristol.ac.uk}{\texttt{yihan.zhang@bristol.ac.uk}}.
    }
}

\begin{document}
\maketitle

\begin{abstract}
We initiate the study of approximating the top eigenvalue and eigenvector of a random symmetric matrix $ A \in \mathbb{R}^{n\times n} $ using $ q(A)b $ where $q$ is a degree-$d$ polynomial and $b$ is a standard Gaussian vector independent of $A$. For spiked GOE $ Y = \lambda vv^\top + X $, we identify $ d_\star = \frac{\log(n)}{2\log(\lambda)} $ to be the critical degree threshold above which accurate approximation of the top eigenvalue and eigenvector is possible. This sharpens the common belief that spectral methods can be implemented by $ O(\log(n)) $-step power iterations and offers a precise connection between spectral methods and low-degree polynomial algorithms, a popular proxy for all polynomial-time algorithms. For GOE $X$, we identify $ d_\star = n^{1/3+o(1)} $ to be the critical degree threshold for top eigenvector approximation, whereas constant degree suffices for top eigenvalue approximation. Moreover, in the limit where $ d/n^{1/3} $ converges to a positive finite constant, we compute the exact asymptotic eigenvector approximation accuracy in terms of the expected squared overlap. These results significantly improve upon predictions made in randomized numerical linear algebra for deterministic data matrices that the iteration count of power methods with random initialization is governed by the inverse spectral gap. Technically, our analyses leverage extremal properties of Chebyshev polynomials and draw upon the rich literature of random matrix theory. 
\end{abstract}

  \tableofcontents


\newcounter{asmpctr} 
\setcounter{asmpctr}{\value{enumi}}


\section{Introduction}
\label{sec:intro}

For an $n\times n$ real symmetric random matrix $A$, we study how well the top eigenvalue and eigenvector of $A$ can be approximated using $q(A)b$ where $q$ is a degree-$d$ polynomial and $b$ is a standard Gaussian vector independent of $A$. 
We allow $d$ to depend on $n$. 
This formalism covers a large family of iterative algorithms with Gaussian initialization. 
Indeed, many iterations in randomized numerical linear algebra, upon unrolled, produce an output of the form $ q(A)b $ for a certain polynomial $q$. 



In this paper, we consider $A$ to be either a pure noise matrix $X$ from GOE or a spiked GOE $Y$ (see \Cref{eqn:GOE,eqn:spiked_model} for their precise definitions). 
For $A$ from these ensembles, the main contribution of this work is to identify the critical degree threshold $ d_\star $ such that accurate approximation is possible by \emph{some} polynomials of degree $ d \gg d_\star $ and is impossible by \emph{any} polynomial of degree $ d\ll d_\star $. 
More specifically, 

\begin{itemize}
    \item For spiked GOE $ Y = \lambda vv^\top + X $ where $ \lambda\in(1,\infty), v\in\bbS^{n-1} $ are fixed and $ X $ is GOE, we identify the critical degree threshold to be $ \frac{\log(n)}{2\log(\lambda)} $ and the optimal polynomial to be a certain linear combination of Chebyshev polynomials; 
    
    \item For GOE, we unveil the surprising distinction between eigenvalue and eigenvector approximation: constant degree suffices to approximate the top eigenvalue of GOE within a multiplicative factor arbitrarily close to $1$, whereas degree $ n^{1/3+o(1)} $ is necessary and sufficient to well approximate the top eigenvector; 

    \item For GOE in the critical degree regime $ d \asymp n^{1/3} $, we derive the precise scaling limit of eigenvector approximation accuracy which is expressed using the Airy point process. 
\end{itemize}

We situate our results in existing literature and discuss their relation to prior work. 

\paragraph{Low-degree polynomials.}
The formalism of $ q(A)b $ is a special case of a broader family of algorithms known as low-degree polynomials. 
These are estimators whose elements are low-degree polynomials of the data $ (A_{i,j})_{1\le i\le j\le n} $. 
To avoid overloading the terminology, we refer to this broader family of algorithms as \LowDeg. 
This formalism was implicitly initiated in \cite{Barak_etal}, followed by \cite{Hopkins_etal,Hopkins_thesis,Bandeira_Banks_Kunisky_Moore_Wein,Kunisky_Wein_Bandeira,Moitra_Wein}, for studying hypothesis testing. 
Schramm and Wein \cite{Schramm_Wein} extended this framework to parameter estimation. 
See \cite{Wein} for a survey and \cite{Holmgren_Wein,Buhai_Hsieh_Jain_Kothari,Jia_Vijayaraghavan,Hsieh_etal,Mao} for some hazards in using \LowDeg as a proxy for general polynomial-time algorithms. 
For spiked matrix models $ Y = \lambda n^{-1} vv^\top + X $ with i.i.d.\ prior $ v \sim \varpi^{\ot n} $ where $ \varpi $ is a probability measure on $\bbR$ with mean zero and variance one, \cite{Lelarge_Miolane} characterized the asymptotic minimum mean squared error for estimating $v$ using the computationally intractable Bayes-optimal estimator $ c_0\,v_1\paren{ \expt{vv^\top \mid Y} } $ (where $ v_1(\cdot) $ denotes the top eigenvector of a Hermitian matrix, and $c_0$ is a carefully chosen constant), and showed that nontrivial error is achievable provided $ \lambda > \lambda_{\mathrm{B}} $ for a threshold $ \lambda_{\mathrm{B}} $. 
On the other hand, efficient estimators such as spectral method which outputs $ v_1(Y) $ start being effective only when $ \lambda > 1 \ge \lambda_{\mathrm{B}} $. 
For many priors such as sparse Rademacher distributions, the inequality $ 1>\lambda_{\mathrm{B}} $ is strict. 
The spectral threshold $1$ is believed to be a computational barrier below which no polynomial-time algorithm is effective.
\cite{Sohn_Wein} proved such hardness results for $\lambda\le1$ against all \LowDeg algorithms of degree $ \ll n^c $ for some constant $c>0$. 
On the algorithmic side, for $\lambda>1$, existing works sidestep the computational issue of approximating $ v_1(Y) $. 
The top eigenvector $ v_1 \colon (Y_{i,j})_{1\le i\le j\le n} \mapsto v_1(Y) $ by itself is a sophisticated function\footnote{For this function to be well defined, one needs to resolve the sign ambiguity since $ u $ is an eigenvector if and only if $-u$ is one. This can be done by e.g.\ enforcing the first nonzero element of $ v_1(Y) $ to be positive.} of the matrix elements and it is a priori unclear how well it can be approximated by a low-degree polynomial.
Although the standard practice is to run randomly initialized power iteration for $ O(\log(n)) $ steps, instead of the more expensive eigendecomposition, end-to-end analysis of the precise iteration complexity and estimation error is not available, to the best of our knowledge. 
Our results fill this gap in the literature by showing that spectral methods can be well approximated by $q(Y)b$  with $q$ a Chebyshev polynomials of degree $ \gg\frac{\log(n)}{2\log(\lambda)} $. 
By \cite{Sohn_Wein}, this degree threshold is optimal for estimating the spike $ vv^\top $ even if one allows for general polynomials in matrix elements that may not arise from matrix powering.

\paragraph{Iterative optimization with random data.}
Any randomly initialized iterative algorithm that only involves matrix-vector multiplication can be written as $ q(A)b $ for some polynomial $q$, and vice versa. 
Many popular iterative schemes for solving estimation problems involving random data matrices belong to the family of approximate message passing (AMP) algorithms \cite{Donoho_Maleki_Montanari,Donoho_Javanmard_Montanari,Romanov_Gavish,Montanari_Venkataramanan,Montanari_Richard,Rangan,Montanari_Zhou_AOP,Huang_Sellke_Sun,Montanari_Zhou} or, more broadly, general first order methods (GFOM) \cite{Celentano_Cheng_Montanari,Gerbelot_Troiani_Mignacco_Krzakala_Zdeborova,Han_Xu,Bellec_Tan,Paquette_Paquette_Adlam_Pennington,BenArous_Gheissari_Jagannath_CPAM,Fan_Wang,Chen_Chi_Fan_Ma,Wu_Zhou}. 
These are iterations that allow for two types of operations: matrix-vector multiplication between the data matrix and the iterates, and entry-wise nonlinear transform of the iterates. 
One attractive feature of AMP / GFOM is that their iterates admit an asymptotically exact distributional characterization via the state evolution theory / dynamical mean-field theory (DMFT), allowing one to accurately predict the performance of these algorithms. 
Most results on state evolution and DMFT assume either an unrealistic warm start, or a spectral initialization without addressing the eigenvector computation question. 
Our formalism $q(A)b$ incorporates linear versions of AMP and GFOM, where ``linear'' refers to the restriction that no entry-wise nonlinearities are allowed. 
For spiked matrix models and statistical linear models with spherical (i.e.\ uninformative) priors, linear methods are in fact without loss of optimality. 
Even in these settings, our results move beyond the scope of standard state evolution theory and DMFT for AMP and GFOM by providing a sharp \emph{dynamical} phase transition under \emph{random initialization}: there exists a critical threshold $ \delta_c \in (0,\infty) $ such that a $t$-step linear method initialized with a Gaussian vector (which is equivalent to $ q(A)b $ for a degree-$t$ polynomial) is effective if $ t\gg \delta_c \log(n) $, and any such method is ineffective if $ t\ll\delta_c\log(n) $. 
Analysis of randomly initialized AMP / GFOM is scarce, with the notable exceptions of \cite{Li_Fan_Wei,Chen_Shen_Xu}. 
However, these papers do not locate the precise phase transition threshold $ \delta_c $ with respect to the iteration count $t$. 
We mention in passing the work \cite{Dey} which studies the spiked covariance model (a rectangular counterpart of spiked GOE) and provides phase transition results for an \emph{online} algorithm which uses a fresh row in the data matrix for each iteration. 

\paragraph{Randomized numerical linear algebra.}
Various fast methods for matrix computation, including eigenvalue \cite{Kuczynski_Wozniakowski,Tropp} and eigenvector approximation \cite{Saibaba,Dong_Martinsson_Nakatsukasa,Musco_Musco,Chen_Epperly_Meyer_Musco_Rao,Menand_Waingarten,Bhattacharjee_Musco_Rutkowski} studied here, have been developed in randomized numerical linear algebra (RNLA). 
See \cite{Martinsson_Tropp,Tropp_Webber} for two surveys.
However, this literature largely focuses on problems with arbitrary input data and designs algorithms that potentially use randomness to facilitate computations. 
Theoretical results in RNLA typically provide probabilistic performance guarantees for worst-case problem instances. 
This differs from the setting of this paper and more broadly from the theme of optimization with random data. 
Under this theme, randomness, besides being a resource for algorithm design, is also used to model the nominal behavior of data. 
Such a distinction was referred to as algorithmic vs.\ statistical randomness by Kireeva and Tropp \cite[Section 1.1]{Kireeva_Tropp}. 
Taking spiked GOE as an example, we identify the optimal polynomial $q$ to be the Chebyshev polynomial which admits a simple recursive implementation involving single-step memory. 
Viewed as a RNLA algorithm, this iteration is randomized only through the Gaussian initialization $b$ and is otherwise deterministic during execution. 
Blessed by the strong distributional assumption (i.e., spiked GOE), the performance guarantees we offer are asymptotically exact and hold on average over all sources of randomness, including the random matrix and initialization. 
Neither this guarantee nor the optimality of the Chebyshev polynomial is expected to be universal across all matrices, though we do expect and have empirically observed universality for more general spiked Wigner matrices (see \Cref{sec:experiments}). 
On the other hand, existing bounds in RNLA, when specialized to ensembles in this paper, are lossy by constants or even by order (see \Cref{rk:relation_RNLA}), and do not capture the precise limiting behaviors and phase transition phenomena. 

\paragraph{Optimization of spin glasses.}
Our results for GOE $X$ give polynomial-time algorithms that approximate the value and maximizer of $ \max_{u\in\bbS^{n-1}} u^\top X u $. 
The objective $ H_n(u) \coloneqq u^\top X u $ can be viewed as the Hamiltonian of a spherical spin glass model. 
The problem of optimizing mean-field spin glass Hamiltonians has received considerable attention from probability theory, statistical physics and computer science. 
For the analogous optimization problem over the binary hypercube, a.k.a.\ the Sherrington--Kirkpatrick model $ \max_{u\in\{-1,1\}^n} H_n(u) $, \cite{Montanari} devised an Incremental Approximate Message Passing (IAMP) algorithm that approximates the maximum value (a.k.a.\ the ground state energy) to within a multiplicative factor arbitrarily close to $1$, while \cite{Gamarnik_Kizildag} proved that exact computation of the partition function $ Z_n(\beta) \coloneqq \sum_{u\in\{-1,1\}^n} \exp\paren{\beta H_n(u)} $ at any fixed inverse temperature $ \beta\in(0,\infty) $ is hard under standard complexity theory assumptions. 
Since $ \frac{1}{\beta} \log Z_n(\beta) \to \max_{u\in\{-1,1\}^n} H_n(u) $ as $ \beta\to\infty $, the latter result constitutes a piece of evidence suggesting the hardness of finding the maximizer of $ H_n $ (a.k.a.\ the ground state). 
Our results show that the algorithmic landscape in the spherical case where $ u\in\bbS^{n-1} $ is significantly different from the Ising case where $ u\in\{-1,1\}^n $ in the sense that both approximating the ground state energy and finding the ground state are tractable in polynomial time. 

\paragraph{Random matrix theory.} 
As key technical input to the proofs of our results, we develop estimates of linear spectral statistics of GOE that may be of independent interest. 
For a matrix $X$, a linear spectral statistic with respect to a test function $f$ is defined as $ L_X(f) \coloneqq \sum_{i=1}^n f(\lambda_i(X)) $ (where $ \lambda_1(X) \ge \cdots \ge \lambda_n(X) $ denotes the eigenvalues of $X$). 
For any fixed function $f$ that is sufficiently regular (say bounded continuous), Wigner's semicircle law (see e.g.\ \cite[Theorem 2.1]{Lytova_Pastur}) states that $ n^{-1} L_X(f) \to L(f) \coloneqq\int f(x) \rho_{\sc}(x) \diff x $ almost surely as $ n\to\infty $, where $ \rho_{\sc} $ denotes the density of the semicircle law (see \Cref{eqn:rho_sc}). 
Moreover, it is well-known (see e.g.\ \cite[Theorem 2.2]{Lytova_Pastur} and \cite[Theorem 1.1]{Bai_Wang_Zhou}) that under slightly stronger regularity assumptions on $f$ (say $ f \in C_c^4(\bbR) $), $ L_X(f) - n L(f) $ converges in distribution to a Gaussian whose mean and variance are determined by $f$. 
This is to be contrasted with the usual central limit theorem where $ (\lambda_i(X))_{i=1}^n $ is replaced with i.i.d.\ random variables $ (Z_i)_{i=1}^n $ with finite variance. 
In the latter case, $ n^{-1/2} \paren{ \sum_{i=1}^n Z_i - n \expt{Z_1} } $ has a Gaussian limit; note the $ n^{-1/2} $ normalization. 
Linear spectral statistics are therefore a manifestation of \emph{eigenvalue rigidity} (see \Cref{prop:rig} for a precise statement), i.e., unlike an arbitrary stochastic process, eigenvalues tend to stick around their classical locations (see \Cref{eqn:class_loc} for a formal definition). 
Classical results for linear spectral statistics \cite{Wigner_1958,Johansson} require $f$ to be \emph{fixed}. 
In this paper, we consider polynomials $p$ of growing degree and prove estimates for $ L_X(p) $ with a GOE $ X $. 
\begin{itemize}
    \item Denote by $ U_m $ the degree-$m$ Chebyshev polynomial of the second kind (see \Cref{sec:Cheb} for background). 
    We show that for any fixed constant $ \eps > 0 $, with high probability, uniformly over all $ 1 \le m = O(\log(n)) $,
    \begin{align}
        \abs{\frac{1}{n} \sum_{i = 1}^n U_m\paren{\frac{\lambda_i(X)}{2}}} &\lesssim m^3 n^{-1 + \eps} . \label{eqn:LSS} 
    \end{align}
    Note that $ \int U_m(x/2) \rho_{\sc}(x) \diff x = 0 $. 
    Compared with CLT for linear spectral statistics with respect to a fixed function, the above estimate allows the polynomial to have degree growing logarithmically in the dimension and is almost order optimal. 
    This estimate is instrumental to the proofs for spiked GOE $Y$ where we need to concentrate certain linear spectral statistics evaluated at $ (\lambda_i(Y))_{i=2}^n $, excluding the potential outlier $ \lambda_1(Y) $; see e.g.\ \Cref{eqn:input1}. 

    \item In the proofs for GOE, we also consider a rather different regime where the polynomial $p$ has degree $ d = \Theta(n^{1/3}) $. 
    Writing $p$ in the Chebyshev basis $ p(x) = \sum_{k=0}^d c_k U_k(x/2) $, we show that if the rescaled coefficient profile $ u_n \colon [0,1] \to \bbR $ defined as $ u_n(t) = \sqrt{d} \, c_k $ for $ t\in[ k/(d+1), (k+1)/(d+1) ) $ (for some $ k \in \{0,1,\cdots,d\} $) has a limit $ u \in L^2([0,1]) $, 
    then
    \begin{align}
        \frac{1}{n} \sum_{i=1}^n p(\lambda_i(X))^2 &\overset{\dd}{\to} \sum_{j=1}^\infty \Gamma_{\delta,u}(-\Lambda_{j-1})^2 , \notag 
    \end{align}
    as $n,d\to\infty$ with $ d/n^{1/3} \to \delta \in (0,\infty) $, where $ \Gamma_{\delta,u} \colon \bbR \to \bbR $ is an explicit function defined in \Cref{eqn:Gamma} and $ (-\Lambda_{j-1})_{j\ge1} $ is the Airy point process (which arises as the scaling limit of GOE eigenvalues around the spectral edge; see \Cref{eqn:explain_airy}). 
    This result stands in striking contrast with the standard CLT for linear spectral statistics in that the degree grows so rapidly that alters the scaling limit. 
\end{itemize}

\section{Setup and preliminaries}


For any real symmetric matrix $ X \in \bbR^{n\times n} $, we write its (real) eigenvalues in nonincreasing order $ \lambda_1(X) \ge \cdots \ge \lambda_n(X) $ and write the associated eigenvectors (of unit $ \ell^2 $ norm) as $ v_1(X), \cdots, v_n(X) $. 
All $\log$ and $\exp$ are to the base $e$. 
We use $ c,C,C_\tau,C_\tau',\cdots $ to denote absolute constants whose values may change across lines. 

We are interested in approximating the top eigenvalue and eigenvector of random matrices using low-degree polynomials. 
Specifically, for any $ d\in\bbZ_{\ge0} $, let $ \cP_d \coloneqq \brace{ p \in \bbR[x] : \deg(p) \le d } $ denote the set of univariate polynomials with real coefficients and degree at most $d$. 
The quality of approximation by $ q\in\cP_d \setminus \{0\} $ is measured by 
\begin{align}
&&
    \cO_{n,d}(q,\cD) &\coloneqq \expt{\frac{\inprod{v_1(A)}{q(A) b}^2}{\normtwo{q(A) b}^2}} , & 
    \cV_{n,d}(q,\cD) &\coloneqq \expt{\frac{\inprod{q(A) b}{A q(A) b}}{\normtwo{q(A) b}^2}} , & 
& \label{eqn:OV} 
\end{align}
where the expectation is taken over an $n\times n$ random real symmetric matrix $ A $ with law $\cD$ and $ b \sim \cN(0_n,I_n/n) $ independent of $A$. 
Here and throughout, $q$ applies to $A$ according to the usual functional calculus: 
\begin{align}
    q(A) &= \sum_{i = 1}^n q(\lambda_i(A)) v_i(A) v_i(A)^\top . \label{eqn:fun_cal} 
\end{align}
Since both $ \cO_{n,d} $ and $ \cV_{n,d} $ are $0$-homogeneous in $q$, one can rescale $q$ by an arbitrary nonzero fixed constant without altering their values. 
We refer to $ \cO_{n,d} $ as the (expected squared) \emph{overlap} achieved by $q$ and it is nothing but the (expected squared) cosine similarity between $ v_1(A) $ and $ q(A) b / \normtwo{q(A) b} $. 
To motivate $ \cV_{n,d}(q,\cD) $, recall that by the Courant--Fischer theorem, 
\begin{align}
    \lambda_1(A) &= \max_{u\in\bbR^n\setminus\{0_n\}} \frac{\inprod{u}{A u}}{\normtwo{u}^2} . \notag 
\end{align}
Therefore $ \cV_{n,d}(q,\cD) $ should be thought of as an approximation to $ \lambda_1(A) $ by $ q(A)b $ and we call it the \emph{value} achieved by $q$. 
The goal is to characterize the optimal performance (in terms of $ \cO_{n,d} $ and $ \cV_{n,d} $) of approximation by \emph{any} degree-$d$ polynomial, that is, 
\begin{align}
&&
    \OPT_{n,d}(\cD) &\coloneqq \sup_{q\in\cP_d \setminus \{0\}} \cO_{n,d}(q,\cD) , & 
    \VAL_{n,d}(\cD) &\coloneqq \sup_{q\in\cP_d \setminus \{0\}} \cV_{n,d}(q,\cD) . & 
& \label{eqn:OPT_VAL} 
\end{align}
We will be considering the case where $A$ is either GOE \Cref{eqn:GOE} or the spiked model \Cref{eqn:spiked_model} to be introduced below. 

\subsection{GOE and spiked GOE}
We denote by $ \GOE(n) $ the ensemble of $ n\times n $ real symmetric random matrices $X$ whose upper triangular entries are independently distributed according to the law: 
\begin{align}
    X_{i,j} &\sim \begin{cases}
        \cN(0,2/n) , & i = j \\
        \cN(0,1/n) , & i < j
    \end{cases} . \label{eqn:GOE} 
\end{align}
All limits and big O notation throughout, unless otherwise specified, are taken with respect to $ n\to\infty $. 
Without further specification, convergence of random variables is understood to hold almost surely. 
Classical random matrix theory results \cite{Wigner,Bai_Yin,Anderson_Guionnet_Zeitouni} assert that 
\begin{align}
&& 
    \lambda_1(X) &\to 2 , &
    \lambda_n(X) &\to -2 , &
& \label{eqn:edge} 
\end{align}
Moreover, the empirical spectral distribution of $ X $ converges weakly to the semicircle law: 
\begin{align}
    \frac{1}{n} \sum_{i = 1}^n \delta_{\lambda_i(X)} &\To \mu_{\sc} , \label{eqn:ESD_cvg} 
\end{align}
where $ \delta_\lambda $ denotes the point mass at $ \lambda $. 
The semicircle law $ \mu_{\sc} $ is a probability measure on $\bbR$ with density $ \rho_{\sc} $ explicitly given by 
\begin{align}
    \rho_{\sc}(x) &= \frac{\sqrt{4 - x^2}}{2\pi} \one_{[-2,2]}(x) . \label{eqn:rho_sc} 
\end{align}
For any $ j\in[n] $, define the classical location $ \gamma_j $ of the $j$-th eigenvalue of GOE as 
\begin{align}
    \gamma_j &\coloneqq Q\paren{ \frac{n-j+1}{n} } , \label{eqn:class_loc} 
\end{align}
where $ Q \colon [0,1] \to [-2,2] $ is the lower quantile (a.k.a.\ inverse c.d.f.) of the semicircle law, i.e., for any $ u\in[0,1] $, $ Q(u) $ is the unique solution in $ [-2,2] $ to 
\begin{align}
    \int_{-\infty}^{Q(u)} \rho_{\sc}(x) \diff x &= u . \notag 
\end{align}

Another matrix ensemble of interest is the spiked counterpart of GOE, denoted by $ \wh{\GOE}(n,\lambda) $. 
Fix $ \lambda \ge 0 $ and consider 
\begin{align}
    Y &= \lambda vv^\top + X , \label{eqn:spiked_model} 
\end{align}
where $ v \in \bbS^{n-1} $ is deterministic and $ X \sim \GOE(n) $. 
Standard random matrix theory results \cite{Feral_Peche,Capitaine_Donati-Martin_Feral,Benaych-Georges_Nadakuditi,Knowles_Yin} imply that almost surely, 
\begin{align}
    \lambda_1(Y) &\to \begin{cases}
        \lambda_\star > 2 , & \lambda > 1 \\
        2 , & \lambda \le 1
    \end{cases} , \label{eqn:lambda1} 
\end{align}
where
\begin{align}
    \lambda_\star &\coloneqq \lambda + \frac{1}{\lambda} . \label{eqn:lambda_star} 
\end{align}
By eigenvalue interlacing, the empirical spectral distribution of $ Y $ also converges weakly to $ \mu_{\sc} $. 
Moreover, 
\begin{align}
    \inprod{v_1(Y)}{v}^2 &\to \begin{cases}
        1 - 1/\lambda^2 > 0 , & \lambda > 1 \\
        0 , & \lambda \le 1
    \end{cases} . \label{eqn:v1} 
\end{align}

\subsection{Chebyshev polynomials}
\label{sec:Cheb}
Our analysis will involve a special family of polynomials known as the Chebyshev polynomials of the second kind, denoted by $ U_d \in \cP_d $. 
For any $ d \ge 0 $, $ U_d $ is defined on $ [-1,1] $ as 
\begin{align}
    U_d(\cos(\theta)) &= \frac{\sin( (d+1) \theta )}{\sin(\theta)} , \label{eqn:U_small} 
\end{align}
on $ (1,\infty) $ as 
\begin{align}
    U_d(\cosh(\theta)) &= \frac{\sinh( (d+1) \theta )}{\sinh(\theta)} , \label{eqn:U_big} 
\end{align}
and on $ (-\infty,-1) $ as 
\begin{align}
    U_d(-\cosh(\theta)) &= (-1)^d \frac{\sinh( (d+1) \theta )}{\sinh(\theta)} . \label{eqn:U_big_neg} 
\end{align}
In \Cref{eqn:U_small}, the function values at the removable singularities $ \theta \in \pi \bbZ $ are identified as their limits at such $ \theta $'s. 
For convenience, we denote by
\begin{align}
    p_d(x) &= U_d(x/2) \label{eqn:p} 
\end{align}
a rescaled version of $ U_d $. 
It is well known that $ (p_d)_{d\ge0} $ are orthonormal with respect to $ \rho_{\sc} $ in the following sense
\begin{align}
    \int p_k(x) p_\ell(x) \rho_{\sc}(x) \diff x &= \indicator{k = \ell} . \label{eqn:orth}
\end{align}
These polynomials satisfy the following three-term recurrence relation
\begin{align}
    x p_k(x)&= p_{k+1}(x) + p_{k-1}(x) \label{eqn:recur} 
\end{align}
and a product-to-sum formula
\begin{align}
    p_k(x) p_\ell(x) &= \sum_{r = 0}^{\min\brace{k,\ell}} p_{k + \ell - 2 r}(x) . \label{eqn:prod_sum} 
\end{align}
Denote the Christoffel--Darboux kernel associated to $ (p_d)_{d\ge0} $ by
\begin{align}
    K_d(x,y) &= \sum_{k = 0}^d p_k(x) p_k(y) . \label{eqn:Kd} 
\end{align}

\section{Main results}
\label{sec:results}

\subsection{Results for spiked GOE}
\label{sec:results_spiked_GOE}

Our first result \Cref{thm:spike} concerns the spiked matrix model \Cref{eqn:spiked_model} where $ \lambda>1 $ and outlying eigenvalue and eigenvector are present (see \Cref{eqn:lambda1,eqn:v1}). 
We show that the phase transitions of $ \OPT_{n,d} $ and $ \VAL_{n,d} $ are both governed by the scaling limit of $ K_d(\lambda_\star,\lambda_\star)/n $. 
In fact, \Cref{itm:opt,itm:val_sub,itm:val_sup} of \Cref{thm:spike} provide non-asymptotic characterizations of $ \OPT_{n,d}, \VAL_{n,d} $. 
Upon taking the scaling limit, we identify in \Cref{itm:concl} a degree threshold $ \delta_c $ which marks a sharp phase transition: 
there exists a polynomial of degree $ d\gg\delta_c \log(n) $ that approximates the top eigenvalue and eigenvector with vanishing errors; 
no polynomial of degree $ d\ll\delta_c \log(n) $ approximates the top eigenvalue and eigenvector with nontrivial accuracy. 
A detailed proof can be found in \Cref{sec:pf_spike}. 

For any $ t\ge0 $, define
\begin{align}
    \Phi(t) &\coloneqq \expt{\frac{t Z}{1 + t Z}} = 1 - \sqrt{\frac{\pi}{2t}} \exp\paren{ \frac{1}{2t} } \erfc\paren{\frac{1}{\sqrt{2t}}} , \label{eqn:Phi}
\end{align}
where $ Z \sim \chi_1^2 $. 
For any $d\ge0$, define 
\begin{align}
    r_d &\coloneqq 2\cos\paren{\frac{\pi}{d+2}} . \label{eqn:rd}
\end{align}

\begin{theorem}[Spiked GOE]
\label{thm:spike}
Fix $ \lambda > 1 $. 
Assume $d = O(\log(n)) $. 
Then the following results hold. 
\begin{enumerate}
    \item \label{itm:opt}
    \begin{align}
        \OPT_{n,d}(\wh{\GOE}(n,\lambda)) &= \Phi\paren{ \frac{K_d(\lambda_\star,\lambda_\star)}{n} } + o(1) , \notag 
    \end{align}
    where $ \lambda_\star, K_d $ and $ \Phi $ are defined in \Cref{eqn:lambda_star,eqn:Kd,eqn:Phi}, respectively. 
    This can be achieved by the following polynomial
    \begin{align}
        q_{\opt}(x) &\coloneqq \frac{K_d(\lambda_\star,x)}{\sqrt{K_d(\lambda_\star,\lambda_\star)}} . \label{eqn:def_qopt} 
    \end{align}

    \item \label{itm:val_sub} If 
    \begin{align}
        \frac{K_d(\lambda_\star,\lambda_\star)}{n} &\to 0 , \label{eqn:cond_sub} 
    \end{align}
    then 
    \begin{align}
        \VAL_{n,d}(\wh{\GOE}(n,\lambda)) &= r_d + o(1) , \notag 
    \end{align}
    where $ r_d $ is defined in \Cref{eqn:rd}. 
    This can be achieved by the following polynomial
    \begin{align}
        q_{\val}(x) &\coloneqq \frac{K_d(r_d,x)}{\sqrt{K_d(r_d,r_d)}} . \label{eqn:def_qval} 
    \end{align}

    \item \label{itm:val_sup} If 
    \begin{align}
        \frac{K_d(\lambda_\star,\lambda_\star)}{n} &\to \infty , \label{eqn:cond_sup} 
    \end{align}
    then 
    \begin{align}
        \VAL_{n,d}(\wh{\GOE}(n,\lambda)) &= \lambda_\star + o(1) . \notag 
    \end{align}
    This can be achieved by the polynomial $ q_{\opt} $ in \Cref{eqn:def_qopt}. 

    \item \label{itm:concl} If 
    \begin{align}
        \frac{d}{\log(n)} \to \delta \in (0,\infty) \label{eqn:d}
    \end{align}
    as $ n\to\infty $, then 
    \begin{align}
    &&
        \lim_{n\to\infty} \OPT_{n,d}(\wh{\GOE}(n,\lambda)) &= \begin{cases}
            0 , & \delta < \delta_c \\
            1 , & \delta > \delta_c
        \end{cases} , & 
        \lim_{n\to\infty} \VAL_{n,d}(\wh{\GOE}(n,\lambda)) &= \begin{cases}
            2 , & \delta < \delta_c \\
            \lambda_\star , & \delta > \delta_c
        \end{cases} , & 
    & \label{eqn:lim} 
    \end{align}
    where
    \begin{align}
        \delta_c &\coloneqq \frac{1}{2\log(\lambda)} . \label{eqn:deltac}
    \end{align}
\end{enumerate}
\end{theorem}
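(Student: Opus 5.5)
The plan is to reduce everything to explicit functionals of the coefficient vector of $q$ in the Chebyshev basis. Since $b\sim\cN(0_n,I_n/n)$ is rotationally invariant and independent of $Y$, the coordinates $g_i\coloneqq\inprod{v_i(Y)}{b}$ are i.i.d.\ $\cN(0,1/n)$ and independent of $Y$. By \Cref{eqn:fun_cal},
\begin{align}
    \frac{\inprod{v_1(Y)}{q(Y)b}^2}{\normtwo{q(Y)b}^2} &= \frac{q(\lambda_1)^2 g_1^2}{q(\lambda_1)^2g_1^2+\sum_{i\ge2}q(\lambda_i)^2g_i^2} , &
    \frac{\inprod{q(Y)b}{Yq(Y)b}}{\normtwo{q(Y)b}^2} &= \frac{\sum_{i}\lambda_iq(\lambda_i)^2g_i^2}{\sum_i q(\lambda_i)^2g_i^2} , \notag
\end{align}
where $\lambda_i=\lambda_i(Y)$. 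Write $q=\sum_{k=0}^d c_kp_k$. The two bulk sums $\sum_{i\ge2}q(\lambda_i)^2g_i^2$ and $\sum_{i\ge 2}\lambda_iq(\lambda_i)^2g_i^2$ will be shown to concentrate, uniformly over $c\neq 0$ after normalizing by $\normtwo{c}^2$, around $\normtwo{c}^2$ and $c^\top Jc$ respectively. Here $J$ is the $(d+1)\times(d+1)$ tridiagonal matrix with ones on the off-diagonals; this is the Jacobi matrix coming from \Cref{eqn:recur} and \Cref{eqn:orth}. For the outlier I use $\lambda_1\to\lambda_\star$ from \Cref{eqn:lambda1}.

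\textbf{Main obstacle: uniform concentration of the bulk.} Conditionally on $Y$, the sum $\sum_{i\ge2}q(\lambda_i)^2g_i^2$ is a Gaussian quadratic form. Its mean is $c^\top Mc$ with
\begin{align}
    M_{k\ell}=\frac1n\sum_{i\ge2}p_k(\lambda_i)p_\ell(\lambda_i) . \notag
\end{align}
Using \Cref{eqn:prod_sum}, each $M_{k\ell}$ is a sum of at most $d+1$ linear statistics $\frac1n\sum_{i\ge 2} p_m(\lambda_i)$ with $m\le 2d=O(\log n)$. For these I need the analogue of \Cref{eqn:LSS} for $Y$ with the outlier removed. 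I would obtain it by interlacing with the eigenvalues of $X$ together with a derivative bound for $p_m$ on $[-2-o(1),2+o(1)]$, giving $|M_{k\ell}-\indicator{k=\ell}|\lesssim \mathrm{polylog}(n)\,n^{-1+\eps}$. Bounding the operator norm by the Frobenius norm then gives $\norm{M-I}_{\mathrm{op}}=o(1)$, and similarly for the $\lambda$-weighted matrix versus $J$. The fluctuation of the quadratic form around its conditional mean is controlled by Hanson--Wright, again uniformly, since $d+1=O(\log n)$ and the quadratic form lives in a $(d+1)$-dimensional space.

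\textbf{Outlier.} I need $q(\lambda_1)\approx q(\lambda_\star)$. For $t\coloneqq q(\lambda_\star)^2/(n\normtwo{c}^2)$, I would use $|\lambda_1-\lambda_\star|=O(n^{-1/2+\eps})$ and the Bernstein/Markov-type bound $|q'(x)|\lesssim d^2\max_{|y|\le\lambda_\star+1}|q(y)|$. Growth of $p_k$ outside $[-2,2]$ is only $e^{O(d)}=n^{O(1)}$ but with a controlled exponent, so some care is needed. For the upper bound I would instead bound $q(\lambda_1)^2\le\normtwo{c}^2K_d(\lambda_1,\lambda_1)$ directly. With $Z=ng_1^2\sim\chi^2_1$ independent of the bulk, the overlap becomes $\expt{tZ/(1+tZ)}+o(1)=\Phi(t)+o(1)$ by bounded convergence.

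\textbf{Optimizing and taking limits.} Since $\Phi$ is increasing, maximizing the overlap means maximizing $t$. By Cauchy--Schwarz,
\begin{align}
    \frac{(\sum_k c_kp_k(\lambda_\star))^2}{\normtwo{c}^2}\le K_d(\lambda_\star,\lambda_\star) , \notag
\end{align}
with equality at $c_k\propto p_k(\lambda_\star)$, which is $q_{\opt}$; this proves \Cref{itm:opt}.

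For the value under \Cref{eqn:cond_sub}, the outlier's weight is negligible for every $q$, because its ratio to the bulk is at most $t\,Z$ with $t\le K_d/n\to0$. The value is then $c^\top Jc/\normtwo{c}^2+o(1)\le\lambda_{\max}(J)=2\cos(\pi/(d+2))=r_d$. Equality holds for $c_k=p_k(r_d)$, because $p_{d+1}(r_d)=0$ makes this the top eigenvector of $J$; this is $q_{\val}$.

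Under \Cref{eqn:cond_sup}, for $q_{\opt}$ the outlier dominates: $t\to\infty$, so the weight concentrates on $\lambda_1\to\lambda_\star$. The matching upper bound $\lambda_1(Y)$ is trivial by Courant--Fischer.

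For \Cref{itm:concl}, write $\lambda_\star/2=\cosh\theta$ with $e^\theta=\lambda$. By \Cref{eqn:U_big}, $p_k(\lambda_\star)=(\lambda^{k+1}-\lambda^{-k-1})/(\lambda-\lambda^{-1})$, so $K_d(\lambda_\star,\lambda_\star)\asymp\lambda^{2d}=n^{2\delta\log\lambda+o(1)}$. Hence $K_d/n\to0$ or $\infty$ according as $\delta<\delta_c$ or $\delta>\delta_c$. Combining this with $\Phi(0^+)=0$, $\Phi(\infty)=1$ and $r_d\to2$ gives \Cref{eqn:lim}.
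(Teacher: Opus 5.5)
Your overall route matches the paper's. You use the spectral decomposition with $g_1$ independent of $(Y,g_2,\dots,g_n)$. You show the Gram matrices in the Chebyshev basis are $o(1)$-close to $I$ (and the $\lambda$-weighted one to $J$) via \Cref{eqn:prod_sum} and linear statistics of $p_m$ with $m=O(\log n)$, followed by a $\chi^2$-concentration step. Then Cauchy--Schwarz gives $K_d(\lambda_\star,\lambda_\star)$ and the Jacobi matrix gives $r_d$. Your interlacing/telescoping transfer from $X$ to $Y$ is the same mechanism as the paper's rank-one trace inequality (\Cref{lem:tr}). Hanson--Wright over a net is interchangeable with the paper's entrywise Bernstein bound. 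Note that \Cref{eqn:LSS} for $X$ itself is where the real input lies: the paper derives it in \Cref{lem:LS} from Erd\H{o}s--Yau--Yin rigidity plus a Riemann-sum comparison at the classical locations.

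The step that fails as you describe it is the outlier comparison. Markov's inequality on the fixed window $|y|\le\lambda_\star+1$ bounds $|q(\lambda_1)-q(\lambda_\star)|$ by $d^2|\lambda_1-\lambda_\star|\max_{|y|\le\lambda_\star+1}|q(y)|$. For $q=q_{\opt}$, however, the ratio $\max_{|y|\le\lambda_\star+1}|q_{\opt}(y)|/q_{\opt}(\lambda_\star)$ is of order $(\beta(\lambda_\star+1)/\lambda)^d$, with $\beta$ from \Cref{eqn:beta}. For $d=\delta\log n$ this is $n^{\delta\log(\beta(\lambda_\star+1)/\lambda)}$, and the gain $|\lambda_1-\lambda_\star|\asymp n^{-1/2}$ does not beat it in general. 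At $d\approx\delta_c\log n$ you would need $\beta(\lambda_\star+1)<\lambda^2$, which fails for example at $\lambda=1.2$, where $\beta(\lambda_\star+1)\approx 2.66$.

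The missing idea is to compare at scale $1/d$. By \Cref{prop:out}, $d|\lambda_1-\lambda_\star|=O_{\bbP}(d/\sqrt n)\to0$. The formula $p_k(t)=(\beta(t)^{k+1}-\beta(t)^{-(k+1)})/(\beta(t)-\beta(t)^{-1})$, with $\log\beta$ locally Lipschitz near $\lambda_\star$, then gives $p_k(t_n)/p_k(\lambda_\star)=1+o(1)$ uniformly in $k\le d$ whenever $d|t_n-\lambda_\star|\to0$. Since every $p_k(\lambda_\star)>0$, this yields $K_d(\lambda_\star,\lambda_1)=(1+o(1))K_d(\lambda_\star,\lambda_\star)$ and $K_d(\lambda_1,\lambda_1)=(1+o(1))K_d(\lambda_\star,\lambda_\star)$; this is the paper's \Cref{lem:K}. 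You need both statements:
the first for the lower bound with $q_{\opt}$, and the second for your uniform upper bound via $q(\lambda_1)^2\le\normtwo{c}^2K_d(\lambda_1,\lambda_1)$ and for negligibility of the outlier under \Cref{eqn:cond_sub}. A multiplicative $1+o(1)$ error suffices, because $\frac{\dd}{\dd\log t}\Phi(t)=\expt{tZ/(1+tZ)^2}\le\frac14$.

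Finally, for the value statements you also need uniform integrability to pass from convergence in probability to expectations. The paper gets this because the Rayleigh quotient is bounded by $\normtwo{Y}$, whose second moment is bounded.
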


\begin{remark}[Refined scaling limit at criticality]
    As shown in \Cref{itm:concl} of \Cref{thm:spike}, the limit of $ \OPT_{n,d} $ undergoes a discontinuous phase transition (in fact, a $0$-$1$ transition) around $ \delta_c $. 
    The critical case $ \delta = \delta_c $ is not covered since the limit of $ \OPT_{n,d} $ does not exist. 
    This can be seen by taking $ d = \floor{ \delta_c \log(n) - \sqrt{\log(n)} } $ and $ d = \floor{ \delta_c \log(n) + \sqrt{\log(n)} } $ which lead to different limits $ 0 $ and $ \infty $, respectively, of $ K_d(\lambda_\star,\lambda_\star)/n $. 
    On the other hand, by the non-asymptotic result in \Cref{itm:opt} of \Cref{thm:spike}, under the refined scaling limit $ d - \delta_c \log(n) \to s \in \bbR $, 
    one has 
    \begin{align}
    &&
        \frac{K_d(\lambda_\star, \lambda_\star)}{n} &\to \frac{\lambda^6}{(\lambda^2 - 1)^3} \lambda^{2s} , & 
        \OPT_{n,d}(\wh{\GOE}(n,\lambda)) &\to \Phi\paren{ \frac{\lambda^6}{(\lambda^2 - 1)^3} \lambda^{2s} } . & 
    & \notag 
    \end{align}
\end{remark}

\begin{remark}[Chebyshev polynomials as maximizers]
    \Cref{thm:spike} not only characterizes $ \OPT_{n,d}, \VAL_{n,d} $, but also identifies maximizing polynomials for them (see $ q_{\opt}, q_{\val} $ in \Cref{eqn:def_qopt,eqn:def_qval}, respectively).
    These polynomials are certain linear combinations of Chebyshev polynomials. 
    The appearance of Chebyshev polynomials as maximizers is not surprising. 
    It has been known that such polynomials encode non-backtracking walks \cite{Sodin,Feldheim_Sodin} and self-avoiding walks \cite{Hopkins_Steurer,Ding_Hopkins_Steurer} which are certain optimal tree-structured polynomials. 
    These connections are obtained by viewing the matrix as the adjacency matrix of a graph. 
    For Wigner matrices, such connections are not exact, and only hold approximately for large dimensions. 
    Our proofs do not exploit these connections since we never work with individual matrix elements and only work with eigenvalue spectra. 
    The reason why Chebyshev polynomials show up as maximizers is explained in the following passage; see in particular \Cref{eqn:obj,eqn:obj2}. 
\end{remark}

\paragraph{Proof overview.}
In the \LowDeg literature, many analyses for Gaussian models are based on Hermite polynomials \cite{Schramm_Wein,Montanari_Wein,Sohn_Wein,Li_corr,Fu_Sohn,Damian_Pillaud-Vivien_Lee_Bruna,Jia_Vijayaraghavan}. 
This is a sensible choice since Hermite polynomials are orthonormal with respect to the Gaussian measure and any polynomial in the entries of the input data admits a decomposition in this basis. 
In contrast, the proof of \Cref{thm:spike} and, indeed, all proofs in this paper are based on Chebyshev polynomials. 
This is because we deal with special polynomials given by linear combinations of matrix powers. 
Equivalently, by functional calculus \Cref{eqn:fun_cal}, they apply to the eigenvalue spectrum component-wise, while retaining eigenvectors. 
Chebyshev polynomials are a convenient tool since they are orthonormal with respect to semicircle law (see \Cref{eqn:orth}) which is the limiting spectral distribution of GOE (and also spiked GOE, by eigenvalue interlacing; recall \Cref{eqn:ESD_cvg}). 

To see the advantage of working in the Chebyshev basis, let us provide an informal derivation of $ \OPT_{n,d}, \VAL_{n,d} $ for the spiked matrix model \Cref{eqn:spiked_model}. 
Fix an arbitrary nonzero polynomial $q\in\cP_d$ and without loss of optimality normalize it such that 
\begin{align}
    \int q(x)^2 \rho_{\sc}(x) \diff x = 1 . \label{eqn:q_norm}
\end{align}
By spectral decomposition of $Y$, 
\begin{align}
    \frac{\inprod{v_1(Y)}{q(Y) b}^2}{\normtwo{q(Y) b}^2}
    &= \frac{q(\lambda_1(Y))^2 \inprod{v_1(Y)}{b}^2}{\sum_{j=1}^n q(\lambda_j(Y))^2 \inprod{v_j(Y)}{b}^2} . 
    \label{eqn:spec_decomp}
\end{align}
Since $ b \sim \cN(0_n,I_n/n) $ is independent of $Y$ and $ (v_j(Y))_{j=1}^n $ is an orthonormal system, 
\begin{align}
    \frac{q(\lambda_1(Y))^2 \inprod{v_1(Y)}{b}^2}{\sum_{j=1}^n q(\lambda_j(Y))^2 \inprod{v_j(Y)}{b}^2}
    &= \frac{\xi_1^2 q(\lambda_1(Y))^2}{\sum_{i=1}^n \xi_i^2 q(\lambda_i(Y))^2}
    = \frac{1}{1 + \frac{\sum_{i=2}^n \xi_i^2 q(\lambda_i(Y))^2}{\xi_1^2 q(\lambda_1(Y))^2}} , \notag  
\end{align}
where $ \xi_j \coloneqq \inprod{v_j(Y)}{b} $ and $ \xi_1, \cdots, \xi_n \iid \cN(0,1/n) $ are independent of $ (\lambda_j(Y))_{j=1}^n $. 
Due to concentration of measure of $ (\xi_i)_{i=1}^n $ and the assumed normalization \Cref{eqn:q_norm}, we claim that 
\begin{align}
    \sum_{i=2}^n \xi_i^2 q(\lambda_i(Y))^2 &= \int q(x)^2 \rho_{\sc}(x) \diff x + o_{\bbP}(1)
    = 1 + o_{\bbP}(1) . \label{eqn:input1} 
\end{align}
To justify this step, we develop quantitative estimates, such as \Cref{eqn:LSS}, for linear spectral statistics with respect to polynomials of logarithmic degree. 
On the other hand, since $ \lambda_1(Y) \to \lambda_\star $ (see \Cref{eqn:lambda1}), $ q(\lambda_1(Y))^2 \approx q(\lambda_\star)^2 $. 
Combining the above claims, we have 
\begin{align}
    \cO_{n,d}(q,\wh{\GOE}(n,\lambda))
    = \expt{ \frac{\inprod{v_1(Y)}{q(Y)b}^2}{\normtwo{q(Y)b}^2} }
    &= \expt{\frac{1}{1 + \frac{1}{\xi_1^2 q(\lambda_\star)^2}}} + o(1)
    . \label{eqn:ol_TODO} 
\end{align}
Maximizing \Cref{eqn:ol_TODO} over $q\in\cP_d\setminus\{0\}$ is then equivalent to the following maximization problem: 
\begin{align}
    \sup\brace{ q(\lambda_\star)^2 : q\in\cP_d , \, \int q(x)^2 \rho_{\sc}(x) \diff x = 1 } . \label{eqn:obj}
\end{align}
This problem can be explicitly solved for any $d$ by decomposing $q$ in the Chebyshev basis and recognizing that \Cref{eqn:obj} can be written as maximization of a linear form over a $(d+1)$-dimensional unit vector recording the coefficients of $q$. 
Solving \Cref{eqn:obj}, we get the exact maximum $ K_d(\lambda_\star,\lambda_\star) $. 
Plugging this back to \Cref{eqn:ol_TODO} and recalling the definition of $\Phi$ in \Cref{eqn:Phi}, we conclude
\begin{align}
    \OPT_{n,d}(\wh{\GOE}(n,\lambda))
    &= \sup\brace{ \cO_{n,d}(q,\wh{\GOE}(n,\lambda)) : q\in\cP_d , \, \int q(x)^2 \rho_{\sc}(x) \diff x = 1 }
    = \Phi\paren{\frac{K_d(\lambda_\star,\lambda_\star)}{n}} + o(1) . \notag 
\end{align}
The unique maximizer (modulo a sign) of \Cref{eqn:obj} is $ q_{\opt} $ in \Cref{eqn:def_qopt} which, in view of the above display, also achieves $ \OPT_{n,d} $. 
This (informally) proves \Cref{itm:opt} of \Cref{thm:spike}. 

We follow a similar approach to studying $ \VAL_{n,d} $. 
We do not get into the details here, but only mention that now the technical input analogous to \Cref{eqn:input1} is: 
\begin{align}
    \sum_{i=2}^n \xi_i^2 \lambda_i(Y) q(\lambda_i(Y))^2 
    &= \int x q(x)^2 \rho_{\sc}(x) \diff x + o_{\bbP}(1) . \notag 
\end{align}
and maximizing the Rayleigh quotient over $q$ boils down to 
\begin{align}
    \sup\brace{ \int x q(x)^2 \rho_{\sc}(x) \diff x : q \in \cP_d , \, \int q(x)^2 \rho_{\sc}(x) \diff x = 1 } \label{eqn:obj2} 
\end{align}
which again can be solved explicitly by recognizing it as a quadratic form maximization over the coefficient vector of $q$ in the Chebyshev basis, giving the exact maximum $ r_d $ in \Cref{eqn:rd} and the unique maximizer $ q_{\val} $ in \Cref{eqn:def_qval}. 
This line of argument eventually leads to \Cref{itm:val_sub} of \Cref{thm:spike}. 
For \Cref{itm:val_sup}, an upper bound is simply given by $ \lambda_1(Y) \to \lambda_\star $, and we verify that this value can be achieved by $ q_{\opt} $. 
Finally, taking suitable scaling limits of \Cref{itm:opt,itm:val_sub,itm:val_sup} yields \Cref{itm:concl} of \Cref{thm:spike}. 

\subsection{Results for spiked GOE with a prior}
\label{sec:results_spiked_GOE_prior}

If $v$ follows a prior distribution, the spiked matrix model \Cref{eqn:spiked_model} becomes a prototypical model for principal component analysis (PCA). 
Specifically, fix $ \lambda > 1 $ and consider the following variant of \Cref{eqn:spiked_model}: 
\begin{align}
&&
    Y &= \lambda vv^\top + X , & 
    \textnormal{where } &(\sqrt{n} \, v,X) \sim \varpi^{\ot n} \ot \GOE(n) , & 
& \label{eqn:spiked_model_prior}
\end{align}
where $ \varpi $ is an arbitrary fixed prior distribution on $ \bbR $ with finite second moment. 
The asymptotic minimum mean squared error of estimating $ v $ (up to a potentially non-identifiable global sign) was characterized by Lelarge and Miolane \cite{Lelarge_Miolane}. 
Bayes-AMP 
\cite{Montanari_Venkataramanan} achieves the Bayes risk whenever there is no statistical-computational gap and is conjectured to be optimal among all polynomial-time algorithms. 
The optimality of Bayes-AMP is rigorously proved within the families of general first order methods (GFOM) \cite{Celentano_Montanari_Wu,Montanari_Wu} and \LowDeg algorithms \cite{Montanari_Wein,Sohn_Wein}. 
Specifically, when $ \varpi $ has nonzero mean and finite moments of all orders, Montanari and Wein \cite{Montanari_Wein} showed that Bayes-AMP with a constant number of steps is as powerful as \LowDeg estimators of constant degree. 
Such an equivalence was later extended to iteration count / degree $ o(n^{1/60}) $ in \cite{Li} for Bernoulli priors. 
Both results assume a nonzero-mean prior which implies that Bayes-AMP does not require a warm start to achieve a nontrivial mean squared error in $ O(1) $ steps. 
Indeed, initializing with the all-one vector suffices since $ v $ following a non-centered prior contains a component along that direction \cite{Montanari_Wu}. 
This is not the case for zero-mean priors such as the Gaussian prior and Bayes-AMP needs to be modified via a spectral initialization \cite{Montanari_Venkataramanan}. 
Although the common belief is that spectral estimators can be approximated by power iteration with $ O(\log(n)) $ steps, this has not been formally justified, thereby leaving a gap in the precise connection between spectral methods and \LowDeg in the high-dimensional limit. 

Here we address this gap in the simplest case of $ \varpi = \cN(0,1) $. 
The Gaussian prior on $v$ captures scenarios where no structural information of the spike is available. 
The techniques developed in the proof of \Cref{thm:spike} allow us to derive in \Cref{thm:spike_prior} an asymptotically exact characterization of the dynamics of the state-of-the-art algorithm -- Bayes-AMP.

To showcase the value of this result, we first present a statement that can be derived by adapting techniques already available in the literature. 
In the presence of a Gaussian (in particular zero-mean) prior, spectral method is needed and Bayes-AMP degenerates to a ``linear'' iteration in the sense that its entry-wise denoisers are linear functions. 
Existing state evolution theory for AMP implies that Bayes-AMP does not improve upon spectral initialization. 
This reduces the problem to eigenvector approximation studied in the present paper. 
\Cref{prop:SE} below states that (a) Bayes-AMP takes the form of a power iteration with one-step memory, and (b) its iterate converges to the top eigenvector of $Y$ when initialized with a vector that respects the state evolution fixed point but is independent of $X$. 

For a formal statement, we begin by specializing the Bayes-AMP algorithm proposed in \cite[Section 2.4]{Montanari_Venkataramanan} to the setting \Cref{eqn:spiked_model_prior} above with $ \varpi = \cN(0,1) $. 
Starting from an initialization $ v^0 \in \bbR^n $, the iterate $ v^{t+1} $ of Bayes-AMP is updated according to the following rule: for $t=0$, 
\begin{subequations}
\label{eqn:BAMP}
\begin{align}
    v^1 &= \frac{1}{\lambda} Y v^0 ; \label{eqn:BAMP0}
\end{align}
for $ t\ge1 $, 
\begin{align}
    v^{t+1} &= \frac{1}{\lambda} Y v^t - \frac{1}{\lambda^2} v^{t-1} . \label{eqn:BAMPt}
\end{align}
\end{subequations}

\begin{proposition}[State evolution and alignment with $ v_1(Y) $]
\label{prop:SE}
Consider the Bayes-AMP algorithm \Cref{eqn:BAMP} for $Y$ in \Cref{eqn:spiked_model_prior} with $ \varpi = \cN(0,1) $. 
Suppose that $ (v,v^0) $ is independent of $X$ and satisfies that the empirical distribution of $ (\sqrt{n} v_i, v^0_i)_{i = 1}^n $ converges in Wasserstein-$2$ distance\footnote{We say that the empirical distribution of the rows of a (possibly random) matrix $ \matrix{v_1 & \cdots & v_k} \in \bbR^{n\times k} $ converges in Wasserstein-$2$ to a random vector $ \matrix{\sfV_1 & \cdots & \sfV_k} \in \bbR^{1\times k} $ with finite $ \expt{\normtwo{\matrix{\sfV_1 & \cdots & \sfV_k}}^2} $ if for any $ f \colon \bbR^k \to \bbR $ satisfying $ \abs{f(x) - f(y)} \le C \normtwo{x-y} (1 + \normtwo{x} + \normtwo{y}) $ for some constant $C>0$, one has $ n^{-1} \sum_{i=1}^n f(v_{i,1},\cdots,v_{i,k}) \to \expt{f(\sfV_1, \cdots, \sfV_k)} $ almost surely as $n\to\infty$ and $k$ fixed. See \cite[Chapter 6]{Villani} for the relevant background.} to a pair of random variables $ (\sfV, (\lambda^2 - 1) \sfV + \sqrt{\lambda^2 - 1} \sfX_0) $ where $ (\sfV,\sfX_0) \sim \cN(0,1)^{\ot2} $. 
Then for any fixed $t\ge0$, the empirical distribution of $ (\sqrt{n} v_i, v^t_i)_{i = 1}^n $ converges in Wasserstein-$2$ to a pair of random variables $ (\sfV, (\lambda^2 - 1) \sfV + \sqrt{\lambda^2 - 1} \sfX_t) $ where $ (\sfV,\sfX_t) \sim \cN(0,1)^{\ot2} $. 
In particular, 
\begin{align}
&&
    \lim_{t\to\infty} \lim_{n\to\infty} \expt{ \frac{\inprod{v}{v^t}^2}{\normtwo{v}^2 \normtwo{v^t}^2} } &= 1 - \frac{1}{\lambda^2} , &
    \lim_{t\to\infty} \lim_{n\to\infty} \expt{ \frac{\inprod{v^t}{Y v^t}}{\normtwo{v^t}^2} } &= \lambda_\star , & 
& \label{eqn:SE_E} 
\end{align}
where $ \lambda_\star $ is defined in \Cref{eqn:lambda_star}. 
Moreover, 
\begin{align}
    \lim_{t\to\infty} \lim_{n\to\infty} \expt{ \frac{\inprod{v_1(Y)}{v^t}^2}{\normtwo{v^t}^2} } &= 1 . \label{eqn:align} 
\end{align}
\end{proposition}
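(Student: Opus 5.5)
The proof has three parts. First we identify \Cref{eqn:BAMP} as an instance of the Bayes-AMP / general AMP framework of \cite{Montanari_Venkataramanan} and invoke its state evolution. Then we solve the scalar recursion and observe that the initial state is a fixed point. Finally, we compare the iterate with $v_1(Y)$ through a spectral argument.

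\textbf{State evolution.} We write $Y = \lambda vv^\top + X$. For the Gaussian prior with signal-to-noise $\mu_t$ and effective noise level $\sigma_t^2$, the posterior mean denoiser is linear, $f_t(y) = \frac{\mu_t}{\mu_t^2+\sigma_t^2}\,y$. With the normalization chosen in the statement, the iterate has the form $v^t \approx \mu_t v\sqrt n + \sigma_t \sfX_t$ with $\mu_t = \lambda^2-1$ and $\sigma_t^2 = \lambda^2-1$. In that case $f_t(y) = y/\lambda^2$, and the Onsager coefficient equals $\frac{1}{n}\sum_i f_t'(\cdot) = 1/\lambda^2$ times the scaling. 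Rescaling by $\lambda$ to match the convention $v^{t+1} = \frac1\lambda Y v^t - \frac{1}{\lambda^2}v^{t-1}$ recovers exactly \Cref{eqn:BAMPt}. The first step \Cref{eqn:BAMP0} carries no memory term, because the initialization is independent of $X$; this is the standard way AMP handles an initializer independent of the noise.

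The state evolution recursion reads $\mu_{t+1} = \lambda\,\expt{\sfV f_t(\mu_t\sfV+\sigma_t\sfX)}$ and $\sigma_{t+1}^2 = \expt{f_t(\cdot)^2}$, in the paper's scaling. Plugging in $\mu=\sigma^2=\lambda^2-1$ gives $\mu_{t+1}=\mu_t$ and $\sigma_{t+1}^2=\sigma_t^2$; this is a one-line Gaussian computation. By induction, the general AMP state evolution theorem (for a spiked Wigner matrix with Lipschitz denoisers and an initializer independent of $X$) yields the claimed $W_2$ limit $(\sfV,(\lambda^2-1)\sfV+\sqrt{\lambda^2-1}\,\sfX_t)$ for every fixed $t$.

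\textbf{The two limits in \Cref{eqn:SE_E}.} The overlap limit follows directly from $W_2$ convergence applied to the pseudo-Lipschitz functions $x y$, $x^2$, $y^2$: it gives $\frac{(\lambda^2-1)^2}{(\lambda^2-1)^2+(\lambda^2-1)} = 1-\lambda^{-2}$. Passing from almost sure limits of the ratio to its expectation uses bounded convergence, since the ratio lies in $[0,1]$.

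For the Rayleigh quotient, we use the recursion to write $Yv^t = \lambda v^{t+1} + \lambda^{-1}v^{t-1}$. This gives
\[
\inprod{v^t}{Yv^t} = \lambda\inprod{v^t}{v^{t+1}} + \lambda^{-1}\inprod{v^t}{v^{t-1}} .
\]
The inner products between iterates at different times are obtained from the joint state evolution of $(v^{t-1},v^t,v^{t+1})$. This requires the covariance $\expt{\sfX_s\sfX_t}$, which obeys its own recursion. We expect this covariance to tend to $1$ as $s,t\to\infty$ at a geometric rate with ratio $\lambda^{-2}$, so that the iterates become asymptotically aligned. Granting this, the Rayleigh quotient becomes $(\lambda+\lambda^{-1})\cdot\frac{\normtwo{v^t}^2}{\normtwo{v^t}^2} = \lambda_\star$ in the limit. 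Uniform integrability of the quotient follows from $\norm{Y}_{\mathrm{op}}$ being bounded with high probability, together with Gaussian tail bounds.

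\textbf{Alignment \Cref{eqn:align}; main obstacle.} We decompose $v^t$ in the eigenbasis of $Y$. Let $P$ be the projection onto $v_1(Y)^\perp$. Since $\lambda_1(Y)\to\lambda_\star$ and the rest of the spectrum lies in $[-2-\epsilon,2+\epsilon]$, we aim to show that $\normtwo{Pv^t}^2/\normtwo{v^t}^2$ vanishes as $t\to\infty$ after $n\to\infty$. We would try two routes.
\begin{enumerate}
\item Combine the Rayleigh quotient limit $\lambda_\star$ with the bound $\inprod{u}{Yu} \le \lambda_1(Y)\,(1-\epsilon_u) + (2+o(1))\,\epsilon_u$ for a unit vector $u$, where $\epsilon_u = \normtwo{Pu}^2$. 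Since the Rayleigh quotient tends to $\lambda_\star$, this forces $\epsilon_u\to0$ because $\lambda_\star>2$. This is the cleanest route; it reduces \Cref{eqn:align} to the Rayleigh quotient limit.
\item Alternatively, write $v^t=q_t(Y)v^0$ with $q_t$ a rescaled Chebyshev polynomial $\lambda^{-t}p_t$, up to a term from the first step. Then $q_t(\lambda_\star)\asymp 1$, while $\sup_{|x|\le2+\epsilon}|q_t(x)|\lesssim t\lambda^{-t}\to0$, which kills the bulk directly.
\end{enumerate}
The delicate step is the cross-time covariance needed for the Rayleigh quotient limit. For that reason I would actually use route 2 to prove \Cref{eqn:align} directly, and then deduce the Rayleigh quotient limit from it. The one remaining point in route 2 is that $\inprod{v_1(Y)}{v^0}^2/\normtwo{v^0}^2$ stays bounded away from $0$. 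This holds because $v^0$ has overlap $1-\lambda^{-2}$ with $v$, and $v$ has overlap $1-\lambda^{-2}$ with $v_1(Y)$ by \Cref{eqn:v1}.
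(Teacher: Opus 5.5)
Your state-evolution step and the overlap limit match the paper. Route~2 for \Cref{eqn:align} is viable, but its last step is justified incorrectly. You claim that $\langle v_1(Y),v^0\rangle^2/\|v^0\|^2$ stays bounded away from $0$ because $v^0$ and $v_1(Y)$ each have squared overlap $c=1-\lambda^{-2}$ with $v$. Overlap is not transitive. Unit vectors $\pm\sqrt{c}\,v+w$ and $\pm\sqrt{c}\,v+w'$ with $w,w'\perp v$ and $\|w\|^2=\|w'\|^2=1-c$ are only guaranteed an inner product of absolute value at least $2c-1=1-2\lambda^{-2}$. So for $\lambda\le\sqrt2$ they can be orthogonal. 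This quantity is exactly the denominator in your bound
\[
1-\frac{\langle v_1(Y),v^t\rangle^2}{\|v^t\|^2}\le\frac{\max_{i\ge2}q_t(\lambda_i(Y))^2}{q_t(\lambda_1(Y))^2}\cdot\frac{\|v^0\|^2}{\langle v_1(Y),v^0\rangle^2},
\]
so neither \Cref{eqn:align} nor the Rayleigh-quotient limit you deduce from it is established as written. Your argument never uses the hypothesis that $(v,v^0)$ is independent of $X$, and that is exactly what is needed here. Either of two repairs works.
\begin{enumerate}
\item Conditionally on $(v,v^0)$, the law of $v_1(Y)v_1(Y)^\top$ is invariant under conjugation by every orthogonal $O$ with $Ov=v$. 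Hence for the $\sigma(v,v^0)$-measurable unit vector $w\propto P_{v^\perp}v^0$ one has $\expt{\langle w,v_1(Y)\rangle^2\mid v,v^0}\le 1/(n-1)$. Together with \Cref{eqn:v1}, this gives $\langle v_1(Y),v^0\rangle^2/\|v^0\|^2\to(1-\lambda^{-2})^2$ in probability.
\item Within your own framework, state evolution gives $\langle\sqrt n\,v,v^t\rangle/n\to\lambda^2-1$ for every fixed $t$. Expand $\langle v,v^t\rangle=\sum_i\lambda^{-t}p_t(\lambda_i(Y))\langle v,v_i(Y)\rangle\langle v_i(Y),v^0\rangle$. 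The terms $i\ge2$ contribute at most $(t+1+o(1))\lambda^{-t}\|v\|\|v^0\|$, and $\lambda^{-t}p_t(\lambda_1(Y))$ stays bounded. Fixing one large $t$ then forces $|\langle v_1(Y),v^0\rangle|/\|v^0\|$ to be bounded away from $0$ with probability tending to one.
\end{enumerate}

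With that repaired, your route is genuinely different from the paper's. The paper gets the Rayleigh quotient from the cross-time covariance $x_t=\expt{\sfX_t\sfX_{t+1}}$, which satisfies $x_t=\lambda^{-2}(\lambda^2-1+x_{t-1})$ and so tends to $1$ geometrically with ratio $\lambda^{-2}$. Your guess was right and it is a one-line computation; with it, your route~1 is itself a complete proof of \Cref{eqn:align}. For alignment the paper follows \cite{Zhang_Mondelli}. It rewrites \Cref{eqn:BAMPt} as a power iteration on $Y+\lambda C I_n$ plus an error driven by $v^{t+1}-v^t$, whose normalized norm vanishes because $x_t\to1$. It then lets $s$ further power steps suppress the bulk, using $\lambda_2(Y)\to2<\lambda_\star$. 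Since state evolution pins down $\|v^{t+s}\|^2/n$, it never needs any information on $\langle v_1(Y),v^0\rangle$.

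Your route instead uses three facts: the exact identity $v^t=\lambda^{-t}p_t(Y)v^0$ of \Cref{lem:AMP_poly} (there is no extra first-step term), $\sup_{[-2,2]}|p_t|\le t+1$, and $\lambda^{-t}p_t(\lambda_\star)\to\lambda^2/(\lambda^2-1)$. This bypasses cross-time covariances and gives an explicit rate of order $(t+1)^2\lambda^{-2t}$ in \Cref{eqn:align}. It also yields the Rayleigh quotient from alignment, as in the proof of \Cref{itm:val_sup} of \Cref{thm:spike}. The cost is the separate initial-overlap input above. One minor inaccuracy: $\sup_{|x|\le2+\epsilon}|q_t(x)|\lesssim t\lambda^{-t}$ fails for $2<|x|\le2+\epsilon$, where $|p_t|$ grows geometrically. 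This is harmless, since for fixed $t$ you may use $\max_{i\ge2}|\lambda_i(Y)|\to2$.
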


\begin{remark}[Proof technique]
\Cref{prop:SE} is proved in \Cref{sec:pf_spiked_goe_prior} using techniques developed in \cite{Zhang_Ji_Venkataramanan_Mondelli} (with precursors \cite{Mondelli_Thrampoulidis_Venkataramanan,Mondelli_Venkataramanan_JSTAT,Mondelli_Venkataramanan_NeurIPS,Zhong_Wang_Fan}) for proving alignment between the top eigenvector and the iterate of AMP initialized at stationarity and run till convergence. 
This method offers an alternative to random matrix theory for studying spectral statistics, and can be seamlessly combined with state evolution theory to show further refinement by iterative schemes initialized with spectral estimators. 
This proof strategy was subsequently adopted by \cite{Zhang_Mondelli,Zhang_Ji_Venkataramanan_Mondelli_rotinv,Yang_Sen_Lu} and proved fruitful in the contexts of generalized linear models and PCA. 
Among papers cited above, \cite{Zhang_Mondelli} contains an execution of this argument that is technically closest to the one used for \Cref{prop:SE}. 
\end{remark}

\Cref{prop:SE} concerns AMP with an unrealistic initialization that puts the state evolution parameters at stationarity but is independent of $X$. 
Despite being a useful proof technique, it is not a viable algorithmic approach and does not clarify the relation between linearized AMP and spectral method, or unveil the dynamics of \Cref{eqn:BAMP} with random initialization. 
The following result makes both points clear. 
A full proof can be found in \Cref{sec:pf_spiked_goe_prior}.

\begin{theorem}[Spiked GOE with Gaussian prior]
\label{thm:spike_prior}
Let $Y$ be as in \Cref{eqn:spiked_model_prior} with $ \varpi = \cN(0,1) $.
Consider the Bayes-AMP algorithm \Cref{eqn:BAMP} initialized with $ v^0 \sim \cN(0_n,I_n/n) $ independent of $ Y $. 
If $ t / \log(n) \to \delta \in (0,\infty) $ as $n\to\infty$, then 
\begin{align}
&&
    \lim_{n\to\infty} \expt{ \frac{\inprod{v_1(Y)}{v^t}^2}{\normtwo{v^t}^2} } &= \begin{cases}
        0 , & \delta < \delta_c \\
        1 , & \delta > \delta_c
    \end{cases} , & 
    \lim_{n\to\infty} \expt{ \frac{\inprod{v^t}{Y v^t}}{\normtwo{v^t}^2} } &= \begin{cases}
        0 , & \delta < \delta_c \\
        \lambda_\star , & \delta > \delta_c
    \end{cases} , &
& \label{eqn:lim_pt} 
\end{align}
where $ \lambda_\star $ and $ \delta_c $ are defined in \Cref{eqn:deltac,eqn:lambda_star}, respectively. 
\end{theorem}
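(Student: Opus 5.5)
The first step is to identify the Bayes-AMP iterate with a Chebyshev polynomial. Write $v^t = q_t(Y) v^0$ where $q_0 = 1$, $q_1(x) = x/\lambda$ and $q_{t+1}(x) = \lambda^{-1} x q_t(x) - \lambda^{-2} q_{t-1}(x)$. Substituting $q_t = \lambda^{-t} s_t$ gives $s_0 = 1$, $s_1 = x$ and $s_{t+1} = x s_t - s_{t-1}$. This is exactly the recurrence \Cref{eqn:recur} with the initial conditions of $(p_k)$, so $s_t = p_t$ and $v^t = \lambda^{-t} p_t(Y) v^0$.

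Both quantities in \Cref{eqn:lim_pt} are $0$-homogeneous in $v^t$. They therefore equal $\cO_{n,t}(p_t,\cdot)$ and $\cV_{n,t}(p_t,\cdot)$ for the law of $Y$ in \Cref{eqn:spiked_model_prior}. Moreover, $p_t$ already satisfies the normalization \Cref{eqn:q_norm} by \Cref{eqn:orth}.

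Next I reduce to a fixed spike. Condition on $v$. By orthogonal invariance of $\GOE(n)$, $Y$ is orthogonally conjugate to $\lambda \normtwo{v}^2 \, e_1 e_1^\top + X$. Since $b = v^0$ is rotationally invariant and independent of $X$, the overlap and Rayleigh quotient have the same law as in the fixed-spike model $\wh{\GOE}(n, \lambda_n)$ with $\lambda_n = \lambda\normtwo{v}^2 \to \lambda$. The deviation $\lambda_n - \lambda = O_{\bbP}(n^{-1/2})$ is harmless: it enters only through $p_t(\lambda_1(Y))^2 \approx p_t(\lambda_\star)^2$, and $t\,|\lambda_1(Y)-\lambda_\star| \to 0$ in probability for $t = O(\log n)$.

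I then follow the decomposition \Cref{eqn:spec_decomp}. The overlap is
\begin{align}
    \expt{\frac{1}{1 + \frac{\sum_{i\ge2}\xi_i^2 p_t(\lambda_i(Y))^2}{\xi_1^2 p_t(\lambda_1(Y))^2}}} . \notag
\end{align}
\begin{itemize}
    \item \textbf{Denominator sum.} By the input \Cref{eqn:input1} used in the proof of \Cref{thm:spike}, the sum $\sum_{i\ge2}\xi_i^2 p_t(\lambda_i(Y))^2$ is $1+o_{\bbP}(1)$. Here $p_t^2$ is expanded via \Cref{eqn:prod_sum} into $p_{2t-2r}$'s, and the linear spectral statistics estimate \Cref{eqn:LSS} controls the non-outlier eigenvalues through interlacing.
    \item \textbf{Outlier term.} Using $\lambda_1(Y) \to \lambda_\star = 2\cosh(\log\lambda)$ and \Cref{eqn:U_big}, $p_t(\lambda_\star) = (\lambda^{t+1}-\lambda^{-t-1})/(\lambda-\lambda^{-1})$. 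Hence $\xi_1^2 p_t(\lambda_1)^2 \asymp \chi^2_1 \cdot \lambda^{2t}/n$.
    \item \textbf{Phase transition.} With $t = \delta\log n$, $\lambda^{2t}/n = n^{2\delta\log\lambda - 1}$. This tends to $0$ if $\delta<\delta_c$ and to $\infty$ if $\delta>\delta_c$, so by dominated convergence the overlap tends to $0$ or $1$ respectively. Equivalently, this is \Cref{itm:opt} of \Cref{thm:spike} with $\Phi$ evaluated at $p_t(\lambda_\star)^2/n$ in place of $K_t(\lambda_\star,\lambda_\star)/n$.
\end{itemize}

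For the value, the numerator is $\xi_1^2 \lambda_1 p_t(\lambda_1)^2 + \sum_{i\ge2}\xi_i^2 \lambda_i p_t(\lambda_i)^2$.
\begin{itemize}
    \item \textbf{Bulk contribution.} By \Cref{eqn:recur} and \Cref{eqn:prod_sum}, $x p_t(x)^2$ is a combination of $p_{2t+1-2r}$, all of odd index, so $\int x p_t^2 \rho_{\sc} = 0$. The same LSS estimate then gives that the bulk contribution is $o_{\bbP}(1)$. This is why the subcritical limit is $0$ rather than $2$: $p_t$ is not the value-optimal polynomial.
    \item \textbf{Subcritical case.} The outlier term is negligible and the denominator is $1+o_{\bbP}(1)$, so the ratio tends to $0$.
    \item \textbf{Supercritical case.} The outlier terms dominate both numerator and denominator, so the ratio tends to $\lambda_1(Y) \to \lambda_\star$.
    \item \textbf{From probability to expectation.} The ratio is bounded by $\normop{Y}$, which is uniformly integrable, so convergence in probability upgrades to convergence of expectations.
\end{itemize}

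The main obstacle is the bulk concentration, namely $\sum_{i\ge2}\xi_i^2 \lambda_i p_t(\lambda_i)^2 = o_{\bbP}(1)$ and $\sum_{i\ge 2}\xi_i^2 p_t(\lambda_i)^2 = 1+o_{\bbP}(1)$ uniformly in $t=O(\log n)$. Two things must be checked. First, the $\xi$-fluctuation: conditionally on the spectrum, its variance is $O(n^{-2}\sum_i p_t(\lambda_i)^4)$, and this is small because the non-outlier eigenvalues lie in $[-2-n^{-2/3+\eps}, 2+n^{-2/3+\eps}]$, where $|p_t| \le \mathrm{poly}(t)$. Second, the deterministic-equivalent error, which \Cref{eqn:LSS} bounds by $t^3 n^{-1+\eps}$ per Chebyshev mode, summed over $O(t)$ modes. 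I expect these to be exactly the estimates already established for the proof of \Cref{thm:spike}. I would invoke them directly for the specific choice $q = p_t$ rather than $q_{\opt}$.
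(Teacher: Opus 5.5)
Your proposal is correct and follows essentially the same route as the paper. The paper identifies $v^t=\lambda^{-t}p_t(Y)v^0$ (\Cref{lem:AMP_poly}) and applies the bulk concentration estimates of \Cref{lem:event} to $q=p_t$, for which $\int p_t^2\,\dd\mu_{\sc}=1$ and $\int x p_t^2\,\dd\mu_{\sc}=0$. It uses the explicit value of $p_t(\lambda_\star)$ to get the overlap $\Phi(p_t(\lambda_\star)^2/n)+o(1)$ and passes to the limit via $\lambda^{2t}/n=n^{2\delta\log\lambda-1+o(1)}$. The only difference on the value is that the paper shows the Rayleigh quotient equals $\lambda_\star$ times the overlap up to $o(1)$, while you split into sub- and supercritical cases. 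Your explicit reduction of the Gaussian-prior spike to a fixed spike of strength $\lambda\normtwo{v}^2=\lambda+O_{\bbP}(n^{-1/2})$ supplies a step the paper leaves implicit when it works directly with $\wh{\GOE}(n,\lambda)$. The resulting shift of $\lambda_1(Y)$ is $O_{\bbP}(n^{-1/2})$ by Weyl's inequality, so this step is harmless.
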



\begin{remark}[Eigenvalue approximation below criticality]
\label{rk:eigval_below}
Note that when $ \delta < \delta_c $, the limiting Rayleigh quotient achieved by $ v^t $ is $0$ whereas \Cref{eqn:lim} promised that the optimal polynomial achieves $2$ which is the right edge of the bulk of the limiting spectrum of $Y$. 
This nuance is immaterial and can be easily rectified by taking $ \sum_{s = 1}^t \lambda^s v^s $ as the final estimate. 
By \Cref{lem:AMP_poly}, the latter is still a legit low-degree polynomial estimator and we claim that it achieves a limiting value $2$, matching \Cref{eqn:lim}. 
\end{remark}

\begin{remark}[Scaling limit of ``linear'' iterations]
\Cref{thm:spike_prior} is proved by unrolling the iteration \Cref{eqn:BAMP}, recognizing that it takes the form of $ q(Y) v^0 $ where $ q(x) = p_{t+1}(x) / \lambda^{t+1} $ (see \Cref{lem:AMP_poly}), and then using the techniques developed in \Cref{thm:spike} to compute the two limits in \Cref{eqn:lim_pt}.
In principle, any iteration that does not involve nonlinearities can be unrolled, producing a certain polynomial $q$, and one can study its asymptotic properties within our framework. 
Of particular interest are popular methods in randomized numerical linear algebra, and we leave such studies to future work. 
\end{remark}

\subsection{Results for GOE}
\label{sec:results_GOE}

Next, we turn to the pure noise model where $X$ is GOE \Cref{eqn:GOE}. 
We first show in \Cref{thm:eigval} that for any fixed $\eps>0$, it is possible to approximate the top eigenvalue to within a multiplicative factor $1-\eps$ using a polynomial of constant degree $ d\le\ceil{1/\eps} $. 

\begin{theorem}[GOE, eigenvalue approximation]
\label{thm:eigval}
Let $ X \sim \GOE(n) $. 
Fix any $ \eps\in(0,1) $. 
Let $ v^{-1} = 0_n $ and $ v^{0} \sim \cN(0_n,I_n) $ independent of $X$. 
Consider $ v^t $ iteratively defined by 
\begin{align}
    v^{t+1} &= X v^t - v^{t-1} . \label{eqn:BAMP_eigval}
\end{align}
For $ T = \ceil{1/\eps} $, let $ \wh{v}^T \coloneqq \sum_{t = 1}^{T} v^{t+1} $. 
Then almost surely, 
\begin{align}
    \lim_{n\to\infty} \frac{\inprod{\wh{v}^T}{X\wh{v}^T}}{\normtwo{\wh{v}^T}^2}
    &\ge 2(1-\eps) . \notag 
\end{align}
\end{theorem}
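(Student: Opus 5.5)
The proof unrolls the recursion into a fixed polynomial in $X$, reduces the Rayleigh quotient to two weighted linear spectral statistics, and evaluates their limits using the Chebyshev orthonormality relations. Since $v^{-1}=0_n$, $p_0=1$, $p_1(x)=x$ and $p_{t+1}=xp_t-p_{t-1}$ by \Cref{eqn:recur}, induction on $t$ gives $v^t=p_t(X)v^0$ for all $t\ge0$. Hence
\begin{align}
    \wh v^T = q(X)v^0 , \qquad q \coloneqq \sum_{k=2}^{T+1} p_k , \notag
\end{align}
and $q$ is a polynomial of fixed degree $T+1$, independent of $n$.

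\textbf{Step 1: spectral reduction.} By \Cref{eqn:fun_cal}, writing $\xi_i\coloneqq\inprod{v_i(X)}{v^0}$, we have
\begin{align}
    \frac{\inprod{\wh v^T}{X\wh v^T}}{\normtwo{\wh v^T}^2}
    = \frac{\frac1n\sum_{i=1}^n \xi_i^2\,\lambda_i(X)\,q(\lambda_i(X))^2}{\frac1n\sum_{i=1}^n \xi_i^2\, q(\lambda_i(X))^2} . \notag
\end{align}
By rotational invariance of $v^0$ and its independence from $X$, conditionally on $X$ the $\xi_i$ are i.i.d.\ $\cN(0,1)$.

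\textbf{Step 2: almost sure limits of numerator and denominator.} By \Cref{eqn:edge}, almost surely all eigenvalues eventually lie in $[-3,3]$. On this interval the functions $f(x)=q(x)^2$ and $f(x)=xq(x)^2$ are bounded by a constant $C_T$ independent of $n$. I would show, for each such $f$, that
\begin{align}
    \frac1n\sum_i(\xi_i^2-1)f(\lambda_i)\to0 \quad \text{almost surely.} \notag
\end{align}
Conditionally on $X$, this sum has independent centered summands with bounded fourth moments. Its conditional fourth moment is therefore $O(C_T^4/n^2)$, and Markov's inequality plus Borel--Cantelli give the almost sure convergence; I would truncate on the event where all $|\lambda_i|\le3$. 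Then \Cref{eqn:ESD_cvg}, applied with the bounded continuous modification of $f$ outside $[-3,3]$, gives
\begin{align}
    \frac1n\sum_i f(\lambda_i)\to\int f\rho_{\sc} . \notag
\end{align}
This step, the almost sure law of large numbers for the Gaussian-weighted spectral sums, is the main technical obstacle. With fixed degree it is routine, but care is needed to make the conditioning on $X$ and the truncation argument fully rigorous.

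\textbf{Step 3: Chebyshev computation.} By \Cref{eqn:orth}, $\int q^2\rho_{\sc}=T$. By \Cref{eqn:recur}, $\int x\,p_kp_\ell\,\rho_{\sc}=\indicator{|k-\ell|=1}$. Summing over $k,\ell\in\{2,\dots,T+1\}$ counts the ordered adjacent pairs, so
\begin{align}
    \int xq^2\rho_{\sc}=2(T-1) . \notag
\end{align}
Since $T\ge1$ the denominator limit $T$ is positive, so the ratio converges almost surely to $2(T-1)/T=2(1-1/T)$. Finally, $T=\ceil{1/\eps}$ implies $1/T\le\eps$, so the limit is at least $2(1-\eps)$, which proves the theorem.
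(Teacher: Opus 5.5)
Your proof is correct, but it follows a different route from the paper's.

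The paper treats \Cref{eqn:BAMP_eigval} as an AMP iteration with identity denoisers (Onsager coefficient $1$). It invokes the general state evolution theorem to conclude that the empirical law of $(v^1,\dots,v^{T+2})$ converges to i.i.d.\ $\cN(0,1)$ coordinates. It then expands $\inprod{\wh v^T}{X\wh v^T}$ using $Xv^t=v^{t+1}+v^{t-1}$ and reads off $2(T-1)$ and $T$ from the limiting covariances.

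You instead unroll the recursion to $\wh v^T=q(X)v^0$ with $q=\sum_{k=2}^{T+1}p_k$. You then evaluate the two Gaussian-weighted spectral sums directly, using \Cref{eqn:edge,eqn:ESD_cvg}, a fourth-moment Borel--Cantelli bound, and Chebyshev orthonormality.

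The underlying computation is the same, since the state-evolution covariances are exactly $\int p_sp_t\rho_{\sc}=\indicator{s=t}$. The trade-offs differ:
\begin{itemize}
\item The paper's argument is shorter given the AMP black box. It is also the one that extends to nonlinear denoisers, which is the IAMP connection in \Cref{rk:relation_spin}.
\item Your argument is self-contained, and it proves rather than cites the almost-sure law of large numbers for the Gaussian weights.
\item Your argument also places the result in the coefficient-vector framework of \Cref{lem:extr} and \Cref{thm:goe_eigval_opt}. The limit is $a^\top Ja/\normtwo{a}^2$ with $a=(0,0,1,\dots,1)\in\bbR^{T+2}$ and $J$ as in \Cref{eqn:J}. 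This makes it transparent why this polynomial attains $2(1-1/T)$ rather than the optimal $r_{T+1}$.
\end{itemize}
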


\begin{remark}[Relation to optimization of spin glasses]
\label{rk:relation_spin}
    The iteration \Cref{eqn:BAMP_eigval} is a natural analogue of \Cref{eqn:BAMP} below the BBP threshold $1$ (recall \Cref{eqn:lambda1}) where $\lambda>1$ in the latter is replaced with $1$ in the former. 
    By the three-term recurrence \Cref{eqn:recur}, one has $ p_{t+1}(x) = xp_t(x) - p_{t-1}(x) $, and hence \Cref{eqn:BAMP_eigval} is precisely the iterative implementation of Chebyshev polynomials. 
    Another (indirect) way of motivating \Cref{eqn:BAMP_eigval} is given by the connection to optimization of Sherrington--Kirkpatrick Hamiltonian \cite{Sherrington_Kirkpatrick}. 
    This problem concerns finding $ \wh{u} \equiv \wh{u}(X) \in \{-1,1\}^n $ such that with high probability, $ n^{-1} H_n(\wh{u}) \ge (1-\eps) \cH_n $ for a fixed given $\eps$, where $ H_n(u) \coloneqq \inprod{u}{Xu} $ and $ \cH_n \coloneqq n^{-1} \max_{u\in\{-1,1\}^n} H_n(u) $. 
    The precise limit of $ \cH_n $ is given by the Parisi formula \cite{Parisi,Talagrand,Panchenko,Auffinger_Chen} which approximately evaluates to $1.53<2$ and an efficient algorithm that outputs the desired $ \wh{u} $ with high probability was constructed in \cite{Montanari}. 
    If one replaces the hypercube constraint $ u\in\{-1,1\}^n $ with the spherical constraint $ u\in\sqrt{n}\,\bbS^{n-1} $ in the definition of $ \cH_n $, then this problem reduces to eigenvalue approximation of GOE studied here, and the algorithm of \cite{Montanari} reduces to \Cref{eqn:BAMP_eigval}. 
    The proof of \Cref{thm:eigval} is a simple consequence of state evolution theory of AMP (see \Cref{sec:pf_goe_eigval} for details) and we provide a short self-contained proof instead of adapting the result of \cite{Montanari}. 
\end{remark}

Our next result identifies the exact limit (as $n\to\infty$) of $ \VAL_{n,d} $ for any fixed degree $d$. 
This in particular strengthens the $\eps$ dependence of degree for achieving an asymptotic value $2(1-\eps)$ from $1/\eps$ in \Cref{thm:eigval} to the optimal order $O(1/\sqrt{\eps})$. 

\begin{theorem}[GOE, optimal eigenvalue approximation]
\label{thm:goe_eigval_opt}
    For every fixed integer $ d\ge0 $, 
    \begin{align}
        \lim_{n\to\infty} \VAL_{n,d}(\GOE(n)) &= r_d , \label{eqn:VAL_rd}
    \end{align}
    where $ r_d $ is defined in \Cref{eqn:rd}. 
    This can be achieved by the polynomial $ q_{\val} $ in \Cref{eqn:def_qval}. 
\end{theorem}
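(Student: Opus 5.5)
We reduce the problem to an explicit quadratic-form maximization in the Chebyshev basis, exactly as in the informal derivation preceding \Cref{eqn:obj2}, but now for fixed $d$ and with $A=X\sim\GOE(n)$.

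\emph{Step 1 (reduction to spectral sums).} Fix $q\in\cP_d\setminus\{0\}$, normalized as in \Cref{eqn:q_norm}. By the spectral decomposition,
\begin{align}
    \frac{\inprod{q(X)b}{Xq(X)b}}{\normtwo{q(X)b}^2} &= \frac{\sum_{i=1}^n \xi_i^2 \lambda_i(X) q(\lambda_i(X))^2}{\sum_{i=1}^n \xi_i^2 q(\lambda_i(X))^2} , \notag
\end{align}
where $\xi_i=\inprod{v_i(X)}{b}$ are i.i.d.\ $\cN(0,1/n)$ and independent of the spectrum. Since $d$ is fixed, $q^2$ and $xq^2$ are fixed polynomials. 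By \Cref{eqn:edge} the spectrum lies in $[-3,3]$ eventually, so these polynomials are effectively bounded continuous there. The semicircle law \Cref{eqn:ESD_cvg} then gives $n^{-1}\sum_i f(\lambda_i)\to\int f\rho_{\sc}$ almost surely. Conditionally on $X$, the weighted sums $\sum_i\xi_i^2 f(\lambda_i)$ have mean $n^{-1}\sum_i f(\lambda_i)$ and variance $O(n^{-2}\sum_i f(\lambda_i)^2)=O(1/n)$. Hence the numerator converges in probability to $\int xq^2\rho_{\sc}$ and the denominator to $1$.

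\emph{Step 2 (convergence of expectations).} The ratio is a Rayleigh quotient, so it is bounded by $\norm{X}$, whose expectation is bounded. We therefore have uniform integrability, and
\begin{align}
    \cV_{n,d}(q,\GOE(n)) &\to \int xq(x)^2\rho_{\sc}(x)\diff x \notag
\end{align}
for each fixed $q$.

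\emph{Step 3 (solving \Cref{eqn:obj2}).} Write $q=\sum_{k=0}^d c_kp_k$ with $\sum_k c_k^2=1$ by \Cref{eqn:orth}. The recurrence \Cref{eqn:recur} gives
\begin{align}
    \int xq^2\rho_{\sc} &= c^\top Jc , \notag
\end{align}
where $J$ is the $(d+1)\times(d+1)$ tridiagonal matrix with zero diagonal and unit off-diagonals. Its largest eigenvalue is $2\cos(\pi/(d+2))=r_d$, with eigenvector proportional to $(p_k(r_d))_{k=0}^d$. Indeed, $p_{d+1}(r_d)=U_{d+1}(\cos(\pi/(d+2)))=0$, so the recurrence closes. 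This eigenvector is exactly the coefficient vector of $q_{\val}$, which gives the achievability $\liminf_n\VAL_{n,d}\ge r_d$.

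\emph{Step 4 (upper bound).} The main obstacle is that $\VAL_{n,d}$ takes a supremum over $q$, which may depend on $n$, so we need convergence that is uniform over $q$. Since the ratio is $0$-homogeneous, we parametrize $q$ by $c\in\bbS^d$, a compact set. Both the numerator and the denominator are quadratic forms $c^\top M_n c$ and $c^\top N_n c$ in $c$. Here
\begin{align}
    (M_n)_{k\ell} &= \sum_i\xi_i^2\lambda_ip_k(\lambda_i)p_\ell(\lambda_i) , \notag
\end{align}
and $N_n$ is defined analogously without the factor $\lambda_i$. These are finitely many $(d+1)^2$ entries, and by Step 1 they converge in probability to $J$ and $I$, respectively.

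On the event $\norm{N_n-I}\le\eta$ and $\norm{M_n-J}\le\eta$, we get uniformly in $c$:
\begin{align}
    \frac{c^\top M_nc}{c^\top N_nc} &\le \frac{r_d+\eta}{1-\eta} . \notag
\end{align}
On the complement, we bound the ratio by $\norm{X}$. The complement has vanishing probability, and $\norm{X}$ has uniformly bounded second moment, so Cauchy--Schwarz makes its contribution $o(1)$. The bound $\E[\sup_q\text{ratio}]\le r_d+o(1)$ follows, and hence $\limsup_n\VAL_{n,d}\le r_d$.
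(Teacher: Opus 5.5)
Your proposal is correct and follows the paper's proof essentially step for step. Your $M_n$ and $N_n$ are exactly the paper's Gram matrices $\ol{G}$ and $\ul{G}$ in the Chebyshev basis, shown to converge in probability to $J$ and $I_{d+1}$, so everything reduces to $\lambda_1(J)=r_d$, whose top eigenvector gives $q_{\val}$. Your Step 4 is more careful than the paper, which passes the limit through $\max_a$ without comment: you bound the expectation of the supremum over $q$ on the event $\normtwo{M_n-J},\normtwo{N_n-I_{d+1}}\le\eta$ and use $\normtwo{X}$ on the complement, which supplies the uniformity the paper leaves implicit. One small correction: in Step 2, uniform integrability needs the uniformly bounded second moment of $\normtwo{X}$ that you invoke in Step 4, not merely bounded expectation.
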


\begin{remark}[Large-$d$ asymptotics]
    Recall from \Cref{eqn:edge} that the largest possible value is $ \lambda_1(X) \to 2 $. 
    The deficit of \Cref{eqn:VAL_rd} has the following expansion as $d\to\infty$: 
    \begin{align}
        2 - r_d &= 4 \sin\paren{\frac{\pi}{d+2}}^2 
        = \frac{\pi^2}{(d+2)^2} + O\paren{\frac{1}{(d+2)^4}} . \notag 
    \end{align}
    In particular, for any $\eps\in(0,1)$, to achieve an asymptotic value at least $ 2(1-\eps) $, the degree $d$ needs to be at least 
    \begin{align}
        \ceil{ \frac{\pi}{\arccos(1-\eps)} } - 2
        &\sim \frac{\pi}{\sqrt{2\eps}} , \notag
    \end{align}
    as $ \eps\downarrow0 $. 
    This improves the $ 1/\eps $ dependence of the iteration count $T$ in \Cref{thm:eigval} and is optimal in that it can be achieved by $ q_{\val} $. 
    A lower bound matching \Cref{eqn:VAL_rd} was previously derived in \cite[Theorems 6.9 and 6.12]{Pesenti_thesis} using the Fourier diagram basis introduced in \cite{Jones_Pesenti}. 
    Our proof, based on the Chebyshev basis, directly yields the limit $ r_d $; see \Cref{sec:pf_goe_eigval} for details. 
\end{remark}

\begin{remark}[Relation to power iterations]
    It was shown in \cite[Theorem 6.3 and Remark 6.8]{Pesenti_thesis} that vanilla power iteration on $X$ (corresponding to taking the monomial $ p(x) = x^t $) initialized with the all-one vector requires $ \Theta(n^{2/3}) $ steps to achieve an asymptotic value $2$. 
    This is in sharp contrast to the $1/\eps$ and $ O(1/\sqrt{\eps}) $ guarantees in \Cref{thm:eigval,thm:goe_eigval_opt} to achieve $ 2(1-\eps) $ (corresponding to taking the Chebyshev polynomial and $ q_{\val} $, respectively).
    These results altogether signify the importance of choosing a judicious polynomial for efficient eigenvalue approximation of GOE. 
\end{remark}

In contrast to eigenvalue approximation for which constant degree suffices, the problem of eigenvector approximation for GOE is at a significantly different level of difficulty. 
The upper and lower bounds in \Cref{thm:sub,thm:sup} respectively identify the critical scaling of degree for approximating the top eigenvector to be $ d = n^{1/3+o(1)} $. 
These two results are proved in \Cref{sec:pf_sub,sec:pf_sup}, respectively. 

\begin{theorem}[GOE, subcritical]
\label{thm:sub}
Fix $ \eps\in(0,1/3) $ and suppose that 
\begin{align}
    \frac{d}{n^{1/3 - \eps}} &\to \delta \in (0,\infty) . \label{eqn:sub}
\end{align}
Then 
\begin{align}
    \lim_{n\to\infty} \OPT_{n,d}(\GOE(n)) &= 0 . \notag 
\end{align}
\end{theorem}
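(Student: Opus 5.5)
Our plan is to reduce to the spectral representation used in the proof overview of \Cref{thm:spike}. For $X\sim\GOE(n)$ and $\xi_i = \inprod{v_i(X)}{b}$, which are i.i.d.\ $\cN(0,1/n)$ and independent of the spectrum by orthogonal invariance, we have
\begin{align}
    \frac{\inprod{v_1(X)}{q(X)b}^2}{\normtwo{q(X)b}^2} &= \frac{\xi_1^2 q(\lambda_1)^2}{\sum_{i=1}^n \xi_i^2 q(\lambda_i)^2} . \notag
\end{align}
Since the summand for $i=1$ appears in the denominator, it suffices to exhibit many indices $i\ge2$ with $q(\lambda_i)^2$ comparable to $q(\lambda_1)^2$.

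The key deterministic input is an extremal (Markov/Bernstein-type) property of polynomials. Suppose $d\ll n^{1/3}$ and $q$ has degree at most $d$. Near the edge, the top $k$ eigenvalues lie within distance roughly $(k/n)^{2/3}$ of $2$. A degree-$d$ polynomial cannot vary much on scales finer than $d^{-2}$ near the edge of $[-2,2]$. We will formalize this as follows: for $|x|,|y|\le 2+\eta$ with $\eta\lesssim d^{-2}$,
\begin{align}
    \abs{q(x)-q(y)} &\le C d^2 |x-y| \sup_{[-2-\eta,2+\eta]}|q| , \notag
\end{align}
using the Markov inequality together with the Chebyshev bound $\sup_{[-a,a]}|q| \le |T_d(a/2)|\sup_{[-2,2]}|q|$. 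This alone does not compare $q(\lambda_i)$ to $q(\lambda_1)$, because $q$ may be large elsewhere. We will therefore instead use a Remez/Bernstein-type estimate. On an interval $I=[2-\ell,\lambda_1]$ with $\ell = Md^{-2}$ for large $M$, we have $\sup_I |q| \le C_M \cdot(\text{typical size of } |q| \text{ on a positive fraction of } I)$. More concretely, rescale $I$ to $[-1,1]$. Because $\lambda_1-2 = O(n^{-2/3+o(1)}) \ll d^{-2}$, the point $\lambda_1$ sits within $o(1)$ of the rescaled endpoint. The Remez inequality then gives $|q(\lambda_1)|\le C\sup\{|q(x)|: x\in I\setminus E\}$ for any exceptional set $E$ of relative measure at most $1/2$. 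By eigenvalue rigidity (\Cref{prop:rig}), $I$ contains $N\asymp n\ell^{3/2}\asymp n d^{-3}\to\infty$ eigenvalues, in fact $N\gtrsim n^{3\eps}$, and they are spread out according to the semicircle density on scale $\ell$.

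The step we expect to be the main obstacle is turning the sup bound into a bound on a \emph{sum over eigenvalues}. We need
\begin{align}
    \sum_{i:\lambda_i\in I} q(\lambda_i)^2 &\ge c\, N\, q(\lambda_1)^2 , \notag
\end{align}
and a single large value of $q$ between eigenvalues is not enough. Our first attempt will be a polynomial large-sieve / Marcinkiewicz--Zygmund inequality. The eigenvalues in $I$ form a net of mesh $\ll$ the local scale $\ell/d$ on which $q$ oscillates: the local spacing at distance $s$ from the edge is $n^{-1}s^{-1/2}$, versus the oscillation scale $\sqrt{s}/d$. Since $n s\gg d$ precisely when $s\gg d/n$, and $d/n \ll d^{-2}$, the mesh condition holds on all of $I$ except a negligible top portion. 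Hence discrete $L^2$ sums with the semicircle weights are comparable to $\int_I q^2\rho_{\sc}$. The Remez and Bernstein estimates then give $\int_I q^2\rho_{\sc}\gtrsim |I|^{3/2} q(\lambda_1)^2$, with constants uniform in $q$ because everything is scale invariant after rescaling by $d^{-2}$. These statements hold with probability $1-o(1)$ simultaneously for all $q\in\cP_d$, since they are deterministic given rigidity.

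Finally, we conclude on the rigidity event. The random variables $\xi_i^2$ for $i$ in the index set of $I$ are independent of the spectrum. Writing $w_i = q(\lambda_i)^2/q(\lambda_1)^2$, we get
\begin{align}
    \frac{\xi_1^2 q(\lambda_1)^2}{\sum_i\xi_i^2q(\lambda_i)^2} &\le \frac{\xi_1^2}{\xi_1^2+\sum_{i\in I} w_i\xi_i^2} . \notag
\end{align}
Since the weights satisfy $w_i\le C$ (by the same sup bound) and $\sum_{i\in I} w_i\ge cN\to\infty$, a second-moment bound gives $\sum w_i\xi_i^2\ge cN/(2n)$ with high probability. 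Meanwhile $\xi_1^2 = O_{\bbP}(1/n)$. The ratio is therefore $O_{\bbP}(1/N)\to0$ uniformly in $q$, and bounded convergence yields $\OPT_{n,d}\to0$.
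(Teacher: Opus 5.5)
Your spectral reduction and the final averaging over the $\xi_i$ are fine. The gap is the central deterministic claim, which is false.

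\textbf{The edge-localized Remez claim fails.} Restricted to the edge window $I=[2-\ell,\lambda_1]$ with $\ell=Md^{-2}$, an arbitrary $q\in\cP_d$ is just an arbitrary degree-$d$ polynomial on $I$. So no Remez inequality on $I$ can have a $d$-independent constant: the sharp constant for an exceptional set of relative measure $1/2$ is $T_d(3)\approx(3+2\sqrt2)^d/2$.

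Here is a concrete counterexample. Let $y$ be the affine map sending $[2-\ell,2-\ell/2]$ onto $[-1,1]$, so $y(2)=3$, and take $q=T_d\circ y$.
\begin{itemize}
\item On the lower half of $I$ we have $|q|\le 1$.
\item On the rigidity event, $|q(\lambda_1)|\ge T_d(3)/2$, because $d\,|\lambda_1-2|/\ell\to0$.
\item On the upper part of $I$, $\log|q|$ has slope of order $d/\ell$. Hence $q(\lambda_i)^2/q(\lambda_1)^2\le C\exp(-c\,d(\lambda_1-\lambda_i)/\ell)$ for all $\lambda_i\in I$.
\end{itemize}
Consequently only eigenvalues within $O(\ell/d)=O(Md^{-3})$ of $\lambda_1$ contribute. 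There are roughly $1+nM^{3/2}d^{-9/2}$ of them, which is $\ll N\asymp nM^{3/2}d^{-3}$, and is $O(1)$ when $\eps<1/9$. Similarly $\int_I q^2\rho_{\sc}\lesssim d^{-3/2}|I|^{3/2}q(\lambda_1)^2$.

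So both $\int_I q^2\rho_{\sc}\gtrsim|I|^{3/2}q(\lambda_1)^2$ and $\sum_{\lambda_i\in I}q(\lambda_i)^2\ge cN\,q(\lambda_1)^2$ fail. No Marcinkiewicz--Zygmund argument can repair this. The ``scale invariance at scale $d^{-2}$'' you invoke holds only relative to $\sup_{[-2,2]}|q|$, since Markov and Bernstein are global inequalities, not relative to the values of $q$ on $I$. For this particular $q$ the overlap is of course tiny, but only because $|q|$ is astronomically larger near $-2$. That is exactly the information your edge-localized argument discards.

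\textbf{The missing idea.} It is the one you dismissed with ``$q$ may be large elsewhere''. That is harmless: the denominator only needs many terms that are \emph{at least} a constant times $q(\lambda_1)^2$. For the same reason you do not need $w_i\le C$, which in any case does not follow from what you proved. Compare $q(\lambda_1)$ with the global maximum $M_q=\max_{[-2,2]}|q|$ rather than with $q$ on the edge window.
\begin{itemize}
\item On the rigidity event, $\lambda_1\le 2+0.1\,d^{-2}$, because $n^{-2/3+o(1)}\ll d^{-2}$.
\item The Chebyshev extremal bound $|q(x)|\le T_d(x/2)\,M_q$ for $x\ge2$ then gives $|q(\lambda_1)|\le 2M_q$.
\item Markov's inequality $\max_{[-2,2]}|q'|\le d^2M_q/2$, applied at a maximizer of $|q|$, gives an interval of length $d^{-2}$ on which $|q|\ge M_q/2\ge|q(\lambda_1)|/4$. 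This interval sits wherever the maximizer is, not necessarily near the edge.
\item Rigidity guarantees that every such interval contains $\gtrsim n/d^3$ eigenvalues.
\end{itemize}
With $\cI$ the indices of those eigenvalues, the overlap is at most $16\xi_1^2/\sum_{i\in\cI}\xi_i^2$. Its conditional expectation is $O(d^3/n)\to0$, uniformly in $q$. This is the paper's proof, and it needs no Remez or Marcinkiewicz--Zygmund input and no edge localization.
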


\begin{theorem}[GOE, supercritical]
\label{thm:sup}
Fix $\eps\in(0,1/3)$ and suppose 
\begin{align}
    \frac{d}{n^{1/3 + \eps}} &\to \delta \in (0,\infty) . \label{eqn:sup} 
\end{align}
Then 
\begin{align}
    \lim_{n\to\infty} \OPT_{n,d}(\GOE(n)) &= 1 . \notag 
\end{align}
\end{theorem}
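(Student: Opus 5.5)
We need a polynomial $q$ of degree $d\approx \delta n^{1/3+\eps}$ for which $\cO_{n,d}(q,\GOE(n))\to 1$. Since $\OPT_{n,d}\le 1$ trivially, only this lower bound is required.

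By the spectral decomposition used in the proof overview of \Cref{thm:spike}, with $\xi_i\iid\cN(0,1/n)$ independent of the spectrum,
\begin{align}
    \frac{\inprod{v_1(X)}{q(X)b}^2}{\normtwo{q(X)b}^2} &= \frac{1}{1+\frac{\sum_{i\ge2}\xi_i^2 q(\lambda_i)^2}{\xi_1^2 q(\lambda_1)^2}} . \notag
\end{align}
Because $\xi_1^2 n$ is a.s.\ positive and of order one, it suffices to show that, with probability tending to one, $\sum_{i\ge 2} q(\lambda_i)^2/q(\lambda_1)^2\to 0$. Taking expectation over $\xi$ given the spectrum, this ratio bounds the conditional mean of the numerator. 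Bounded convergence then gives the result.

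The natural choice is a power-type polynomial that amplifies the top eigenvalue relative to the others. I would use the Chebyshev-scaled polynomial $q(x)=p_d(x/a)$ with $a=\lambda_2$, or rather a deterministic proxy for it. Alternatively, I would take $q(x)=U_d(x/(2 r))$ with $r$ chosen slightly above the second eigenvalue; since $r$ must not depend on $X$, the cleaner option is $q(x)=T_d\bigl((x+2)/ (\gamma)-1\bigr)$-type shifted Chebyshev polynomials. Plan: fix $\theta=n^{-2/3-\eps/2}$ and let $q(x)=U_d\bigl(x/(2-\theta)\bigr)$ up to a harmless rescaling (i.e.\ $p_d$ evaluated at $2x/(2-\theta)$). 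Then $q$ is bounded by $d+1$ on $[-(2-\theta),2-\theta]$ and grows like $e^{(d+1)\phi}$ beyond it, where $\cosh\phi = x/(2-\theta)$.

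The key input is edge rigidity and level repulsion from random matrix theory: $\lambda_1-\lambda_2\ge n^{-2/3-\eps/4}$ with probability $1-o(1)$ (Tracy--Widom/Airy limit of the gap), and the rigidity of \Cref{prop:rig}, $|\lambda_i-\gamma_i|\le n^{-2/3+\eps/8}\min(i,n-i+1)^{-1/3}$. I would split the sum over $i\ge2$ into two groups. The first is bulk eigenvalues with $\lambda_i\le 2-\theta$, contributing at most $n(d+1)^2$. The second is the $O(n^{\eps'})$ edge eigenvalues in $(2-\theta,\lambda_2]$. For the latter I compare $q(\lambda_i)/q(\lambda_1)\approx e^{-(d+1)(\phi_1-\phi_i)}$. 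Near the edge, $\phi\approx\sqrt{2(x-(2-\theta))}$, so $\phi_1-\phi_2\gtrsim (\lambda_1-\lambda_2)/\sqrt{\lambda_1-2+\theta}\gtrsim n^{-2/3-\eps/4}/n^{-1/3+\eps/16}$. This is roughly $n^{-1/3-\eps/3}$, while $d\approx n^{1/3+\eps}$, so $(d+1)(\phi_1-\phi_i)\ge n^{c\eps}$ and each edge term is superpolynomially small. For the bulk part we need $q(\lambda_1)^2\gg n d^2$. Here $\phi_1\gtrsim\sqrt{\theta}=n^{-1/3-\eps/4}$, so $q(\lambda_1)\ge e^{c\,n^{3\eps/4}}/\phi_1$, which is superpolynomial. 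Choosing $\theta$ at a scale that is smaller than the typical gap yet leaves $\lambda_1$ beyond $2-\theta$ by a margin is the point that needs care, and I would tune exponents (with $\lambda_1\ge 2-n^{-2/3+\eps/8}$ by rigidity) so that both estimates hold simultaneously.

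The main obstacle is the gap lower bound $\lambda_1-\lambda_2\ge n^{-2/3-\eps/4}$ w.h.p. I expect to cite convergence of the rescaled top two eigenvalues $n^{2/3}(\lambda_j-2)$ to the Airy point process, which has simple points almost surely. That gives $\bbP(n^{2/3}(\lambda_1-\lambda_2)<\eta)\to\bbP(\Lambda_0-\Lambda_1<\eta)$, and this tends to zero as $\eta\downarrow0$, which suffices since $n^{-\eps/4}\to0$. With this, the ratio tends to zero in probability, and \Cref{thm:sup} follows.
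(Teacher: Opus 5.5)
Your architecture matches the paper's: a deterministic threshold, $q$ polynomially bounded below it and growing monotonically and exponentially above it, plus the Airy-limit gap $\lambda_1-\lambda_2\ge n^{-2/3-\eps/4}$ w.h.p.\ to separate $q(\lambda_1)$ from $q(\lambda_2)$. Your reduction to $\sum_{i\ge2}q(\lambda_i)^2/q(\lambda_1)^2\to0$ in probability is also fine. However, the polynomial you actually commit to fails, for two reasons.

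First, the threshold is on the wrong side of the Tracy--Widom window. With $\theta=n^{-2/3-\eps/2}$ you have $n^{2/3}\theta\to0$, so $\prob{\lambda_1<2-\theta}=\prob{n^{2/3}(2-\lambda_1)>n^{-\eps/2}}\to\prob{\Lambda_0>0}>0$. On that event $|q(\lambda_1)|\le d+1$, there is no amplification, and your lower bound $\phi_1\gtrsim\sqrt\theta$ fails. The requirement you state at the end (``$\theta$ smaller than the typical gap yet $\lambda_1$ beyond $2-\theta$ by a margin'') is self-contradictory. Indeed, $\lambda_1-2$ fluctuates on the same scale $n^{-2/3}$ as the gap, so no deterministic threshold can be placed between $\lambda_2$ and $\lambda_1$. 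The threshold must sit below the edge by $\gg n^{-2/3}$; the paper uses $u_n=2-(\log n)^2n^{-2/3}$. Your own bookkeeping ($\sqrt{\lambda_1-2+\theta}\lesssim n^{-1/3+\eps/16}$, and $O(n^{\eps'})$ eigenvalues above the threshold) actually fits something like $\theta=n^{-2/3+\eps/8}$. With such a $\theta$, $\lambda_1\ge 2-\theta/2$ w.h.p.\ and $q(\lambda_1)\ge e^{c\,d\sqrt\theta}$ is superpolynomially large. The growing number of eigenvalues above the threshold is then handled exactly by your second group: $q$ is increasing there, so each such term is at most $q(\lambda_2)^2$, and your gap computation makes $q(\lambda_2)/q(\lambda_1)$ superpolynomially small.

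Second, you never treat the left edge. Since $|U_d(-y)|=|U_d(y)|$, your $q(x)=U_d(x/(2-\theta))$ is bounded by $d+1$ only on $[-(2-\theta),2-\theta]$. It amplifies eigenvalues below $-(2-\theta)$ just as much as those above $2-\theta$. With a corrected $\theta\gg n^{-2/3}$ one has $\lambda_n<-(2-\theta)$ w.h.p., and by $X\eqqlaw -X$ the event $\{-\lambda_n>\lambda_1\}$ has probability $1/2$. On it $|q(\lambda_n)|\ge|q(\lambda_1)|$, because $\sinh((d+1)t)/\sinh t$ is increasing in $t$. So the conditional expected overlap is at most $\expt{g_1^2/(g_1^2+g_n^2)}=1/2$, and hence $\cO_{n,d}(q,\GOE(n))\le 3/4+o(1)$.

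What is needed is an asymmetric polynomial that stays bounded on the whole spectrum away from the right edge. The paper takes $q=T_d\circ f_n$, with $f_n$ the affine map of $[-3,u_n]$ onto $[-1,1]$, and uses $\normtwo{X}\le3$ w.h.p. Then $|q(\lambda_i)|\le1$ whenever $\lambda_i\le u_n$, $q$ is increasing on $(u_n,\infty)$, and $\sum_{i\ge2}q(\lambda_i)^2/q(\lambda_1)^2\le n\,(1+q(\lambda_2)^2)/q(\lambda_1)^2=o(1)$. The shifted variant you mention in passing, anchored at $-2$, can be rescued. But eigenvalues below $-2$ are then amplified by $e^{O_{\bbP}(dn^{-1/3})}$ and must be compared with $q(\lambda_1)$; anchoring at $-3$ sidesteps this. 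With these two changes your argument essentially becomes the paper's.
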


\begin{remark}[Relation to RNLA results]
\label{rk:relation_RNLA}
    Taking \Cref{thm:eigval,thm:sub,thm:sup} collectively, we have that for GOE, constant degree suffices for approximating the top eigenvalue, and $ n^{1/3 + o(1)} $ degree is necessary and sufficient for approximating the top eigenvector.
    On the other hand, for an \emph{arbitrary deterministic} positive definite\footnote{One can add $ CI_n $ (for a large constant $ C < \infty $) to a GOE to get a matrix that is positive definite for all large $n$ almost surely. A $t$-step power iteration for the resulting matrix is equivalent to applying the monomial $ (x+C)^t $ to the original GOE.} matrix $ X'\in\bbR^{n\times n} $, a textbook result in RNLA \cite{Kireeva_Tropp} states that the $T$-th iterate $ \wh{u}^T $ of power iteration with random initialization satisfies 
    \begin{align}
    &&
        \frac{\lambda_1(X') - \normtwo{\wh{u}^T}^{-2} \inprod{\wh{u}^T}{X'\wh{u}^T}}{\lambda_1(X')} &\in [0,\eps^2] , & 
        \frac{\abs{\inprod{v_1(X')}{\wh{u}^T}}}{\normtwo{\wh{u}^T}}
        &\ge \sqrt{1 - \eps^2} , & 
    & \label{eqn:RNLA_apx} 
    \end{align}
    with high probability, provided 
    \begin{align}
    &&
        \lambda_2(X') &\le (1 - \gamma) \lambda_1(X') , & 
        T \gtrsim \frac{1}{\gamma} \log\paren{\frac{n}{\eps}} . & 
    & \label{eqn:RNLA_cond}
    \end{align}
    The two conditions in \Cref{eqn:RNLA_cond} enforce a spectral gap and a large degree / iteration count. 
    For GOE $ X $, $ \lambda_1(X) = 2+o(1) $, $ \lambda_2(X) = 2 + o(1) $, and $ \lambda_1(X) - \lambda_2(X) \asymp n^{-2/3} $, typically. 
    Specializing \Cref{eqn:RNLA_cond} by taking $ \gamma \asymp n^{-2/3} $, we obtain the sufficient condition $ T \gtrsim n^{2/3} \log(n/\eps) $ for eigenvalue and eigenvector approximation up to error $ \eps $ in the sense of \Cref{eqn:RNLA_apx}. 
    This is massively overpessimistic compared to the degree thresholds in \Cref{thm:eigval,thm:sub,thm:sup} since such RNLA results only leverage algorithmic randomness in the initialization of power method, and do not account for statistical randomness in the data matrix $X'$. 
\end{remark}

Finally, still focusing on GOE, we zoom into the critical window $ d\asymp n^{1/3} $ and identify the precise scaling limit of eigenvector approximation accuracy in terms of $ \OPT_{n,d} $ in \Cref{eqn:OPT_VAL}. 
The proof of \Cref{thm:crit} is presented in \Cref{sec:pf_crit}. 

For $ u\in L^2([0,1]) $, $ \delta\in(0,\infty) $ and $ s\in\bbR $, define 
\begin{align}
&&
    \Gamma_{\delta,u}(s) &\coloneqq \delta^{3/2} \int_0^1 u(t) \Psi_s(t) \diff t , & 
    \Psi_s(t) &\coloneqq \begin{cases}
        \frac{\sin(\delta \sqrt{-s} \, t)}{\delta \sqrt{-s}} , & s < 0 \\
        t , & s = 0 \\
        \frac{\sinh(\delta \sqrt{s} \, t)}{\delta \sqrt{s}} , & s > 0
    \end{cases} . & 
& \label{eqn:Gamma} 
\end{align}

\begin{theorem}[GOE, critical]
\label{thm:crit}
Assume
\begin{align}
    \frac{d}{n^{1/3}} &\to \delta \in (0,\infty) . \label{eqn:crit}
\end{align}
Then 
\begin{align}
    \lim_{n\to\infty} \OPT_{n,d}(\GOE(n)) &= \sup_{u\in L^2([0,1]) \setminus \{0\}} \expt{\frac{\Gamma_{\delta,u}(-\Lambda_0)^2 Z_1}{\sum_{j\ge1} \Gamma_{\delta,u}(-\Lambda_{j-1})^2 Z_j}} , \label{eqn:OPT} 
\end{align}
where $ (Z_j)_{j\ge1} $ are i.i.d.\ $ \chi_1^2 $ random variables independent of the Airy point process $ (-\Lambda_{j-1})_{j\ge1} $. 
\end{theorem}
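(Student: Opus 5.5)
Our approach follows the template of the proof overview for \Cref{thm:spike}: reduce $\cO_{n,d}$ to a functional of the spectrum and then pass to the edge scaling limit. By the spectral decomposition \Cref{eqn:spec_decomp} with $\xi_j = \inprod{v_j(X)}{b}$, and setting $Z_j \coloneqq n\xi_j^2$, which are i.i.d.\ $\chi_1^2$ and independent of the spectrum, we have
\begin{align}
    \cO_{n,d}(q,\GOE(n)) &= \expt{\frac{q(\lambda_1(X))^2 Z_1}{\sum_{i=1}^n q(\lambda_i(X))^2 Z_i}} . \notag
\end{align}
We expand $q = \sum_{k=0}^d c_k p_k$ and encode the coefficients by the profile $u_n(t) = \sqrt{d}\,c_k$ on $[k/(d+1),(k+1)/(d+1))$, as in the introduction. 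Near the edge we write $\lambda_i(X) = 2 + n^{-2/3}s_i$, so that $(s_i)_{i\ge1} \Rightarrow (-\Lambda_{i-1})_{i\ge1}$ (Airy$_1$). With $\lambda_i/2 = \cosh\theta$ and $\theta \approx n^{-1/3}\sqrt{s}$, and using $k \approx t\,\delta n^{1/3}$, we get $p_k(2+n^{-2/3}s) = \sinh((k+1)\theta)/\sinh\theta \approx n^{1/3}\sinh(\delta\sqrt{s}\,t)/\sqrt{s}$. Hence
\begin{align}
    q(2+n^{-2/3}s) \approx n^{1/3}\, d^{1/2}\,\delta^{-1/2}\cdots \approx C_n\, \Gamma_{\delta,u}(s) \notag
\end{align}
for a deterministic scalar $C_n$ that does not depend on $u$. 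By $0$-homogeneity, $C_n$ cancels. The key steps are: (i) uniform convergence of this Riemann-sum approximation for $s$ in compacts; (ii) control of the bulk eigenvalues; and (iii) the resulting limit, which has the form \Cref{eqn:OPT} for each fixed $u$.

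\textbf{Lower bound.} We fix $u$, which by density we may take continuous (or a step function), and choose $c_k = u(k/(d+1))/\sqrt d$. We must show that the denominator $\sum_i q(\lambda_i)^2 Z_i / C_n^2$ converges in distribution to $\sum_j \Gamma_{\delta,u}(-\Lambda_{j-1})^2 Z_j$, jointly with the numerator. We split the eigenvalues into three groups.
\begin{itemize}
    \item The top $K$ eigenvalues, with $K$ fixed. Here convergence follows from edge universality and the Tracy--Widom/Airy limit.
    \item The intermediate edge eigenvalues $K < i \le n^{1-c}$. For these, rigidity (\Cref{prop:rig}) gives $s_i \approx -(3\pi i/2)^{2/3}$, so that $q(\lambda_i) = \sum_k c_k \sin((k+1)\theta_i)/\sin\theta_i$ is an oscillatory sum. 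We bound $\sum_{i>K}\Gamma(s_i)^2$ by $\epsilon(K)$ using $\Gamma_{\delta,u}(s) = O(|s|^{-1/2}\cdot|s|^{-1/2})$-type decay. Because $u\in L^2$ only, this should be done in $L^2$ sense, averaging over the eigenvalues via a Plancherel-type identity: $\sum_i$ over a bulk window behaves like $n\int q^2\rho_{\sc}$ restricted to that window.
    \item The bulk. Here $|\theta|\gtrsim n^{-c}$, and the sum is of order $n\int_{\text{bulk}} q^2\rho_{\sc}$ plus fluctuations. The $L^2$ identity \Cref{eqn:orth} gives $n\sum_k c_k^2 = n\|u\|^2/1$, while $C_n^2 \asymp n^{2/3}d \asymp n$. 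We therefore need the bulk contribution, which matches the tail $\sum_{j>K}$ of the Airy sum, to be consistent. That consistency is exactly the statement that the semicircle density near the edge matches the Airy density $\sqrt{-s}/\pi$.
\end{itemize}
With the denominator controlled, the ratio is bounded by $1$, so dominated convergence gives the limit.

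\textbf{Upper bound.} For an arbitrary sequence $q_n$, we normalize $\sum_k c_k^2 = 1/d$, i.e.\ $\|u_n\|_{L^2}=1$, and extract a weakly convergent subsequence $u_n \rightharpoonup u$ in $L^2$. Since $\Psi_s$ lies in $L^2$, the top-$K$ values converge to $\Gamma_{\delta,u}(-\Lambda_{j-1})$ under weak convergence. The denominator is bounded below by its top-$K$ terms, and by monotonicity the resulting expression is at most the supremum after $K\to\infty$. If $u=0$ (mass escaping), the numerator vanishes relative to the total and the overlap tends to $0$. We still need to check that a weak limit does not beat the supremum when the denominator loses mass: dropping denominator terms can only increase the ratio, so this is not immediate. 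We handle it by comparing with $\Gamma_{\delta,u}$ via the lower semicontinuity $\liminf \sum_{j\le K}\Gamma_{u_n}^2 Z_j \ge \sum_{j\le K}\Gamma_u^2 Z_j$, combined with the fact that the numerator converges.

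\textbf{Main obstacle.} The main obstacle is step (ii): a uniform tail estimate showing that $\sum_{i>K} q(\lambda_i)^2 Z_i / C_n^2$ is close to its Airy counterpart. This requires rigidity at the optimal scale $n^{-2/3} i^{-1/3}$ together with oscillatory-sum bounds for polynomials of degree $\asymp n^{1/3}$. For these bounds we would first try the Christoffel--Darboux representation of $K_d$ and the edge estimates used in \Cref{thm:sub,thm:sup}.
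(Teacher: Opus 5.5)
\textbf{Upper bound.} When the weak limit $u$ is nonzero, your upper bound is sound and shorter than the paper's route. Bound the denominator below by its top-$K$ terms. Use the uniform convergence $n^{-1/2}q(2+sn^{-2/3})\to\Gamma_{\delta,u}(s)$ on compacts (as in \Cref{eqn:q_Gamma}) jointly with the Airy limit. Then send $K\to\infty$ by dominated convergence. This gives
\[
\limsup_n \mathcal{O}_{n,d}(q_n)\le \expt{\Gamma_{\delta,u}(-\Lambda_0)^2Z_1/\textstyle\sum_{j\ge1}\Gamma_{\delta,u}(-\Lambda_{j-1})^2Z_j},
\]
which is at most the right side of \Cref{eqn:OPT}. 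The continuous-mapping step is legitimate because $\Gamma_{\delta,u}$ is a nonzero entire function of $s$ and $\Lambda_0$ has a density. Your worry about ``lost mass'' is unfounded: dropping denominator terms is exactly the right direction for an upper bound. This argument bypasses the suppression factor and the full-limit computation of \Cref{lem:mass,lem:conv,lem:Oq}.

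What is not justified is the case $u=0$. ``The numerator vanishes relative to the total'' requires $\sum_i n^{-1}q(\lambda_i)^2Z_i$ to stay bounded away from $0$ in probability. That is, the eigenvalues must actually realize the mass $\int q^2\,d\mu_{\mathrm{sc}}=1$ after it has left every window $[2-\ell n^{-2/3},\infty)$. This mass may sit at mesoscopic distance from $2$, in the bulk, or near $-2$, where $|q|$ can be as large as near $2$ since $p_k(-x)=(-1)^kp_k(x)$. Showing that the eigenvalue sum tracks it is precisely the content of \Cref{lem:mass}, namely $S-\sigma\to\sum_j\Gamma_{\delta,u}(-\Lambda_{j-1})^2-\|u\|_{L^2}^2$. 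The paper uses this to obtain the deterministic extra term $\bar\sigma-\|u\|_{L^2}^2\ge 0$ in the limiting denominator, which covers every $u$, including $u=0$.

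\textbf{Lower bound.} The real gap is here, at the step you flag yourself. \Cref{prop:rig} loses a factor $(\log n)^{C\log\log n}$. Rigidity at the exact scale $n^{-2/3}i^{-1/3}$ with an $n$-independent constant is not available, since the $i$-th eigenvalue fluctuates on the scale $\sqrt{\log i}\,i^{-1/3}n^{-2/3}$. This loss is fatal because the tail sums are borderline:
\begin{itemize}
\item For a general profile the prelimit envelope is only $n^{-1}q(\lambda_i)^2\lesssim |s_i|^{-1}\asymp i^{-2/3}$, which is not summable.
\item Smallness therefore comes only from comparing $\sum_i g(-L_i)$ with the deterministic $\int g\,\tilde\rho$ and invoking the Plancherel identity \Cref{eqn:identity}.
\item Replacing eigenvalues by classical locations costs about $(\log n)^{C\log\log n}K^{-1/3}$, which does not vanish uniformly in $n$.
\end{itemize}
Bounding the limit values $\Gamma_{\delta,u}(s_i)$, as you propose, does not address this, because the approximation by $\Gamma_{\delta,u}$ holds only for $s$ in compacts.

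The missing ingredient is $n$-uniform control of the counting discrepancy $\hat F-\tilde F$. The paper gets it from the mean bound \Cref{eqn:NX} and the variance bound \Cref{eqn:VarX} (Gustavsson, transferred to GOE via Forrester--Rains). It feeds these into an integration by parts using $|g'(s)|\lesssim|s|^{-3/2}$. The factor $(x+2)/4$ of \Cref{lem:suppress} kills the boundary term at $-2$, an edge your proposal never treats.

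An alternative within your framework: restrict to bounded-variation $u$. Then Abel summation improves the envelope to $|s|^{-2}$, and a first-moment argument with \Cref{eqn:NX} alone handles the tail. You would still need to prove that the limiting functional is continuous in $u\in L^2$ to pass to general $u$; the paper sidesteps this with the cell averages of \Cref{lem:ulim}.

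\textbf{Minor points.} Your normalization should read $\sum_k c_k^2=1$, not $1/d$. For general $u\in L^2$ one only has $\Gamma_{\delta,u}(s)=o(|s|^{-1/2})$ as $s\to-\infty$, not $O(|s|^{-1})$.
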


\paragraph{Interpretation of result and proof overview.}
    \Cref{thm:crit} is the technically most sophisticated result of this paper. 
    We provide a heuristic derivation, thereby an interpretation, of the limit \Cref{eqn:OPT}. 
    Take any nonzero $ q\in\cP_d $. 
    Similarly to \Cref{eqn:spec_decomp}, by spectral decomposition of $X$, one has 
    \begin{align}
        \frac{\inprod{v_1(X)}{q(X) b}^2}{\normtwo{q(X) b}^2}
        &= \frac{n^{-1} q(\lambda_1(X))^2 Z_1}{\sum_{j=1}^n n^{-1} q(\lambda_j(X))^2 Z_j} , \notag 
    \end{align}
    where $ Z_j \coloneqq n \inprod{v_j(X)}{b}^2 $ and $ Z_1, \cdots, Z_n \iid \chi_1^2 $ are independent of $ (\lambda_j(X))_{j=1}^n $. 
    We claim that one can truncate the sum in the denominator at a large constant $M$ (independent of $n$) with a negligible loss. 
    More precisely, for any $ \zeta>0 $, 
    \begin{align}
        \lim_{M\to\infty} \limsup_{n\to\infty} \prob{
            \abs{
                \sum_{j=M+1}^n n^{-1} q(\lambda_j(X))^2
            }
            > \zeta
        }
        &= 0 . \label{eqn:negligible} 
    \end{align}
    In other words, the sum with $n$ terms is dominated by $M$ of those corresponding to eigenvalues close to the right edge of the spectrum. 
    The justification of \Cref{eqn:negligible} constitutes a large part of the proof whose details we do not get into in this discussion. 
    Technically, one also needs to account for the contribution from the left edge. 
    We work around this by multiplying $q(x)$ by an additional $ x+2 $ factor that suppresses the contribution around the left edge $-2$ (recall \Cref{eqn:edge}). 
    This increases the degree of $q$ by one which is inconsequential under the limit \Cref{eqn:crit}. 
    For the sake of illustration, we will ignore this technicality hereafter. 

    It is well-known in random matrix theory \cite{Ramirez_Rider_Virag} that 
    \begin{align}
        (n^{2/3}(\lambda_1(X) - 2), \cdots, n^{2/3}(\lambda_M(X) - 2)) &\overset{\dd}{\to} (-\Lambda_0, \cdots, -\Lambda_{M-1}) , \label{eqn:explain_airy}
    \end{align}
    as $n\to\infty$ and $M$ fixed, where $ (-\Lambda_{j-1})_{j\ge1} $ is the Airy point process. 
    Since $ (Z_j)_{j=1}^n $ is independent of $ (\lambda_j(X))_{j=1}^n $, to study the limiting distribution of overlap with a truncated sum in the denominator, it suffices to show the joint distributional convergence of the truncated sum along with the top-$M$ eigenvalues for any fixed large constant $M$: 
    \begin{align}
        \paren{ \frac{1}{n} \sum_{j=1}^M q(\lambda_j(X))^2 , n^{2/3}(\lambda_1(X) - 2), \cdots, n^{2/3}(\lambda_M(X) - 2) } . \label{eqn:joint_cvg} 
    \end{align}
    For brevity, here we only focus on the first entry instead of the whole vector: 
    \begin{align}
        \frac{1}{n} \sum_{j=1}^M q(\lambda_j(X))^2
        &= \sum_{j=1}^M n^{-1} q( 2 + n^{-2/3} \cdot n^{2/3}(\lambda_j(X) - 2) )^2 . \notag 
    \end{align}
    By \Cref{eqn:explain_airy}, each $ n^{2/3}(\lambda_j(X) - 2) $ is of order one and may take positive or negative values. 
    Therefore, let us consider (the square root of) each summand which takes the form $ n^{-1/2} q(2 + sn^{-2/3}) $ for a constant $ s\in\bbR $. 
    Assume for the sake of illustration $s\le0$; the case $ s\ge0 $ is similar. 
    Decompose $q$ in the Chebyshev basis $ q(x) = \sum_{k=0}^d c_k U_k(x/2) $ and write it explicitly using the representation \Cref{eqn:U_small}: 
    \begin{align}
        q(2\cos(\phi)) &= \sum_{k=0}^d c_k \frac{\sin( (k+1)\phi )}{\sin(\phi)} . \notag 
    \end{align}
    Under the change of variable $ 2 \cos(\phi_n(s)) = 2 + sn^{-2/3} $, we have 
    \begin{align}
        n^{-1/2} q(2 + sn^{-2/3})
        &= n^{-1/2} q(2 \cos(\phi_n(s)))
        = n^{-1/2} \sum_{k=0}^d c_k \frac{\sin( (k+1) \phi_n(s) )}{\sin(\phi_n(s))} \notag \\
        &= n^{-1/2} d^{3/2} \cdot \frac{1}{d} \sum_{k=0}^d (\sqrt{d} \, c_k) \frac{\sin\paren{ \frac{k+1}{d} \cdot d \phi_n(s) }}{d \sin(\phi_n(s))} . \label{eqn:explain6}
    \end{align}
    In the regime where $ d/n^{1/3} \to \delta $ and $s\le0$ is fixed, one may check $ d \phi_n(s) \to \delta \sqrt{-s} $ and $ d \sin(\phi_n(s)) \to \delta\sqrt{-s} $. 
    Define the piecewise constant function $ u_n \colon [0,1] \to \bbR $ by rescaling the coefficients of $q$ as follows: $ u_n(t) = \sqrt{d}\,c_k $ for $ t\in[k/(d+1),(k+1)/(d+1)) $, where $ k\in\{0,1,\cdots,d\} $. 
    If $ u_n $ has a weak limit\footnote{Technically, \Cref{eqn:int_lim1} holds under \emph{strong} convergence of $u_n$. However, the actual proofs only require weak convergence at the cost of having additional terms in the limit which turn out to vanish when $q$ is optimized out. We do not address such technicality in this informal discussion.} $ u\in L^2([0,1]) $, then \Cref{eqn:explain6} converges to
    \begin{align}
        n^{-1/2} q(2 + sn^{-2/3}) 
        &\to \delta^{3/2} \int_0^1 u(t) \frac{\sin(\delta \sqrt{-s} \,t) }{\delta \sqrt{-s}} \diff t , 
        \label{eqn:int_lim1} 
    \end{align}
    uniformly over $s$ in any compact interval on $ \bbR_{\le0} $. 
    The RHS is precisely $ \Gamma_{\delta,u}(s) $ (for $ s\le0 $) defined in \Cref{eqn:Gamma}. 
    The $ s\ge0 $ case can be treated in the same way using \Cref{eqn:U_big} in place of \Cref{eqn:U_small}, and in particular, the $s=0$ case is by continuity. 
    Combining this with \Cref{eqn:explain_airy}, we arrive at: 
    \begin{align}
        \frac{1}{n} \sum_{j=1}^M q(\lambda_j(X))^2 &\overset{d}{\to} \sum_{j=1}^\infty \Gamma_{\delta,u}(-\Lambda_{j-1})^2 , \notag 
    \end{align}
    as $ n\to\infty $ followed by $ M\to\infty $. 
    Leveraging the independence between $ (Z_j)_{j\ge1} $ and $ (-\Lambda_{j-1})_{j\ge1} $, assuming that the joint convergence in \Cref{eqn:joint_cvg} is justified, with some technical work to take care of modes of convergence, we can then conclude 
    \begin{align}
        \expt{ \frac{\inprod{v_1(X)}{q(X) b}^2}{\normtwo{q(X) b}^2} } &\to \expt{ \frac{\Gamma_{\delta,u}(-\Lambda_0)^2 Z_1}{\sum_{j\ge1} \Gamma_{\delta,u}(-\Lambda_{j-1})^2 Z_j} } , \notag 
    \end{align}
    for any sequence of $ q \in \cP_d \setminus \{0\} $ whose rescaled coefficient profile in the Chebyshev basis has a weak limit $ u \in L^2([0,1]) \setminus \{0\} $. 
    Since each $ q $ is uniquely encoded by $ u_n $, maximizing the above expression over $ u $ gives the desired limit \Cref{eqn:OPT}.

\section{Numerical experiments}
\label{sec:experiments}

Theoretical findings in \Cref{sec:results} are corroborated by numerical experiments. 
We also numerically investigate a few aspects that are not covered by our theory. 

\paragraph{Spiked GOE.}
Consider the spiked GOE $Y$ with a Gaussian prior \Cref{eqn:spiked_model_prior} and fix $ \lambda = 1.2 > 1 $. 
\begin{itemize}
    \item For top eigenvector approximation, \Cref{itm:opt} of \Cref{thm:spike} identifies $ q_{\opt} $ in \Cref{eqn:def_qopt} to be an optimal polynomial. 
    \Cref{fig:fig_spiked_goe_poly} plots the squared overlap of $ z_d \coloneqq q_{\opt}(Y)b $ (where $ b \sim \cN(0_n,I_n/n) $ is independent of $Y$) as $ \delta = d/\log(n) $ varies. 
    For each $ n \in \brace{ 2^i \cdot 10^3 : i=1,\cdots,4 } $, the overlap curve is averaged over $100$ i.i.d.\ trials. 
    The shaded band around each curve has width given by $1$ standard error of the mean (`sem').
    Since all our theoretical results concern \emph{expected} squared overlap / Rayleigh quotient, we report sem as a measure of the accuracy of estimating the average using $100$ trials. 
    For finite $n$, the phase transition of the squared overlap is not as sharp as the $0$-$1$ transition in the limit \Cref{eqn:d} guaranteed by \Cref{itm:concl} of \Cref{thm:spike}. 
    Indeed, even for dimension as large as $n=16000$, as \Cref{fig:fig_trial_spiked_goe_poly} shows, there is a nontrivial fluctuation across different trials. 

    \item For top eigenvalue approximation, \Cref{itm:val_sub,itm:val_sup} of \Cref{thm:spike} assert that $ q_{\val} $ in \Cref{eqn:def_qval} is an optimal polynomial in the subcritical regime $ \delta < \delta_c $ and $ q_{\opt} $ in \Cref{eqn:def_qopt} is optimal for supercritical $ \delta > \delta_c $. 
    In \Cref{fig:fig_spiked_goe_rayleigh_poly,fig:fig_trial_spiked_goe_rayleigh_poly}, we see that the Rayleigh quotients $ \inprod{\cdot}{Y \cdot}/\inprod{\cdot}{\cdot} $ of $ q_{\val}(Y)b $ and $ q_{\opt}(Y)b $ exhibit a $2$-to-$ \lambda_\star $ discontinuous phase transition away from $ \delta_c $, as predicted by \Cref{itm:concl} of \Cref{thm:spike} in the limit \Cref{eqn:d}.
    However, close to $\delta_c$, there is a discrepancy between $ q_{\val} $ and $ q_{\opt} $ due to finite-$n$ effects. 
    \Cref{fig:fig_spiked_goe_kernels_poly} compares these two Rayleigh quotients (averaged over $100$ trials) at $n=16000$ for both subcritical and supercritical values of $\delta$. 
\end{itemize}

We also plot in \Cref{fig:spiked_goe} the squared overlap and Rayleigh quotient of $ v^t $ generated by the iteration \Cref{eqn:BAMP}. 
The squared overlap behaves qualitatively similarly to that of $ z_d $ in \Cref{fig:spiked_goe_eigvec_poly}, whereas the Rayleigh quotient exhibits a $0$-to-$ \lambda_\star $ (instead of $ 2 $-to-$ \lambda_\star $; see \Cref{rk:eigval_below}) transition as predicted by \Cref{thm:spike_prior}. 

\begin{figure}[htbp]
    \centering
    \begin{subfigure}{0.32\textwidth}
        \centering
        \includegraphics[width=\textwidth]{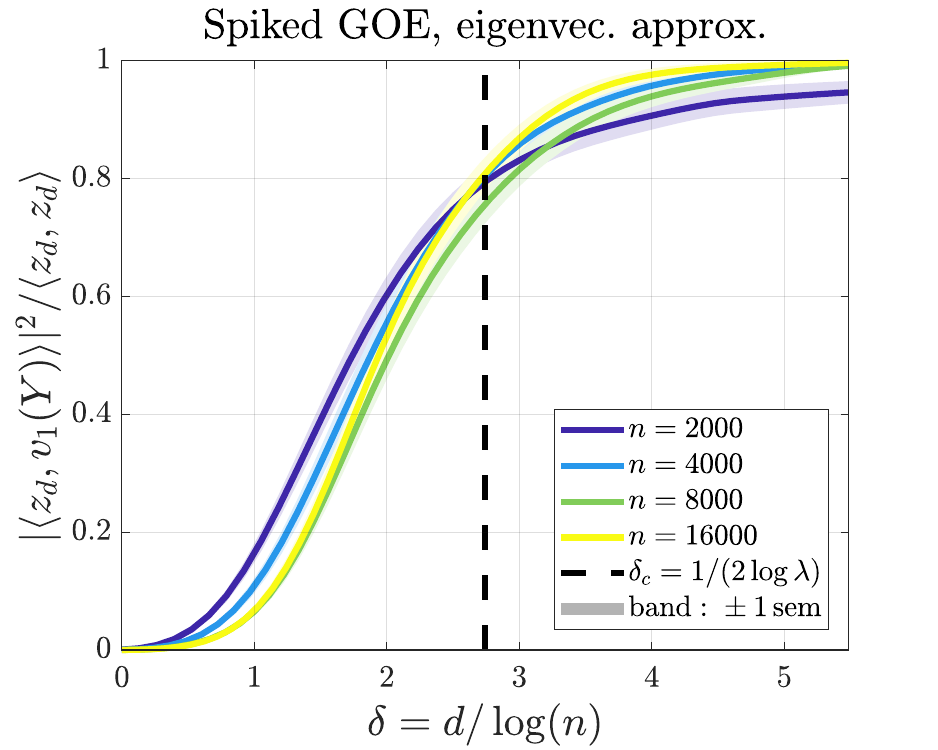}
        \caption{}
        \label{fig:fig_spiked_goe_poly}
    \end{subfigure}
    \begin{subfigure}{0.32\textwidth}
        \centering
        \includegraphics[width=\textwidth]{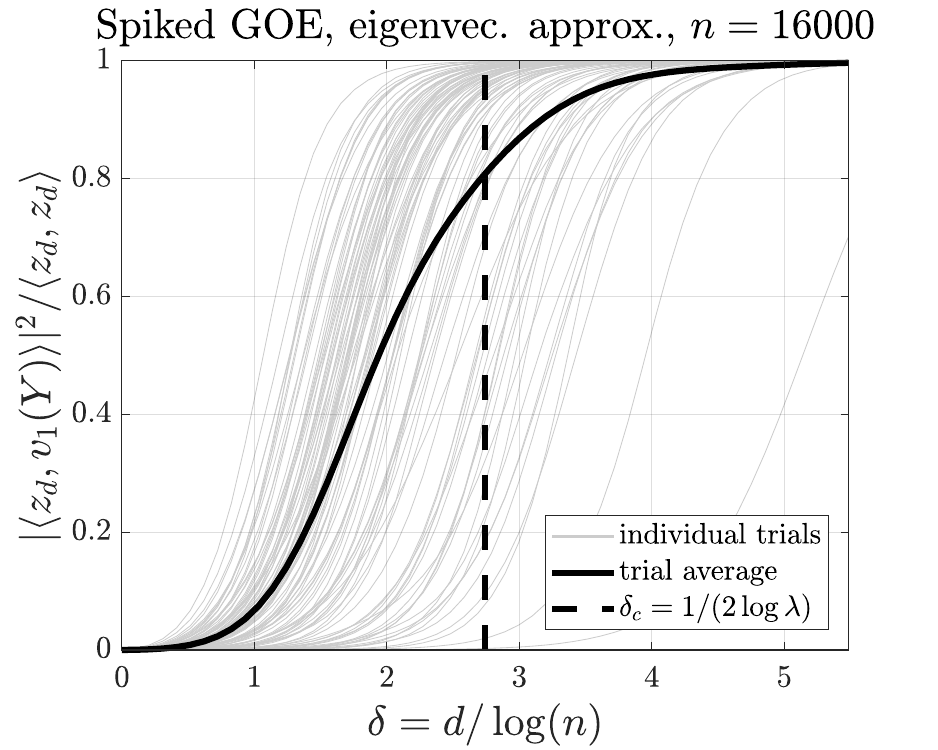}
        \caption{}
        \label{fig:fig_trial_spiked_goe_poly}
    \end{subfigure}
    \caption{
        For a spiked GOE $Y$ with a Gaussian prior \Cref{eqn:spiked_model_prior} and $ \lambda = 1.2 $, the squared overlap between $ z_d \coloneqq q_{\opt}(Y)b $ (where $ q_{\opt} $ is defined in \Cref{eqn:def_qopt} and $ b \sim \cN(0_n,I_n/n) $ is independent of $Y$) and $ v_1(Y) $ is plotted as a function of $ \delta = d/\log(n) $. 
        The left panel plots the squared overlap for increasing values of $n$, each averaged over $100$ i.i.d.\ trials. 
        The right panel plots the squared overlap of all $100$ trials and their average for $n=16000$. 
        As predicted by \Cref{itm:concl} of \Cref{thm:spike}, the squared overlap undergoes a phase transition around the threshold $ \delta_c $ in \Cref{eqn:deltac}. 
    }
    \label{fig:spiked_goe_eigvec_poly}
\end{figure}

\begin{figure}[htbp]
    \centering
    \begin{subfigure}{0.32\textwidth}
        \centering
        \includegraphics[width=\textwidth]{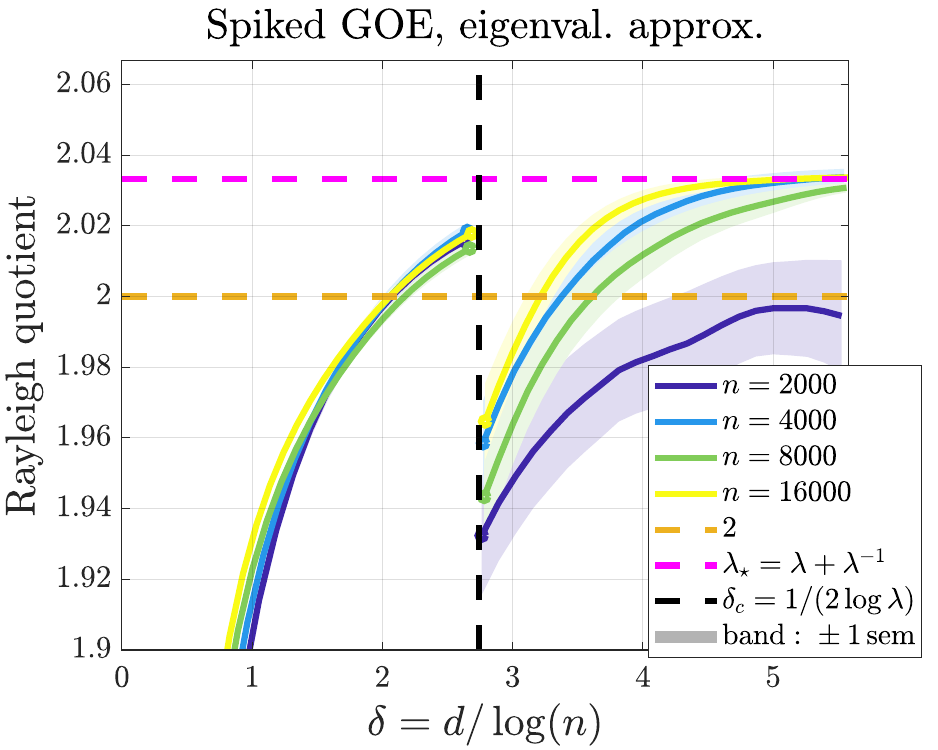}
        \caption{}
        \label{fig:fig_spiked_goe_rayleigh_poly}
    \end{subfigure}
    \begin{subfigure}{0.32\textwidth}
        \centering
        \includegraphics[width=\textwidth]{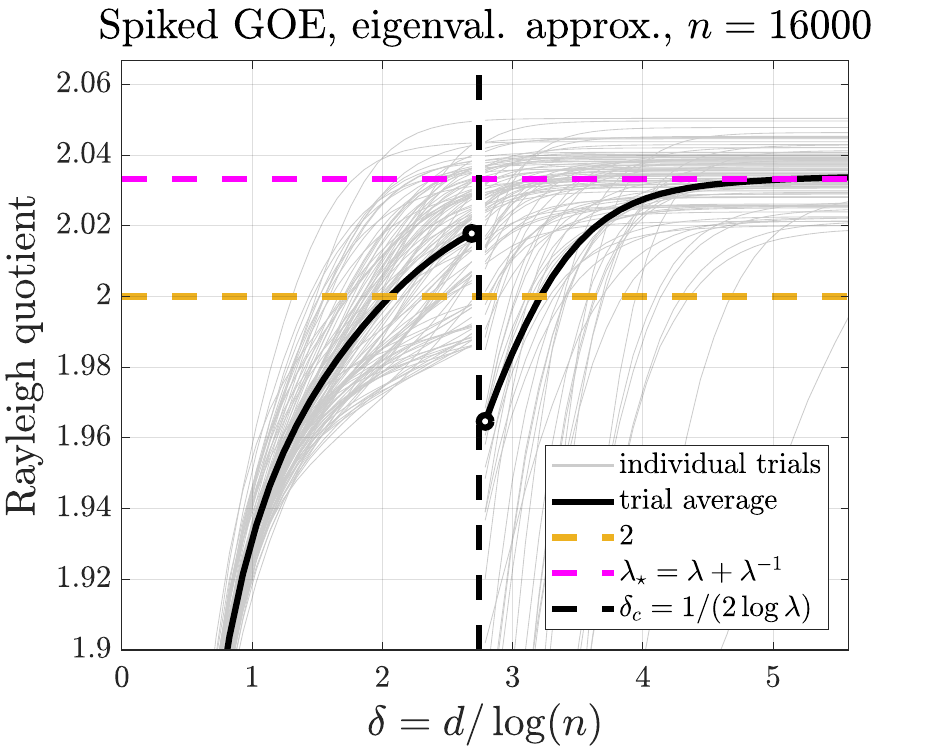}
        \caption{}
        \label{fig:fig_trial_spiked_goe_rayleigh_poly}
    \end{subfigure}
    \begin{subfigure}{0.32\textwidth}
        \centering
        \includegraphics[width=\textwidth]{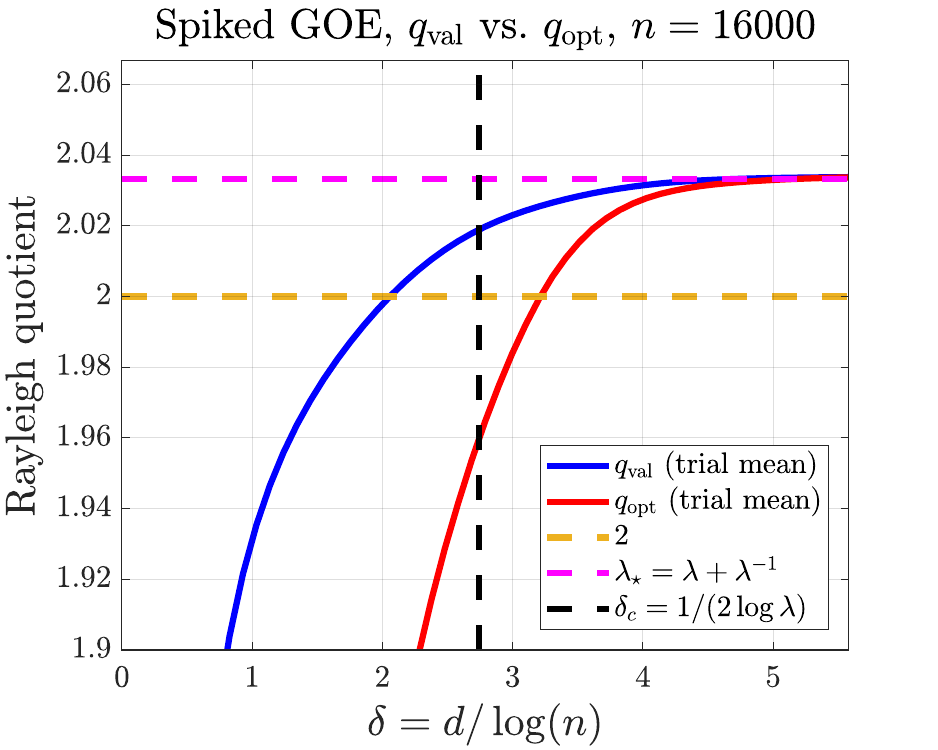}
        \caption{}
        \label{fig:fig_spiked_goe_kernels_poly}
    \end{subfigure}
    \caption{
        For the same spiked GOE $Y$ as in \Cref{fig:spiked_goe_eigvec_poly}, the Rayleigh quotient $ \inprod{\cdot}{Y\cdot}/\inprod{\cdot}{\cdot} $ is plotted for $ q_{\val}(Y)b $ when $ \delta < \delta_c $ and for $ q_{\opt}(Y)b $ when $ \delta > \delta_c $. 
        The left panel overlays plots for different values of $n$ each averaged over $100$ i.i.d.\ trials, while the middle panel overlays all $100$ trials and their average for $n=16000$. 
        The right panel compares the Rayleigh quotients of $ q_{\val}(Y)b $ and $ q_{\opt}(Y)b $ for both subcritical and supercritical values of $\delta$. 
        The numerical results corroborate the asymptotic predictions in \Cref{itm:val_sub,itm:val_sup} of \Cref{thm:spike} away from the phase transition threshold $ \delta_c $, but exhibit a discrepancy close to $ \delta_c $ due to finite-$n$ effects. 
    }
    \label{fig:spiked_goe_eigval_poly}
\end{figure}


\begin{figure}[htbp]
    \centering
    \begin{subfigure}{0.32\textwidth}
        \centering
        \includegraphics[width=\textwidth]{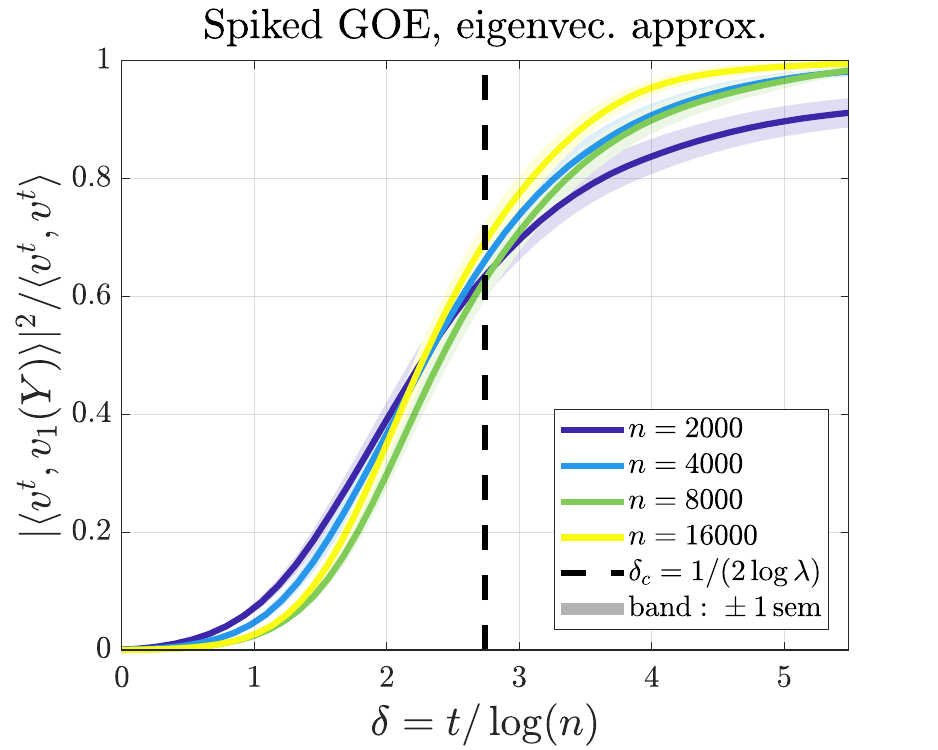}
        \caption{}
        \label{fig:fig_spiked_goe}
    \end{subfigure} 
    \begin{subfigure}{0.32\textwidth}
        \centering
        \includegraphics[width=\textwidth]{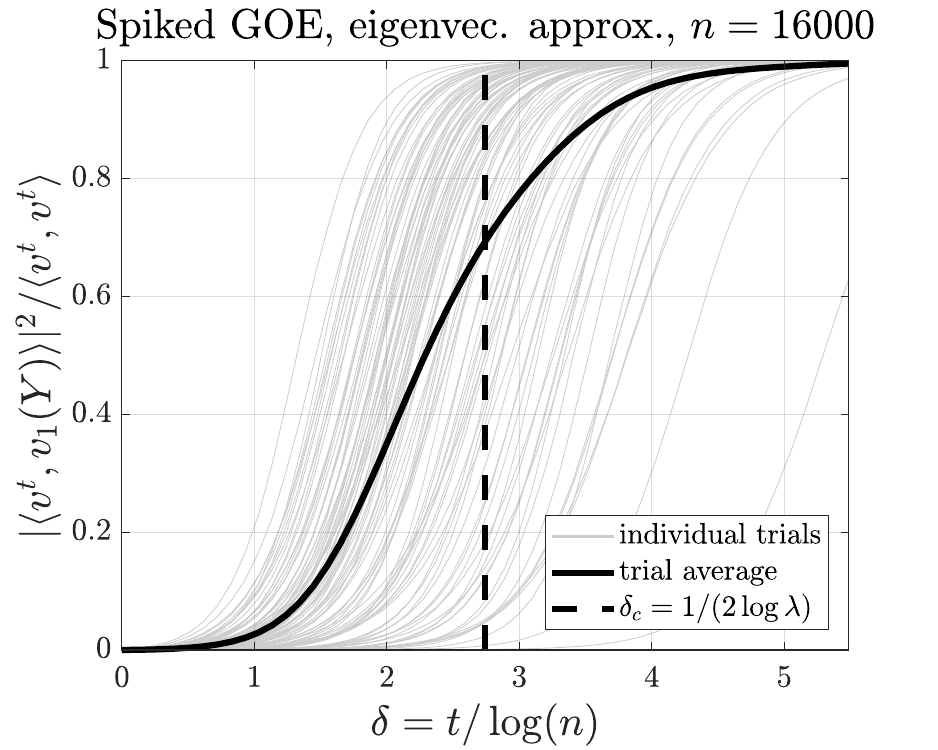}
        \caption{}
        \label{fig:fig_trial_spiked_goe}
    \end{subfigure}
    \\
    \begin{subfigure}{0.32\textwidth}
        \centering
        \includegraphics[width=\textwidth]{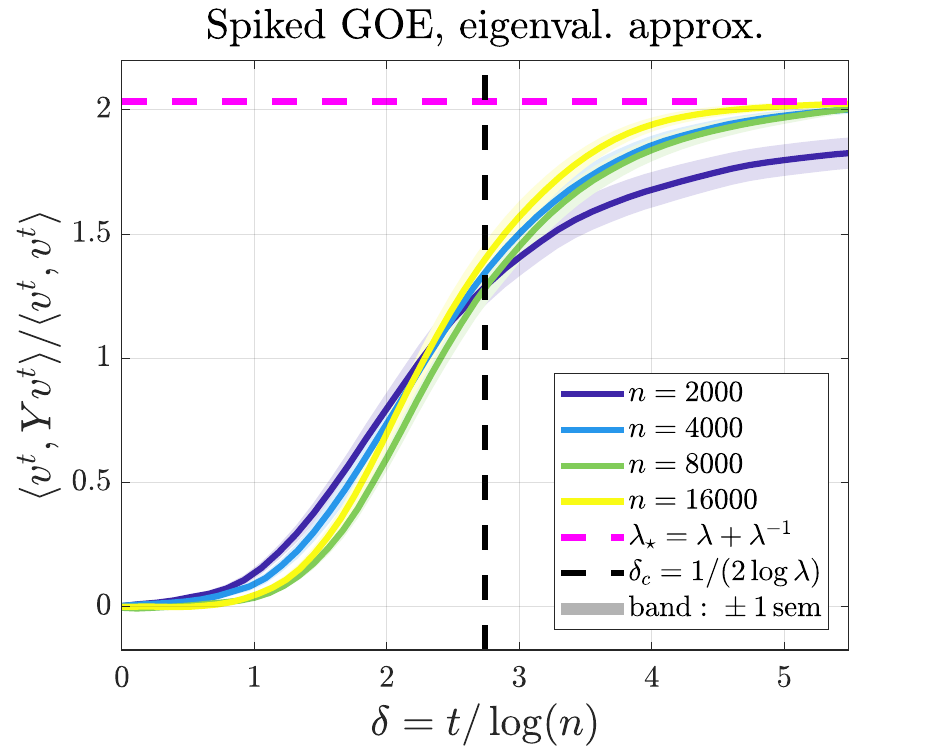}
        \caption{}
        \label{fig:fig_spiked_goe_rayleigh}
    \end{subfigure}
    \begin{subfigure}{0.32\textwidth}
        \centering
        \includegraphics[width=\textwidth]{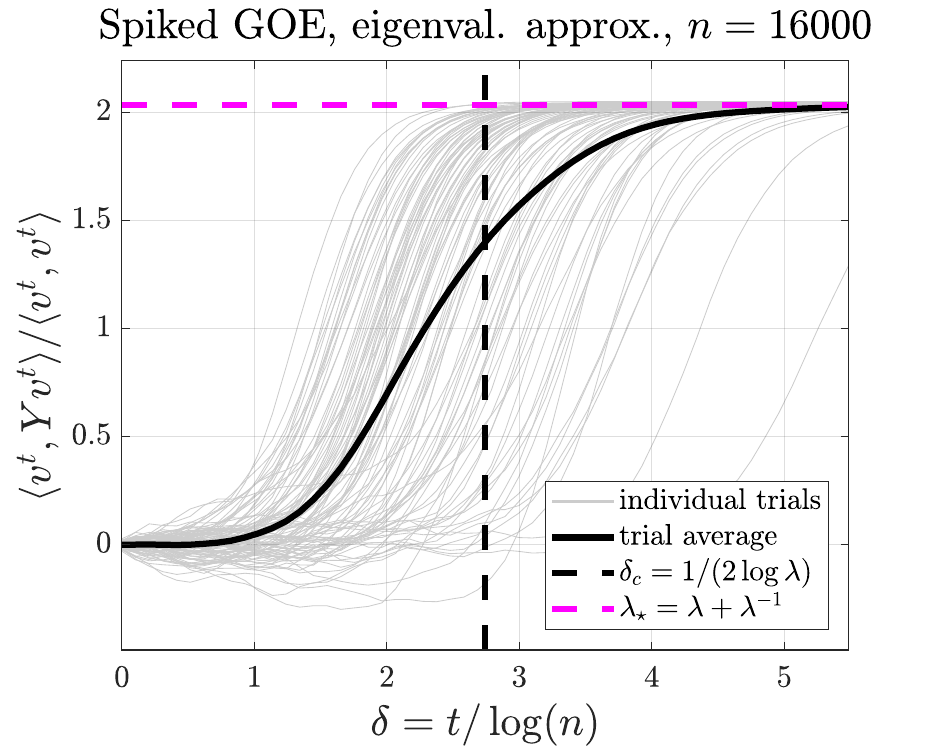}
        \caption{}
        \label{fig:fig_trial_spiked_goe_rayleigh}
    \end{subfigure}
    \caption{
        For the same spiked GOE $Y$ as in \Cref{fig:spiked_goe_eigvec_poly}, the squared overlap and Rayleigh quotient of $ v^t $ generated by the iteration \Cref{eqn:BAMP} are plotted as functions of $ \delta $ in the first and second rows, respectively. 
        The first column overlays plots for different values of $n$ each averaged over $100$ i.i.d.\ trials, while the second column overlays all $100$ trials and their average for $n=16000$. 
        The numerical results are consistent with the prediction of \Cref{thm:spike_prior} that the squared overlap and Rayleigh quotient exhibit $0$-to-$1$ and $0$-to-$ \lambda_\star $ transitions around $ \delta_c $ in the limit \Cref{eqn:d}, respectively. 
    }
    \label{fig:spiked_goe}
\end{figure}

\paragraph{GOE.}
For a GOE $X$, eigenvector approximation is substantially harder than eigenvalue approximation. 
However, for the former task, the theoretical results in \Cref{sec:results_GOE} do not offer polynomials that are optimal throughout the entire range of $ \delta = \log(d)/\log(n) $ (recall that the critical $\delta$ is now $1/3$). 
In \Cref{fig:null}, we take $ \wh{v}^d $ from \Cref{thm:eigval} which was designed for eigenvalue approximation and plot its squared overlap. 
By \Cref{rk:relation_spin}, $ \wh{v}^d = \sum_{s=2}^{d+1} p_{s}(X) v^0 $, where $ v^0 \sim \cN(0_n,I_n) $ is independent of $X$. 
From the numerical results, we observe that $ \wh{v}^d $ respects the phase transition threshold $ 1/3 $, but its overlap oscillates above the threshold and in particular does not always attain the optimum $1$ promised by \Cref{thm:sup}. 

\begin{figure}[htbp]
    \centering
    \begin{subfigure}{0.32\textwidth}
        \centering
        \includegraphics[width=\textwidth]{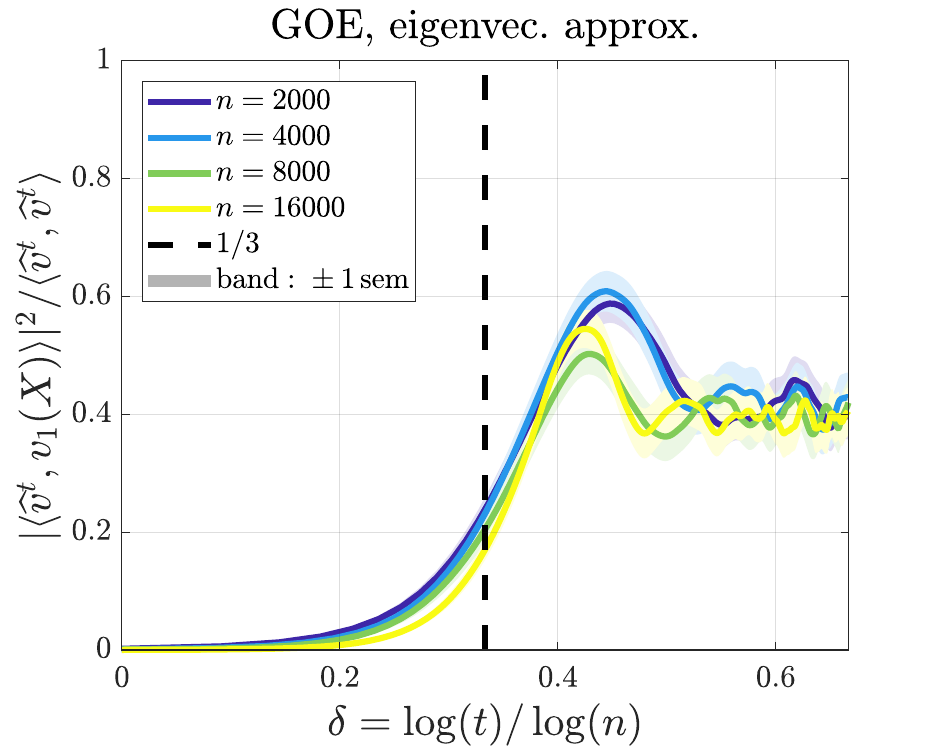}
        \caption{}
        \label{fig:fig_goe}
    \end{subfigure}
    \begin{subfigure}{0.32\textwidth}
        \centering
        \includegraphics[width=\textwidth]{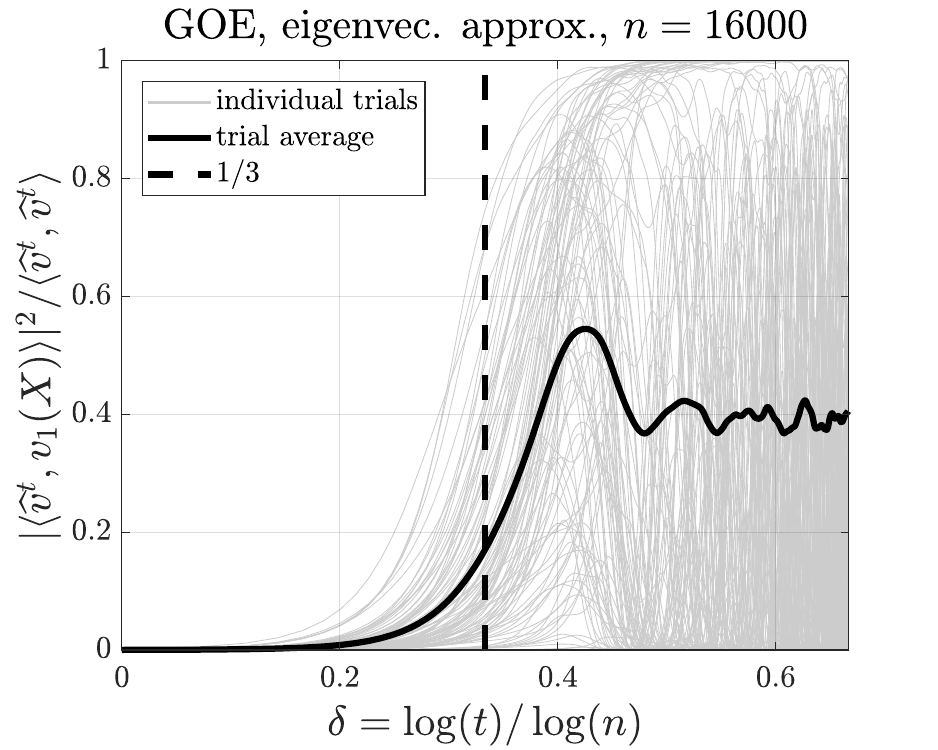}
        \caption{}
        \label{fig:fig_trial_goe}
    \end{subfigure}
    \caption{
        For a GOE $X$, the squared overlap of $ \wh{v}^t $ defined in \Cref{thm:eigval} is plotted as a function of $ \delta = \log(t)/\log(n) $. 
        It can be seen that the overlap curve respects the phase transition threshold $ 1/3 $ as predicted by \Cref{thm:sub,thm:sup}, but oscillates above the threshold and does not always attain the maximal value $1$. 
    }
    \label{fig:null}
\end{figure}

\paragraph{Universality.}
We repeat all experiments above, replacing GOE $X$ with $X$ having independent Rademacher entries rescaled to match the entry-wise variance of GOE: 
\begin{align}
    \sqrt{n} \, X_{i,j} &\sim \begin{cases}
        \unif(\{-\sqrt{2},\sqrt{2}\}) , & i = j \\
        \unif(\{-1,1\}) , & i < j
    \end{cases} . \label{eqn:rademacher} 
\end{align}
All numerical results appear to be visually almost indistinguishable from their GOE counterparts, strong empirical evidence of universality with respect to the entry-wise distribution of $X$.


\begin{figure}[htbp]
    \centering
    \begin{subfigure}{0.32\textwidth}
        \centering
        \includegraphics[width=\textwidth]{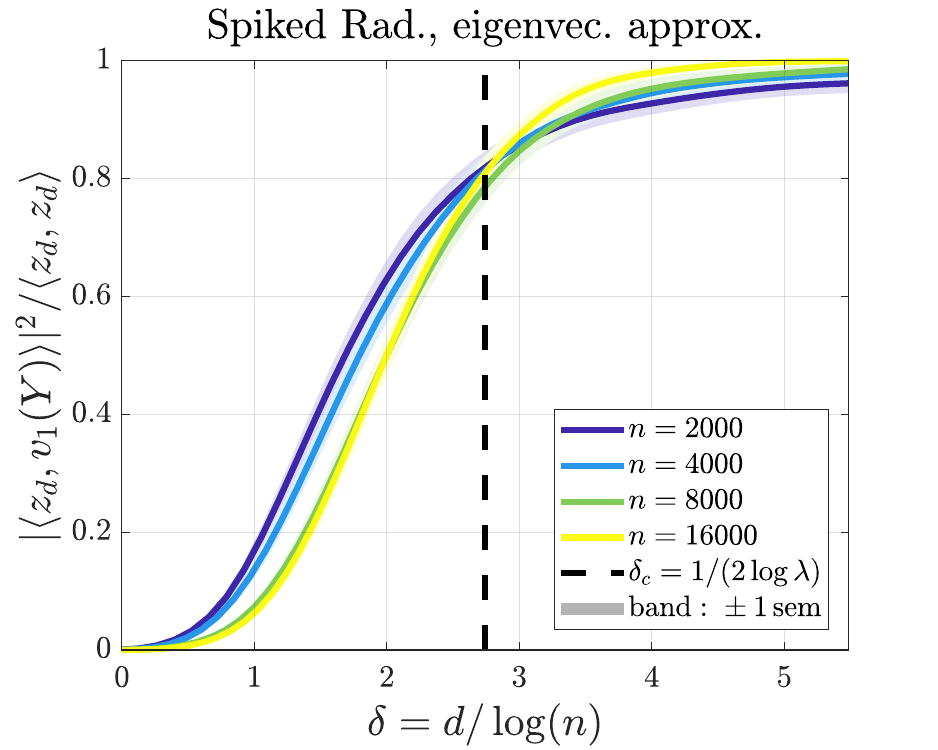}
        \caption{}
        \label{fig:fig_spiked_rademacher_poly}
    \end{subfigure}
    \begin{subfigure}{0.32\textwidth}
        \centering
        \includegraphics[width=\textwidth]{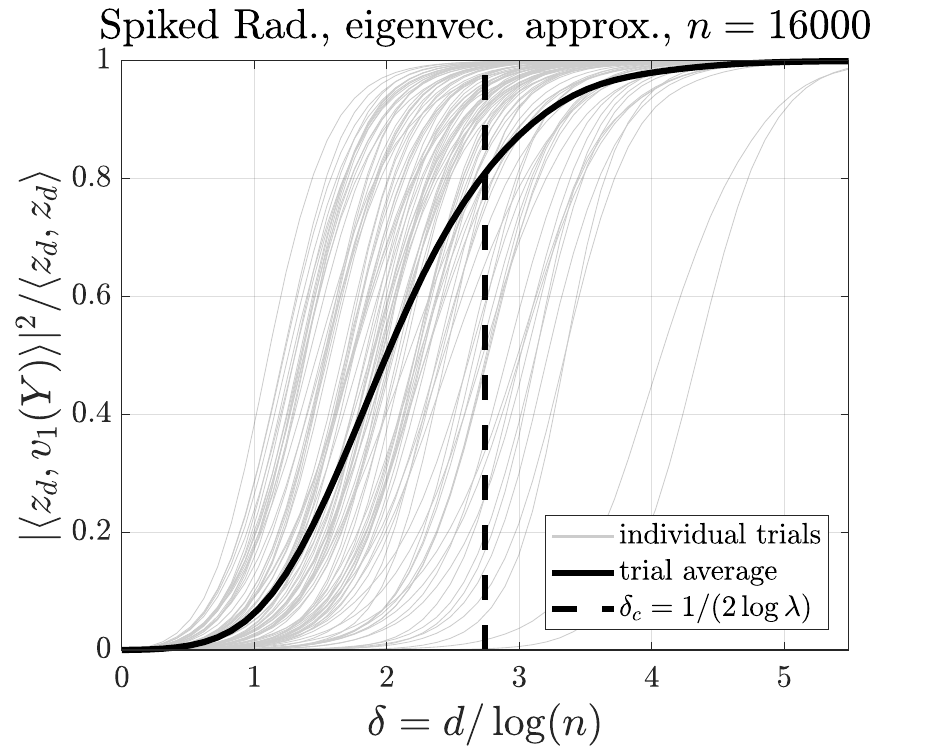}
        \caption{}
        \label{fig:fig_trial_spiked_rademacher_poly}
    \end{subfigure}
    \caption{Repetition of experiments in \Cref{fig:spiked_goe_eigvec_poly}, changing the distribution of $X$ from GOE \Cref{eqn:GOE} to Rademacher \Cref{eqn:rademacher}.}
    \label{fig:spiked_rademacher_eigvec_poly}
\end{figure}

\begin{figure}[htbp]
    \centering
    \begin{subfigure}{0.32\textwidth}
        \centering
        \includegraphics[width=\textwidth]{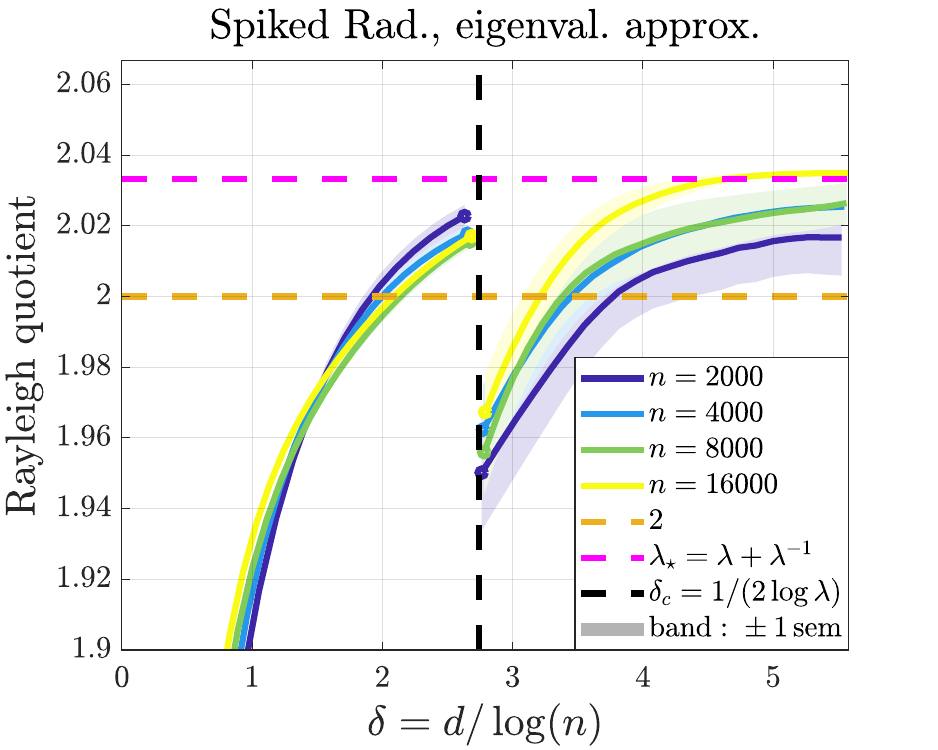}
        \caption{}
        \label{fig:fig_spiked_rademacher_rayleigh_poly}
    \end{subfigure}
    \begin{subfigure}{0.32\textwidth}
        \centering
        \includegraphics[width=\textwidth]{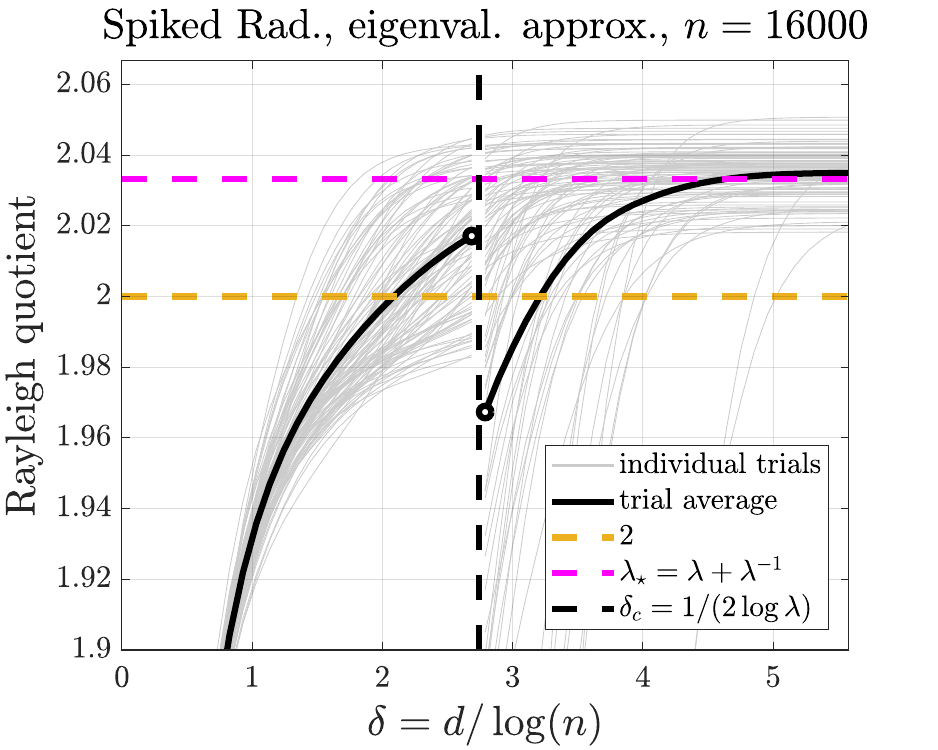}
        \caption{}
        \label{fig:fig_trial_spiked_rademacher_rayleigh_poly}
    \end{subfigure}
    \begin{subfigure}{0.32\textwidth}
        \centering
        \includegraphics[width=\textwidth]{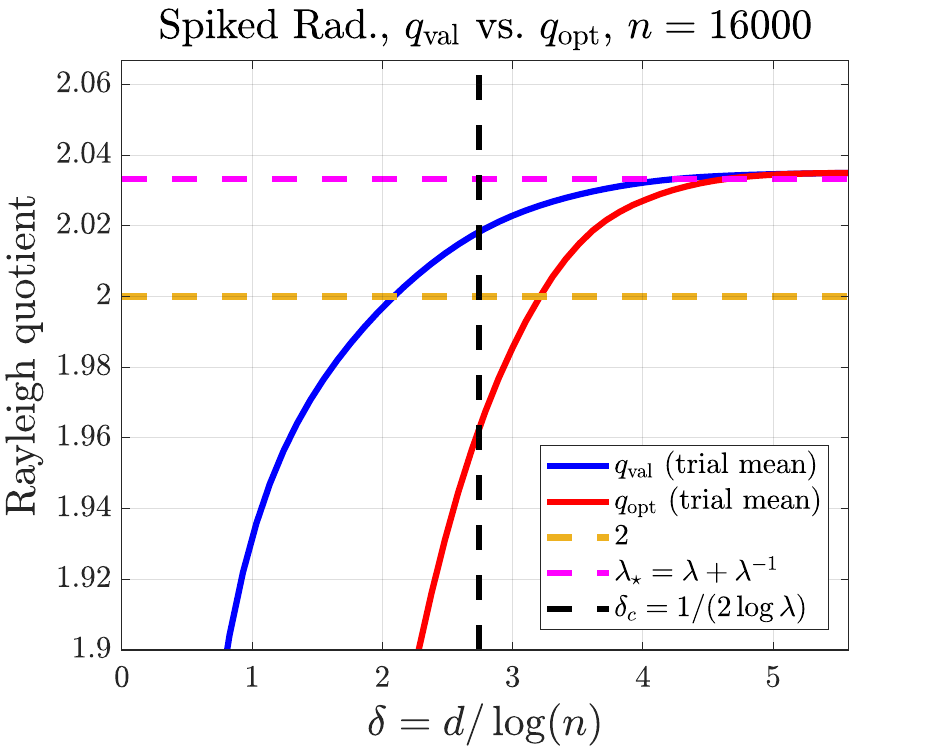}
        \caption{}
        \label{fig:fig_spiked_rademacher_kernels_poly}
    \end{subfigure}
    \caption{Repetition of experiments in \Cref{fig:spiked_goe_eigval_poly}, changing the distribution of $X$ from GOE \Cref{eqn:GOE} to Rademacher \Cref{eqn:rademacher}.}
    \label{fig:spiked_rademacher_eigval_poly}
\end{figure}


\begin{figure}[htbp]
    \centering
    \begin{subfigure}{0.32\textwidth}
        \centering
        \includegraphics[width=\textwidth]{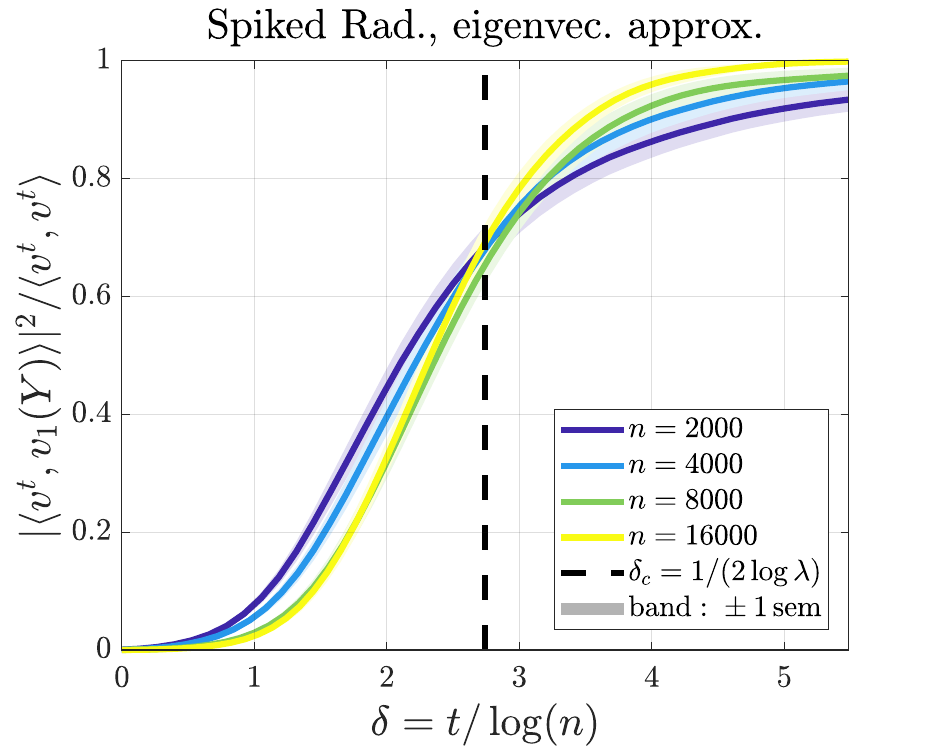}
        \caption{}
        \label{fig:fig_spiked_rademacher}
    \end{subfigure} 
    \begin{subfigure}{0.32\textwidth}
        \centering
        \includegraphics[width=\textwidth]{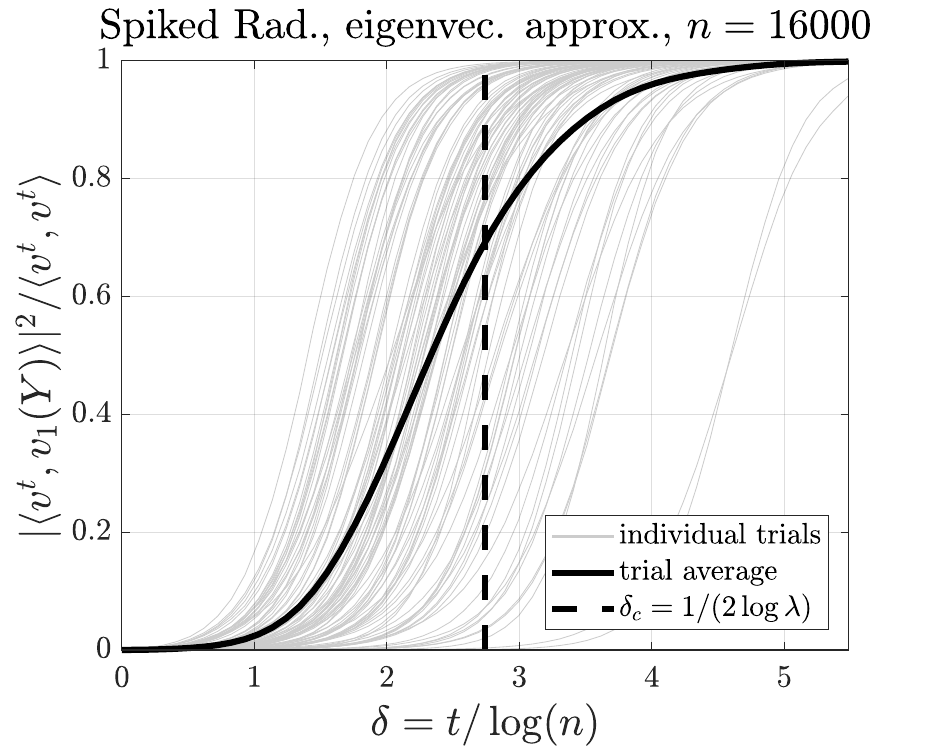}
        \caption{}
        \label{fig:fig_trial_spiked_rademacher}
    \end{subfigure}
    \\
    \begin{subfigure}{0.32\textwidth}
        \centering
        \includegraphics[width=\textwidth]{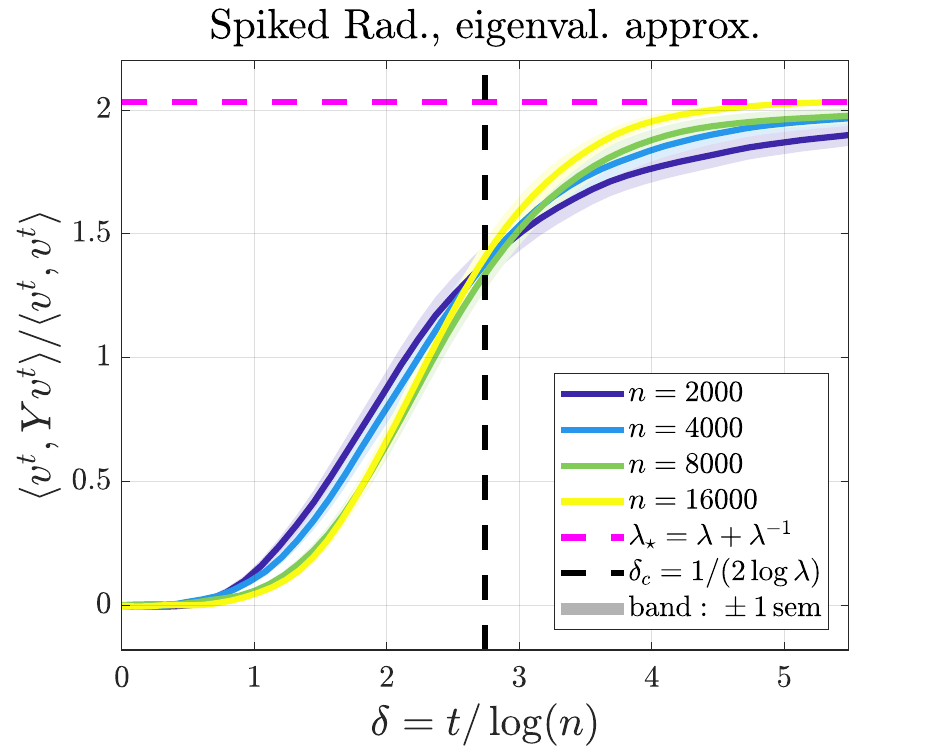}
        \caption{}
        \label{fig:fig_spiked_rademacher_rayleigh}
    \end{subfigure}
    \begin{subfigure}{0.32\textwidth}
        \centering
        \includegraphics[width=\textwidth]{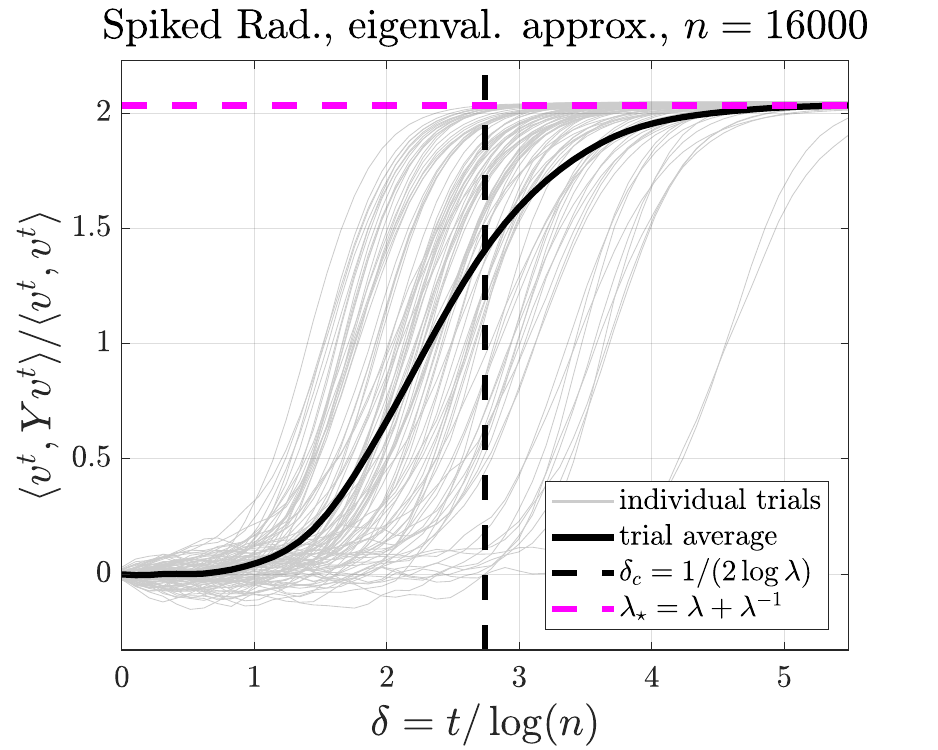}
        \caption{}
        \label{fig:fig_trial_spiked_rademacher_rayleigh}
    \end{subfigure}
    \caption{Repetition of experiments in \Cref{fig:spiked_goe}, changing the distribution of $X$ from GOE \Cref{eqn:GOE} to Rademacher \Cref{eqn:rademacher}.}
    \label{fig:spiked_rademacher}
\end{figure}


\begin{figure}[htbp]
    \centering
    \begin{subfigure}{0.32\textwidth}
        \centering
        \includegraphics[width=\textwidth]{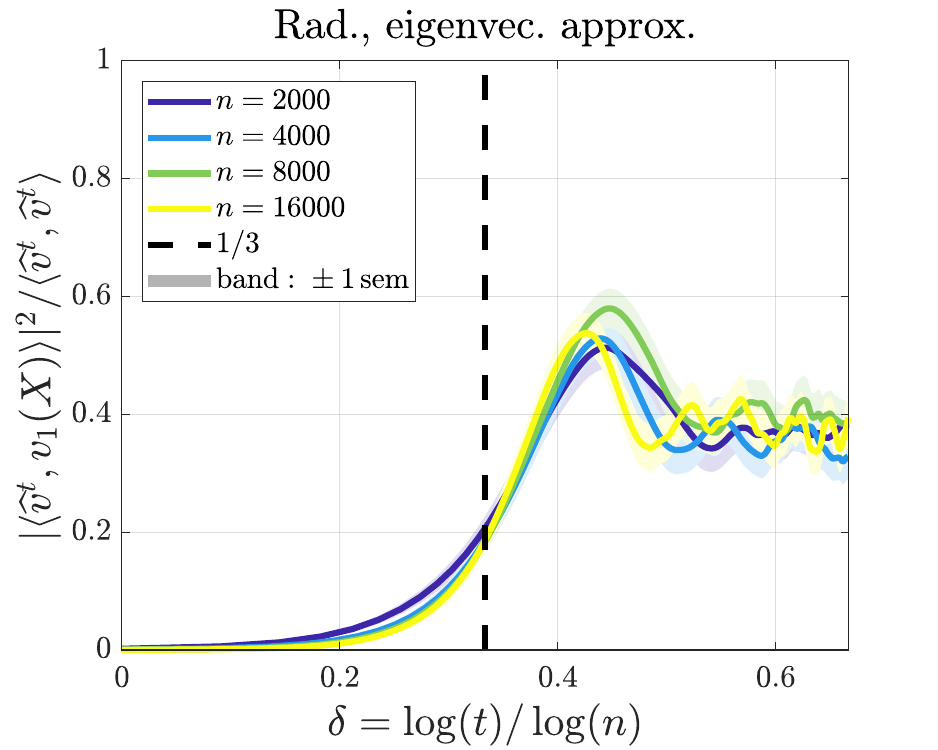}
        \caption{}
        \label{fig:fig_rademacher}
    \end{subfigure}
    \begin{subfigure}{0.32\textwidth}
        \centering
        \includegraphics[width=\textwidth]{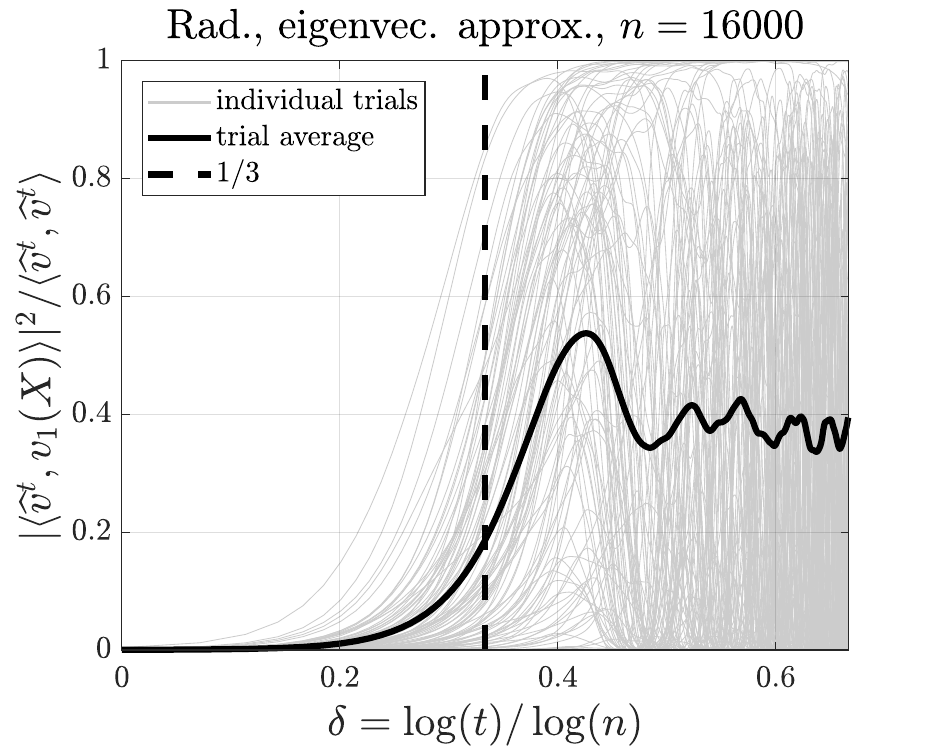}
        \caption{}
        \label{fig:fig_trial_rademacher}
    \end{subfigure}
    \caption{Repetition of experiments in \Cref{fig:null}, changing the distribution of $X$ from GOE \Cref{eqn:GOE} to Rademacher \Cref{eqn:rademacher}.}
    \label{fig:null_univ}
\end{figure}

\section{Discussion and future directions}


This paper determines the fundamental limits of using low-degree polynomials to approximate the top eigenvalue and eigenvector of two random matrix models: spiked GOE and GOE, each of broad interest to statistical estimation, theoretical computer science and computational mathematics. 
Our results draw on the rich literature of random matrix theory and develop new quantitative estimates of linear spectral statistics that may be of independent interest. 
We end the paper with some further questions that arise from this study. 


\paragraph{Fluctuation.}
Most of our theoretical results concern \emph{expected} squared overlap / Rayleigh quotient in the large $n,d$ limit, whereas numerical experiments suggest that the raw squared overlap / Rayleigh quotient exhibit nontrivial fluctuation for finite yet large $n$ such as $16000$; see figures in \Cref{sec:experiments} that overlay multiple i.i.d.\ trials. 
This motivates the question of understanding the size of the variance of squared overlap / Rayleigh quotient as dimension-dependent random variables. 
We leave this for future work. 

\paragraph{Universality and non-universality.}
We have been assuming exclusively that $X$ in the models \Cref{eqn:GOE,eqn:spiked_model} is distributed exactly as GOE. 
However, we expect all results to remain valid under sufficiently strong moment assumptions if $X$ has independent entries in the upper triangular part that may not be Gaussian. 
To be specific, consider $ X $ an $n\times n$ Wigner matrix, i.e., $X$ is symmetric whose independent entries have mean $0$, and variance $1/n$ strictly above the diagonal and $ \le C/n $ on the diagonal (for a constant $C>0$). 
Then we believe that all results in \Cref{sec:results_spiked_GOE,sec:results_spiked_GOE_prior} hold provided that $ \sqrt{n} \, X_{i,j} $ has uniformly (in $n,i,j$) bounded $ 4+\eps $ moment (for any constant $ \eps>0 $),
and all results in \Cref{sec:results_GOE} hold provided that for all finite $p$, $ \sqrt{n} \, X_{i,j} $ has $p$-th moment uniformly bounded by a constant depending only on $p$. 
These assumptions are standard in the literature \cite{Erdos_Knowles_Yau_Yin} and it is possible that even weaker assumptions suffice \cite{Lelarge_Miolane}. 
A formal justification of the above claims goes out of the scope of this paper. 
That said, since our proofs crucially rely on GOE eigenvalue rigidity (\Cref{prop:rig}) at places, for heavy-tailed Wigner matrices, some of our results (such as \Cref{thm:crit}) are no longer expected to hold. 
Identifying the conditions under which our results continue to hold and how the results need to be modified when these conditions fail constitute an interesting future direction. 

\paragraph{Invariant ensemble.}
One motivation underlying the $ q(Y)b $ formalism is spectral methods for spiked matrices $ Y = \lambda vv^\top + X $. 
Indeed, when $X$ is GOE, $ v_1(Y) $ achieves the optimal weak recovery threshold (i.e., smallest $ \lambda $ above which positive asymptotic overlap with $v$ is possible) among all \LowDeg estimators \cite{Sohn_Wein}, and our results provide a bona fide \LowDeg implementation of $ v_1(Y) $ with a sharp degree dependence. 
A vast generalization of spiked GOE is given by spiked invariant ensemble where $X$ is orthogonally invariant, meaning $ X \eqqlaw O X O^\top $ for any fixed orthogonal matrix $ O $, but otherwise can have an arbitrary eigenvalue spectrum. 
With the connection to \LowDeg in mind, we point out a significant obstruction to extending our framework to spiked invariant ensemble. 
For such ensembles, it is folklore in random matrix theory that a rank-one perturbation $ \lambda vv^\top $ can produce more than one (or even a countably infinite number of) outlying eigenvalues in the spectrum of $Y$; see \cite[Remark 2.12]{Benaych-Georges_Nadakuditi} and \cite[Examples 2.3 and 2.4]{Belinschi_Bercovici_Capitaine_Fevrier} (attributed to Bordenave). 
It is possible to have multiple informative outlying eigenvectors and the rightmost one may be strictly suboptimal \cite{Nadakuditi}. 
In general, it is necessary to combine all outlying eigenvectors carefully to achieve the optimal weak recovery threshold \cite{Chen_Liu_Ma}. 
From a \LowDeg perspective, this poses the challenge of approximating outlying eigenvalues / eigenvectors potentially in the middle of the spectrum. 

\section*{Acknowledgment}
The author is grateful to Lucas Pesenti for pointers to relevant results in \cite[Chapter 6]{Pesenti_thesis}. 

\section{Proofs for spiked GOE}
\label{sec:pf_spike}


\begin{lemma}
\label{lem:extr}
For each $ d\ge0 $, the following results hold. 
\begin{enumerate}
    \item \label{itm:extr_pt} For any $ t\in\bbR $,
    \begin{align}
        \sup\brace{ q(t)^2 : q\in\cP_d , \int q(x)^2 \, \mu_{\sc}(\dd x) = 1 } &= K_d(t,t) . \label{eqn:extr_pt} 
    \end{align}
    The unique maximizer (up to sign) is 
    \begin{align}
        q(x) &= \frac{K_d(t,x)}{\sqrt{K_d(t,t)}} . \label{eqn:q} 
    \end{align}

    \item \label{itm:extr_frac} 
    \begin{align}
        \sup\brace{ \frac{\int x q(x)^2 \, \mu_{\sc}(\dd x)}{\int q(x)^2 \, \mu_{\sc}(\dd x)} : q \in \cP_d \setminus \{0\} } &= r_d , \label{eqn:sup_rd}
    \end{align}
    where $ r_d $ is defined in \Cref{eqn:rd}. 
    A maximizer is 
    \begin{align}
        q(x) &= K_d(r_d, x) \propto \sum_{j = 0}^d \sin\paren{\frac{(j+1)\pi}{d+2}} p_j(x) . \notag 
    \end{align}

    \item \label{itm:extr_grow} For every $ s\in[0,1] $ and every integer $ m\ge0 $, it holds that 
    \begin{align}
    &&
        \sup_{x\in[-(2+s),2+s]} \abs{p_m(x)} &\le (m+1) e^{m \sqrt{s}} , & 
        \sup_{x\in[-(2+s),2+s]} \abs{p'_m(x)} &\le (m+1)^3 e^{m \sqrt{s}} . & 
    & \label{eqn:ps} 
    \end{align}
    Consequently, taking $ s_n = n^{-2/3 + \tau} $ for a fixed $ \tau \in (0,2/3) $ and $ m \le 2d + 2 $ with $ d = O(\log(n)) $, we have 
    \begin{align}
    &&
        \sup_{x\in[-(2+s_n),2+s_n]} \abs{p_m(x)} &\le C_\tau (m+1) , & 
        \sup_{x\in[-(2+s_n),2+s_n]} \abs{p'_m(x)} &\le C_\tau (m+1)^3 , & 
    & \label{eqn:psn} 
    \end{align}
    where the constant $ C_\tau > 0 $ depends only on $\tau$. 
    Moreover, 
    \begin{align}
        V_{[-(2+s_n),2+s_n]}(p_m) &\le C_\tau (m+1)^3 , \label{eqn:V}
    \end{align}
    where $ V_I(\cdot) $ denotes the total variation of a function restricted to the domain $I\subset\bbR$. 
\end{enumerate}
\end{lemma}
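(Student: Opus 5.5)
Throughout, I work in the orthonormal basis $(p_k)_{k=0}^d$ of $\cP_d$ with respect to $\mu_{\sc}$ given by \Cref{eqn:orth}. Writing $q=\sum_{k=0}^d c_k p_k$, the constraint $\int q^2\,\dd\mu_{\sc}=1$ becomes $\normtwo{c}=1$.

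\textbf{Item 1.} We have $q(t)=\inprod{c}{(p_k(t))_{k=0}^d}$. By Cauchy--Schwarz, $q(t)^2\le \sum_k p_k(t)^2=K_d(t,t)$. Equality holds if and only if $c\propto (p_k(t))_k$, i.e.\ $q(x)\propto K_d(t,x)$. Normalizing gives \Cref{eqn:q}, and this maximizer is unique up to sign. The vector $(p_k(t))_k$ is nonzero since $p_0=1$, so $K_d(t,t)\ge1$ and the normalization is well defined.

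\textbf{Item 2.} By the three-term recurrence \Cref{eqn:recur} and orthonormality,
\[
\int x\,p_kp_\ell\,\dd\mu_{\sc}=\indicator{|k-\ell|=1}.
\]
Hence the ratio equals $c^\top J c/\normtwo{c}^2$, where $J$ is the $(d+1)\times(d+1)$ tridiagonal matrix with zero diagonal and unit off-diagonals. The supremum is therefore $\lambda_{\max}(J)$. I would verify directly that $c_j=\sin((j+1)\pi/(d+2))$ satisfies
\[
c_{j-1}+c_{j+1}=2\cos\!\big(\pi/(d+2)\big)\,c_j ,
\]
with boundary values $c_{-1}=c_{d+1}=0$. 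All entries of this vector are positive, so by Perron--Frobenius it is the top eigenvector, giving the eigenvalue $r_d$. Alternatively, one can use the known spectrum $2\cos(k\pi/(d+2))$, $k=1,\dots,d+1$.

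To identify this with $K_d(r_d,\cdot)$, note that $r_d=2\cos\theta$ with $\theta=\pi/(d+2)$. By \Cref{eqn:U_small}, $p_j(r_d)=\sin((j+1)\theta)/\sin\theta$, so $K_d(r_d,x)=\sum_j p_j(r_d)p_j(x)$ has coefficient vector proportional to $c$.

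\textbf{Item 3.} For $|x|\le2$, write $x=2\cos\theta$. Then $|p_m(x)|=|\sin((m+1)\theta)/\sin\theta|\le m+1$ by the standard bound $|\sin((m+1)\theta)|\le (m+1)|\sin\theta|$.

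For $2<|x|\le2+s$, write $|x|=2\cosh\theta$. Then $|p_m(x)|=\sinh((m+1)\theta)/\sinh\theta$. The product-to-sum expansion
\[
\sinh((m+1)\theta)/\sinh\theta=\sum_{k=0}^m e^{(m-2k)\theta}
\]
gives $|p_m(x)|\le (m+1)e^{m\theta}$. Moreover $\cosh\theta=1+s'/2$ with $s'\le s\le1$, so $\theta=\operatorname{arccosh}(1+s'/2)\le\sqrt{s'}\le\sqrt s$. This uses the elementary inequality $\cosh(\sqrt{u})\ge1+u/2$, which follows from the Taylor series.

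For the derivative, differentiating the recurrence gives $p_m'=\sum_{k} (\text{coefficients})\,p_k$. A cleaner route is the identity
\[
p_m'(x)=\sum_{j\ge0,\; m-1-2j\ge0}(m-2j)\,p_{m-1-2j}(x)\quad\text{(up to convention)},
\]
which follows from the Chebyshev derivative formula. Alternatively, one can apply Markov's inequality for polynomials on the interval $[-(2+s),2+s]$: it gives $\sup|p_m'|\le m^2/(2+s)\cdot\sup|p_m|\le (m+1)^3e^{m\sqrt s}$. I would use Markov's inequality, as it is the simplest.

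The consequences then follow by plugging in $s_n=n^{-2/3+\tau}$ and $m\le 2d+2=O(\log n)$. This gives
\[
m\sqrt{s_n}=O\big(\log(n)\,n^{-1/3+\tau/2}\big)\to0 \quad\text{since } \tau<2/3,
\]
so the factor $e^{m\sqrt{s_n}}$ is bounded by a constant $C_\tau$.

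Finally, the total variation bound \Cref{eqn:V} follows from
\[
V_I(p_m)=\int_I|p_m'|\le |I|\sup_I|p_m'|\le (4+2s_n)\,C_\tau(m+1)^3 .
\]

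The only mildly delicate step is getting the derivative bound with the exact constant; Markov's inequality settles it.
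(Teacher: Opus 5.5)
Your proposal is correct and follows the paper's proof essentially step by step. Item 1 uses Cauchy--Schwarz in the Chebyshev coefficient basis, Item 2 reduces to the top eigenvalue of the Jacobi matrix $J$, and Item 3 combines the $\sin$/$\sinh$ geometric-sum bounds with Markov brothers' inequality and $V_I(p_m)\le |I|\sup_I|p_m'|$; the only cosmetic differences are that you certify $\sin((j+1)\pi/(d+2))$ as the top eigenvector by a direct check plus Perron--Frobenius instead of citing the known spectrum of $J$, and that you keep the harmless factor $1/(2+s)$ in Markov's inequality.
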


\begin{proof}
For \Cref{itm:extr_pt}, take any $ q \in \cP_d $ satisfying the condition in \Cref{itm:extr_pt} and decompose it in the basis of Chebyshev polynomials: 
\begin{align}
    q(t) &= \sum_{k = 0}^d a_k p_k(t) = \inprod{a}{u(t)} , \label{eqn:q_cheb} 
\end{align}
where
\begin{align}
&&
    a &\coloneqq \matrix{a_0 & \cdots & a_d}^\top , &
    u(t) &\coloneqq \matrix{p_0(t) & \cdots & p_d(t)}^\top . &
& \label{eqn:au} 
\end{align}
Note that, by orthonormality of $ (p_d)_{d\ge0} $ with respect to $ \mu_{\sc} $ (see \Cref{eqn:orth}), the condition in \Cref{itm:extr_pt} implies 
\begin{align}
    1 &= \int q(x)^2 \, \mu_{\sc}(\dd x)
    = \sum_{k = 0}^d a_k^2 = \normtwo{a}^2 . \notag 
\end{align}
By Cauchy--Schwarz, 
\begin{align}
    q(t)^2 &\le \normtwo{a}^2 \normtwo{u(t)}^2
    = \sum_{k = 0}^d p_k(t)^2
    = K_d(t,t) , \notag 
\end{align}
which establishes \Cref{eqn:extr_pt}. 
Equality holds if and only if the two vectors $ a $ and $ u(t) $ are proportional to each other, leading to the polynomial (up to sign)
\begin{align}
    q(x) &= \frac{\sum_{k = 0}^d p_k(t) p_k(x)}{\sqrt{\sum_{k = 0}^d p_k(t)^2}} , \notag 
\end{align}
which is precisely \Cref{eqn:q}. 
This proves \Cref{itm:extr_pt}. 

For \Cref{itm:extr_frac}, we again take any $ q\in\cP_d\setminus\{0\} $ and work with the representation \Cref{eqn:q_cheb}. 
By orthonormality of Chebyshev polynomials \Cref{eqn:orth} and the three-term recurrence relation \Cref{eqn:recur}, 
\begin{align}
    \int x q(x)^2 \, \mu_{\sc}(\dd x)
    &= \sum_{k,\ell = 0}^d a_k a_\ell \int x p_k(x) p_\ell(x) \, \mu_{\sc}(\dd x) \notag \\
    &= \sum_{k,\ell = 0}^d a_k a_\ell \int p_{k+1}(x) p_\ell(x) + p_{k-1}(x) p_\ell(x) \, \mu_{\sc}(\dd x) \notag \\
    &= \sum_{k = 0}^d a_k a_{k+1} + a_k a_{k-1}
    = a^\top J a , \notag 
\end{align}
where we adopt the convention that summands with out-of-range indices are zero. 
On the RHS of the last equality, the vector $a$ has been defined in \Cref{eqn:au} and the Jacobi matrix $ J\in\bbR^{(d+1)\times (d+1)} $ is given by 
\begin{align}
    J &= \matrix{
        0 & 1 &  &  & \\
        1 & 0 & 1 &  & \\
         & \ddots & \ddots & \ddots & \\
         &  & 1 & 0 & 1 \\
         &  &  & 1 & 0
    } . \label{eqn:J} 
\end{align}
Therefore, the LHS of \Cref{eqn:sup_rd} can be written as 
\begin{align}
    \sup\brace{ \frac{\int x q(x)^2 \, \mu_{\sc}(\dd x)}{\int q(x)^2 \, \mu_{\sc}(\dd x)} : q \in \cP_d \setminus \{0\} }
    &= \max_{a\in\bbR^{d+1} \setminus \{0_{d+1}\}} \frac{a^\top J a}{\normtwo{a}^2}
    = \lambda_1(J) , \label{eqn:J_opt} 
\end{align}
and any maximizer $ a $ equals $ v_1(J) $ up to rescaling. 
Note that $J$ is a tridiagonal matrix whose spectrum and eigenspace are well understood; see \cite[Section 3]{Losonczi}. 
In particular, the eigenvalues of $J$ are given by 
\begin{align}
    \brace{ 2 \cos\paren{ \frac{k \pi}{d + 2} } : 1\le k\le d+1 } \subset [-2,2] , \notag 
\end{align}
among which the largest one is precisely $ r_d $ given in \Cref{eqn:rd} corresponding to $ k = 1 $. 
Moreover, the eigenvector associated with $ r_d $ is proportional to $ a = \matrix{a_0 & \cdots & a_d}^\top $ where for each $ 0\le j\le d $, 
\begin{align}
    a_j &= \sin\paren{\frac{(j+1)\pi}{d+2}} . \notag 
\end{align}
Therefore, a maximizing polynomial $ q $ is given by 
\begin{align}
    q(x) &= \sum_{j = 0}^d a_j p_j(x) = \sum_{j = 0}^d \sin\paren{\frac{(j+1)\pi}{d+2}} p_j(x) . \notag 
\end{align}
Since any nonzero rescaling preserves optimality of $q$ and by \Cref{eqn:U_small},
\begin{align}
    p_j(r_d) &= U_j\paren{ \cos\paren{ \frac{\pi}{d + 2} } }
    = \frac{\sin\paren{ (j+1) \pi / (d+2) }}{\sin\paren{ \pi / (d+2) }} = \frac{a_j}{\sin\paren{ \pi / (d+2) }} , \notag 
\end{align}
the following polynomial is also a maximizer: 
\begin{align}
    q(x) &= \sum_{j = 0}^d p_j(r_d) p_j(x) = K_d(r_d, x) . \notag 
\end{align}
This completes the proof of \Cref{itm:extr_frac}. 

Turning to \Cref{itm:extr_grow}, first note that on $ [-1,1] $, one has an explicit representation of $ U_m $ in \Cref{eqn:U_small} the RHS of which can be upper bounded as
\begin{align}
    \abs{ \frac{\sin( (m+1) \theta )}{\sin(\theta)} } &\le m+1 . \label{eqn:sin}
\end{align}
To see this, note that for any $ m\in\bbZ_{\ge0} $ and $ \theta\in\bbR $, the geometric series on the LHS below sums to 
\begin{align}
    \sum_{j = 0}^m \exp\paren{\ii (m - 2j) \theta} &= \frac{\exp\paren{\ii(m + 1) \theta} - \exp\paren{-\ii (m+1) \theta}}{\exp\paren{\ii \theta} - \exp\paren{-\ii\theta}} , \notag 
\end{align}
where $ \ii $ denotes the imaginary unit. 
Using the identity $ \sin(x) = \paren{ \exp\paren{\ii x} - \exp\paren{-\ii x} } / (2\ii) $, we have 
\begin{align}
    \abs{\frac{\sin( (m+1) \theta )}{\sin(\theta)}} &= \abs{\sum_{j = 0}^m \exp\paren{\ii(m - 2 j) \theta}} \le m+1 . \notag 
\end{align}
This justifies \Cref{eqn:sin} which immediately implies 
\begin{align}
    \sup_{x\in[-2,2]} \abs{p_m(x)} &\le m+1 . \label{eqn:pm} 
\end{align}
For $ x > 2 $, let us write $ x = 2 \cosh(t) $ for some $ t > 0 $. 
Then by the explicit representation of $ U_m $ in \Cref{eqn:U_big}, 
\begin{align}
    p_m(x) &= U_m(\cosh(t)) 
    = \frac{\sinh( (m+1) t )}{\sinh(t)}
    \le (m+1) \exp\paren{mt} . \label{eqn:sinh}
\end{align}
To see the last inequality, by definition of $ \sinh $, 
\begin{align}
    \frac{\sinh( (m+1) t )}{\sinh(t)}
    &= \frac{\exp\paren{(m+1) t} - \exp\paren{-(m+1) t}}{\exp\paren{t} - \exp\paren{-t}} 
    = \exp\paren{-mt} \frac{\exp\paren{2(m+1)t} - 1}{\exp\paren{2t} - 1} \notag \\
    &= \sum_{j = 0}^m e^{(2j-m)t}
    \le (m+1) \exp\paren{mt} . \notag 
\end{align}
To conclude the first inequality in \Cref{eqn:ps}, we only need to upper bound $t$ in terms of $s$. 
This can be done by recalling the assumption $ 2\cosh(t) = x \le 2 + s $ which implies $ \cosh(t) \le 1 + s/2 $. 
Since $ \cosh(t) > 1 + t^2/2 $ for any $t>0$, we have $ t\le\sqrt{s} $, as desired. 
The case where $ x < -2 $ is similar if one uses \Cref{eqn:U_big_neg} in place of \Cref{eqn:U_big}. 
We omit the details. 

The second inequality in \Cref{eqn:ps} follows by combining the above result with Markov brothers' inequality (see \Cref{prop:markov}), 
\begin{align}
    \sup_{x\in[-(2+s),2+s]} \abs{p_m'(x)}
    &\le m^2 \sup_{x\in[-(2+s),2+s]} \abs{p_m(x)}
    \le (m+1)^3 e^{m \sqrt{s}} . \notag 
\end{align}

Taking $ s_n = n^{-2/3 + \tau} $ and $ m = 2d+2 = O(\log(n)) $ in \Cref{eqn:ps}, one has $ m \sqrt{s_n} = O(\log(n) \cdot n^{-1/3+\tau/2}) = o(1) $, and hence $ \exp\paren{m\sqrt{s_n}} = 1+o(1) $. 
This allows us to conclude \Cref{eqn:psn}. 
Finally, \Cref{eqn:V} follows since for a differentiable function $p$ and any interval $ I \subset \bbR $, 
\begin{align}
    V_I(p) &= \int_I \abs{p'(x)} \diff x \le \abs{I} \cdot \sup_{x\in I} \abs{p'(x)} . \notag 
\end{align}
Plugging the second estimate in \Cref{eqn:psn} to the RHS above gives \Cref{eqn:V}. 
\end{proof}

\begin{lemma}
\label{lem:tr}
Let $ X,Y $ be $n\times n$ real symmetric matrices and $ f\colon\bbR \to \bbR $ be a function with bounded variation. 
Then 
\begin{align}
    \abs{ \tr(f(X)) - \tr(f(Y)) } &\le \rk(X-Y) \cdot V(f) , \notag 
\end{align}
where $ V(f) $ denotes the total variation of $f$. 
\end{lemma}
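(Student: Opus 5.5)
Since $\tr(f(X)) = \sum_{i=1}^n f(\lambda_i(X))$, the claim compares two eigenvalue sequences. The plan is to reduce to rank-one perturbations and then use interlacing, which makes the difference of traces a telescoping sum of increments of $f$.

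\textbf{Reduction to rank one.} Let $r = \rk(X-Y)$. Since $X-Y$ is real symmetric, its spectral decomposition writes it as a sum of $r$ rank-one symmetric matrices $\mu_k w_k w_k^\top$. Set $X_0 = Y$ and $X_k = X_{k-1} + \mu_k w_k w_k^\top$, so that $X_r = X$. By the triangle inequality it then suffices to prove
\begin{align}
    \abs{\tr(f(A)) - \tr(f(B))} &\le V(f) \notag
\end{align}
whenever $A - B = \mu ww^\top$ has rank one.

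\textbf{Rank-one case.} Without loss of generality take $\mu > 0$; otherwise swap $A$ and $B$. Write $a_i = \lambda_i(A)$ and $b_i = \lambda_i(B)$. Weyl's interlacing for a positive rank-one update gives
\begin{align}
    a_1 \ge b_1 \ge a_2 \ge b_2 \ge \cdots \ge a_n \ge b_n . \notag
\end{align}
Hence
\begin{align}
    \sum_{i=1}^n \paren{ f(a_i) - f(b_i) } &= \sum_{i=1}^n \paren{ f(a_i) - f(b_i) } , \notag
\end{align}
where each term is an increment of $f$ over the interval $[b_i, a_i]$. Interlacing shows these intervals $[b_i, a_i]$ are non-overlapping: they can share only endpoints, since $b_i \ge a_{i+1}$. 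The total variation bounds the sum of absolute increments over any such family of intervals, so $\sum_{i=1}^n \abs{f(a_i) - f(b_i)} \le V(f)$. The same conclusion follows from the definition of $V(f)$ as a supremum over partitions by ordering all the points $b_n \le a_n \le \cdots \le b_1 \le a_1$.

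\textbf{Conclusion and main subtlety.} Summing the rank-one bound over the $r$ steps gives $\abs{\tr(f(X)) - \tr(f(Y))} \le r\, V(f) = \rk(X-Y)\cdot V(f)$, as claimed. The only real subtlety is to invoke interlacing correctly and to make sure the sign of $\mu$ can be normalized. Both are routine.
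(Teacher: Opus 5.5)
Your argument is correct, but it follows a different route from the paper. The paper does not split $X-Y$ into rank-one pieces. It cites the rank inequality for empirical spectral c.d.f.'s, $\sup_z |F_X(z)-F_Y(z)| \le \rk(X-Y)/n$ (Bai--Silverstein, Theorem A.43). It then writes $\frac{1}{n}(\tr f(X) - \tr f(Y)) = \int f \, \mathrm{d}(F_X - F_Y)$ and integrates by parts to bound this by $\sup_z |F_X(z)-F_Y(z)| \cdot V(f)$.

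You instead add one eigen-component of $X-Y$ at a time. Weyl interlacing then shows the intervals $[b_i,a_i]$ do not overlap, so the increments of $f$ over them are dominated by a single partition sum in the definition of $V(f)$.

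The paper's version is shorter given the cited inequality. It also isolates the Kolmogorov distance between spectral distributions, which is the natural intermediate quantity for linear spectral statistics. Yours is self-contained: the rank inequality is itself an interlacing statement, so you are in effect unfolding its proof. Yours also avoids Stieltjes integration by parts. For a general, possibly discontinuous, $f$ of bounded variation, that step needs some care at jump points shared with the step function $F_X - F_Y$. This is harmless for the continuous truncated polynomials where the paper applies the lemma, but your partition argument works verbatim for every $f$ of bounded variation.

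Your displayed identity $\sum_i (f(a_i)-f(b_i)) = \sum_i (f(a_i)-f(b_i))$ says nothing and can be dropped; the argument does not use it.
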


\begin{proof}
For $ Z \in \{X,Y\} $, define, for any $ z\in\bbR $, 
\begin{align}
    F_Z(z) &= \frac{1}{n} \abs{\brace{ i\in[n] : \lambda_i(Z) \le z }} \notag 
\end{align}
to be the c.d.f.\ of the empirical spectral distribution of $ Z $. 
Define also the signed measure whose c.d.f.\ is $ F = F_X - F_Y $. 
Then by \cite[Theorem A.43]{Bai_Silverstein}, 
\begin{align}
    \sup_{z\in\bbR} \abs{ F(z) }
    &\le \frac{1}{n} \rk(X - Y) . \label{eqn:rk_ineq}
\end{align}
Using this and integration by parts, we have
\begin{align}
    \frac{1}{n} \paren{ \tr(f(X)) - \tr(f(Y)) }
    &= \frac{1}{n} \sum_{i = 1}^n f(\lambda_i(X)) - f(\lambda_i(Y))
    = \int f(z) \diff F(z) \notag \\
    &= f(+\infty) F(+\infty) - f(-\infty) F(-\infty) - \int F(z) \diff f(z) . \label{eqn:tr1} 
\end{align}
The first two terms on the RHS vanish for the following reasons: $ f(+\infty),f(-\infty) $ are finite due to bounded variation of $f$, $ F(-\infty) =0 $ by definition, and $ F(+\infty) = F_X(+\infty) - F_Y(+\infty) = 0 $ since $ F_X,F_Y $ are probability measures.
For the third term, we have
\begin{align}
    \abs{\int F(z) \diff f(z)}
    &\le \paren{ \sup_{z\in\bbR} \abs{F(z)} } \cdot \int \abs{ \dd f(z) }
    \le \frac{1}{n} \rk(X - Y) \cdot V(f) , \label{eqn:tr2}
\end{align}
where the last inequality is by \Cref{eqn:rk_ineq} and the definition of total variation. 
Combining \Cref{eqn:tr1,eqn:tr2} completes the proof. 
\end{proof}

\begin{lemma}
\label{lem:LS}
Assume $ d = O(\log(n)) $. 
Fix a constant $ \tau \in (0,2/3) $. 
Then there exists a constant $ C_\tau>0 $ depending only on $\tau$ such that with probability $1-o(1)$, uniformly for all $ 0\le m\le 2d+2 $, 
\begin{align}
    \abs{\frac{1}{n} \sum_{i = 2}^n p_m(\lambda_i(Y)) - \indicator{m = 0}} &\le C_\tau (m+1)^3 n^{-1+\tau} . \label{eqn:LS} 
\end{align}
\end{lemma}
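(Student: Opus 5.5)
We will reduce the statement about $Y$ with its outlier removed to a statement about the GOE $X$ itself, and then prove the GOE bound via eigenvalue rigidity. Concretely, we will show that with probability $1-o(1)$, uniformly in $0\le m\le 2d+2$,
\begin{align}
    \abs{\frac{1}{n}\sum_{i=1}^n p_m(\lambda_i(X)) - \indicator{m=0}} &\le C_\tau (m+1)^3 n^{-1+\tau} . \notag
\end{align}
We then transfer this bound to $(\lambda_i(Y))_{i=2}^n$.

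\textbf{Transfer step.} The transfer uses \Cref{lem:tr} together with interlacing. Since $\rk(Y-X)=1$, the eigenvalues interlace: $\lambda_{i}(X)\le\lambda_i(Y)\le\lambda_{i-1}(X)$. In particular, $\lambda_2(Y),\dots,\lambda_n(Y)$ lie in $[\lambda_n(X),\lambda_1(X)]$, which by rigidity (\Cref{prop:rig}) is contained in $[-(2+s_n),2+s_n]$ with high probability, where $s_n=n^{-2/3+\tau}$.

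The difference between $\sum_{i=2}^n p_m(\lambda_i(Y))$ and $\sum_{i=1}^n p_m(\lambda_i(X))$ will be controlled by a telescoping/interlacing argument. Alternatively, apply \Cref{lem:tr} to $f=p_m\cdot\one_{[-(2+s_n),2+s_n]}$, noting that outside this window only $\lambda_1(Y)$ can lie. This gives an error of order
\[
V_{[-(2+s_n),2+s_n]}(p_m)+2\sup_{[-(2+s_n),2+s_n]}\abs{p_m}\le C_\tau(m+1)^3 ,
\]
by \Cref{eqn:psn,eqn:V}. After dividing by $n$, this fits within the claimed bound.

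\textbf{GOE estimate.} On the high-probability rigidity event, $\abs{\lambda_i(X)-\gamma_i}\le n^{-2/3+\tau/2}\hat i^{-1/3}$ with $\hat i=\min(i,n+1-i)$. We split
\[
\frac1n\sum_i p_m(\lambda_i(X))-\int p_m\,\dd\mu_{\sc}
\]
into two pieces:
\begin{itemize}
    \item[(a)] $\frac1n\sum_i\bigl(p_m(\lambda_i)-p_m(\gamma_i)\bigr)$, bounded by $\sup\abs{p_m'}\cdot\frac1n\sum_i\abs{\lambda_i-\gamma_i}$. Using \Cref{eqn:psn}, this is at most $C_\tau(m+1)^3\cdot n^{-1}\sum_i n^{-2/3+\tau/2}\hat i^{-1/3}\lesssim (m+1)^3 n^{-1+\tau/2}$.
    \item[(b)] The deterministic quadrature error $\frac1n\sum_i p_m(\gamma_i)-\int p_m\,\dd\mu_{\sc}$. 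Since $\gamma_i$ are the $\mu_{\sc}$-quantiles, each cell $[\gamma_{i+1},\gamma_i]$ has mass $1/n$. The error is therefore at most $\frac1n\sum_i \mathrm{osc}_{[\gamma_{i+1},\gamma_i]}p_m\le \frac1n V_{[-2,2]}(p_m)\le C(m+1)^3/n$.
\end{itemize}
Finally, $\int p_m\,\dd\mu_{\sc}=\indicator{m=0}$ by \Cref{eqn:orth} with $p_0=1$. Uniformity over $m$ is automatic, because all bounds hold on a single rigidity event that does not depend on $m$.

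\textbf{Main obstacle.} The delicate point is the edge behaviour. Eigenvalues may exceed $\pm2$ by up to $s_n$, and interlacing only localizes $\lambda_i(Y)$ for $i\ge2$. We must therefore check that the derivative and variation bounds of \Cref{lem:extr} on the slightly enlarged interval $[-(2+s_n),2+s_n]$ are enough. They are, because $m\sqrt{s_n}=o(1)$ when $m=O(\log n)$. We also need rigidity in the precise form of \Cref{prop:rig}, including the $\hat i^{-1/3}$ edge improvement, so that the average displacement $\frac1n\sum_i\abs{\lambda_i-\gamma_i}$ is $O(n^{-1+\tau})$ rather than $O(n^{-2/3})$.
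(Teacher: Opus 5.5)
Your proposal is correct and follows the paper's proof essentially step for step. You compare $p_m(\lambda_i(X))$ with $p_m(\gamma_i)$ via rigidity and the derivative bound \Cref{eqn:psn}, bound the quadrature error by $V_{[-2,2]}(p_m)/n$, and transfer to $(\lambda_i(Y))_{i\ge2}$ via \Cref{lem:tr} applied to a truncation of $p_m$ to $[-(2+s_n),2+s_n]$. The only difference is cosmetic: you truncate with an indicator and pay for the two endpoint jumps in the variation, whereas the paper extends $p_m$ by constants outside the window and pays a separate $\abs{p_m(2+s_n)}/n$ for the outlier $\lambda_1(Y)$, which in either version contributes at most $\sup\abs{p_m}\le C_\tau(m+1)$.
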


\begin{proof}
Let $ s_n = n^{-2/3 + \tau} $. 
By the rigidity of GOE eigenvalues (see \Cref{prop:rig}), there exists a constant $ C_\tau>0 $ depending only on $\tau$ such that with probability $ 1 - o(1) $, 
\begin{align}
&&
    \max_{i\in[n]} \, \abs{ \lambda_i(X) - \gamma_i } &\le C_\tau n^{-2/3 + \tau} \min\brace{i,n-i+1}^{-1/3} , & 
    \max_{i\in[n]} \, \abs{\lambda_i(X)} &\le 2 + s_n , &
& \label{eqn:rig}
\end{align}
where $ \gamma_i $ is the classical location of $ \lambda_i(X) $; see \Cref{eqn:class_loc}. 
By Weyl's inequality, the eigenvalues of $ X $ and $ Y $ interlace each other: 
\begin{align}
    \lambda_1(Y) &\ge \lambda_1(X) \ge \lambda_2(Y) \ge \lambda_2(X) \ge \cdots \ge \lambda_n(Y) \ge \lambda_n(X) . \notag 
\end{align}
Combining this with the second inequality in \Cref{eqn:rig}, we have that for all sufficiently large $n$, with probability $1-o(1)$, 
\begin{align}
    \brace{\lambda_i(Y) : 2\le i\le n} &\subset [-(2+s_n), 2+s_n] . \label{eqn:bulk}
\end{align}
Furthermore, since $ \lambda_1(Y) \to \lambda_\star > 2 $ (see \Cref{eqn:lambda1}), for all sufficiently large $n$, with probability $1 - o(1)$, it holds that 
\begin{align}
    \lambda_1(Y) &> 2 + s_n . \label{eqn:out}
\end{align}
In the rest of the proof, we work on the high-probability event where \Cref{eqn:rig,eqn:bulk,eqn:out} simultaneously hold. 

By the fundamental theorem of calculus and the triangle inequality, 
\begin{align}
    \abs{ \frac{1}{n} \sum_{i = 1}^n p_m(\lambda_i(X)) - \frac{1}{n} \sum_{i = 1}^n p_m(\gamma_i) }
    &= \abs{\frac{1}{n} \sum_{i = 1}^n \int^{\lambda_i(X)}_{\gamma_i} p_m'(x) \diff x} \notag \\
    &\le \paren{\sup_{x\in[-(2+s_n),2+s_n]} \abs{p_m'(x)}} \cdot \frac{1}{n} \sum_{i = 1}^n \abs{\lambda_i(X) - \gamma_i} . \notag 
\end{align}
The first factor above can be bounded using \Cref{eqn:psn}. 
The second factor above can be bounded using the rigidity of GOE eigenvalues in \Cref{eqn:rig}, 
\begin{align}
    \frac{1}{n} \sum_{i = 1}^n \abs{\lambda_i(X) - \gamma_i}
    &\le C_\tau n^{-2/3 + \tau} \cdot \frac{1}{n} \sum_{i = 1}^n \min\brace{i,n-i+1}^{-1/3} \notag \\
    &\le C_\tau n^{-2/3 + \tau} \cdot \frac{2}{n} \sum_{i = 1}^{\ceil{n/2}} i^{-1/3} \notag \\
    &\le C_\tau n^{-2/3 + \tau} \cdot \frac{2}{n} \int_0^{\ceil{n/2}} x^{-1/3} \diff x \notag \\
    &= C_\tau n^{-2/3 + \tau} \cdot \frac{2}{n} \frac{3}{2} \ceil{n/2}^{2/3} \notag \\
    &\le 3 C_\tau n^{-1 + \tau} . \notag 
\end{align}
Combining the above two estimates, we have
\begin{align}
    \abs{ \frac{1}{n} \sum_{i = 1}^n p_m(\lambda_i(X)) - \frac{1}{n} \sum_{i = 1}^n p_m(\gamma_i) }
    &\le C_\tau (m+1)^3 n^{-1+\tau} . \label{eqn:close1} 
\end{align}

Recalling from \Cref{eqn:class_loc} the definition of the classical location $ \gamma_i $, we have 
\begin{align}
    \frac{1}{n} \sum_{i = 1}^n p_m(\gamma_i)
    &= \frac{1}{n} \sum_{i = 1}^n p_m\paren{ Q\paren{\frac{i}{n}} } . \notag 
\end{align}
Defining the function $ g\colon[0,1] \to \bbR $ as $ g(u) \coloneqq p_m\paren{ Q(u) } $, we claim that $g$ has bounded variation. 
Indeed, 
\begin{align}
    V_{[0,1]}(g) &= \int_0^1 \abs{g'(u)} \diff u
    = \int_0^1 \abs{ p_m'\paren{Q(u)} } \cdot \abs{Q'(u)} \diff u 
    = \int_0^1 \abs{ p_m'\paren{Q(u)} } \diff Q(u) \notag \\
    &= \int_{-2}^2 \abs{p_m'(x)} \diff x
    = V_{[-2,2]}(p_m) . \notag 
\end{align}
In the third equality above, we use the fact that $Q$ is nondecreasing, so the absolute value around $Q'$ can be removed. 
Now applying \Cref{itm:extr_grow} of \Cref{lem:extr} to the RHS, we have
\begin{align}
    V_{[0,1]}(g) &\le C (m+1)^3 , \label{eqn:Vg} 
\end{align}
for an absolute constant $C>0$. 
Recalling that $ Q $ is the inverse c.d.f.\ of $ \mu_{\sc} $ and applying the change of variable $ x \mapsto Q(x) $, we have 
\begin{align}
    \abs{ \frac{1}{n} \sum_{i = 1}^n p_m(\gamma_i) - \int p_m(x) \, \mu_{\sc}(\dd x) }
    &= \abs{ \frac{1}{n} \sum_{i = 1}^n p_m\paren{ Q\paren{ \frac{i}{n} } } - \int p_m(Q(x)) \diff x } \notag \\
    &= \abs{ \sum_{i = 1}^n \frac{1}{n} g\paren{\frac{i}{n}} - \sum_{i = 1}^n \int_{(i-1)/n}^{i/n} g(x) \diff x } \notag \\
    &\le \sum_{i = 1}^n \int_{(i-1)/n}^{i/n} \abs{ g(i/n) - g(x) } \diff x \notag \\
    &\le \frac{1}{n} \sum_{i = 1}^n V_{[(i-1)/n,i/n]}(g) \notag \\
    &\le \frac{V_{[0,1]}(g)}{n}
    \le \frac{C (m+1)^3}{n} , \label{eqn:close2} 
\end{align}
where the last step is by \Cref{eqn:Vg}. 

By orthonormality of $ (p_m)_{m\ge0} $ with respect to $ \mu_{\sc} $ and the fact $ p_0(x) = 1 $, we have 
\begin{align}
    \int p_m(x) \, \mu_{\sc}(\dd x) &= \indicator{m = 0} . \notag 
\end{align}
Combining this with \Cref{eqn:close1,eqn:close2} and using the triangle inequality yield 
\begin{align}
    \abs{\frac{1}{n} \sum_{i = 1}^n p_m(\lambda_i(X)) - \indicator{m = 0}} &\le C_\tau (m+1)^3 n^{-1+\tau} . \label{eqn:close3} 
\end{align}

Now it remains to relate the LHS of the above estimate to the LHS of the desired one in \Cref{eqn:LS}. 
To this end, we truncate $ p_m $ outside $[-(2+s_n), (2+s_n)]$ and define 
\begin{align}
    \ol{p}_m(x) &\coloneqq \begin{cases}
        p_m(-(2+s_n)) , & x < -(2+s_n) \\
        p_m(x) , & -(2+s_n) \le x \le 2+s_n \\
        p_m(2+s_n) , & x > 2+s_n
    \end{cases} . \notag 
\end{align}
It immediately follows that 
\begin{align}
    V(\ol{p}_m) &\le V_{[-(2+s_n),2+s_n]}(p_m) \le C_\tau (m+1)^3 , \notag 
\end{align}
where the last inequality is from \Cref{eqn:V}. 
Recalling from \Cref{eqn:spiked_model} that $ Y - X $ has rank $1$, \Cref{lem:tr} guarantees that 
\begin{align}
    \abs{\frac{1}{n} \sum_{i = 1}^n \ol{p}_m(\lambda_i(Y)) - \frac{1}{n} \sum_{i = 1}^n \ol{p}_m(\lambda_i(X))}
    = \abs{\frac{1}{n} \tr(\ol{p}_m(Y)) - \frac{1}{n} \tr(\ol{p}_m(X))} &\le \frac{V(\ol{p}_m)}{n} \le \frac{C_\tau (m+1)^3}{n} . \label{eqn:close4} 
\end{align}
On the high-probability event where \Cref{eqn:rig} holds, we have $ \ol{p}_m(\lambda_i(X)) = p_m(\lambda_i(X)) $ for every $ 1\le i\le n $. 
Similarly, \Cref{eqn:bulk} implies that $ \ol{p}_m(\lambda_i(Y)) = p_m(\lambda_i(Y)) $ for every $ 2\le i\le n $. 
Moreover, by \Cref{eqn:out}, $ \ol{p}_m(\lambda_1(Y)) = p_m(2+s_n) $. 
Therefore, 
\begin{align}
    \abs{\frac{1}{n} \sum_{i = 2}^n p_m(\lambda_i(Y)) - \frac{1}{n} \sum_{i = 1}^n p_m(\lambda_i(X))}
    &\le \abs{\frac{1}{n} \tr(\ol{p}_m(Y)) - \frac{1}{n} \tr(\ol{p}_m(X))} + \frac{\abs{p_m(2+s_n)}}{n} \notag \\
    &\le \frac{C_\tau (m+1)^3}{n} + \frac{C_\tau (m+1)}{n}
    \le \frac{2C_\tau (m+1)^3}{n} , \notag 
\end{align}
where we use \Cref{eqn:close4,eqn:psn} in the last line.
Finally, combining this with \Cref{eqn:close3} produces the desired estimate \Cref{eqn:LS}. 
This completes the proof of the lemma. 
\end{proof}

Define a $ (d+2)\times(d+2) $ matrix $H$ entry-wise by 
\begin{align}
    H_{k,\ell} &= \frac{1}{n} \sum_{i = 2}^n p_k(\lambda_i(Y)) p_\ell(\lambda_i(Y)) . \label{eqn:H}
\end{align}
Note that the elements of $H$ are indexed by $ k,\ell\in\{0,1,\cdots,d+1\} $ for notational convenience. 

\begin{lemma}
\label{lem:H-I}
Consider the matrix $H$ defined in \Cref{eqn:H} and assume $ d=O(\log(n)) $. 
Then for any fixed constant $ \tau \in (0,2/3) $, there exists a constant $ C_\tau>0 $ depending only on $\tau$ such that with probability $1-o(1)$, it holds that
\begin{align}
    \normtwo{H - I_{d+2}} &\le C_\tau d^5 n^{-1+\tau} . \notag 
\end{align}
\end{lemma}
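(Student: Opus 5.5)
The plan is to reduce every entry of $H$ to a linear spectral statistic of a single Chebyshev polynomial via the product-to-sum formula \Cref{eqn:prod_sum}, control each entry with \Cref{lem:LS}, and then pass from entrywise bounds to an operator-norm bound by a crude Frobenius estimate.

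\emph{Step 1 (product-to-sum).} For $0\le k,\ell\le d+1$, \Cref{eqn:prod_sum} gives
\begin{align}
    H_{k,\ell} &= \sum_{r=0}^{\min\{k,\ell\}} \frac{1}{n}\sum_{i=2}^n p_{k+\ell-2r}(\lambda_i(Y)) . \notag
\end{align}
Every index $m = k+\ell-2r$ satisfies $0\le m\le 2d+2$, so it lies in the range covered by \Cref{lem:LS}.

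\emph{Step 2 (isolating the identity).} On the high-probability event of \Cref{lem:LS}, which holds uniformly over all such $m$, replace each inner sum by $\indicator{m=0}$ at an error of at most $C_\tau(m+1)^3 n^{-1+\tau}$. The term $m=0$ occurs only when $k=\ell$ and $r=k$, and then exactly once. Hence the main terms sum to $\indicator{k=\ell}=(I_{d+2})_{k,\ell}$.

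\emph{Step 3 (entrywise error).} The accumulated error in each entry is at most
\begin{align}
    (\min\{k,\ell\}+1)\, C_\tau (2d+3)^3 n^{-1+\tau} \le C_\tau' d^4 n^{-1+\tau} , \notag
\end{align}
since there are at most $d+2$ summands, each with error $O(d^3 n^{-1+\tau})$.

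\emph{Step 4 (operator norm).} Bound $\normtwo{H-I_{d+2}}$ by the Frobenius norm, which is at most $(d+2)\max_{k,\ell}\abs{H_{k,\ell}-\indicator{k=\ell}}$. Combined with Step 3, this gives $\normtwo{H-I_{d+2}} \le C_\tau d^5 n^{-1+\tau}$, with constants absorbed and $d\ge1$ assumed without loss of generality.

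I do not expect a genuine obstacle here. The step that needs care is making sure the probability-$1-o(1)$ event of \Cref{lem:LS} is used simultaneously for all $m\le 2d+2$, which that lemma provides, and correctly counting the contribution of the identity term.
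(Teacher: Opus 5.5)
Your proposal is correct and follows essentially the same route as the paper. The paper also expands each entry via \Cref{eqn:prod_sum}, applies \Cref{lem:LS} uniformly over $m=k+\ell-2r\le 2d+2$ to obtain an entrywise bound of order $d^4 n^{-1+\tau}$, and then uses $\normtwo{A}\le p\max_{i,j}\abs{A_{i,j}}$ (which is your Frobenius step) to reach $C_\tau d^5 n^{-1+\tau}$.
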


\begin{proof}
By the identity \Cref{eqn:prod_sum}, we have that for any $ 0\le k,\ell\le d+1 $, 
\begin{align}
    H_{k,\ell} - \indicator{k = \ell}
    &= \sum_{r = 0}^{\min\brace{k,\ell}} \paren{ \frac{1}{n} \sum_{i = 2}^n p_{k+\ell-2r}(\lambda_i(Y)) - \indicator{k+\ell-2r = 0} } , \label{eqn:H-I} 
\end{align}
where it is easy to verify that $ \indicator{k = \ell} = \sum_{r = 0}^{\min\brace{k,\ell}} \indicator{k+\ell-2r=0} $. 
The term in parentheses can be bounded using \Cref{lem:LS} with $m$ therein taken to be $ k+\ell-2r $. 
This ensures that with probability $ 1-o(1) $, uniformly for all $ k,\ell,r $, it holds that
\begin{align}
    \abs{ \frac{1}{n} \sum_{i = 2}^n p_{k+\ell-2r}(\lambda_i(Y)) - \indicator{k+\ell-2r = 0} }
    &\le C_\tau (k+\ell-2r+1)^3 n^{-1+\tau} . \notag 
\end{align}
Plugging this back in \Cref{eqn:H-I}, we have 
\begin{align}
    \abs{H_{k,\ell} - \indicator{k = \ell}}
    &\le C_\tau n^{-1+\tau} \sum_{r = 0}^{\min\brace{k,\ell}} (k+\ell-2r+1)^3 \notag \\
    &\le C_\tau n^{-1+\tau} \cdot \paren{\min\brace{k,\ell} + 1} \paren{k+\ell+1}^3 \notag \\
    &\le C_\tau n^{-1+\tau} \cdot (d+2)(2(d+1)+1)^3
    \le C_\tau n^{-1+\tau} \cdot (3d)^4 , \notag
\end{align}
where in the second line, we upper bound each summand by the largest one. 
To turn this entry-wise estimate into an operator norm bound, we simply apply the elementary inequality 
\begin{align}
    \normtwo{A} \le p \max_{1\le i,j\le p} \abs{A_{i,j}} \label{eqn:op}
\end{align}
for any $p\times p$ matrix $A$. 
Specifically, we have that with probability $1 - o(1)$, 
\begin{align}
    \normtwo{H - I_{d+2}} &\le (d+2) \max_{0\le k,\ell\le d+1} \abs{H_{k,\ell} - \indicator{k=\ell}} \le C_\tau n^{-1+\tau} \cdot (3d)^5 , \notag 
\end{align}
completing the proof. 
\end{proof}

For any $i\in[n]$, let 
\begin{align}
    \xi_i &\coloneqq \inprod{v_i(Y)}{b} . \label{eqn:xi}
\end{align}
Since $ (v_i(Y))_{i = 1}^n $ forms an orthonormal basis of $ \bbR^n $ and $ b \sim \cN(0,I_n/n) $ is independent of $Y$, it holds that $ \xi_1, \cdots, \xi_n \iid \cN(0,1/n) $ and remain independent of $Y$. 
For convenience, we also define 
\begin{align}
    g_i \coloneqq \sqrt{n} \xi_i \iid \cN(0,1) \label{eqn:g}
\end{align}
for all $i\in[n]$ which are still independent of $Y$. 

Define a $ (d+2)\times(d+2) $ matrix $ G $ entry-wise by 
\begin{align}
    G_{k,\ell} &= \sum_{i = 2}^n \xi_i^2 p_k(\lambda_i(Y)) p_\ell(\lambda_i(Y)) , \label{eqn:G}
\end{align}
where $ 0\le k,\ell\le d+1 $. 

\begin{lemma}
\label{lem:G-H}
Consider the matrix $G$ defined in \Cref{eqn:G}. 
Assume $ d = O(\log(n)) $. 
Then there exists an absolute constant $ C>0 $ such that with probability $1-o(1)$, 
\begin{align}
    \normtwo{G - H} &\le C d^3 \sqrt{\frac{\log(n)}{n}} . \notag 
\end{align}
\end{lemma}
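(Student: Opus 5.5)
We condition on $Y$ throughout and use only the randomness of $(g_i)_{i\ge2}$, which are i.i.d.\ $\cN(0,1)$ and independent of $Y$ by \Cref{eqn:g}. Entrywise,
\begin{align}
    G_{k,\ell} - H_{k,\ell} &= \frac{1}{n} \sum_{i=2}^n (g_i^2 - 1)\, p_k(\lambda_i(Y)) p_\ell(\lambda_i(Y)) . \notag
\end{align}
Conditionally on $Y$, this is a weighted sum of centred $\chi^2_1$ variables with weights $w_i \coloneqq p_k(\lambda_i(Y))p_\ell(\lambda_i(Y))/n$.

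First we control the weights. On the high-probability event \Cref{eqn:bulk}, all $\lambda_i(Y)$ with $i\ge2$ lie in $[-(2+s_n),2+s_n]$. Hence \Cref{eqn:psn} gives $\abs{p_k(\lambda_i(Y))} \le C_\tau(k+1) \le C d$ uniformly for $0\le k\le d+1$. It follows that $\max_i\abs{w_i}\le Cd^2/n$ and $\sum_i w_i^2 \le Cd^4/n$.

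Next we apply the Hanson--Wright inequality, or equivalently Bernstein's inequality for sub-exponential variables, to $\sum_i w_i(g_i^2-1)$. This gives, for $t>0$,
\begin{align}
    \prob{\abs{G_{k,\ell}-H_{k,\ell}} > t \mid Y} &\le 2\exp\paren{-c\min\brace{\frac{t^2 n}{d^4}, \frac{t n}{d^2}}} . \notag
\end{align}
We choose $t = C' d^2\sqrt{\log(n)/n}$ with $C'$ large. Since $d=O(\log n)$, the quadratic term dominates: the linear term is $\sqrt{n\log n}\gg\log n$. So each probability is at most $n^{-10}$. A union bound over the $(d+2)^2 = O(\log^2 n)$ pairs $(k,\ell)$ still leaves failure probability $o(1)$. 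We then average over $Y$, which is legitimate because the event \Cref{eqn:bulk} has probability $1-o(1)$.

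Finally, we pass from entrywise to operator norm via \Cref{eqn:op}:
\begin{align}
    \normtwo{G-H} &\le (d+2)\max_{k,\ell}\abs{G_{k,\ell}-H_{k,\ell}} \le C d^3\sqrt{\frac{\log(n)}{n}} . \notag
\end{align}
This is the claimed bound. The constant $C_\tau$ from \Cref{eqn:psn} can be made absolute by fixing, say, $\tau=1/3$.

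The main obstacle is bookkeeping the conditioning. The bound \Cref{eqn:psn} holds only on a high-probability event in $Y$, so we must work on that event and use the independence of $(g_i)$ from $Y$ to apply the concentration inequality conditionally. Everything else is routine.
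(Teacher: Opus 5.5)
Your proposal is correct and follows essentially the same route as the paper: condition on $Y$, bound the weights $p_k(\lambda_i(Y))p_\ell(\lambda_i(Y))/n$ via \Cref{eqn:psn} on the bulk event \Cref{eqn:bulk}, apply Bernstein's inequality for sub-exponential variables with $t \asymp d^2\sqrt{\log(n)/n}$, take a union bound over the $(d+2)^2$ entries, and pass to the operator norm via \Cref{eqn:op}. Your remark that fixing $\tau$ (e.g.\ $\tau=1/3$) makes $C_\tau$ an absolute constant is a small clarification that the paper leaves implicit.
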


\begin{proof}
By definitions \Cref{eqn:G,eqn:H} of $H$ and $ G $, respectively, 
\begin{align}
    G_{k,\ell} - H_{k,\ell} &= \sum_{i = 2}^n \frac{p_k(\lambda_i(Y)) p_\ell(\lambda_i(Y))}{n} (g_i^2 - 1) , \label{eqn:G-H} 
\end{align}
where $ g_i $ is defined in \Cref{eqn:g}. 
As in the proof of \Cref{lem:LS}, henceforth we work on the high-probability event where \Cref{eqn:rig,eqn:bulk,eqn:out} simultaneously hold. 
In particular, since \Cref{eqn:bulk} holds, \Cref{eqn:psn} ensures that uniformly for all $ 2\le i\le n $ and $ 0\le k,\ell\le d+1 $, 
\begin{align}
    \max\brace{\abs{p_k(\lambda_i(Y))}, \abs{p_\ell(\lambda_i(Y))}} &\le C_\tau \paren{\max\brace{k,\ell}+1} \le C_\tau' d . \notag 
\end{align}
This implies that 
\begin{align}
&&
    \max_{2\le i\le n} \abs{ \frac{p_k(\lambda_i(Y)) p_\ell(\lambda_i(Y))}{n} } &\le C_\tau \frac{d^2}{n} , & 
    \sum_{i = 2}^n \paren{\frac{p_k(\lambda_i(Y)) p_\ell(\lambda_i(Y))}{n}}^2 &\le C_\tau \frac{d^4}{n} . & 
& \label{eqn:psi1}
\end{align}
Note also that $ (g_i^2 - 1)_{i = 1}^n $ are i.i.d.\ centered sub-exponential random variables with finite $ \psi_1 $-norm. 
Therefore, the RHS of \Cref{eqn:G-H} is a weighted sum of i.i.d.\ sub-exponential random variables. 
Using \Cref{eqn:psi1} in the Bernstein inequality (see \cite[Theorem 2.9.1]{Vershynin}), conditioned on $Y$, we have that there exists an absolute constant $c>0$ such that for any $t\ge0$, 
\begin{align}
    \prob{ \abs{G_{k,\ell} - H_{k,\ell}} > t } &\le 2 \exp\paren{ -c \min\brace{ \frac{n t^2}{d^4} , \frac{n t}{d^2} } } , \label{eqn:Bernstein}
\end{align}
where the probability is taken only over $ b\sim\cN(0_n,I_n/n) $, or equivalently, over $ (g_i)_{i=1}^n \sim \cN(0_n,I_n) $. 
Now taking 
\begin{align}
    t &= C d^2 \sqrt{\frac{\log(n)}{n}} \notag
\end{align}
for a sufficiently large absolute constant $C>0$, we have 
\begin{align}
    \min\brace{ \frac{n t^2}{d^4} , \frac{n t}{d^2} }
    &= \min\brace{ C^2 \log(n) , C \sqrt{n\log(n)} }
    = C^2  \log(n) , \notag 
\end{align}
for all sufficiently large $n$. 
Then \Cref{eqn:Bernstein} becomes
\begin{align}
    \prob{ \abs{G_{k,\ell} - H_{k,\ell}} > C d^2 \sqrt{\frac{\log(n)}{n}} } &\le 2 n^{-cC^2} . \notag 
\end{align}
Taking a union bound over $ 0\le k,\ell\le d+1 $ where $ d = O(\log(n)) $, we get that with probability $1-o(1)$, 
\begin{align}
    \max_{0\le k,\ell\le d+1} \abs{G_{k,\ell} - H_{k,\ell}} &\le C d^2 \sqrt{\frac{\log(n)}{n}} . \notag 
\end{align}
Finally, we invoke \Cref{eqn:op} to conclude that with probability $1-o(1)$, 
\begin{align}
    \normtwo{G - H} &\le (d+2) \max_{0\le k,\ell\le d+1} \abs{G_{k,\ell} - H_{k,\ell}} \le C d^3 \sqrt{\frac{\log(n)}{n}} , \notag 
\end{align}
as desired. 
\end{proof}

\begin{lemma}
\label{lem:event}
Assume $ d = O(\log(n)) $. 
There exists a deterministic positive sequence $ \eta_n \downarrow 0 $ and events $ \cE_n \in \sigma(Y, g_2, \cdots, g_n) $ independent of $ g_1 $ satisfying $ \prob{\cE_n} \to 1 $ such that on $ \cE_n $, the following results hold: 
\begin{enumerate}
    \item \label{itm:event1}
    \begin{align}
    &&
        \normtwo{G - I_{d+2}} &\le \eta_n , & 
        d \abs{\lambda_1(Y) - \lambda_\star} &\le \eta_n ; & 
    & \label{eqn:event1} 
    \end{align}

    \item \label{itm:event2} for every $ f,g\in\cP_{d+1} $, 
    \begin{align}
        \abs{ \sum_{i = 2}^n \xi_i^2 f(\lambda_i(Y)) g(\lambda_i(Y)) - \int f(x) g(x) \, \mu_{\sc}(\dd x) }
        &\le \eta_n \sqrt{\int f(x)^2 \, \mu_{\sc}(\dd x)} \sqrt{\int g(x)^2 \, \mu_{\sc}(\dd x)} ; \label{eqn:event2}
    \end{align}

    \item \label{itm:event3} for every $ q\in\cP_d $ with $ \int q^2 \diff\mu_{\sc} = 1 $, we have
    \begin{align}
    &&
        1 - \eta_n &\le \sum_{i = 2}^n \xi_i^2 q(\lambda_i(Y))^2 \le 1 + \eta_n , &
        \abs{ \sum_{i = 2}^n \xi_i^2 \lambda_i(Y) q(\lambda_i(Y))^2 - \int x q(x)^2 \, \mu_{\sc}(\dd x) } &\le 2 \eta_n . &
    & \label{eqn:event2q} 
    \end{align}
\end{enumerate}
\end{lemma}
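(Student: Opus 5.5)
The event $\cE_n$ will be the intersection of the high-probability events from \Cref{lem:H-I}, \Cref{lem:G-H} and the convergence $\lambda_1(Y)\to\lambda_\star$. The rest is linear algebra in the Chebyshev basis.

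\textbf{Step 1: the event and the sequence $\eta_n$.} Fix $\tau\in(0,1/3)$. Since $d=O(\log n)$, the triangle inequality with \Cref{lem:H-I,lem:G-H} gives, with probability $1-o(1)$,
\begin{align}
    \normtwo{G-I_{d+2}} \le C_\tau d^5 n^{-1+\tau} + C d^3\sqrt{\log(n)/n} \eqqcolon \eta_n' \to 0 . \notag
\end{align}
For the eigenvalue, I will invoke a quantitative version of \Cref{eqn:lambda1}: fluctuations of the outlier are $O(n^{-1/2+\tau})$ with probability $1-o(1)$ (e.g.\ from the isotropic local law / \cite{Knowles_Yin}). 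Then $d\abs{\lambda_1(Y)-\lambda_\star}\le C\log(n) n^{-1/2+\tau}$. If only almost sure convergence is available, it is still enough. Since $d\abs{\lambda_1(Y)-\lambda_\star}\to0$ in probability, some deterministic $\eta_n''\downarrow0$ satisfies $\prob{d\abs{\lambda_1(Y)-\lambda_\star}>\eta_n''}\to0$, by a standard diagonal choice. Set $\eta_n$ to be a monotone majorant of $\max\{\eta_n',\eta_n''\}$, still tending to $0$, and let $\cE_n$ be the intersection of these events.

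The measurability claim needs a check. $G$ is built from $\xi_i=g_i/\sqrt n$ with $i\ge2$ only, together with $\lambda_i(Y)$, and $\lambda_1(Y)$ depends on $Y$ alone. Hence $\cE_n\in\sigma(Y,g_2,\dots,g_n)$. The vector $(g_i)$ is i.i.d.\ and independent of $Y$, because it is a rotation of $b$ by the $Y$-measurable orthonormal basis. Therefore $g_1$ is independent of $\cE_n$.

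\textbf{Step 2: \Cref{itm:event2} from \Cref{itm:event1}.} Write $f=\inprod{a}{u}$ and $g=\inprod{c}{u}$ with $a,c\in\bbR^{d+2}$ the Chebyshev coefficients. Then
\begin{align}
    \sum_{i\ge2}\xi_i^2 f(\lambda_i(Y))g(\lambda_i(Y)) = a^\top G c , \notag
\end{align}
and by \Cref{eqn:orth}, $\int fg\,\diff\mu_{\sc}=a^\top c$. The difference is $a^\top(G-I)c$, whose absolute value is at most $\eta_n\normtwo a\normtwo c$, and $\normtwo a^2=\int f^2\,\diff\mu_{\sc}$. This is exactly \Cref{eqn:event2}.

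\textbf{Step 3: \Cref{itm:event3}.} The first bound is Step 2 with $f=g=q$. For the second, take $f=xq\in\cP_{d+1}$ and $g=q$; this is why the matrices are $(d+2)$-dimensional. It remains to bound $\int x^2q^2\,\diff\mu_{\sc}$. Since $\mu_{\sc}$ is supported on $[-2,2]$, this is at most $4\int q^2\,\diff\mu_{\sc}=4$. So the error is at most $\eta_n\cdot2\cdot1=2\eta_n$.

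The only real obstacle is the second half of \Cref{eqn:event1}. It needs a rate for $\lambda_1(Y)\to\lambda_\star$ beating $1/\log n$, or equivalently the diagonal-sequence trick applied to convergence in probability. Everything else is bookkeeping.
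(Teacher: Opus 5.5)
Your proposal is correct and follows the paper's proof essentially step for step. You intersect the high-probability events from \Cref{lem:H-I,lem:G-H} with a rate event for the outlier, then read off items 2 and 3 from $a^\top(G-I_{d+2})c$ in the Chebyshev basis, using $f=xq$ and $\int x^2q(x)^2\,\mu_{\sc}(\dd x)\le 4$. The one point to tighten is your aside that plain almost sure convergence of $\lambda_1(Y)$ would suffice. Since $d$ may grow like $\log n$, you genuinely need a rate, as your last paragraph concedes. The paper gets this rate from the outlier CLT (\Cref{prop:out}), which gives $d\abs{\lambda_1(Y)-\lambda_\star}=O(d/\sqrt{n})=o(1)$ in probability; your Knowles--Yin rate works equally well.
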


\begin{proof}
By \Cref{lem:G-H,lem:H-I}, there exists an absolute constant $ C>0 $ such that with probability $1-o(1)$, 
\begin{align}
    \normtwo{G - I_{d+2}} &\le \normtwo{G - H} + \normtwo{H - I_{d+2}}
    \le C \paren{ d^5 n^{-1+\tau} + d^3 \sqrt{\frac{\log(n)}{n}} } = o(1) . \label{eqn:G-I}
\end{align}
Moreover, since $ \sqrt{n} (\lambda_1(Y) - \lambda_\star) $ has Gaussian fluctuation (see \Cref{prop:out}), one has 
\begin{align}
    d \abs{\lambda_1(Y) - \lambda_\star} &= O_{\bbP}(d/\sqrt{n}) = o_{\bbP}(1) . \label{eqn:lambda}
\end{align}
Inspecting the RHS's of \Cref{eqn:G-I,eqn:lambda} and recalling the assumption $ d=O(\log(n)) $, we see that to make sure that both events in \Cref{eqn:event1} hold, one can take $ \eta_n \downarrow 0 $ sufficiently slowly so that 
\begin{align}
    C \max\brace{ \frac{(\log(n))^5}{n^{1 - \tau}} , \frac{(\log(n))^{3.5}}{\sqrt{n}} , \frac{\log(n)}{\sqrt{n}} } \le \eta_n &\le o(1) . \notag 
\end{align} 
For instance, it suffices to take $ \eta_n = (\log(n))^{5.1} / n^{1/3} = o(1) $. 
This proves \Cref{itm:event1}. 

For \Cref{itm:event2}, take any $ f,g \in \cP_{d+1} $ and write them in the basis of Chebyshev polynomials: 
\begin{align}
&&
    f(x) &= \sum_{k = 0}^{d+1} a_k p_k(x) , & 
    g(x) &= \sum_{k = 0}^{d+1} b_k p_k(x) . & 
& \notag 
\end{align}
By orthonormality of $ (p_k)_{k\ge0} $ with respect to $ \mu_{\sc} $, 
\begin{align}
    \int f(x) g(x) \, \mu_{\sc}(\dd x)
    &= \sum_{k,\ell=0}^{d+1} a_k b_\ell \int p_k(x) p_\ell(x) \, \mu_{\sc}(\dd x)
    = a^\top b , \label{eqn:fg_mu}
\end{align}
where $ a \coloneqq (a_k)_{k=0}^{d+1} , b \coloneqq (b_k)_{k=0}^{d+1} $ as usual. 
Similarly, 
\begin{align}
&&
    \int f(x)^2 \, \mu_{\sc}(\dd x) &= \normtwo{a}^2 , & 
    \int g(x)^2 \, \mu_{\sc}(\dd x) &= \normtwo{b}^2 . & 
& \label{eqn:norm}
\end{align}
On the other hand, 
\begin{align}
    \sum_{i = 2}^n \xi_i^2 f(\lambda_i(Y)) g(\lambda_i(Y))
    &= \sum_{i = 2}^n \xi_i^2 \sum_{k,\ell = 0}^{d+1} a_k b_\ell p_k(\lambda_i(Y)) p_\ell(\lambda_i(Y)) \notag \\
    &= \sum_{k,\ell = 0}^{d+1} a_k b_\ell \sum_{i = 2}^n \xi_i^2 p_k(\lambda_i(Y)) p_\ell(\lambda_i(Y)) 
    = \sum_{k,\ell = 0}^{d+1} a_k b_\ell G_{k,\ell}
    = a^\top G b , \label{eqn:fg_xi}
\end{align}
So the difference between \Cref{eqn:fg_xi,eqn:fg_mu} is
\begin{align}
    \abs{ \sum_{i = 2}^n \xi_i^2 f(\lambda_i(Y)) g(\lambda_i(Y)) - \int f(x) g(x) \, \mu_{\sc}(\dd x) }
    &= \abs{a^\top (G - I_{d+2}) b}
    \le \normtwo{G - I_{d+2}} \normtwo{a} \normtwo{b} . \notag 
\end{align}
Applying \Cref{eqn:event1,eqn:norm} establishes \Cref{eqn:event2}. 

Finally, take any $ q \in \cP_d $ with $ \int q^2 \diff\mu_{\sc} = 1 $. 
Applying \Cref{eqn:event2} with $ f = g = q $ gives the first result in \Cref{eqn:event2q} since $ \int f^2 \diff\mu_{\sc} = \int g^2 \diff\mu_{\sc} = \int fg \diff\mu_{\sc} $ all of which are equal to $ \int q^2 \diff\mu_{\sc} = 1 $. 
Applying \Cref{eqn:event2} with $ f(x) = x q(x) \in \cP_{d+1} $ and $ g = q \in \cP_d $ yields the second result in \Cref{eqn:event2q} since 
\begin{align}
    \int f(x)^2 \, \mu_{\sc}(\dd x) &= \int x^2 q(x)^2 \, \mu_{\sc}(\dd x)
    \le \sup\brace{x^2 : x \in \supp(\mu_{\sc})} \cdot \int q(x)^2 \, \mu_{\sc}(\dd x) = 4 . \notag 
\end{align}
This completes the proof of \Cref{itm:event3} and therefore the entire lemma. 
\end{proof}

One can easily verify the following standard fact using \Cref{eqn:U_big}. 
For $ t>2 $, $ p_k(t) $ admits an alternative expression
\begin{align}
    p_k(t) &= \frac{\beta(t)^{k+1} - \beta(t)^{-(k+1)}}{\beta(t) - \beta(t)^{-1}} , \notag 
\end{align}
where 
\begin{align}
    \beta(t) &\coloneqq \frac{t + \sqrt{t^2 - 4}}{2} > 1 . \label{eqn:beta} 
\end{align}
This in turn allows us to prove the lemma below. 

\begin{lemma}
\label{lem:K}
The following results hold: 
\begin{enumerate}
    \item \label{itm:K1} for $ t = \lambda_\star $ defined in \Cref{eqn:lambda_star}, it holds that 
    \begin{align}
        p_k(\lambda_\star) &= \frac{\lambda^{k+1} - \lambda^{-(k+1)}}{\lambda - \lambda^{-1}} , \label{eqn:plam} \\
        K_d(\lambda_\star, \lambda_\star) &= \frac{1}{(\lambda - \lambda^{-1})^2} \sum_{k = 0}^d (\lambda^{k+1} - \lambda^{-(k+1)})^2 
        = \frac{\lambda^{2(d+3)}}{(\lambda^2 - 1)^3} (1 + o(1)) ; \label{eqn:Klam}
    \end{align}

    \item \label{itm:K2} for $ t = t_n $ satisfying $ d \abs{t_n - \lambda_\star} \to 0 $, it holds that 
    \begin{align}
    &&
        K_d(\lambda_\star, t_n) &= (1+o(1)) K_d(\lambda_\star, \lambda_\star) , & 
        K_d(t_n,t_n) &= (1+o(1)) K_d(\lambda_\star, \lambda_\star) . & 
    & \notag 
    \end{align}
\end{enumerate}
\end{lemma}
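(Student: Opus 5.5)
The plan is to use the closed form $p_k(t) = (\beta^{k+1}-\beta^{-(k+1)})/(\beta-\beta^{-1})$ with $\beta=\beta(t)$ from \Cref{eqn:beta}, valid for $t>2$. For \Cref{itm:K1}, substitute $t=\lambda_\star=\lambda+\lambda^{-1}$. Since $\lambda_\star^2-4=(\lambda-\lambda^{-1})^2$ and $\lambda>1$, we get $\beta(\lambda_\star)=(\lambda+\lambda^{-1}+\lambda-\lambda^{-1})/2=\lambda$, which gives \Cref{eqn:plam}. Squaring and summing over $0\le k\le d$ gives the first equality in \Cref{eqn:Klam}. For the asymptotics (as $d\to\infty$), expand $(\lambda^{k+1}-\lambda^{-(k+1)})^2=\lambda^{2(k+1)}-2+\lambda^{-2(k+1)}$. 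The geometric sum $\sum_{k=0}^d\lambda^{2(k+1)}=\lambda^2(\lambda^{2(d+1)}-1)/(\lambda^2-1)$ dominates, while the remaining terms contribute $O(d)$, which is $o(\lambda^{2d})$. Dividing by $(\lambda-\lambda^{-1})^2=(\lambda^2-1)^2/\lambda^2$ yields $\lambda^{2(d+3)}/(\lambda^2-1)^3\,(1+o(1))$. The relative error is $O(\lambda^{-2d}(d+1))$, so for the $(1+o(1))$ one needs $d\to\infty$; I would state that this is the intended regime, or else read the display as an exact formula plus a leading-order statement.

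For \Cref{itm:K2}, write $\beta_n=\beta(t_n)$. Since $t_n\to\lambda_\star>2$, $\beta$ is smooth (indeed Lipschitz) near $\lambda_\star$, so $|\beta_n-\lambda|\le C|t_n-\lambda_\star|$ and hence $d|\beta_n-\lambda|\to0$. It follows that $(\beta_n/\lambda)^{k}=\exp(k\log(\beta_n/\lambda))=1+o(1)$ uniformly over $k\le d+1$, because $k|\log(\beta_n/\lambda)|\le C d|t_n-\lambda_\star|\to0$. Combined with $\beta_n-\beta_n^{-1}\to\lambda-\lambda^{-1}>0$ and $\beta_n^{-(k+1)}\le\beta_n^{-1}<1$, this gives the uniform comparison $p_k(t_n)=(1+o(1))p_k(\lambda_\star)+O(1)$.

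The additive $O(1)$ terms need care. Handle them by splitting the sums into $k\le k_0$ and $k>k_0$. Alternatively, observe that $p_k(t_n)/p_k(\lambda_\star)\to1$ uniformly in $k$ directly: both numerator and denominator are sums $\sum_{j=0}^k \beta^{k-2j}$ of positive terms, each ratio term $(\beta_n/\lambda)^{k-2j}$ is $1+o(1)$ uniformly since $|k-2j|\le d+1$, and a sum of positive terms with uniformly small relative errors has the same relative error. I prefer this formulation because it is cleanest: $p_k(t)=\sum_{j=0}^k\beta^{k-2j}$, as shown in the proof of \Cref{lem:extr}. Then every summand $p_k(\lambda_\star)p_k(t_n)$ and $p_k(t_n)^2$ equals $(1+o(1))p_k(\lambda_\star)^2$ uniformly in $k\le d$. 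Summing nonnegative terms preserves the uniform relative error, which gives both claims.

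The only delicate point is the uniformity in $k$ up to $d$, which is exactly where the hypothesis $d|t_n-\lambda_\star|\to0$ enters; the positive-sum representation resolves it without any case split.
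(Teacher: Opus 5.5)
Your proof is correct and follows essentially the paper's route: $\beta(\lambda_\star)=\lambda$ plus geometric sums (as $d\to\infty$) for the first item, and for the second item a uniform bound $(\beta(t_n)/\lambda)^{k}=1+o(1)$ over $k\le d$ from $d\abs{t_n-\lambda_\star}\to0$. This gives $p_k(t_n)/p_k(\lambda_\star)=1+o(1)$ uniformly, and then the kernel ratios follow by the same weighted-average bound the paper uses; the only variation is that you get the per-$k$ ratio from the positive-sum form $p_k=\sum_{j}\beta^{k-2j}$ rather than the paper's closed-form quotient, which cleanly avoids the cancellation in $\beta^{k+1}-\beta^{-(k+1)}$ that the paper handles implicitly.
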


\begin{proof}
If $ t = \lambda_\star > 0 $, then \Cref{eqn:plam} follows by noting that for any $t>2$, $ \beta(t) $ defined in \Cref{eqn:beta} is the unique solution in $ [1,\infty) $ to $ t = \beta + 1/\beta $, and hence $ \lambda = \beta(\lambda_\star) $. 
Using \Cref{eqn:plam} and recalling from \Cref{eqn:Kd} the definition of $ K_d $ yield
\begin{align}
    K_d(\lambda_\star, \lambda_\star) &= \sum_{k = 0}^d p_k(\lambda_\star)^2 
    = \frac{1}{(\lambda - \lambda^{-1})^2} \sum_{k = 0}^d (\lambda^{k+1} - \lambda^{-(k+1)})^2 \notag \\
    &= \frac{1}{(\lambda - \lambda^{-1})^2} \sum_{k = 0}^d \paren{ \lambda^{2(k+1)} + \lambda^{-2(k+1)} - 2 } \notag \\
    &= \frac{\lambda^2}{(\lambda^2 - 1)^2} \paren{ \lambda^2 \frac{\lambda^{2(d+1)} - 1}{\lambda^2 - 1} + \lambda^{-2} \frac{1 - \lambda^{-2(d+1)}}{1 - \lambda^{-2}} - 2(d+1) } \notag \\
    &= \frac{\lambda^2}{(\lambda^2 - 1)^2} \frac{\lambda^{2(d+2)}}{\lambda^2 - 1} (1+o(1)) 
    = \frac{\lambda^{2(d+3)}}{(\lambda^2 - 1)^3} (1+o(1)) , \notag 
\end{align}
where the asymptotic expansions are with respect to $ d\to\infty $. 
This proves \Cref{eqn:Klam} and therefore \Cref{itm:K1}. 

Turning to \Cref{itm:K2}, recall from \Cref{eqn:lambda1} that $ \lambda_\star>2 $. 
Let $ I\subset(2,\infty) $ be a compact interval such that $ I \ni \lambda_\star $. 
On $I$, $ \beta(t) $ is a continuously differentiable function. 
Expanding $ \log(\beta(t)) $ around $ \lambda_\star $ gives 
\begin{align}
    \log(\beta(t)) &= \log(\beta(\lambda_\star)) + \frac{t - \lambda_\star}{\sqrt{\lambda_\star^2 - 4}} + O\paren{ (t - \lambda_\star)^2 } 
    = \log(\lambda) + O\paren{\abs{t - \lambda_\star}} . \notag 
\end{align}
Recalling the assumption $ d \abs{t - \lambda_\star} \to 0 $, we have that uniformly for every $ 0\le k\le d $, 
\begin{align}
    \paren{ \frac{\lambda}{\beta(t)} }^{k+1}
    &= \exp\paren{ (k+1) \brack{ \log(\lambda) - \log(\beta(t)) } }
    = \exp\paren{ O\paren{ d \abs{t - \lambda_\star} } }
    = \exp\paren{ o(1) }
    = 1 + o(1) . \notag 
\end{align}
In particular, taking $k=0$ above, we have $ \beta(t) \to \lambda $. 
Therefore, 
\begin{align}
    \frac{p_k(t)}{p_k(\lambda_\star)}
    &= \frac{\beta(t)^{k+1} - \beta(t)^{-(k+1)}}{\lambda^{k+1} - \lambda^{-(k+1)}} \cdot \frac{\lambda - \lambda^{-1}}{\beta(t) - \beta(t)^{-1}}
    = 1+o(1) , \label{eqn:p_close} 
\end{align}
uniformly for all $ 0\le k\le d $. 
Then 
\begin{align}
    \abs{ \frac{K_d(\lambda_\star,t)}{K_d(\lambda_\star,\lambda_\star)} - 1 }
    &= \abs{ \frac{\sum_{k=0}^d p_k(\lambda_\star) p_k(t)}{\sum_{k=0}^d p_k(\lambda_\star)^2} - 1 }
    = \abs{ \frac{\sum_{k=0}^d p_k(\lambda_\star)^2 (p_k(t)/p_k(\lambda_\star) - 1)}{\sum_{k=0}^d p_k(\lambda_\star)^2} } \notag \\
    &\le \max_{0\le k\le d} \abs{ \frac{p_k(t)}{p_k(\lambda_\star)} - 1 }
    = o(1) . \notag 
\end{align}
The result $ K_d(t,t) = (1+o(1)) K_d(\lambda_\star,\lambda_\star) $ follows similarly. 
\end{proof}

\begin{proof}[Proof of \Cref{thm:spike}]
Before diving into the proof, we make a few observations as consequences of the preceding lemmas.
Take any $ q \in \cP_d\setminus\{0\} $ and normalize it such that 
\begin{align}
    \int q(x)^2 \, \mu_{\sc}(\dd x) &= 1 . \notag 
\end{align}
By spectral decomposition of $Y$, we have 
\begin{align}
    q(Y) b &= \sum_{i = 1}^n \xi_i q(\lambda_i(Y)) v_i(Y) , \label{eqn:qb}
\end{align}
where $ \xi_i $ is defined in \Cref{eqn:xi}. 
Using \Cref{eqn:qb}, we can write
\begin{align}
&&
    \frac{\inprod{v_1(Y)}{q(Y) b}^2}{\normtwo{q(Y) b}^2} &= \frac{\xi_1^2 q(\lambda_1(Y))^2}{\sum_{i = 1}^n \xi_i^2 q(\lambda_i(Y))^2} , & 
    \frac{\inprod{q(Y) b}{Y q(Y) b}}{\normtwo{q(Y) b}^2} &= \frac{\sum_{i = 1}^n \xi_i^2 \lambda_i(Y) q(\lambda_i(Y))^2}{\sum_{i = 1}^n \xi_i^2 q(\lambda_i(Y))^2} . & 
& \label{eqn:ratio} 
\end{align}

Let the events $ \cE_n $ and the sequence $ \eta_n $ be as in \Cref{lem:event}. 
Then on $ \cE_n $, \Cref{eqn:event2q} holds. 
Also, by \Cref{itm:extr_pt} of \Cref{lem:extr}, $ q(\lambda_1(Y))^2 \le K_d(\lambda_1(Y), \lambda_1(Y)) $. 
Since the event \Cref{eqn:event1} holds on $ \cE_n $, by \Cref{itm:K2} of \Cref{lem:K}, 
\begin{align}
    q(\lambda_1(Y))^2 \le (1+o(1)) K_d(\lambda_\star,\lambda_\star) . \label{eqn:qlambda1}
\end{align}

We now prove the claimed results item by item. 

\paragraph{Proof of \Cref{itm:opt}.}
We first upper bound $ \OPT_{n,d}(\wh{\GOE}(n,\lambda)) $. 
On $ \cE_n $, by \Cref{eqn:ratio,eqn:event2q,eqn:qlambda1}, for any $ q\in\cP_d $, 
\begin{align}
    \frac{\normtwo{q(Y) b}^2}{\inprod{v_1(Y)}{q(Y) b}^2}
    &= 1 + \frac{\sum_{i = 2}^n \xi_i^2 q(\lambda_i(Y))^2}{\xi_1^2 q(\lambda_1(Y))^2}
    \ge 1 + \frac{1 - \eta_n}{(1+o(1)) \xi_1^2 K_d(\lambda_\star,\lambda_\star)} . \notag 
\end{align}
Flipping the ratio, we have 
\begin{align}
    \frac{\inprod{v_1(Y)}{q(Y) b}^2}{\normtwo{q(Y) b}^2}
    &\le \frac{(1+o(1)) \xi_1^2 K_d(\lambda_\star,\lambda_\star)}{(1+o(1)) \xi_1^2 K_d(\lambda_\star,\lambda_\star) + 1 - \eta_n} . \label{eqn:ratio_xi} 
\end{align}
Recalling the definition \Cref{eqn:OV}, taking expectation of \Cref{eqn:ratio_xi} and using the law of total expectation, we have 
\begin{align}
    \cO_{n,d}(q,\wh{\GOE}(n,\lambda))
    &= \expt{ \expt{ \frac{\inprod{v_1(Y)}{q(Y) b}^2}{\normtwo{q(Y) b}^2} \mid \one_{\cE_n} } } \notag \\
    &= \prob{\cE_n} \expt{ \frac{\inprod{v_1(Y)}{q(Y) b}^2}{\normtwo{q(Y) b}^2} \mid \cE_n } + \prob{\cE_n^c} \expt{ \frac{\inprod{v_1(Y)}{q(Y) b}^2}{\normtwo{q(Y) b}^2} \mid \cE_n^c } \notag \\
    &\le \expt{ \frac{\inprod{v_1(Y)}{q(Y) b}^2}{\normtwo{q(Y) b}^2} \mid \cE_n } + o(1) , \notag 
\end{align}
where the last step follows since $ \inprod{v_1(Y)}{\cdot}^2 / \normtwo{\cdot}^2 \in [0,1] $ and $ \prob{\cE_n}\to1 $. 
Since $ \cE_n\in\sigma(Y,g_2, \cdots, g_n) $ is independent of $ g_1 $, conditioning on $ \cE_n $ does not alter the distribution of $ g_1 = \sqrt{n} \xi_1 \sim \cN(0,1) $. 
Using this with \Cref{eqn:ratio_xi}, we have 
\begin{align}
    \cO_{n,d}(q,\wh{\GOE}(n,\lambda))
    &\le \expt{ \frac{(1+o(1)) \xi_1^2 K_d(\lambda_\star,\lambda_\star)}{(1+o(1)) \xi_1^2 K_d(\lambda_\star,\lambda_\star) + 1 - \eta_n} \mid \cE_n } + o(1) \notag \\
    &= \expt{ \frac{g_1^2 K_d(\lambda_\star,\lambda_\star)/n}{g_1^2 K_d(\lambda_\star,\lambda_\star)/n + 1} \mid \cE_n } + o(1)
    = \Phi\paren{ \frac{K_d(\lambda_\star,\lambda_\star)}{n} } + o(1) , \notag 
\end{align}
where the last line is by continuity of $\Phi$ and the fact that $ \eta_n\downarrow0 $. 
Since this holds uniformly over all $ q\in\cP_d $, maximizing over such $q$ gives the upper bound
\begin{align}
    \OPT_{n,d}(\wh{\GOE}(n,\lambda)) &\le \Phi\paren{ \frac{K_d(\lambda_\star,\lambda_\star)}{n} } + o(1) . \label{eqn:OPT_ub} 
\end{align}

Next, we show a matching lower bound. 
This amounts to exhibiting a polynomial $ q\in\cP_d $ whose value of $ \cO_{n,d} $ asymptotically attains the RHS of \Cref{eqn:OPT_ub}. 
Consider the polynomial $ q_{\opt} \in \cP_d $ defined in \Cref{eqn:def_qopt}. 
By \Cref{itm:extr_pt} of \Cref{lem:extr}, 
\begin{align}
&&
    \int q_{\opt}(x)^2 \, \mu_{\sc}(\dd x) &= 1 , & 
    q_{\opt}(\lambda_\star)^2 &= K_d(\lambda_\star, \lambda_\star) . &  
& \notag 
\end{align}
On $ \cE_n $, \Cref{eqn:event1} holds, so by \Cref{itm:K2} of \Cref{lem:K}, 
\begin{align}
    q_{\opt}(\lambda_1(Y))^2 &= \frac{K_d(\lambda_\star, \lambda_1(Y))^2}{K_d(\lambda_\star,\lambda_\star)}
    = (1+o(1)) K_d(\lambda_\star,\lambda_\star) . \label{eqn:qopt} 
\end{align}
By \Cref{eqn:event2q,eqn:ratio,eqn:qopt}, following similar reasoning leading to \Cref{eqn:ratio_xi}, we obtain
\begin{align}
    \frac{\inprod{v_1(Y)}{q_{\opt}(Y) b}^2}{\normtwo{q_{\opt}(Y) b}^2}
    &\ge \frac{(1+o(1)) \xi_1^2 K_d(\lambda_\star,\lambda_\star)}{(1+o(1)) \xi_1^2 K_d(\lambda_\star,\lambda_\star) + 1 + \eta_n} . \label{eqn:qopt_lb} 
\end{align}
Taking expectation, using independence between $ \cE_n,\xi_1 $, continuity of $\Phi$, and the facts $ \prob{\cE_n}\to1, \eta_n\to0 $, we further have 
\begin{align}
    \cO_{n,d}(q_{\opt},\wh{\GOE}(n,\lambda)) &\ge \prob{\cE_n} \expt{ \frac{\inprod{v_1(Y)}{q_{\opt}(Y) b}^2}{\normtwo{q_{\opt}(Y) b}^2} \mid \cE_n } \notag \\
    &\ge (1-o(1)) \expt{ \frac{(1+o(1)) \xi_1^2 K_d(\lambda_\star,\lambda_\star)}{(1+o(1)) \xi_1^2 K_d(\lambda_\star,\lambda_\star) + 1 + \eta_n} \mid \cE_n }
    = \Phi\paren{ \frac{K_d(\lambda_\star,\lambda_\star)}{n} } - o(1) . \notag
\end{align}
Therefore the same lower bound holds for $ \OPT_{n,d}(\wh{\GOE}(n,\lambda)) $. 
This together with \Cref{eqn:OPT_ub} establishes \Cref{itm:opt}. 

\paragraph{Proof of \Cref{itm:val_sub}.}
Assuming \Cref{eqn:cond_sub}, our strategy is again to establish a pair of matching upper and lower bounds. 
For the upper bound, take any $ q\in\cP_d $ and without loss of optimality assume $ \int q^2 \diff\mu_{\sc} = 1 $. 
By \Cref{itm:extr_frac} of \Cref{lem:extr}, 
\begin{align}
    \int x q(x)^2 \, \mu_{\sc}(\dd x) &\le r_d . \notag 
\end{align}
On $ \cE_n $, \Cref{eqn:event2q} holds. 
Therefore, 
\begin{align}
&&
    \sum_{i = 2}^n \xi_i^2 \lambda_i(Y) q(\lambda_i(Y))^2 &\le r_d + 2 \eta_n , & 
    \sum_{i = 2}^n \xi_i^2 q(\lambda_i(Y))^2 \ge 1 - \eta_n . & 
& \notag 
\end{align}
For the term involving the outlying eigenvalue, we have 
\begin{align}
    \xi_1^2 \lambda_1(Y) q(\lambda_1(Y))^2 &\le (1+o(1)) \xi_1^2 \lambda_\star K_d(\lambda_\star,\lambda_\star) \notag 
\end{align}
obtained from \Cref{eqn:qlambda1,eqn:event1}.
Plugging the above bounds into \Cref{eqn:ratio}, we arrive at
\begin{align}
    \frac{\inprod{q(Y) b}{Y q(Y) b}}{\normtwo{q(Y) b}^2}
    &= \frac{\xi_1^2 \lambda_1(Y) q(\lambda_1(Y))^2 + \sum_{i = 2}^n \xi_i^2 \lambda_i(Y) q(\lambda_i(Y))^2}{\xi_1^2 q(\lambda_1(Y))^2 + \sum_{i = 2}^n \xi_i^2 q(\lambda_i(Y))^2} 
    \notag \\
    &\le \frac{(1+o(1)) \lambda_\star \xi_1^2 K_d(\lambda_\star,\lambda_\star) + r_d + 2\eta_n}{\sum_{i = 2}^n \xi_i^2 q(\lambda_i(Y))^2}
    \notag \\
    &\le \frac{(1+o(1)) \lambda_\star \xi_1^2 K_d(\lambda_\star,\lambda_\star) + r_d + 2\eta_n}{1 - \eta_n}
    \notag \\
    &= \frac{(1+o(1)) \lambda_\star g_1^2 K_d(\lambda_\star,\lambda_\star)/n + r_d + 2\eta_n}{1 - \eta_n} . \label{eqn:val_ub} 
\end{align}
Recalling the definition \Cref{eqn:OV}, first conditioning on $ \cE_n $ and then taking expectation of \Cref{eqn:val_ub} over $ g_1 \sim \cN(0,1) $, using the facts that $ \prob{\cE_n^c}\to0, \eta_n\downarrow0 $ and the assumption \Cref{eqn:cond_sub}, we have that 
\begin{align}
    \cV_{n,d}(q,\wh{\GOE}(n,\lambda)) &\le r_d + o(1) , \notag 
\end{align}
uniformly for all $ q\in\cP_d $. 
Maximizing over such $ q $, we obtain the desired upper bound $ \VAL_{n,d}(\wh{\GOE}(n,\lambda)) \le r_d + o(1) $. 

For the lower bound, consider the polynomial $ q_{\val} \in \cP_d $ defined in \Cref{eqn:def_qval}. 
By \Cref{itm:extr_frac} of \Cref{lem:extr}, $ q_{\val} $ enjoys the following properties: 
\begin{align}
&&
    \int q_{\val}(x)^2 \, \mu_{\sc}(\dd x) &= 1 , & 
    \int x q_{\val}(x)^2 \, \mu_{\sc}(\dd x) &= r_d . & 
& \notag 
\end{align}
On $ \cE_n $, by \Cref{eqn:event2q}, 
\begin{align}
&&
    \sum_{i = 2}^n \xi_i^2 q_{\val}(\lambda_i(Y))^2 &= 1+ O(\eta_n) , & 
    \sum_{i = 2}^n \xi_i^2 \lambda_i(Y) q_{\val}(\lambda_i(Y))^2 &= r_d + O(\eta_n) . & 
& \notag 
\end{align}
By \Cref{itm:extr_pt} of \Cref{lem:extr} and \Cref{itm:K2} of \Cref{lem:K}, on $ \cE_n $, 
\begin{align}
    q_{\val}(\lambda_1(Y))^2 &\le K_d(\lambda_1(Y), \lambda_1(Y)) = (1+o(1)) K_d(\lambda_\star,\lambda_\star) , \notag 
\end{align}
so by the assumption \Cref{eqn:cond_sub}, 
\begin{align}
    \xi_1^2 q_{\val}(\lambda_1(Y))^2 &\le (1+o(1)) g_1^2 K_d(\lambda_\star,\lambda_\star)/n = o_{\bbP}(1) . \notag 
\end{align}
The same estimate holds for $ \xi_1^2 \lambda_1(Y) q_{\val}(\lambda_1(Y))^2 $. 
Using these in \Cref{eqn:ratio}, we have 
\begin{align}
    \frac{\inprod{q_{\val}(Y) b}{Y q_{\val}(Y) b}}{\normtwo{q_{\val}(Y) b}^2}
    &= r_d + o_{\bbP}(1) . \label{eqn:qval_p} 
\end{align}
To pass to convergence in expectation, note that 
\begin{align}
    \abs{ \frac{\inprod{q_{\val}(Y) b}{Y q_{\val}(Y) b}}{\normtwo{q_{\val}(Y) b}^2} }
    &\le \normtwo{Y} \notag 
\end{align}
the RHS of which has second moment uniformly bounded in $n$ and is therefore uniformly integrable. 
So the LHS of \Cref{eqn:qval_p} converges to the RHS in $ L^1 $, and therefore also in expectation, that is, 
\begin{align}
    \cV_{n,d}(q_{\val},\wh{\GOE}(n,\lambda)) &= r_d + o(1) . \notag 
\end{align}
This proves $ \VAL_{n,d}(\wh{\GOE}(n,\lambda)) \ge r_d + o(1) $, which, combined with the upper bound, in turn proves \Cref{itm:val_sub}. 

\paragraph{Proof of \Cref{itm:val_sup}.}
Now assume \Cref{eqn:cond_sup}.
Let us first establish a lower bound. 
Consider again $ q_{\opt} $ in \Cref{eqn:def_qopt}. 
Passing to the $n\to\infty$ limit on the RHS of \Cref{eqn:qopt_lb}, we have 
\begin{align}
    \frac{\inprod{v_1(Y)}{q_{\opt}(Y) b}^2}{\normtwo{q_{\opt}(Y) b}^2} &\overset{\mathrm{p}}{\to} 1 . \notag 
\end{align}
Since the ratio on the LHS is point-wise nonnegative and at most $1$, and hence uniformly integrable, we also have convergence in $ L^1 $: 
\begin{align}
    \frac{\inprod{v_1(Y)}{q_{\opt}(Y) b}^2}{\normtwo{q_{\opt}(Y) b}^2} &\overset{L^1}{\to} 1 . \label{eqn:L1} 
\end{align}
Now define, for each $i\in[n]$, 
\begin{align}
    w_i &\coloneqq \frac{\xi_i^2 q_{\opt}(\lambda_i(Y))^2}{\sum_{j = 1}^n \xi_j^2 q_{\opt}(\lambda_j(Y))^2} . \label{eqn:w} 
\end{align}
By \Cref{eqn:ratio}, 
\begin{align}
&&
    \frac{\inprod{v_1(Y)}{q_{\opt}(Y) b}^2}{\normtwo{q_{\opt}(Y) b}^2} &= w_1 , & 
    \frac{\inprod{q_{\opt}(Y) b}{Y q_{\opt}(Y) b}}{\normtwo{q_{\opt}(Y) b}^2}
    &= \sum_{i = 1}^n w_i \lambda_i(Y) . & 
& \notag 
\end{align}
By definition \Cref{eqn:w}, $ \sum_{i = 1}^n w_i = 1 $ and $ w_i\ge0 $ for all $i\in[n]$. 
By \Cref{eqn:L1}, $ w_1 \to 1 $ in $ L^1 $. 
Therefore, 
\begin{align}
    \abs{ \frac{\inprod{q_{\opt}(Y) b}{Y q_{\opt}(Y) b}}{\normtwo{q_{\opt}(Y) b}^2} - \lambda_1(Y) }
    &= \abs{ \sum_{i = 1}^n (\lambda_i(Y) - \lambda_1(Y)) w_i }
    = \abs{ \sum_{i = 2}^n (\lambda_i(Y) - \lambda_1(Y)) w_i } \notag \\
    &\le \paren{ \max_{2\le i\le n} \abs{\lambda_i(Y) - \lambda_1(Y)} } \cdot \abs{ \sum_{i = 2}^n w_i } \notag \\
    &\le \paren{ \max_{2\le i\le n} \abs{\lambda_i(Y)} + \abs{\lambda_1(Y)} } \cdot (1 - w_1) \notag \\
    &\le 2 \normtwo{Y} \cdot (1 - w_1) . \notag 
\end{align}
Since $ \normtwo{Y} $ has uniformly bounded second moment and $ 1 - w_1 \to 0 $ in $ L^1 $, the RHS above converges to $0$ in $ L^1 $. 
Moreover, $ \lambda_1(Y) \to \lambda_\star $ in $ L^1 $. 
We conclude
\begin{align}
    \frac{\inprod{q_{\opt}(Y) b}{Y q_{\opt}(Y) b}}{\normtwo{q_{\opt}(Y) b}^2}
    &\overset{L^1}{\to} \lambda_\star , \notag 
\end{align}
which implies convergence in expectation: $ \cV_{n,d}(q_{\opt},\wh{\GOE}(n,\lambda)) \to \lambda_\star $.
Consequently, $ \VAL_{n,d}(\wh{\GOE}(n,\lambda)) \ge \lambda_\star - o(1) $. 

The upper bound follows trivially from the fact that 
\begin{align}
    \frac{\inprod{q(Y) b}{Y q(Y) b}}{\normtwo{q(Y) b}^2} &\le \lambda_1(Y) . \notag 
\end{align}
Thus $ \VAL_{n,d}(\wh{\GOE}(n,\lambda)) \le \expt{\lambda_1(Y)} = \lambda_\star + o(1) $. 

\paragraph{Proof of \Cref{itm:concl}.}
This amounts to studying the criticality conditions \Cref{eqn:cond_sub,eqn:cond_sup}. 
By the asymptotic expansion \Cref{eqn:Klam} and the scaling assumption \Cref{eqn:d} on $d$, 
\begin{align}
    \frac{K_d(\lambda_\star,\lambda_\star)}{n}
    &= n^{-1} \lambda^{2d} \frac{\lambda^{6}}{(\lambda^2 - 1)^3} (1+o(1))
    = n^{-1 + 2 \delta \log(\lambda) + o(1)} . \notag 
\end{align}
Recalling the definition \Cref{eqn:deltac} of $ \delta_c $, it becomes evident that \Cref{eqn:cond_sup} holds whenever $ \delta > \delta_c $ and \Cref{eqn:cond_sub} holds whenever $ \delta < \delta_c $. 
The result \Cref{eqn:lim} then follows by verifying 
\begin{align}
&&
    \lim_{t\to0} \Phi(t) &= 0 , & 
    \lim_{t\to\infty} \Phi(t) &= 1 , & 
    \lim_{d\to\infty} r_d &= 2 & 
& \label{eqn:lim_Phi} 
\end{align}
using the definitions \Cref{eqn:Phi,eqn:rd}. 
This concludes the proof of \Cref{thm:spike}. 
\end{proof}

\section{Proofs for spiked GOE with a prior}
\label{sec:pf_spiked_goe_prior}

\begin{proof}[Proof of \Cref{prop:SE}.]
\cite[Section 2.4]{Montanari_Venkataramanan} considers Bayes-AMP for a more general model of $ Y $ where $ \normtwo{v} \to 1 $ and the empirical distribution of the elements in $ \sqrt{n} \, v $ converges weakly to a probability measure $ \nu $ with unit second moment. 
The Bayes-AMP for this more general model reads: 
\begin{subequations}
\label{eqn:MV}
\begin{align}
&&
    v^{t+1} &= Y f_t(v^t) - b_t f_{t-1}(v^{t-1}) , & 
    b_t &= \frac{1}{n} \sum_{i = 1}^n f_t'(v_i^t) , & 
& \label{eqn:BAMP_gen} 
\end{align}
where 
\begin{align}
&&
    f_t(v) &= \lambda \expt{ \sfV \mid \alpha_t \sfV + \sqrt{\alpha_t} \sfX = v } , &
    (\sfV, \sfX) &\sim \nu \ot \cN(0,1) . & 
& \label{eqn:f_gen}
\end{align}
The constants $ (\alpha_t)_{t\ge0} $ in the above display are defined recursively by: 
\begin{align}
\begin{split}
    \alpha_0 &= \lambda^2 - 1 , \\
    \alpha_{t+1} &= \lambda^2 \paren{ 1 - \expt{\paren{\sfV - \expt{\sfV \mid \sqrt{\alpha_t} \sfV + \sfX}}^2} } 
    = \lambda^2 \expt{ \expt{\sfV \mid \alpha_t \sfV + \sqrt{\alpha_t} \sfX}^2 } , 
\end{split}
\label{eqn:SE_gen}
\end{align}
\end{subequations}
where the last equality is by the tower property of conditional expectation. 
Specializing \Cref{eqn:MV} to our model \Cref{eqn:spiked_model_prior}, it is easy to verify that 
\begin{align}
&&
    f_t(v) &= \frac{\lambda v}{\alpha_t + 1} , & 
    b_t &= \frac{\lambda}{\alpha_t + 1} , & 
    \alpha_{t+1} &= \lambda^2 \frac{\alpha_t}{\alpha_t+1} . & 
& \notag 
\end{align}
Also, $ \alpha \coloneqq \lambda^2 - 1 $ is the unique nontrivial fixed point of the recursion for $ \alpha_t $, and hence $ \alpha_t = \alpha $ for all $t\ge0$. 
This also implies that $ f_t $ is Lipschitz continuous, as required by \cite[Theorem 2]{Montanari_Venkataramanan}. 
So \Cref{eqn:MV} is reduced to \Cref{eqn:BAMP} and the conclusion of \cite[Theorem 2]{Montanari_Venkataramanan} then translates to the claimed Wasserstein-$2$ convergence result. 
Moreover, $ (\sfX_1, \cdots, \sfX_t) $ forms a centered Gaussian process whose covariance can be recursively specified as follows.
By \cite[Equation (26)]{Feng_etal}, for any $ s,t\ge1 $, 
\begin{align}
    \expt{ (\sqrt{\alpha_{s+1}} \sfX_{s+1}) (\sqrt{\alpha_{t+1}} \sfX_{t+1}) }
    &= \expt{ f_s( \alpha_s \sfV + \sqrt{\alpha_s} \sfX_s ) f_t( \alpha_t \sfV + \sqrt{\alpha_t} \sfX_t ) } . \label{eqn:cov} 
\end{align}

The first limit in \Cref{eqn:SE_E} is a direct consequence of the Wasserstein-$2$ convergence result. 
Indeed, letting $ \sfV_t \coloneqq \alpha \sfV + \sqrt{\alpha} \sfX_t $, we have that for every $ t\ge1 $, 
\begin{align}
    \lim_{n\to\infty} \expt{ \frac{\inprod{v}{v^t}^2}{\normtwo{v}^2 \normtwo{v^t}^2} } 
    &= \frac{\expt{\sfV \, \sfV_t}^2}{\expt{\sfV^2} \expt{\sfV_t^2}}
    = \frac{\alpha^2}{\alpha^2 + \alpha} = 1 - \frac{1}{\lambda^2} . \notag 
\end{align}
For the second limit, using the update rule \Cref{eqn:BAMPt}, we have that for any $t\ge1$, 
\begin{align}
    \lim_{n\to\infty} \expt{ \frac{\inprod{v^{t}}{Y v^{t}}}{\normtwo{v^{t}}^2} }
    &= \lambda \lim_{n\to\infty} \expt{ \frac{\inprod{v^t}{v^{t+1}}}{\normtwo{v^t}^2} } + \frac{1}{\lambda} \lim_{n\to\infty} \expt{ \frac{\inprod{v^t}{v^{t-1}}}{\normtwo{v^t}^2} } \notag \\
    &= \lambda \frac{\expt{\sfV_t \sfV_{t+1}}}{\expt{\sfV_t^2}} + \frac{1}{\lambda} \frac{\expt{\sfV_t \sfV_{t-1}}}{\expt{\sfV_t^2}} \notag \\
    &= \lambda \frac{\alpha^2 + \alpha x_t}{\alpha^2 + \alpha} + \frac{1}{\lambda} \frac{\alpha^2 + \alpha x_{t-1}}{\alpha^2 + \alpha} , \label{eqn:xt} 
\end{align}
where $ x_t \coloneqq \expt{\sfX_t \sfX_{t+1}} $. 
By \Cref{eqn:cov}, we have the following recursion for $ x_t $: 
\begin{align}
    \alpha x_t &= \paren{ \frac{\lambda}{\alpha+1} }^2 \paren{ \alpha^2 + \alpha x_{t-1} } , \label{eqn:xt_recur} 
\end{align}
from which it is easy to show that $ x_t $ converges to the unique fixed point $1$ as $t\to\infty$. 
Using this in \Cref{eqn:xt} produces the second result in \Cref{eqn:SE_E}. 

Finally, the asymptotic alignment between $ v^t $ and $ v_1(Y) $ in \Cref{eqn:align} can be established following similar steps in the proof of \cite[Lemma E.4]{Zhang_Mondelli}. 
We briefly spell out the details. 
Let $ e^t \coloneqq v^{t+1} - v^t $ and $ C\in(0,\infty) $ be a constant sufficiently large so that almost surely $ Z \coloneqq Y + \lambda C I_n $ is strictly positive definite for all large $n$, i.e., $ \lim_{n\to\infty} \lambda_n(Z) > 0 $. 
Now it is easy to check that the iteration \Cref{eqn:BAMPt} is identical to
\begin{align}
&&
    v^{t+1} 
    &= \frac{Z}{\lambda_\star + \lambda C} v^t 
    + \wh{e}^t , & 
    \textnormal{where } 
    \wh{e}^t &\coloneqq \frac{\lambda^{-2} + C}{1 + \lambda^{-2} + C} e^t + \frac{\lambda^{-2}}{1+\lambda^{-2}+C} e^{t-1} . &
& \notag 
\end{align}
Assume $t,s$ are both large constants. 
Unrolling $ v^{t+s} $ down to $ v^t $, we obtain 
\begin{align}
&&
    v^{t+s} &= \paren{ \frac{Z}{\lambda_\star + \lambda C} }^s v^t + \wh{e}^{t,s} , & 
    \textnormal{where } 
    \wh{e}^{t,s} &= \sum_{r=0}^{s-1} \paren{ \frac{Z}{\lambda_\star + \lambda C} }^{s-1-r} \wh{e}^{t+r} . &
& \label{eqn:powitr} 
\end{align}
We compute the limit of the normalized squared norm of both sides. 
For the error term on the RHS, we claim that 
\begin{align}
    \lim_{s\to\infty} \lim_{t\to\infty} \lim_{n\to\infty} n^{-1} \normtwo{\wh{e}^{t,s}}^2 &= 0 . \label{eqn:error} 
\end{align}
For the other term on the RHS, 
\begin{align}
    n^{-1} \normtwo{\paren{ \frac{Z}{\lambda_\star + \lambda C} }^s v^t}^2
    &= n^{-1} \normtwo{\paren{ \frac{Z}{\lambda_\star + \lambda C} }^s v_1(Y) v_1(Y)^\top v^t}^2
    + n^{-1} \normtwo{\paren{ \frac{Z}{\lambda_\star + \lambda C} }^s (I_n - v_1(Y) v_1(Y)^\top) v^t}^2 . \notag 
\end{align}
The first part equals 
\begin{align}
    \paren{\frac{\lambda_1(Y) + \lambda C}{\lambda_\star + \lambda C}}^{2s} \frac{\inprod{v_1(Y)}{v^t}^2}{n} . \label{eqn:part1} 
\end{align}
The second part can be upper bounded as 
\begin{align}
    n^{-1} \normtwo{\paren{ \frac{Z}{\lambda_\star + \lambda C} (I_n - v_1(Y) v_1(Y)^\top) }^s v^t}^2
    &\le \frac{\normtwo{v^t}^2}{n} \sigma_1\paren{ \paren{\frac{Z}{\lambda_\star + \lambda C} (I_n - v_1(Y) v_1(Y)^\top)}^{s} }^2 \notag \\
    &= \frac{\normtwo{v^t}^2}{n} \sigma_1\paren{ \frac{Z}{\lambda_\star + \lambda C} (I_n - v_1(Y) v_1(Y)^\top) }^{2s} \notag \\
    &= \frac{\normtwo{v^t}^2}{n} \sigma_2\paren{ \frac{Z}{\lambda_\star + \lambda C} }^{2s} \notag \\
    &= \frac{\normtwo{v^t}^2}{n} \paren{ \frac{\lambda_2(Y) + \lambda C}{\lambda_\star + \lambda C} }^{2s} . \label{eqn:part2} 
\end{align}
Moreover, by state evolution, for any $t$,
\begin{align}
    \lim_{n\to\infty} n^{-1} \normtwo{v^t}^2 &= \alpha^2 + \alpha = \lambda^2 (\lambda^2 - 1) \in (0,\infty) . \notag 
\end{align}
Combining this with $ \lambda_2(Y) \to 2 < \lambda_\star $ (as $n\to\infty$), we see that \Cref{eqn:part2} vanishes under the limits $n\to\infty$ followed by $t\to\infty$ and then $ s\to\infty $. 
Therefore, taking the normalized squared norm and the same limit on both sides of \Cref{eqn:powitr} and using \Cref{eqn:part1,eqn:part2,eqn:error}, we conclude 
\begin{align}
    \lim_{t\to\infty} \lim_{n\to\infty} \frac{\inprod{v_1(Y)}{v^t}^2}{\normtwo{v^t}^2} &= 1 . \notag 
\end{align}

To complete the proof of \Cref{eqn:align}, it remains to justify \Cref{eqn:error}. 
By state evolution, 
\begin{align}
    \lim_{t\to\infty} \lim_{n\to\infty} n^{-1} \normtwo{e^t}^2
    &= \lim_{t\to\infty} \lim_{n\to\infty} \paren{ n^{-1} \normtwo{v^{t+1}}^2 + n^{-1} \normtwo{v^{t}}^2 - 2 n^{-1} \inprod{v^{t+1}}{v^t} } \notag \\
    &= 2 (\alpha^2 + \alpha) - 2 \lim_{t\to\infty} \lim_{n\to\infty} (\alpha^2 + \alpha x_t)
    = 0 , \notag 
\end{align}
by recalling from \Cref{eqn:xt_recur} that $ x_t = \expt{\sfX_{t+1} \sfX_{t}} \to 1 $ as $t\to\infty$. 
This implies $ \lim_{t\to\infty} \lim_{n\to\infty} n^{-1} \normtwo{\wh{e}^t}^2 = 0 $. 
Using this along with the triangle inequality and the fact $ \lim_{n\to\infty} \lambda_1(Z) < \infty $ proves \Cref{eqn:error}. 
\end{proof}

\begin{lemma}
\label{lem:AMP_poly}
Consider the Bayes-AMP algorithm \Cref{eqn:BAMP}. 
Then for every $t\ge0$, the iterate $ v^{t+1} $ can be written as 
\begin{align}
    v^{t+1} &= \frac{1}{\lambda^{t+1}} p_{t+1}(Y) v^0 , \label{eqn:ind} 
\end{align}
where $ (p_{t+1})_{t\ge0} $ is defined in \Cref{eqn:p}. 
\end{lemma}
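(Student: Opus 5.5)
We argue by strong induction on $t$, verifying the claim \Cref{eqn:ind} for $v^1$ and $v^2$ directly and then propagating it with the three-term recurrence \Cref{eqn:recur}. It is convenient to note first that the claim is also consistent at $t+1=0$, since $p_0\equiv 1$ gives $v^0 = \lambda^{0} p_0(Y) v^0$. This lets us treat the recursion uniformly.

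\emph{Base cases.} For $t=0$, the update \Cref{eqn:BAMP0} gives $v^1 = \lambda^{-1} Y v^0$. Since $p_1(x) = U_1(x/2) = x$ (from \Cref{eqn:U_small}, $U_1(\cos\theta) = 2\cos\theta$), we get $v^1 = \lambda^{-1} p_1(Y) v^0$, as claimed. For $t=1$, the update \Cref{eqn:BAMPt} gives
\begin{align}
    v^2 = \lambda^{-2} Y^2 v^0 - \lambda^{-2} v^0 = \lambda^{-2}(Y^2 - I_n) v^0 .
\end{align}
By \Cref{eqn:recur} with $k=1$, $p_2(x) = x p_1(x) - p_0(x) = x^2 - 1$, so $v^2 = \lambda^{-2} p_2(Y) v^0$.

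\emph{Inductive step.} Assume the claim holds for $v^{t}$ and $v^{t-1}$ with $t\ge1$; we use the convention $v^0 = p_0(Y)v^0$ when $t=1$. Substituting into \Cref{eqn:BAMPt} gives
\begin{align}
    v^{t+1} = \lambda^{-1} Y \lambda^{-t} p_t(Y) v^0 - \lambda^{-2}\lambda^{-(t-1)} p_{t-1}(Y) v^0 = \lambda^{-(t+1)}\paren{ Y p_t(Y) - p_{t-1}(Y) } v^0 .
\end{align}
By functional calculus \Cref{eqn:fun_cal}, the recurrence \Cref{eqn:recur} holds at the matrix level: $Y p_t(Y) - p_{t-1}(Y) = p_{t+1}(Y)$. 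This yields $v^{t+1} = \lambda^{-(t+1)} p_{t+1}(Y) v^0$.

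The only point requiring care is that the powers of $\lambda$ match: the $1/\lambda$ and $1/\lambda^2$ coefficients in \Cref{eqn:BAMPt} are exactly what make both terms carry the common factor $\lambda^{-(t+1)}$. There is no genuine obstacle beyond this bookkeeping.
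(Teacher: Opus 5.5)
Your proof is correct and follows the paper's argument essentially verbatim: induction on $t$, with $v^1=\lambda^{-1}p_1(Y)v^0$ and $v^2=\lambda^{-2}(Y^2-I_n)v^0=\lambda^{-2}p_2(Y)v^0$ checked directly, and the inductive step obtained by substituting into \Cref{eqn:BAMPt} and applying the three-term recurrence \Cref{eqn:recur}. Your convention $v^0=p_0(Y)v^0$ makes the separate $t=1$ base case redundant, which is harmless.
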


\begin{proof}
This can be proved by induction on time. 
The $ t=0 $ case holds by \Cref{eqn:BAMP0} and the definition $ p_1(x) = x $. 
For $ t=1 $, according to the update rule \Cref{eqn:BAMPt}, 
\begin{align}
    v^2 &= \frac{1}{\lambda} Y v^1 - \frac{1}{\lambda^2} v^0
    = \frac{1}{\lambda^2} \paren{ Y^2 - I_n } v^0 . \notag 
\end{align}
By the definition of $ p_2(x) = x^2 - 1 $, $ v^2 = p_2(Y) v^0 / \lambda^2 $. 

For any $t\ge2$, assuming the validity of \Cref{eqn:ind} up to time $t-1$, we verify its validity for $ t $.
By \Cref{eqn:BAMPt} and the induction hypothesis, 
\begin{align}
    v^{t+1} &= \frac{1}{\lambda} Y \frac{p_t(Y) v^0}{\lambda^{t}} - \frac{1}{\lambda^2} \frac{p_{t-1}(Y) v^0}{\lambda^{t-1}} 
    = \frac{1}{\lambda^{t+1}} \paren{ Y p_t(Y) - p_{t-1}(Y) } v^0
    = \frac{1}{\lambda^{t+1}} p_{t+1}(Y) v^0 , \notag 
\end{align}
where the last step follows from the three-term recurrence relation \Cref{eqn:recur} for $ p_{t+1} $. 
This completes the proof. 
\end{proof}

\begin{proof}[Proof of \Cref{thm:spike_prior}.]
By \Cref{lem:AMP_poly}, it suffices to study $ \cO_{n,t}(p_t,\wh{\GOE}(n,\lambda)) $ and $ \cV_{n,t}(p_t,\wh{\GOE}(n,\lambda)) $ since both quantities are invariant under nonzero scaling of $ p_t $. 
%
We first collect a few facts about $ p_t $. 
By orthonormality with respect to $ \mu_{\sc} $ and the recurrence relation \Cref{eqn:recur}, 
\begin{align}
&&
    \int p_t(x)^2 \, \mu_{\sc}(\dd x) &= 1 , &
    \int x p_t(x)^2 \, \mu_{\sc}(\dd x) &= \int p_{t+1}(x) p_t(x) + p_{t-1}(x) p_t(x) \, \mu_{\sc}(\dd x) = 0 . &
& \label{eqn:pt} 
\end{align}
Recalling from \Cref{eqn:plam} the explicit expression of $ p_t(\lambda_\star) $, we have 
\begin{align}
    p_t(\lambda_\star)^2 &= \paren{ \frac{\lambda^{t+1} - \lambda^{-(t+1)}}{\lambda - \lambda^{-1}} }^2 
    = \frac{\lambda^{2(t+2)}}{(\lambda^2 - 1)^2} (1+o(1)) , \label{eqn:pt2} 
\end{align}
where the last asymptotic expansion is with respect to $ t\to\infty $. 

Next, by \Cref{eqn:p_close}, on the event $ \cE_n $ from \Cref{lem:event} where \Cref{eqn:event1} holds, we have 
\begin{align}
    \paren{ \frac{p_t(\lambda_1(Y))}{p_t(\lambda_\star)} }^2 &= 1 + o(1) . \label{eqn:pt_close} 
\end{align}
Moreover, by \Cref{itm:event3} of \Cref{lem:event} and \Cref{eqn:pt}, 
\begin{align}
&&
    1 - \eta_n &\le \sum_{i = 2}^n \xi_i^2 p_t(\lambda_i(Y))^2 \le 1 + \eta_n , & 
    \abs{ \sum_{i = 2}^n \xi_i^2 \lambda_i(Y) p_t(\lambda_i(Y))^2 } &\le 2 \eta_n , & 
& \label{eqn:pt_fact} 
\end{align}
where $ \xi_i $ is defined in \Cref{eqn:xi} with $ b = v^0 $. 

Armed with these facts, we are ready to present the proof of the claimed results. 

Applying spectral decomposition to $Y$, using \Cref{eqn:pt_close,eqn:pt_fact}, we have
\begin{align}
    \frac{\inprod{v_1(Y)}{p_t(Y) v^0}^2}{\normtwo{p_t(Y) v^0}^2} &= \frac{\xi_1^2 p_t(\lambda_1(Y))^2}{\sum_{i=1}^n \xi_i^2 p_t(\lambda_i(Y))^2} \label{eqn:Opt} \\
    &\in \brack{ \frac{(1 - o(1)) g_1^2 p_t(\lambda_\star)^2/n}{1 + \eta_n + (1+o(1)) g_1^2 p_t(\lambda_\star)^2/n} , \frac{(1 + o(1)) g_1^2 p_t(\lambda_\star)^2/n}{1 - \eta_n + (1-o(1)) g_1^2 p_t(\lambda_\star)^2/n} } , \notag 
\end{align}
where $ g_i $ is defined in \Cref{eqn:g}. 
Since $ \cE_n $ is independent of $ g_1 $ and satisfies $ \prob{\cE_n}\to1 $, taking expectation, using the fact $ \eta_n\downarrow0 $ and continuity of $ \Phi $ (defined in \Cref{eqn:Phi}), we have 
\begin{align}
    \cO_{n,t}(p_t,\wh{\GOE}(n,\lambda)) &= \Phi\paren{ \frac{p_t(\lambda_\star)^2}{n} } + o(1) . \label{eqn:Opt_result} 
\end{align}

Turning to $ \cV_{n,t}(p_t,\wh{\GOE}(n,\lambda)) $, we write
\begin{align}
    \frac{\inprod{p_t(Y) v^0}{Y p_t(Y) v^0}}{\normtwo{p_t(Y) v^0}^2}
    &= \frac{\sum_{i=1}^n \xi_i^2 \lambda_i(Y) p_t(\lambda_i(Y))^2}{\sum_{i=1}^n \xi_i^2 p_t(\lambda_i(Y))^2} . \label{eqn:Vpt} 
\end{align}
Consider
\begin{align}
    & \frac{\inprod{p_t(Y) v^0}{Y p_t(Y) v^0}}{\normtwo{p_t(Y) v^0}^2} - \lambda_\star \frac{\inprod{v_1(Y)}{p_t(Y) v^0}^2}{\normtwo{p_t(Y) v^0}^2} \notag \\
    &= \frac{\xi_1^2 (\lambda_1(Y) - \lambda_\star) p_t(\lambda_1(Y))^2 + \sum_{i=2}^n \xi_i^2 \lambda_i(Y) p_t(\lambda_i(Y))^2}{\sum_{i=1}^n \xi_i^2 p_t(\lambda_i(Y))^2} \notag \\
    &= (\lambda_1(Y) - \lambda_\star) \frac{\inprod{v_1(Y)}{p_t(Y) v^0}^2}{\normtwo{p_t(Y) v^0}^2} + \frac{\sum_{i=2}^n \xi_i^2 \lambda_i(Y) p_t(\lambda_i(Y))^2}{\sum_{i=1}^n \xi_i^2 p_t(\lambda_i(Y))^2} , \notag 
\end{align}
where the first equality is by \Cref{eqn:Vpt,eqn:Opt}. 
On $ \cE_n $, using \Cref{eqn:pt_fact}, we can upper bound the above difference as 
\begin{align}
    \abs{ \frac{\inprod{p_t(Y) v^0}{Y p_t(Y) v^0}}{\normtwo{p_t(Y) v^0}^2} - \lambda_\star \frac{\inprod{v_1(Y)}{p_t(Y) v^0}^2}{\normtwo{p_t(Y) v^0}^2} }
    &\le \abs{\lambda_1(Y) - \lambda_\star} 
    + \frac{2\eta_n}{1 - \eta_n} 
    = o(1) . \label{eqn:diff}
\end{align}
Also, a crude upper bound is given by $ \normtwo{Y} + \lambda_\star $ whose second moment is uniformly bounded over $n$. 
Thus the expectation of the LHS of \Cref{eqn:diff} is $ o(1) $. 
Combining this with \Cref{eqn:Opt_result} just proved, we conclude 
\begin{align}
    \cV_{n,t}(p_t,\wh{\GOE}(n,\lambda)) &= \lambda_\star \cO_{n,t}(p_t,\wh{\GOE}(n,\lambda)) + o(1)
    = \lambda_\star \Phi\paren{ \frac{p_t(\lambda_\star)^2}{n} } + o(1) . \label{eqn:Vpt_result} 
\end{align}

Finally, we pass \Cref{eqn:pt2} to the scaling limit $ t/\log(n)\to\delta $: 
\begin{align}
    \frac{p_t(\lambda_\star)^2}{n} &= \frac{\lambda^{2t}}{n} \cdot \frac{\lambda^4}{(\lambda^2 - 1)^2} \cdot (1+o(1))
    = n^{2\delta\log(\lambda) - 1 + o(1)} , \label{eqn:ptn_lim} 
\end{align}
the RHS of which converges to $ 0 $ whenever $ \delta < \delta_c $ and to $ \infty $ whenever $ \delta > \delta_c $. 
The final result \Cref{eqn:lim_pt} then follows by plugging \Cref{eqn:ptn_lim} into \Cref{eqn:Opt_result,eqn:Vpt_result}, and recalling from \Cref{eqn:lim_Phi} the limits of $ \Phi $ at $ 0 $ and $ \infty $. 
\end{proof}

\section{Proofs for GOE}

\subsection{Eigenvalue approximation}
\label{sec:pf_goe_eigval}

\begin{proof}[Proof of \Cref{thm:eigval}.]
The general state evolution theory for approximate message passing algorithms (see \cite[Section 2.1]{Feng_etal}) guarantees that for any fixed $t\ge1$, the empirical distribution of the rows of $ \matrix{v^1 & \cdots & v^t} \in \bbR^{n\times t} $ converges in Wasserstein-$2$ to a centered Gaussian process $ \matrix{\sfV_1 & \cdots & \sfV_t} \in \bbR^t $ whose covariance matrix is specified as follows: $ \expt{\sfV_1^2} = 1 $; $ \expt{\sfV_t \sfV_1} = 0 $ for $t\ge2$; and $ \expt{\sfV_t \sfV_s} = \expt{\sfV_{t-1} \sfV_{s-1}} $ for $ t,s\ge2 $. 
From this, it follows that $ \matrix{\sfV_1 & \cdots & \sfV_t} \sim \cN(0_t, I_t) $. 
A straightforward calculation yields: 
\begin{align}
    \frac{1}{n} \inprod{\wh{v}^T}{X \wh{v}^T}
    &= \frac{1}{n} \sum_{t = 2}^{T+1} \inprod{v^t}{X v^t}
    + \frac{2}{n} \sum_{2\le s<t\le T+1} \inprod{v^s}{X v^t} \notag \\
    &= \frac{1}{n} \sum_{t = 2}^{T+1} \inprod{v^t}{v^{t+1} + v^{t-1}}
    + \frac{2}{n} \sum_{2\le s<t\le T+1} \inprod{v^s}{v^{t+1} + v^{t-1}} \notag \\
    &\to \sum_{t = 2}^{T+1} \paren{ \expt{\sfV_t \sfV_{t+1}} + \expt{\sfV_t \sfV_{t-1}} }
    + 2 \sum_{2\le s<t\le T+1} \paren{ \expt{\sfV_s \sfV_{t+1}} + \expt{\sfV_s \sfV_{t-1}} } \label{eqn:EV} \\
    &= 2 \sum_{2\le s\le T} \expt{\sfV_s^2}
    = 2 (T-1) , \notag 
\end{align}
where the last line follows since the first sum in \Cref{eqn:EV} vanishes and only terms with indices $ 2\le s=t-1<T+1 $ in the second sum survive. 
Moreover, since $ (v^t/\sqrt{n})_{t\ge1} $ is asymptotically orthonormal, we have
\begin{align}
    \frac{1}{n} \normtwo{\wh{v}^T}^2
    &\to \sum_{2\le t\le T+1} \expt{ \sfV_t^2 } = T . \notag 
\end{align}
Combining the preceding two displays and noting that $ (T-1)/T\ge1-\eps $ for $ T\ge1/\eps $, we obtain the desired result. 
\end{proof}

\begin{proof}[Proof of \Cref{thm:goe_eigval_opt}.]
    Take any $ q\in\cP_d\setminus\{0\} $ and decompose it in the Chebyshev basis: $ q(x) = \sum_{k=0}^d a_k p_k(x) $. 
    Denote $ a \coloneqq \matrix{a_0 & \cdots & a_d}^\top \in \bbR^{d+1} $. 
    Define two random matrices $ \ul{G}, \ol{G} \in \bbR^{(d+1)\times(d+1)} $ component-wise by 
    $ \ul{G}_{k,\ell} = b^\top p_k(X) p_\ell(X) b $, 
    $ \ol{G}_{k,\ell} = b^\top p_k(X) X p_\ell(X) b $
    for all $ k,\ell\in\{0,1,\cdots,d\} $. 
    Now we can write the Rayleigh quotient as
    \begin{align}
        \frac{\inprod{q(X)b}{Xq(X)b}}{\normtwo{q(X)b}^2}
        &= \frac{a^\top \ol{G} a}{a^\top \ul{G} a} . \notag 
    \end{align}
    By standard Gaussian concentration and weak convergence of GOE spectrum \Cref{eqn:ESD_cvg}, one has 
    \begin{align}
        b^\top f(X) b &\pto \int f(x) \rho_{\sc}(x) \diff x , \notag 
    \end{align}
    for any fixed polynomial $f$. 
    Using this, orthonormality of $ (p_k)_{k\ge1} $ with respect to $ \mu_{\sc} $ \Cref{eqn:orth} and the three-term recurrence relation \Cref{eqn:recur}, one can easily see that $ \ul{G} \pto I_{d+1} $ and $ \ol{G} \pto J $ as $n\to\infty$ and $d$ fixed, where $J$ is defined in \Cref{eqn:J}. 
    Therefore, 
    \begin{align}
        \VAL_{n,d}(\GOE(n)) &= \max_{a\in\bbR^{d+1}\setminus\{0_{d+1}\}} \expt{ \frac{a^\top \ol{G} a}{a^\top \ul{G} a} }
        \to \max_{a\in\bbR^{d+1}\setminus\{0_{d+1}\}} \frac{a^\top J}{a^\top a} . \notag 
    \end{align}
    The optimization problem on the RHS is the same as \Cref{eqn:J_opt}. 
    In particular, its optimum equals $ r_d $ and an optimizer is given by $ q_{\val} $ in \Cref{eqn:def_qval}. 
    This completes the proof. 
\end{proof}

\subsection{Subcritical case}
\label{sec:pf_sub}

\begin{lemma}
\label{lem:rig_sub}
There exists a universal constant $C>0$ and events $ \cF_n $ such that $ \prob{\cF_n} \to 1 $ and on $ \cF_n $, 
\begin{align}
    \max_{1\le i\le n} \abs{\lambda_i - \gamma_i} &\le (\log(n))^{C\log\log(n)} n^{-2/3} , \label{eqn:F} 
\end{align}
where $ (\gamma_i)_{1\le i\le n} $ is defined in \Cref{eqn:class_loc}. 
Suppose that \Cref{eqn:sub} holds. 
Then for all sufficiently large $n$, on $ \cF_n $, one has 
\begin{enumerate}
    \item \label{itm:rig_eig}
    \begin{align}
    &&
        2 - \frac{1}{2} \le \lambda_1(X) &\le 2 + \frac{1}{10 \cdot d^2} , & 
        \normtwo{X} &\le 3 . & 
    & \label{eqn:F_eig}
    \end{align}

    \item \label{itm:rig_cnt} there exists an absolute constant $ c>0 $ such that every interval $ I \subset[-2,2] $ of length $ \abs{I} \ge d^{-2} $ contains at least $ c n / d^3 $
    eigenvalues of $ X $. 
\end{enumerate}
\end{lemma}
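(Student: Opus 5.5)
We construct $\cF_n$ from the optimal rigidity estimate for GOE (the form of \Cref{prop:rig} with a $(\log n)^{C\log\log n}$ factor, as in Erd\H{o}s--Yau--Yin): with probability $1-o(1)$, uniformly in $i$,
\[
\abs{\lambda_i-\gamma_i}\le (\log n)^{C\log\log n} n^{-2/3}\min\{i,n-i+1\}^{-1/3}.
\]
This implies \Cref{eqn:F}, since the minimum is at least $1$. Let $\cF_n$ be the intersection of this event with $\{\normtwo{X}\le 3\}$. The latter event has probability $1-o(1)$ by \Cref{eqn:edge}, and in fact it is already implied by \Cref{eqn:F} because $\abs{\gamma_i}\le 2$.

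For \Cref{itm:rig_eig}, we use $\gamma_1=Q(1)=2$ together with \Cref{eqn:F}. This gives
\[
\abs{\lambda_1(X)-2}\le (\log n)^{C\log\log n}n^{-2/3}.
\]
The lower bound $\lambda_1\ge 3/2$ is then immediate. For the upper bound, under \Cref{eqn:sub} we have $d^2\asymp n^{2/3-2\eps}$, so
\[
\frac{1}{10d^2}\asymp n^{-2/3+2\eps}.
\]
This dominates $(\log n)^{C\log\log n}n^{-2/3}$, because $(\log n)^{C\log\log n}=n^{o(1)}$. The bound $\normtwo{X}\le3$ follows from $\abs{\lambda_i}\le \abs{\gamma_i}+o(1)\le 2+o(1)$.

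For \Cref{itm:rig_cnt}, we first count classical locations. Take an interval $I=[a,a+\ell]\subset[-2,2]$ with $\ell\ge d^{-2}$, and shrink it to the middle third $I'=[a+\ell/3,a+2\ell/3]$. The number of $\gamma_j$ in $I'$ is $n\mu_{\sc}(I')+O(1)$. Because $\rho_{\sc}(x)\gtrsim\sqrt{2-\abs{x}}$, the worst case is an interval at the edge, where
\[
\mu_{\sc}(I')\gtrsim \int_0^{\ell/3}\sqrt{y}\,\dd y\asymp \ell^{3/2}\ge d^{-3}.
\]
An interior interval only gives more mass. Hence $I'$ contains at least $c\,n/d^3$ classical locations. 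This count tends to infinity, since $n/d^3\asymp n^{3\eps}$, so the $O(1)$ error is absorbed.

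It remains to show that for each such $j$ the eigenvalue $\lambda_j$ lands in $I$. This holds if $\abs{\lambda_j-\gamma_j}\le \ell/3$, and by \Cref{eqn:F} it suffices that
\[
(\log n)^{C\log\log n}n^{-2/3}\le \frac{d^{-2}}{3}\asymp n^{-2/3+2\eps}.
\]
This is true for large $n$, as before. Therefore $I$ contains at least $cn/d^3$ eigenvalues.

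The only delicate points are the edge behaviour of $\mu_{\sc}$ and the comparison $(\log n)^{C\log\log n}=n^{o(1)}\ll n^{2\eps}$. We handle the edge by the explicit lower bound $\mu_{\sc}([2-s,2])\asymp s^{3/2}$, and likewise at $-2$. The rest is routine bookkeeping.
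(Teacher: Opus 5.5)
Your proposal is correct and follows essentially the same route as the paper. Both use rigidity from \Cref{prop:rig}, then $\gamma_1=2$ together with $(\log n)^{C\log\log n}n^{-2/3}=o(d^{-2})$ for \Cref{itm:rig_eig}, and for \Cref{itm:rig_cnt} count classical locations in a shrunken subinterval using the edge lower bound of order $\ell^{3/2}$ on the semicircle mass. The only difference is cosmetic: the paper shrinks an interval of length exactly $d^{-2}$ to its concentric half, while you shrink an interval of arbitrary length $\ell\ge d^{-2}$ to its middle third.
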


\begin{proof}
The rigidity estimate \Cref{eqn:F} is precisely \Cref{prop:rig}. 
By \Cref{eqn:sub}, we have $ d^{-2} \asymp n^{-2/3 + 2\eps} $ and $ (\log(n))^{C\log\log(n)} n^{-2/3} = o(d^{-2}) $. 
Using this in \Cref{eqn:F} and noting that $ \gamma_1 = 2 $, we obtain the two-sided estimate of $ \lambda_1(X) $ in \Cref{eqn:F_eig}. 
Likewise, since $ \gamma_n\to-2 $, $ \lambda_n(X) \ge -3 $ for all large $n$, implying $ \normtwo{X} \le 3 $. 
This proves \Cref{itm:rig_eig}. 

Turning to \Cref{itm:rig_cnt}, take any $ I\subset[-2,2] $ of length $ \abs{I} = d^{-2} $. 
Let $ I' \subset I $ be the concentric subinterval of $I$ of length $ \abs{I}/2 $. 
For all large $n$, $ (\log(n))^{C\log\log(n)} n^{-2/3} \le d^{-2}/4 $. 
Therefore, any eigenvalue of $X$ whose classical location lies in $ I' $ must itself lie in $ I $, and hence, 
\begin{align}
    \abs{\brace{ i\in[n] : \lambda_i(X) \in I }} \ge \abs{\brace{ i\in[n] : \gamma_i\in I' }} . \label{eqn:cnt1}
\end{align}
By the definition \Cref{eqn:class_loc} of classical locations, 
\begin{align}
    \mu_{\sc}(I') &\le \frac{1}{n} \paren{1 + \abs{\brace{i\in[n] : \gamma_i\in I'}}}  . \label{eqn:cnt2}
\end{align}
Moreover, for any interval $ I_0 \subset [-2,2] $ of length $ \ell \le 1 $, 
\begin{align}
    \mu_{\sc}(I_0) &\ge \mu_{\sc}([2-\ell,2]) = \frac{1}{2\pi} \int_{2-\ell}^2 \sqrt{4 - x^2} \diff x
     = \frac{1}{2\pi} \int_0^\ell \sqrt{x(4 - x)} \diff x
     \ge \frac{\sqrt{3}}{2\pi} \int_0^\ell \sqrt{x} \diff x
     = \frac{\ell^{3/2}}{\pi\sqrt{3}} , \notag 
\end{align}
where we apply the change of variable $ x \mapsto 2 - x $ to obtain the second equality. 
Taking $ I_0 = I' $ and $ \ell = \abs{I}/2 = d^{-2}/2 $ in the above estimate, in view of \Cref{eqn:cnt1,eqn:cnt2}, we have 
\begin{align}
    \abs{\brace{i\in[n] : \lambda_i(X) \in I}}
    &\ge n \mu_{\sc}(I') - 1
    \ge \frac{n (d^{-2}/2)^{3/2}}{\pi\sqrt{3}} - 1
    = \frac{1}{\pi\sqrt{3} \, 2^{3/2}} \frac{n}{d^3} - 1 \ge c n/d^3 , \notag 
\end{align}
by \Cref{eqn:sub}. 
This completes the proof of \Cref{itm:rig_cnt} and therefore the lemma. 
\end{proof}

\begin{lemma}
\label{lem:q}
Let $ q\in\cP_d \setminus \{0\} $ with
\begin{align}
    M &\coloneqq \max_{x\in[-2,2]} \abs{q(x)} . \label{eqn:M} 
\end{align}
Then the following results hold. 
\begin{enumerate}
    \item \label{itm:pol1} There exists an interval $ I\subset[-2,2] $ of length $ \abs{I} = d^{-2} $ such that 
    \begin{align}
        \min_{x\in I} \abs{q(x)} &\ge M/2 . \notag 
    \end{align}

    \item \label{itm:pol2}
    \begin{align}
        \max_{x\in[2,2+0.1\cdot d^{-2}]} \abs{q(x)} &\le 2 M . \label{eqn:2M} 
    \end{align}
\end{enumerate}
\end{lemma}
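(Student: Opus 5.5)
Both items follow from Markov brothers' inequality (\Cref{prop:markov}), applied to $q$ rescaled to $[-1,1]$. Write $\tilde q(y) \coloneqq q(2y)$, a polynomial of degree at most $d$ with $\max_{[-1,1]}\abs{\tilde q} = M$. Markov's inequality gives $\max_{[-1,1]}\abs{\tilde q'} \le d^2 M$, hence
\begin{align}
    \max_{x\in[-2,2]} \abs{q'(x)} &\le \frac{d^2 M}{2} . \notag
\end{align}
This single derivative bound drives both items.

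For \Cref{itm:pol1}, pick $x_0\in[-2,2]$ with $\abs{q(x_0)} = M$, which exists by compactness. Choose $I$ of length $d^{-2}$ inside $[-2,2]$ with $x_0\in I$: if $x_0$ is at least $d^{-2}$ from the right endpoint take $I=[x_0,x_0+d^{-2}]$, otherwise take $I=[x_0-d^{-2},x_0]$. We may assume $d\ge1$; the case $d=0$ is trivial since $q$ is constant. For any $x\in I$ we have $\abs{x-x_0}\le d^{-2}$, so the mean value theorem gives
\begin{align}
    \abs{q(x)} &\ge M - \frac{d^2 M}{2}\, d^{-2} = \frac{M}{2} , \notag
\end{align}
which is exactly the claim.

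For \Cref{itm:pol2} we need control slightly outside $[-2,2]$, and this is the only real obstacle, since the derivative bound above holds only on $[-2,2]$. I would use the Chebyshev extremal property: for $\abs{y}\ge1$ one has $\abs{\tilde q(y)} \le M\,\abs{T_d(y)}$, where $T_d$ is the Chebyshev polynomial of the first kind. If the paper's \Cref{prop:markov} does not include this growth bound, I would instead write $q$ in Lagrange interpolation form at the Chebyshev extrema and get the same bound directly. For $x\in[2,2+0.1 d^{-2}]$ put $y = x/2 = \cosh\theta \le 1 + 0.05 d^{-2}$. Since $\cosh\theta \ge 1+\theta^2/2$, this forces $\theta \le \sqrt{0.1}\,d^{-1}$, and then
\begin{align}
    T_d(y) &= \cosh(d\theta) \le \cosh(\sqrt{0.1}) < 2 . \notag
\end{align}
This gives $\abs{q(x)}\le 2M$.

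If I wanted to avoid $T_d$ altogether, an alternative is to bound $q'$ on the enlarged interval by a bootstrap. Let $M' = \max_{[-2,\,2+0.1d^{-2}]}\abs{q}$. Applying Markov's inequality on this interval (whose length is close to $4$) gives $\abs{q'}\lesssim d^2 M'/2$ there. The mean value theorem from the point $2$ then yields
\begin{align}
    M' &\le M + 0.1\, d^{-2}\cdot c\, d^2 M' , \notag
\end{align}
so $M'\le M/(1-0.1c) \le 2M$ once the constant is checked to be at most about $1.1$. Either route finishes the proof; the Chebyshev growth bound is cleaner.
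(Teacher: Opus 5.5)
Your proof is correct and follows the paper's argument. For \Cref{itm:pol1}, Markov brothers' inequality on $[-2,2]$ gives $\max\abs{q'}\le d^2M/2$, and the mean value theorem around a maximizer finishes. For \Cref{itm:pol2}, the Chebyshev extremal bound $\abs{q(2t)}\le M\,T_d(t)$ for $t\ge1$ is combined with $\cosh\theta\ge 1+\theta^2/2$. Your bootstrap alternative also works: Markov's inequality on an interval of length at least $4$ gives $c=1/2$, hence $M'\le M/0.95$. Since any $c\le 5$ would already give $M'\le 2M$, the ``about $1.1$'' caveat is not needed.
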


\begin{proof}
For \Cref{itm:pol1}, let $ x_\star\in[-2,2] $ be a maximizer in \Cref{eqn:M}, i.e., $ \abs{q(x_\star)} = M $. 
By Markov brothers' inequality (see \Cref{prop:markov}), 
\begin{align}
    \max_{x\in[-2,2]} \abs{q'(x)} &\le d^2 M / 2 . \notag 
\end{align}
By the fundamental theorem of calculus, for any $ x\in[-2,2] $ with $ \abs{x - x_\star} \le d^{-2} $, it holds that 
\begin{align}
    \abs{q(x) - q(x_\star)} &= \abs{\int^x_{x_\star} q'(x) \diff x}
    \le d^{-2} \cdot d^2 M / 2 = M / 2 , \notag 
\end{align}
which, combined with the elementary inequality 
\begin{align}
    \abs{ \abs{q(x)} - \abs{q(x_\star)} } &\le \abs{q(x) - q(x_\star)} , \notag 
\end{align}
implies that $ \abs{q(x)} \ge \abs{q(x_\star)} - M/2 = M/2 $. 
Note that $ \abs{[-2,2] \cap [x_\star - d^{-2},x_\star + d^{-2}]} \ge d^{-2} $. 
Therefore, an interval that fulfills the requirement of \Cref{itm:pol1} can be found. 

For \Cref{itm:pol2}, note that $ Q(t) \coloneqq q(2t)/M $ satisfies 
\begin{align}
    \max_{t\in[-1,1]} \abs{Q(t)} &= 1 . \notag 
\end{align}
By a well-known extremal property of Chebyshev polynomials (see \cite[Chapter 2.7.1]{Rivlin}), for any $ t\ge 1 $, 
\begin{align}
    \abs{Q(t)} \le T_d(t) , \label{eqn:QT}
\end{align}
where $ T_d $ is the degree-$d$ Chebyshev polynomial of the first kind. 

Next, we claim that for $ t = 1+s $ where $ s\in[0,1] $, it holds that
\begin{align}
    T_d(1+s) \le \exp\paren{d \sqrt{2s}} . \label{eqn:Td_est}
\end{align}
To see this, recall that on $ [1,\infty) $, $ T_d $ admits the following explicit expression and simple estimate: 
\begin{align}
    T_d(\cosh(t)) &= \cosh\paren{ d t } \le \exp\paren{dt} . \label{eqn:Td} 
\end{align}
Now take the unique $ t\ge0 $ such that $ 1+s = \cosh(t) $. 
On $ [0,\infty) $, one has the elementary estimate $\cosh(t) \ge 1 + t^2/2$. 
This implies $ t \le \sqrt{2s} $. 
Using this in \Cref{eqn:Td} justifies \Cref{eqn:Td_est}. 

For $ x\in[2, 2 + 0.1 \cdot d^{-2}] $, take $ t = 1 + s = x/2 \le 1 + 0.05 \cdot d^{-2} $ for some $ s\ge0 $. 
Using this in \Cref{eqn:Td_est}, we have $ d \sqrt{2s} \le d \sqrt{0.1 \cdot d^{-2}} = \sqrt{0.1} $. 
Therefore $ T_d(t) \le \exp\paren{\sqrt{0.1}} < 2 $. 
Plugging this to the RHS of \Cref{eqn:QT}, we finally obtain $ \abs{q(2t)} / M \le 2 $, uniformly for all $ t\in[1,1+0.05\cdot d^{-2}] $, which is apparently equivalent to the desired \Cref{eqn:2M}. 
This completes the proof of \Cref{itm:pol2} and therefore the entire lemma. 
\end{proof}

\begin{proof}[Proof of \Cref{thm:sub}.]
Take any $ q\in\cP_d $ and consider the representation \Cref{eqn:ratio}. 
We will work on the high-probability event $ \cF_n $ from \Cref{lem:rig_sub}. 
Let $M$ and $ I\subset[-2,2] $ of length $ \abs{I} = d^{-2} $ be defined as in \Cref{eqn:M} and \Cref{itm:pol1} of \Cref{lem:q}. 
Then 
\begin{align}
    \abs{q(x)} &\ge M/2 \label{eqn:q_lb}
\end{align}
for all $x\in I$. 
Denote by $ \cI \coloneqq \brace{ i\in[n] : \lambda_i(X) \in I } $ the indices of eigenvalues in $I$. 
By \Cref{itm:rig_cnt} of \Cref{lem:rig_sub}, 
\begin{align}
    \abs{\cI} &\ge cn/d^3 . \label{eqn:I}
\end{align}

On $ \cF_n $, \Cref{eqn:F_eig} holds, therefore by \Cref{eqn:2M}, 
\begin{align}
    \abs{q(\lambda_1(X))} &\le 2M . \label{eqn:q_ub}
\end{align}
Combining \Cref{eqn:q_ub,eqn:q_lb}, we know that for every $ i\in\cI $, 
\begin{align}
    \abs{q(\lambda_i(X))} &\ge M/2 \ge \abs{q(\lambda_1(X))} / 4 . \label{eqn:q_ulb}
\end{align}

Now, recalling \Cref{eqn:ratio,eqn:g}, on $ \cF_n $, using \Cref{eqn:q_ulb}, we have
\begin{align}
    \frac{\inprod{v_1(X)}{q(X) b}^2}{\normtwo{q(X) b}^2}
    &= \frac{g_1^2 q(\lambda_1(X))^2}{\sum_{i = 1}^n g_i^2 q(\lambda_i(X))^2}
    \le \frac{g_1^2 q(\lambda_1(X))^2}{\sum_{i \in \cI} g_i^2 q(\lambda_i(X))^2}
    \le \frac{g_1^2 q(\lambda_1(X))^2}{\sum_{i\in\cI} g_i^2 (q(\lambda_1(X)) / 4)^2 }
    = \frac{16 \cdot g_1^2}{\sum_{i\in\cI} g_i^2} . \label{eqn:ratio_sub} 
\end{align}
Conditioning on $ X $ and taking expectation only over $ (g_i)_{i\in[n]} $, we have 
\begin{align}
    \expt{ \frac{g_1^2}{\sum_{i\in\cI} g_i^2} \mid X } &= \frac{1}{\abs{\cI}} \sum_{j\in\cI} \expt{\frac{g_j^2}{\sum_{i\in\cI} g_i^2}} = \frac{1}{\abs{\cI}} , \notag 
\end{align}
provided $ 1\in\cI $. 
In the case where $ 1\notin\cI $, 
\begin{align}
    \expt{ \frac{g_1^2}{\sum_{i\in\cI} g_i^2} \mid X }
    &= \expt{g_1^2} \expt{\frac{1}{\sum_{i\in\cI} g_i^2}}
    = \frac{1}{\abs{\cI} - 2} , \notag 
\end{align}
since $ \sum_{i\in\cI} g_i^2 \sim \chi_{\abs{\cI}}^2 $. 
By \Cref{eqn:I} and the assumption \Cref{eqn:sub}, $ \abs{\cI}\to\infty $. 
So in any case, 
\begin{align}
    \expt{ \frac{g_1^2}{\sum_{i\in\cI} g_i^2} \mid X }
    &\le \frac{C}{\abs{\cI}} \le \frac{C' d^3}{n} . \notag 
\end{align}
Taking expectation on both sides of \Cref{eqn:ratio_sub}, we have 
\begin{align}
    \cO_{n,d}(q,\GOE(n)) &\le C\frac{d^3}{n} + \prob{\cF_n^c} , \notag 
\end{align}
uniformly for all $q$. 
Thus, using $ \prob{\cF_n} \to 1 $ and \Cref{eqn:sub}, we have $ \OPT_{n,d}(\GOE(n)) = o(1) $, which completes the proof. 
\end{proof}

\subsection{Supercritical case}
\label{sec:pf_sup}

\begin{lemma}
\label{lem:ratio}
Let $ (g_i)_{i\ge0} $ be i.i.d.\ standard Gaussians and let $ (r_i)_{i\ge1} $ be (deterministic) nonnegative reals. 
Set 
\begin{align}
&&
    Z &\coloneqq \sum_{i \ge 1} r_i g_i^2 , & 
    r &\coloneqq \sum_{i\ge1} r_i \in [0, \infty) . & 
& \notag 
\end{align}
Then 
\begin{align}
    \expt{\frac{Z}{g_0^2 + Z}} &\le 2 \cdot r^{1/3} . \notag 
\end{align}
\end{lemma}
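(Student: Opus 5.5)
We bound the ratio using $x/(g_0^2+x)\le \min\{1, x/g_0^2\}$ and then split on the size of $Z$ relative to a threshold. First, if $r\ge 1/8$ then $2r^{1/3}\ge 1$ and the claim is trivial, since the ratio always lies in $[0,1]$. So assume $r<1/8$. The function $x\mapsto x/(g_0^2+x)$ is nondecreasing and bounded by $1$, and $Z$ is independent of $g_0$.

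Fix a threshold $t>0$ to be chosen later. On the event $\{Z>t\}$ we bound the ratio by $1$. By Markov's inequality,
\begin{align}
    \prob{Z>t}\le \expt{Z}/t = r/t .
\end{align}
On $\{Z\le t\}$, monotonicity gives the ratio at most $t/(g_0^2+t)$. It remains to bound
\begin{align}
    \expt{\frac{t}{g_0^2+t}} \le \prob{g_0^2\le s} + \frac{t}{s}
\end{align}
for any $s>0$. This holds because on $\{g_0^2>s\}$ the quantity is at most $t/s$, and otherwise it is at most $1$. Since the density of $g_0$ is at most $1/\sqrt{2\pi}$, we have
\begin{align}
    \prob{g_0^2\le s}=\prob{|g_0|\le \sqrt s}\le \sqrt{2s/\pi}.
\end{align}
Combining the two pieces,
\begin{align}
    \expt{\frac{Z}{g_0^2+Z}} \le \frac{r}{t} + \sqrt{\frac{2s}{\pi}} + \frac{t}{s} .
\end{align}

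For the choice of parameters, balancing $r/t$, $t/s$ and $\sqrt s$ suggests $s=r^{2/3}$ and $t=r^{2/3}\cdot r^{1/3}$; the latter term is cleaner written as $t=r^{1/3}\cdot r^{1/3}\cdot r^{1/3}$ rebalanced. Concretely, take $t=r^{2/3}$ and $s=r^{1/3}$. This gives
\begin{align}
    \frac{r}{t}=r^{1/3},\qquad \frac{t}{s}=r^{1/3},\qquad \sqrt{\frac{2s}{\pi}}=\sqrt{\frac{2}{\pi}}\,r^{1/6}.
\end{align}
The last term is too large, since $r^{1/6}$ exceeds $r^{1/3}$ for small $r$, so we rebalance. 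Setting $r/t=t/s=\sqrt s$ yields $s=r^{2/3}$ and $t=r^{2/3}$ exactly. Then each of $r/t$ and $t/s$ equals $r^{1/3}$ and $\sqrt{2s/\pi}\le 0.8\,r^{1/3}$, for a total of $2.8\,r^{1/3}$. This is slightly above $2r^{1/3}$.

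To recover the constant $2$, we compute the middle term exactly instead of using the crude Markov split on $g_0$. Conditioning on $Z$, define
\begin{align}
    h(z)=\expt{\frac{z}{g_0^2+z}}\le \sqrt{z}\,\expt{\frac{\sqrt z}{g_0^2+z}}.
\end{align}
A direct Gaussian integral gives $h(z)\le \sqrt{\pi z/2}\cdot\sqrt{2/\pi}=\sqrt z$, using $\int_{\mathbb R} \frac{z}{x^2+z}\frac{e^{-x^2/2}}{\sqrt{2\pi}}\,dx\le \frac{1}{\sqrt{2\pi}}\int \frac{z}{x^2+z}\,dx=\sqrt{\pi z/2}$. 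Hence $h(z)\le \sqrt{\pi/2}\,\sqrt z$, and the expectation is at most
\begin{align}
    \expt{\min\{1,\sqrt{\pi/2}\sqrt Z\}}\le \sqrt{\pi/2}\,\sqrt{\expt{Z}}=\sqrt{\pi/2}\,\sqrt r ,
\end{align}
by Jensen's inequality. Since $r<1/8$, we have $\sqrt r\le r^{1/3}\cdot r^{1/6}\le 0.71\,r^{1/3}$. Therefore
\begin{align}
    \expt{\frac{Z}{g_0^2+Z}}\le \sqrt{\pi/2}\cdot 0.71\, r^{1/3}\le 2r^{1/3},
\end{align}
which proves the claim.

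The main obstacle is getting the explicit constant $2$ rather than just the order $r^{1/3}$, and the exact bound $h(z)\le\sqrt{\pi z/2}$ combined with Jensen is what resolves it. The case $r\ge1/8$ is trivial, as noted at the start.
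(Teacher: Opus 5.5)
Your final argument is correct, and it takes a different route from the paper. The paper never conditions on $Z$. It uses the pointwise bound $\frac{Z}{g_0^2+Z}\le \indicator{g_0^2\le\eta}+Z/\eta$, takes expectations to get $\sqrt{2\eta/\pi}+r/\eta$, and sets $\eta=r^{2/3}$ for $r\le 1$, using the trivial bound for $r>1$. That argument needs only $\expt{Z}=r$ and the small-ball estimate for $g_0$; no independence between $g_0$ and $Z$ is required. The price is that balancing $\sqrt{\eta}$ against $r/\eta$ only yields the rate $r^{1/3}$.

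You instead use independence to average out $g_0$ first. Bounding the Gaussian density by $1/\sqrt{2\pi}$ and integrating the Cauchy kernel exactly gives $h(z)\le \frac{1}{\sqrt{2\pi}}\int_{\bbR}\frac{z}{x^2+z}\diff x=\sqrt{\pi z/2}$. Jensen then gives $\expt{\frac{Z}{g_0^2+Z}}\le \min\{1,\sqrt{\pi r/2}\}$. This is the sharp order: a single nonzero $r_1$ already produces order $\sqrt{r}$. Via your case split at $r=1/8$, it implies the stated $2r^{1/3}$. In the application to \Cref{thm:sup}, $r$ is superpolynomially small, so either rate suffices there.

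Delete the exploratory middle portion of your write-up. It contains false statements, even though your final chain does not use them.
\begin{itemize}
\item With $t=s=r^{2/3}$ one has $t/s=1$, not $r^{1/3}$, so the claimed total $2.8\,r^{1/3}$ is wrong. Solving $r/t=t/s=\sqrt{s}$ actually gives $s=r^{1/2}$ and $t=r^{3/4}$, so that three-way split only reaches rate $r^{1/4}$.
\item The assertion $h(z)\le \sqrt{z}$ is false, since $h(z)\sim\sqrt{\pi z/2}$ as $z\downarrow 0$. Only the correct bound $h(z)\le\sqrt{\pi z/2}$, which you state in the very next line, is needed.
\item The displayed inequality $h(z)\le\sqrt{z}\,\expt{\frac{\sqrt z}{g_0^2+z}}$ is just an identity and adds nothing.
\end{itemize}
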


\begin{proof}
Since $Z\ge0$, for any $ \eta > 0 $, it is easy to verify that
\begin{align}
    \frac{Z}{g_0^2+Z} &\le \indicator{g_0^2 \le \eta} + \frac{Z}{\eta} . \notag 
\end{align}
Taking expectations, we obtain 
\begin{align}
    \expt{ \frac{Z}{g_0^2+Z} } &\le \prob{ g_0^2 \le \eta } + \frac{r}{\eta} . \notag 
\end{align}
The probability on the RHS can be easily upper bounded: 
\begin{align}
    \prob{ g_0^2 \le \eta } &= \prob{ \abs{g_0} \le \sqrt{\eta} } = \int_{-\sqrt{\eta}}^{\sqrt{\eta}} \frac{\exp\paren{-x^2/2}}{\sqrt{2\pi}} \diff x
    \le \frac{2\sqrt{\eta}}{\sqrt{2\pi}} = \sqrt{\frac{2\eta}{\pi}} . \notag 
\end{align}
If $ r\le1 $, choosing $ \eta = r^{2/3} $, we obtain 
\begin{align}
    \expt{ \frac{Z}{g_0^2+Z} } &\le \paren{ 1 + \sqrt{\frac{2}{\pi}} } r^{1/3} \le 2 \cdot r^{1/3} . \notag 
\end{align}
If $ r>1 $, then trivially, 
\begin{align}
    \expt{ \frac{Z}{g_0^2+Z} } &\le 1 \le r^{1/3} . \notag 
\end{align}
Combining the preceding two estimates concludes the proof.
\end{proof}

\begin{proof}[Proof of \Cref{thm:sup}.]
We first gather a few facts regarding eigenvalues around the spectral edge of $X$. 
Denote, for $ i\in\{1,2\} $, $ L_i \coloneqq n^{2/3} (2 - \lambda_i(X)) $. 
By \cite[Theorem 1.1]{Ramirez_Rider_Virag}, 
\begin{align}
    (L_1, L_2) &\overset{\dd}{\to} (\Lambda_0, \Lambda_1) , \notag 
\end{align}
where the random variables $ \Lambda_0<\Lambda_1 $ on the RHS are the smallest and second smallest (simple) eigenvalues of the stochastic Airy operator. 
In particular, $ L_1,L_2 $ (as sequences implicitly indexed by $n$) are tight and 
\begin{align}
    L_2 - L_1 &= n^{2/3}(\lambda_1(X) - \lambda_2(X)) \overset{\dd}{\to} \Delta \coloneqq \Lambda_1 - \Lambda_0 , \notag
\end{align}
where 
\begin{align}
    \Delta &> 0 , \qquad \textnormal{almost surely} . \label{eqn:Delta>0}
\end{align}
This implies 
\begin{align}
    \prob{L_2 - L_1 < 1/\log(n)} &\to 0 . \label{eqn:L2-L1}
\end{align}
To see this, note that by \Cref{eqn:Delta>0}, for any fixed $\eta>0$, it is possible to choose $a>0$ such that $ \prob{\Delta \le a} < \eta $. 
Also, for all large $n$, $ 1/\log(n) < a $. 
Then 
\begin{align}
    \limsup_{n\to\infty} \prob{L_2 - L_1 < 1/\log(n)} &\le \limsup_{n\to\infty} \prob{L_2 - L_1 \le a} 
    \le \prob{\Delta \le a}
    < \eta , \notag 
\end{align}
where the second inequality is implied by convergence of $ L_2-L_1 $ in distribution, by Portmanteau theorem. 
Taking $ \eta\downarrow0 $ gives \Cref{eqn:L2-L1}. 

By sign symmetry of Gaussians, $ -\lambda_n(X) \eqqlaw \lambda_1(X) $. 
Since $ L_1 $ is tight, $ n^{2/3} (2 + \lambda_n(X)) $ is also tight. 
Therefore, 
\begin{align}
    \normtwo{X} &\overset{\mathrm{p}}{\to} 2 . \label{eqn:Xop}
\end{align}

Let $ \ell_n \coloneqq (\log(n))^2, u_n \coloneqq 2 - \ell_n n^{-2/3} $ and define the affine function $ f_n \colon [-3,u_n] \to [-1,1] $ as 
\begin{align}
    f_n(x) &= \frac{2}{u_n+3} (x + 3) - 1
    = \frac{2x+1+\ell_n n^{-2/3}}{5 - \ell_n n^{-2/3}} . \notag 
\end{align}
Consider $ q\in\cP_d $ defined as $ q(x) \coloneqq T_d(f_n(x)) $ where $ T_d $ is the degree-$d$ Chebyshev polynomial of the first kind. 
We record some facts about $q$ as straightforward consequences of extremal properties of $ T_d $. 
For $ x\in[-3,u_n] $, $ \abs{q(x)} \le 1 $. 
For $ x>u_n $, $ f_n(x)>1 $, so recalling from \Cref{eqn:Td} the explicit expression of $ T_d $ on $ [1,\infty) $, we have 
\begin{align}
    q(x) &= \cosh(d \, \theta_n(s)) , \notag 
\end{align}
where 
\begin{align}
&&
    s &\coloneqq n^{2/3} (x - u_n) , & 
    \theta_n(s) &\coloneqq \arccosh\paren{1 + \frac{2n^{-2/3} s}{u_n + 3}} . & 
& \notag 
\end{align}
In particular $ q $ is positive and increasing on $ (u_n,\infty) $. 
A few estimates for $ \theta_n $ are in order. 
A straightforward calculation reveals
\begin{align}
    \theta_n'(s) &= \frac{n^{-1/3}}{\sqrt{s(u_n + 3 + n^{-2/3} s)}} . \notag 
\end{align}
Since $ u_n + 3 = 5 - \ell_n n^{-2/3} \to 5 $, there exist absolute constants $ c,C>0 $ such that for all sufficiently large $n$, 
\begin{align}
    \theta_n'(s) &\ge c n^{-1/3} \ell_n^{-1/2} , \qquad \textnormal{for all } s\in[\ell_n/2,3\ell_n/2] , \label{eqn:theta'} 
\end{align}
and 
\begin{align}
    c \, n^{-1/3} \sqrt{s} &\le \theta_n(s) \le C n^{-1/3} \sqrt{s} , \qquad \textnormal{for all } s\in[0,2\ell_n] , \label{eqn:theta} 
\end{align}
by noting that $ s\le2\ell_n $ implies $ s n^{-2/3} = o(1) $ and using the previous estimate in the fundamental theorem of calculus. 

Define the event 
\begin{align}
    \cG_n &\coloneqq \brace{ \normtwo{X} \le 3 , \; \abs{L_1} \le \ell_n/2 , \; \abs{L_2} \le \ell_n/2 , \; L_2 - L_1 \ge 1/\log(n) } . \notag 
\end{align}
By tightness of $ L_1,L_2 $, \Cref{eqn:Xop,eqn:L2-L1}, $ \prob{\cG_n} \to 1 $. 

Let $ s_j \coloneqq n^{2/3} (\lambda_j(X) - u_n) = \ell_n - L_j $, for $j\in\{1,2\}$. 
Then on $ \cG_n $, we have 
\begin{align}
&&
    s_1, s_2 &\in [\ell_n/2, 3\ell_n/2] , & 
    s_1 - s_2 &= L_2 - L_1 \ge 1/\log(n) . & 
& \label{eqn:s1s2} 
\end{align}
Recalling the definition of $ L_1 $, we also have $ n^{2/3} \abs{2 - \lambda_1(X)} \le \ell_n/2 $, implying $ \lambda_1(X) \ge 2 - \ell_n/(2 n^{2/3}) > u_n $. 
Therefore, the formula \Cref{eqn:theta} is applicable at $ \lambda_1(X) $, leading to 
\begin{align}
    q(\lambda_1(X)) &= \cosh(d\,\theta_n(s_1))
    \ge \frac{1}{2} \exp\paren{ d\,\theta_n(s_1) }
    \ge \frac{1}{2} \exp\paren{ c \, d \, n^{-1/3} \sqrt{s_1} }
    \ge \frac{1}{2} \exp\paren{ c' \, d \, n^{-1/3} \sqrt{\ell_n} } , \label{eqn:q1} 
\end{align}
where the last two inequalities are by \Cref{eqn:theta,eqn:s1s2}. 

Again using \Cref{eqn:theta'} in the fundamental theorem of calculus, 
\begin{align}
    \theta_n(s_1) - \theta_n(s_2) &\ge c n^{-1/3} \ell_n^{-1/2} (s_1 - s_2)
    \ge c n^{-1/3} \ell_n^{-1/2} (\log(n))^{-1} , \label{eqn:s1-s2} 
\end{align}
where we use \Cref{eqn:s1s2} in the last step. 
Now we combine \Cref{eqn:q1,eqn:s1-s2} to obtain 
\begin{align}
    \frac{q(\lambda_2(X))}{q(\lambda_1(X))}
    &= \frac{\cosh(d\,\theta_n(s_2))}{\cosh(d\,\theta_n(s_1))}
    \le 2 \exp\paren{ - d\paren{ \theta_n(s_1) - \theta_n(s_2) } }
    \le 2 \exp\paren{ - c d n^{-1/3} \ell_n^{-1/2} (\log(n))^{-1} } . \label{eqn:q2} 
\end{align}

Let 
\begin{align}
    r &\coloneqq \sum_{i = 2}^n \paren{\frac{q(\lambda_i(X))}{q(\lambda_1(X))}}^2 . \notag 
\end{align}
Since $ q $ is increasing on $ (u_n,\infty) $ and has modulus bounded by $1$ on $ [-3,u_n] $, on $ \cG_n $, we have 
\begin{align}
    r &\le (n-1) \cdot \frac{1 + q(\lambda_2(X))^2}{q(\lambda_1(X))^2} . \notag  
\end{align}
By \Cref{eqn:q1,eqn:q2}, the RHS is further upper bounded by 
\begin{align}
    & 4n \exp\paren{-2c'dn^{-1/3}\sqrt{\ell_n}}
    + 4n \exp\paren{-2cdn^{-1/3}\ell_n^{-1/2} (\log(n))^{-1}} \notag \\
    &= 4n \exp\paren{-2c'dn^{-1/3} \log(n)}
    + 4n \exp\paren{-2cdn^{-1/3}(\log(n))^{-2}} \notag \\
    &= 4n \exp\paren{- c'_1 n^{\eps}\log(n)} 
    + 4n \exp\paren{- c_1 n^{\eps} (\log(n))^{-2}}
    = o(1) , \label{eqn:r} 
\end{align}
under the scaling \Cref{eqn:sup}. 

To conclude, recalling \Cref{eqn:g} and using the representation \Cref{eqn:ratio}, we have 
\begin{align}
    \frac{\inprod{v_1(X)}{q(X)b}^2}{\normtwo{q(X) b}^2}
    &= \frac{g_1^2 q(\lambda_1(X))^2}{\sum_{i=1}^n g_i^2 q(\lambda_i(X))^2}
    = \frac{g_1^2}{g_1^2 + \sum_{i = 2}^n r_i g_i^2} , \notag 
\end{align}
where for all $ 2\le i\le n $, 
\begin{align}
    r_i &\coloneqq \paren{\frac{q(\lambda_i(X))}{q(\lambda_1(X))}}^2 . \notag 
\end{align}
By \Cref{eqn:r} and \Cref{lem:ratio}, conditioned on $ X $, 
\begin{align}
    \expt{ 1 - \frac{\inprod{v_1(X)}{q(X)b}^2}{\normtwo{q(X) b}^2} \mid X }
    &= \expt{ \frac{\sum_{i = 2}^n r_i g_i^2}{g_1^2 + \sum_{i = 2}^n r_i g_i^2} \mid X }
    \le 2 \cdot r^{1/3} = o(1) . \notag 
\end{align}
Further averaging over $ X $, by $ \prob{\cG_n} \to 1 $, we obtain $ \cO_{n,d}(q,\GOE(n)) \to 1 $ and hence $ \OPT_{n,d}(\GOE(n)) \to 1 $. 
This completes the proof. 
\end{proof}

\subsection{Critical case}
\label{sec:pf_crit}

Recall \Cref{eqn:OV} and define
\begin{align}
    \OPT_{n,d}^0(\GOE(n)) &\coloneqq \sup_{\substack{q\in\cP_d \setminus \{0\} \\ q(-2) = 0}} \cO_{n,d}(q,\GOE(n)) . \label{eqn:OPT0} 
\end{align}

\begin{lemma}
\label{lem:suppress}
For a probability measure $\cD$ over $n\times n$ real symmetric matrices, it holds that 
\begin{align}
    \OPT_{n,d-1}(\cD) &\le \OPT_{n,d}^0(\cD) + \prob{\cH^c} , \label{eqn:OPT+1} 
\end{align}
where $ \cH \coloneqq \brace{ \lambda_1(X) \ge 1 , \normtwo{X} \le 3 } $ and the probability is over $ X \sim \cD $. 
\end{lemma}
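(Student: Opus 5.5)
The plan is to map each degree-$(d-1)$ polynomial to a degree-$d$ polynomial vanishing at $-2$ by multiplying with $x+2$, and to show this loses nothing on $\cH$. Fix any $q\in\cP_{d-1}\setminus\{0\}$ and set $\tilde q(x)\coloneqq (x+2)q(x)\in\cP_d$. Then $\tilde q\neq 0$ and $\tilde q(-2)=0$, so $\tilde q$ is admissible in \Cref{eqn:OPT0}. It therefore suffices to prove
\begin{align}
    \cO_{n,d-1}(q,\cD) &\le \cO_{n,d}(\tilde q,\cD) + \prob{\cH^c} \notag
\end{align}
and then take the supremum over $q$.

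By the spectral decomposition \Cref{eqn:ratio}, with $\xi_i=\inprod{v_i(X)}{b}$ independent of $X$, we can write
\begin{align}
    \frac{\inprod{v_1(X)}{q(X)b}^2}{\normtwo{q(X)b}^2} &= \frac{\xi_1^2 q(\lambda_1)^2}{\sum_{i} \xi_i^2 q(\lambda_i)^2} = \frac{1}{1+\sum_{i\ge2} w_i} , & w_i &\coloneqq \frac{\xi_i^2 q(\lambda_i)^2}{\xi_1^2 q(\lambda_1)^2} . \notag
\end{align}
For $\tilde q$ the corresponding weights become $w_i\,(\lambda_i+2)^2/(\lambda_1+2)^2$. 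On $\cH$ we have $-3\le\lambda_i\le\lambda_1$ and $\lambda_1\ge1$, so $|\lambda_i+2|\le \lambda_1+2$ for every $i$: if $\lambda_i\ge-2$ this is monotonicity, and if $\lambda_i\in[-3,-2)$ then $|\lambda_i+2|\le1<3\le\lambda_1+2$. Hence each weight can only shrink, and the overlap ratio for $\tilde q$ is pointwise at least that for $q$ on $\cH$. Here $\normtwo{X}\le3$ is what controls the left tail, and $\lambda_1\ge1$ is what makes $\lambda_1+2$ dominate.

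The degenerate cases need a check. If $q(\lambda_1)=0$, the ratio for $q$ is $0$ and the inequality is trivial. Since $\lambda_1+2\ge3>0$ on $\cH$, we have $\tilde q(\lambda_1)\neq0$ whenever $q(\lambda_1)\ne0$. Division by zero occurs only on null events, for example $\xi_1=0$ or $\tilde q(X)b=0$, which have probability zero because $b$ is Gaussian and independent of $X$.

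Finally, split the expectation over $\cH$ and $\cH^c$. On $\cH^c$, bound the $q$-ratio by $1$, which gives at most $\prob{\cH^c}$. On $\cH$, bound the $q$-ratio by the $\tilde q$-ratio and then by its full expectation, using that the $\tilde q$-ratio is nonnegative. This yields $\cO_{n,d-1}(q,\cD)\le \cO_{n,d}(\tilde q,\cD)+\prob{\cH^c}\le \OPT^0_{n,d}(\cD)+\prob{\cH^c}$. Taking the supremum over $q$ gives \Cref{eqn:OPT+1}.

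The only delicate point is the pointwise comparison $|\lambda_i+2|\le\lambda_1+2$ for eigenvalues below $-2$, and this is exactly what the definition of $\cH$ is designed to handle. The rest is routine.
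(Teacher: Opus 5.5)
Your proof is correct and is essentially the paper's argument: multiply by $x+2$ (the paper uses $m(x)=(x+2)/4$, which makes no difference by $0$-homogeneity), note that on $\cH$ one has $|\lambda_i+2|\le\lambda_1+2$ for all $i$ so the overlap ratio can only increase, and split the expectation over $\cH$ and $\cH^c$ using that both ratios lie in $[0,1]$. One small imprecision: for an arbitrary $\cD$ the event $\{\tilde q(X)b=0\}$ need not be null, but on $\cH$ it forces $\xi_1 q(\lambda_1)=0$ (since $\lambda_1+2\ge 3$), which is exactly the trivial case you already handle, so nothing breaks.
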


\begin{proof}
Take any $ p\in\cP_{d-1} $ and set $ q(x) = p(x) m(x) $ where $ m(x) \coloneqq (x+2)/4 $. 
Then $ q(-2) = 0 $ and $ q\in\cP_d $. 
By spectral decomposition of $X$, we write 
\begin{align}
&&
    \frac{\inprod{v_1(X)}{p(X) b}^2}{\normtwo{p(X) b}^2}
    &= \frac{\alpha_1^2}{\sum_{i = 1}^n \alpha_i^2} , & 
    \frac{\inprod{v_1(X)}{q(X) b}^2}{\normtwo{q(X) b}^2}
    &= \frac{m(\lambda_1(X))^2 \alpha_1^2}{\sum_{i = 1}^n m(\lambda_i(X))^2 \alpha_i^2} , & 
& \label{eqn:m} 
\end{align}
where $ \alpha_i \coloneqq p(\lambda_i(X)) \inprod{v_i(X)}{b} $. 
On $ \cH $, it holds that $ -3 \le \lambda_i(X) \le \lambda_1(X) $ for all $ i\in[n] $, and $ \lambda_1(X) \ge 1 $. 
Therefore, $ -\lambda_1(X) \le \lambda_i(X) + 2 \le \lambda_1(X)+2 $, implying $ \abs{m(\lambda_i(X))} \le m(\lambda_1(X)) $. 
Using this in \Cref{eqn:m}, we have that on $\cH$, 
\begin{align}
    \frac{\inprod{v_1(X)}{p(X) b}^2}{\normtwo{p(X) b}^2} &\le \frac{\inprod{v_1(X)}{q(X) b}^2}{\normtwo{q(X) b}^2} . \notag 
\end{align}
Since both quantities are within $ [0,1] $, taking expectations on both sides and applying law of total expectation yield \Cref{eqn:OPT+1}. 
\end{proof}

\begin{lemma}
\label{lem:mass}
Assume \Cref{eqn:crit}.
Let $ u_n \colon [0,1]\to\bbR $ be a step function on the mesh $ (k/d)_{k=0}^d $ written as 
\begin{align}
&&
    u_n(t) &= \sqrt{d} \, c_k , &
    & \textnormal{for } t\in[k/d,(k+1)/d)
    \textnormal{ and } 0\le k\le d-1 , & 
& \label{eqn:u} 
\end{align}
such that 
\begin{align}
    \sup_d \sqrt{\sum_{k=0}^{d-1} c_k^2} &< \infty . \label{eqn:u_bdd} 
\end{align}
Define 
\begin{align}
&&
    p(x) &= \sum_{k=0}^{d-1} c_k p_k(x) , & 
    q(x) &= m(x) p(x) , & 
& \label{eqn:pq} 
\end{align}
where $ p_k $ is the rescaled Chebyshev polynomial in \Cref{eqn:p} and $ m(x) = (x+2)/4 $. 
Let 
\begin{align}
&&
    S &\coloneqq \frac{1}{n} \tr\paren{ q(X)^2 } , & 
    L_j &\coloneqq n^{2/3} (2 - \lambda_j(X)) , & 
    \sigma &\coloneqq \int q(x)^2 \, \mu_{\sc}(\dd x) . & 
& \label{eqn:ZLsigma} 
\end{align}
Then for any $m$ fixed (relative to $n$), there exist a function $ u\in L^2([0,1]) $ and a deterministic scalar $ \ol{\sigma} \ge 0 $ such that the sequence (indexed by $n$) of tuples $(u_n,\sigma,S,L_1,\cdots,L_m)$ admits a subsequence satisfying: 
\begin{align}
    u_n &\rightharpoonup u \qquad \textnormal{weakly in } L^2([0,1]) , \label{eqn:ulim} \\
    \sigma &\to \ol{\sigma} , \label{eqn:sigmalim} \\
    (S, L_1, \cdots, L_m) &\overset{\dd}{\to} (\Sigma, \Lambda_0, \cdots, \Lambda_{m-1}) , \label{eqn:Zlim}
\end{align}
where $ -\Lambda_0 \ge -\Lambda_1 \ge \cdots $ form the Airy point process, and 
\begin{align}
    \Sigma &\coloneqq \ol{\sigma} - \norm{L^2([0,1])}{u}^2 + \sum_{j = 0}^\infty \Gamma_{\delta,u}(-\Lambda_j)^2 . \notag 
\end{align}
In particular, $ \Sigma\ge0 $ almost surely. 
\end{lemma}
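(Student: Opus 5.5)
Subsequence extraction is routine: $\|u_n\|_{L^2}^2=\sum_k c_k^2$ is bounded by \Cref{eqn:u_bdd}, so Banach--Alaoglu gives a weakly convergent subsequence $u_n\rightharpoonup u$. Next, $\sigma=\int m^2p^2\,\diff\mu_{\sc}\le \int p^2\,\diff\mu_{\sc}=\sum_k c_k^2$ is bounded, so along a further subsequence $\sigma\to\ol\sigma$. Finally, the top-$m$ edge statistics $(L_1,\dots,L_m)$ converge by \cite{Ramirez_Rider_Virag}, so they are tight. The real content of the lemma is \Cref{eqn:Zlim}: we must identify the joint limit of $S$ with the edge eigenvalues.

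The plan is to split $S=S_{\le M}+S_{>M}$, where
\[
S_{\le M}=\frac1n\sum_{j\le M}q(\lambda_j(X))^2 ,
\]
with $M$ a large fixed constant that is sent to infinity after $n$.

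\textbf{Edge part.} Write $\lambda_j=2-n^{-2/3}L_j$. Using the trigonometric and hyperbolic representations \Cref{eqn:U_small,eqn:U_big}, show that
\[
n^{-1/2}p\bigl(2+sn^{-2/3}\bigr)=\delta^{3/2}\int_0^1 u_n(t)\,\Psi_s(t)\,\diff t+o(1)
\]
uniformly for $s$ in compacts. The kernel $\frac{\sin((k+1)\phi)}{d\sin\phi}$ converges uniformly in $k/d$ to $\Psi_s$, so weak convergence of $u_n$ suffices for this linear functional. Since $m(2+sn^{-2/3})\to1$, it follows that $n^{-1/2}q(\lambda_j)\to\Gamma_{\delta,u}(-\Lambda_{j-1})$ jointly for $j\le M$. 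Here we combine the subsequence with \cite{Ramirez_Rider_Virag}, and use Skorokhod coupling and continuous mapping.

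\textbf{Bulk part.} I expect $S_{>M}$ to be the main obstacle. The claim is that $S_{>M}$ is close to
\[
\ol\sigma-\|u\|^2+\sum_{j>M}\Gamma_{\delta,u}(-\Lambda_{j-1})^2 ,
\]
and more precisely that
\[
S-\sigma-\sum_{j}\Bigl[\tfrac1n q(\lambda_j)^2-\text{(semicircle mass near the edge)}\Bigr]
\]
is negligible. The heuristic behind $-\|u\|^2$ is a split of $\sigma$ itself. Decompose $\sigma=\int q^2\,\diff\mu_{\sc}$ into an edge window $[2-Kn^{-2/3},2]$ and its complement.

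In the edge window, the scaling above applies with the deterministic density $\rho_{\sc}(2-sn^{-2/3})\approx n^{-1/3}\sqrt s/\pi$. This gives
\[
\int_0^K \Gamma_{\delta,u}(-s)^2\,\frac{\sqrt s}{\pi}\,\diff s .
\]

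Away from the edge, the rigidity estimate (\Cref{prop:rig}) lets us replace eigenvalues by classical locations, so the bulk of $S$ and the bulk of $\sigma$ agree up to small error. The bound $|\lambda_j-\gamma_j|\lesssim n^{-2/3+\tau}j^{-1/3}$ combined with the derivative estimate $|q'|\lesssim d^2\sup|q|$ (Markov) must be checked carefully. The factor $m(x)$ kills the left edge, and near the right edge the oscillation scale $d^{-2}\asymp n^{-2/3}$ is comparable to the eigenvalue spacing only for the top $O(1)$ eigenvalues, with the relative error decaying in $j$.

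What remains is to identify the edge-window contribution of $\sigma$, $\lim_K\int_0^K\Gamma^2\sqrt s/\pi$, with $\|u\|^2$ minus the part lost under weak convergence. Rather than computing this directly, I would pass it through the identity
\[
\int p^2\,\diff\mu_{\sc}=\sum_k c_k^2=\|u_n\|^2 .
\]
Because $u_n\rightharpoonup u$ only weakly, the quantity $\|u_n\|^2-\|u\|^2\ge0$ is the lost mass, and it is absorbed into $\ol\sigma$. The expansion $\Gamma^2$ captures exactly the $\|u\|^2$ part, via a Plancherel-type identity for the sine transform against the density $\sqrt s/\pi$ on the scale $\delta$.

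\textbf{Tail control.} To make the bulk part rigorous, I would bound
\[
\mathbb E\Bigl|\frac1n\sum_{j>M}q(\lambda_j)^2-\bigl(\text{classical-location analogue}\bigr)\Bigr|
\]
uniformly, and show that $\sum_{j>M}\Gamma(-\Lambda_{j-1})^2$ has a tail that is small in probability. For the latter, use $\Lambda_j\asymp j^{2/3}$ and the bound $|\Gamma(-s)|\lesssim s^{-1/2}$ for large $s$, which implies summability once the edge window is subtracted.

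\textbf{Conclusion.} Nonnegativity $\Sigma\ge0$ follows because $S\ge0$ and distributional limits of nonnegative variables are nonnegative.
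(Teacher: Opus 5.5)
Your subsequence extraction, the uniform edge asymptotics $n^{-1/2}q(2+sn^{-2/3})\to\Gamma_{\delta,u}(s)$ on compacts, and the Plancherel explanation of the $-\norm{L^2([0,1])}{u}^2$ term all match the paper. The gap is in the step you yourself flag as the main obstacle.

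Set $g(s)\coloneqq n^{-1}q(2+sn^{-2/3})^2$. You must show that the residue $S-\sigma-\bigl[\sum_j g(-L_j)\mathbf{1}\{L_j\le\ell\}-\int_{-\ell}^0 g(s)\,n^{1/3}\rho_{\mathrm{sc}}(2+sn^{-2/3})\,\mathrm{d}s\bigr]$ is small in probability as $n\to\infty$ and then $\ell\to\infty$. The tools you name cannot do this.

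\emph{Markov is too crude.} Markov's inequality is global, and $\sup_{[-2,2]}\abs{q}$ can be of order $d^{3/2}$. So it only gives $\abs{g'(s)}\lesssim n^{-5/3}d^{5}\asymp 1$, with no decay in $s$. What is needed are the local bounds $\abs{g(s)}\lesssim\abs{s}^{-1}$ and $\abs{g'(s)}\lesssim\abs{s}^{-3/2}$ on $[-4n^{2/3},-1]$. These come from the representation $q(2\cos\theta)=\tfrac12\cot(\theta/2)\sum_k c_k\sin((k+1)\theta)$ together with Cauchy--Schwarz.

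\emph{Rigidity is too lossy.} Even with those sharp bounds, \Cref{prop:rig} does not suffice. The $j$-th eigenvalue sits at $\abs{s}\asymp j^{2/3}$ and may be displaced by $(\log n)^{C\log\log n}j^{-1/3}$ in rescaled units. Your replacement error is therefore only bounded by $\sum_{j>M}(\log n)^{C\log\log n}j^{-4/3}\asymp(\log n)^{C\log\log n}M^{-1/3}$, which does not vanish as $n\to\infty$ with $M$ fixed.

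\emph{The intermediate window is uncovered.} The range $M\le j\le(\log n)^{C'\log\log n}$ is reached neither by \cite{Ramirez_Rider_Virag} (a fixed number of eigenvalues) nor by rigidity. Absolute bounds also fail there, since $\sum_j\abs{g(-L_j)}\approx\sum_j j^{-2/3}$ diverges. You genuinely need cancellation between the eigenvalue sum and the semicircle integral at mesoscopic scales near the edge. Rigidity sufficed in \Cref{lem:LS} only because $d=O(\log n)$ there.

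\emph{The missing idea: integration by parts against counting functions.} Eigenvalues below $-2$ contribute negligibly, since $q(-2)=0$ and $n^{2/3}(2+\lambda_n(X))$ is tight. After discarding them, the residue becomes a boundary term $g(-\ell)\Delta(-\ell)$ plus $\int_{-4n^{2/3}}^{-\ell}g'(s)\Delta(s)\,\mathrm{d}s$. The boundary term at $-4n^{2/3}$ vanishes, again because $q(-2)=0$. Here $\Delta(s)$ is the number of rescaled eigenvalues above $s$ minus its semicircle prediction. The paper controls $\Delta$ in $L^1$ with two inputs:
\begin{itemize}
\item the mean bound $\abs{\expt{N_X(E)}-n\mu_{\mathrm{sc}}((E,\infty))}=O(1)$ from \cite{Kholopov_Tikhomirov_Timushev};
\item the variance bound $\var{N_X(E)}\lesssim\log(2+n(2-E)^{3/2})$, taken from \cite{Gustavsson} for GUE and transferred to GOE via the interlacing of \cite{Forrester_Rains}.
\end{itemize}
Combined with the decay of $g'$, this gives an expected residue of order $\ell^{-1/2}\sqrt{\log\ell}$, uniformly in $n$.

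\emph{A smaller gap in summability.} The bound $\abs{\Gamma_{\delta,u}(-s)}\lesssim s^{-1/2}$ together with $\Lambda_j\asymp j^{2/3}$ gives terms of size $j^{-2/3}$, which are not summable. Yet $\Sigma$ contains the uncompensated series $\sum_j\Gamma_{\delta,u}(-\Lambda_j)^2$. The paper proves finiteness from three facts:
\begin{itemize}
\item the $L^2$ bound $\int_0^\infty\Gamma_{\delta,u}(-r)^2\sqrt r\,\mathrm{d}r<\infty$;
\item the result of \cite{Kim} that the expected Airy counting function is $\frac{2}{3\pi}r^{3/2}+O(1)$;
\item the derivative bound $\abs{(\Gamma_{\delta,u}(-r)^2)'}\lesssim r^{-3/2}$.
\end{itemize}
Alternatively, once the residue bound is available, the compensated sums are Cauchy in probability in $\ell$. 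Monotonicity of the partial sums then gives almost sure finiteness.
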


\begin{proof}
The proof is divided into several steps. 

\paragraph{Existence of $ u $ and $ \ol{\sigma} $.}
We first show that the limits of $u_n$ and $\sigma$ exist. 
Since
\begin{align}
    \norm{L^2([0,1])}{u_n} &=\sqrt{\int_0^1 u_n(t)^2 \diff t} = \sqrt{\frac{1}{d} \sum_{k = 0}^{d-1} (\sqrt{d}\,c_k)^2} = \sqrt{\sum_{k = 0}^{d-1} c_k^2} \label{eqn:L2} 
\end{align}
is assumed to be bounded uniformly in $n$ (see \Cref{eqn:u_bdd}), there exists a subsequence of $u_n$ (indexed by $n$) that weakly converges in $ L^2([0,1]) $. 
Denote this weak limit by $ u $. 
This proves \Cref{eqn:ulim}. 

Next, we show that $ \sigma $ is uniformly bounded. 
For any $x\in[-2,2]$, let us make the change of variable $ x = 2 \cos(\theta) $ for some $ \theta \in [0,\pi] $. 
Then by \Cref{eqn:U_small}, $ p_k(x) = U_k(\cos(\theta)) = \sin( (k+1)\theta ) / \sin(\theta) $. 
Using this, we have 
\begin{align}
    q(2\cos(\theta)) &= \frac{1 + \cos(\theta)}{2} \cdot \sum_{k = 0}^{d-1} c_k \frac{\sin( (k+1)\theta )}{\sin(\theta)}
    = \frac{\cos(\theta/2)^2}{\sin(\theta)} \cdot \sum_{k = 0}^{d-1} c_k \sin( (k+1)\theta ) . \label{eqn:q_theta} 
\end{align}
Moreover, 
\begin{align}
    \rho_{\sc}(x) \diff x
    &= - 2\sin(\theta) \cdot \rho_{\sc}(2 \cos(\theta)) \diff\theta
    = -2 \sin(\theta) \cdot \frac{\sqrt{1-\cos(\theta)^2}}{\pi} \diff \theta
    = - \frac{2}{\pi} \sin(\theta)^2 \diff\theta . \notag 
\end{align}
Since the change of variable $ x = 2 \cos(\theta) $ maps $ \theta\in[0,\pi] $ decreasingly to $ x\in[-2,2] $, 
\begin{align}
    \sigma &= \frac{2}{\pi} \int_0^{\pi} \cos(\theta/2)^4 \paren{ \sum_{k = 0}^{d-1} c_k \sin( (k+1)\theta ) }^2 \diff\theta . \notag 
\end{align}
Since $ \cos(\theta/2)^4\in[0,1] $,  
\begin{align}
    0 &\le \sigma \le \sum_{k,\ell=0}^{d-1} c_k c_\ell \int_0^\pi \frac{2}{\pi} \sin( (k+1)\theta ) \sin( (\ell+1)\theta ) \diff\theta
    = \sum_{k = 0}^{d-1} c_k^2 , \notag 
\end{align}
where the last equality is by the sine orthogonality relation: 
\begin{align}
    \indicator{m = n}
    &= \frac{2}{\pi} \int_{0}^\pi \sin(m\theta) \sin(n\theta) \diff \theta , \notag 
\end{align}
for 
 $ m,n\in\bbZ_{>0} $. 
Now by \Cref{eqn:L2,eqn:u_bdd}, $ \sigma $ is bounded uniformly in $n$. 
This allows us to pass to a convergent subsequence whose limit $ \ol{\sigma}\ge0 $ exists. 
This proves \Cref{eqn:sigmalim}. 

\paragraph{Uniform convergence of $q$ near the edge.}
We will prove that for any compact interval $ I \subset \bbR $, 
\begin{align}
    \sup_{s\in I} \abs{ n^{-1/2} q(2 + s n^{-2/3}) - \Gamma_{\delta,u}(s) } &\to 0 . \label{eqn:q_Gamma}
\end{align}

To this end, we first consider the case $ s\le0 $. 
Specifically, fix $ L>0 $ and assume $ s\in[-L,0] $. 
Let $ \phi_n(s) \in [0,\pi] $ be defined by $ 2 + sn^{-2/3} = 2 \cos(\phi_n(s)) $. 
Set $ \alpha_n(s) \coloneqq d \phi_n(s) $ and $ \beta_n(s) \coloneqq d \sin(\phi_n(s)) $. 
By the Taylor expansion $ 2\cos(\phi) = 2 - \phi^2 + O(\phi^4) $ at $ \phi=0 $, we have $ \phi_n(s) = \sqrt{-s}\,n^{-1/3}(1+o(1)) $ (as $n\to\infty$). 
Combining this with $ \sin(\phi) = \phi+O(\phi^3) $ and the scaling assumption \Cref{eqn:crit}, it is easy to verify that 
\begin{align}
&&
    \alpha_n(s) &\to \delta \sqrt{-s} , & 
    \beta_n(s) &\to \delta \sqrt{-s} , & 
    \frac{\alpha_n(s)}{\beta_n(s)} &\to 1 , & 
& \label{eqn:alpha_beta} 
\end{align}
uniformly for all $ s\in[-L,0] $. 

For $ t\in[0,1] $, define
\begin{align}
&&
    \psi_{n,s}(t) &\coloneqq \begin{cases}
        \frac{\sin(\alpha_n(s) t)}{\beta_n(s)} , & s < 0 \\
        t , & s = 0
    \end{cases} , & 
    \psi_s(t) &\coloneqq \begin{cases}
        \frac{\sin(\delta \sqrt{-s} \, t)}{\delta\sqrt{-s}} , & s<0 \\
        t , & s = 0
    \end{cases} . & 
& \label{eqn:psi} 
\end{align}
By \Cref{eqn:alpha_beta}, we have 
\begin{align}
    \lim_{n\to\infty} \sup_{s\in[-L,0]} \norm{L^\infty([0,1])}{ \psi_{n,s} - \psi_s } &= 0 . \label{eqn:bdd_diff} 
\end{align}
Moreover, since
\begin{align}
    \psi_{n,s}'(t) &= \begin{cases}
        \frac{\alpha_n(s)}{\beta_n(s)} \cdot \cos(\alpha_n(s) t) , & s<0 \\
        1 , & s=0
    \end{cases} \notag 
\end{align}
is uniformly bounded in $n$, we also have 
\begin{align}
    \sup_{n} \sup_{s\in[-L,0]} \sup_{t\in[0,1]} \abs{ \psi_{n,s}'(t) } &< \infty . \label{eqn:bdd'} 
\end{align}

Now using $ \phi = \phi_n(s) $ in the expression \Cref{eqn:q_theta} of $q$, we have 
\begin{align}
    n^{-1/2} q(2 + sn^{-2/3})
    &= n^{-1/2} q(2\cos(\phi_n(s)))
    = n^{-1/2} \frac{1 + \cos(\phi_n(s))}{2} \sum_{k = 0}^{d-1} c_k \frac{\sin( (k+1) \phi_n(s) )}{\sin(\phi_n(s))} \notag \\
    &= n^{-1/2} d^{3/2} \cdot \frac{4 + s n^{-2/3}}{4} \cdot \frac{1}{d} \sum_{k = 0}^{d-1} (\sqrt{d}\,c_k) \frac{\sin\paren{ d\phi_n(s) \cdot \frac{k+1}{d} }}{d\sin(\phi_n(s))} \notag \\
    &= n^{-1/2} d^{3/2} \cdot \frac{4 + s n^{-2/3}}{4} \cdot \frac{1}{d} \sum_{k = 0}^{d-1} (\sqrt{d}\,c_k) \psi_{n,s}\paren{ \frac{k+1}{d} } . \label{eqn:q_TODO} 
\end{align}
By \Cref{eqn:crit}, the first factor converges to $ \delta^{3/2} $. 
The second factor converges to $1$. 
To prove \Cref{eqn:q_Gamma} for $I=[-L,0]$, it remains to show that the third factor converges to $ \delta^{-3/2} \Gamma_{\delta,u}(s) $ uniformly in $s\in[-L,0]$. 
To this end, let us estimate the difference between the empirical average and its integral counterpart. 
Since $u_n$ is a step function on the mesh $ (k/d)_{k=0}^d $ with heights $ (\sqrt{d}\,c_k)_{k=0}^{d-1} $, 
\begin{align}
    \abs{ \frac{1}{d} \sum_{k = 0}^{d-1} (\sqrt{d}\,c_k) \psi_{n,s}\paren{ \frac{k+1}{d} } - \int_0^1 u_n(t) \psi_{n,s}(t) \diff t }
    &\le \sum_{k = 0}^{d-1} \sqrt{d}\,\abs{c_k} \int_{k/d}^{(k+1)/d} \abs{ \psi_{n,s}\paren{\frac{k+1}{d}} - \psi_{n,s}(t) } \diff t . \notag 
\end{align}
By \Cref{eqn:bdd'} and the fundamental theorem of calculus, there exists $ C_L>0 $ such that uniformly for all $n$, the RHS above is upper bounded by 
\begin{align}
    \sum_{k=0}^{d-1} \sqrt{d}\,\abs{c_k} \int_{k/d}^{(k+1)/d} \int_t^{(k+1)/d} \abs{ \psi_{n,s}'(\tau) } \diff \tau \diff t
    &\le \sum_{k=0}^{d-1} \sqrt{d}\,\abs{c_k} \frac{C_L}{d^2} 
    \le \frac{C_L}{d} \sqrt{\frac{1}{d} \sum_{k = 0}^{d-1} (\sqrt{d}\,c_k)^2}
    = \frac{C_L}{d} \norm{L^2([0,1])}{u_n} , \notag 
\end{align}
where the last two steps are by Cauchy--Schwarz and \Cref{eqn:L2}, respectively.
By \Cref{eqn:u_bdd}, we have 
\begin{align}
    \sup_{s\in[-L,0]} \abs{ \frac{1}{d} \sum_{k = 0}^{d-1} (\sqrt{d}\,c_k) \psi_{n,s}\paren{ \frac{k+1}{d} } - \int_0^1 u_n(t) \psi_{n,s}(t) \diff t } &\to 0 . \label{eqn:avg_int} 
\end{align}

Next, we estimate the difference between the integral in \Cref{eqn:avg_int} and its limiting counterpart. 
By triangle inequality for $ \abs{\cdot} $ and \Holder's inequality for $ \inprod{\cdot}{\cdot}_{L^2([0,1])} $, 
\begin{align}
    & \lim_{n\to\infty} \sup_{s\in[-L,0]} \abs{ \int_0^1 u_n(t) \psi_{n,s}(t) \diff t - \int_0^1 u(t) \psi_s(t) \diff t } \notag \\
    &\le \lim_{n\to\infty} \sup_{s\in[-L,0]} \abs{\inprod{u_n}{\psi_{n,s} - \psi_s}_{L^2([0,1])}} + \abs{\inprod{u_n - u}{\psi_s}_{L^2([0,1])}} \notag \\
    &\le \lim_{n\to\infty} \sup_{s\in[-L,0]} \norm{L^2([0,1])}{u_n} \norm{L^\infty([0,1])}{\psi_{n,s} - \psi_s} + \abs{\inprod{u_n - u}{\psi_s}_{L^2([0,1])}} . \notag 
\end{align}
In the Hilbert space $ L^2([0,1]) $, the collection of functions $ K = ( \psi_s )_{s\in[-L,0]} $ is compact. 
Applying \Cref{prop:weak} to this compact subset $ K \subset L^2([0,1]) $ and the weakly convergent subsequence $ u_n $, we have
\begin{align}
     \lim_{n\to\infty} \sup_{s\in[-L,0]} \abs{\inprod{u_n - u}{\psi_s}_{L^2([0,1])}} &= 0 . \notag 
\end{align} 
Since $ \norm{L^2([0,1])}{u_n} $ is uniformly bounded in $n$ (see \Cref{eqn:u_bdd,eqn:L2}) and $ \norm{L^\infty([0,1])}{\psi_{n,s} - \psi_s} $ converges to $0$ uniformly in $s\in[-L,0]$ (see \Cref{eqn:bdd_diff}), we conclude 
\begin{align}
    \lim_{n\to\infty} \sup_{s\in[-L,0]} \abs{ \int_0^1 u_n(t) \psi_{n,s}(t) \diff t - \int_0^1 u(t) \psi_s(t) \diff t } &= 0 . \label{eqn:int_int} 
\end{align} 

Comparing the definition of $ \psi_s $ in \Cref{eqn:psi} with that of $ \Gamma_{\delta,u} $ in \Cref{eqn:Gamma}, 
\begin{align}
    \int_0^1 u(t) \psi_s(t) \diff t &= \delta^{-3/2} \Gamma_{\delta,u}(s) , 
    \qquad \textnormal{for } s\le0 . \label{eqn:int_Gamma} 
\end{align}
So using \Cref{eqn:avg_int,eqn:int_int,eqn:int_Gamma} in \Cref{eqn:q_TODO}, we have
\begin{align}
    \sup_{s\in[-L,0]} \abs{ n^{-1/2} q(2 + sn^{-2/3}) - \Gamma_{\delta,u}(s) } &\to 0 . \label{eqn:neg}
\end{align}
This proves \Cref{eqn:q_Gamma} for $ s\in I\cap\bbR_{\le0} $. 

For the positive part where $ s\in[0,L] $, the proof is almost identical and we only highlight necessary modifications. 
Let $ \wt{\phi}_n(s) \ge 0 $ be defined by $ 2 + sn^{-2/3} = 2 \cosh(\wt{\phi}_n(s)) $. 
Set $ \wt{\alpha}_n(s) \coloneqq d \wt{\phi}_n(s) $ and $ \wt{\beta}_n(s) \coloneqq d\sinh(\wt{\phi}_n(s)) $. 
Then using Taylor expansions $ 2 \cosh(\wt{\phi}) = 2 + \wt{\phi}^2 + O(\wt{\phi}^4) $ and $ \sinh(\wt{\phi}) = \wt{\phi} + O(\wt{\phi}^3) $ at $ \wt{\phi} = 0 $, uniformly for all $s\in[0,L]$, we have
\begin{align}
&&
    \wt{\alpha}_n(s) &\to \delta \sqrt{s} , & 
    \wt{\beta}_n(s) &\to \delta \sqrt{s} , & 
    \frac{\wt{\alpha}_n(s)}{\wt{\beta}_n(s)} &\to 1 . & 
& \notag 
\end{align}
By \Cref{eqn:U_big}, we have the following expression for $ \phi\ge0 $ in analogy to \Cref{eqn:q_theta} for $\theta\in[0,\pi]$, 
\begin{align}
    q(2\cosh(\phi)) &= \frac{1+\cosh(\phi)}{2} \sum_{k = 0}^{d-1} c_k \frac{\sinh( (k+1)\phi )}{\sinh(\phi)} . \notag 
\end{align}
Using this with $ \phi = \wt{\phi}_n(s) $ and following derivations similar to the case of $ s\le0 $, we obtain
\begin{align}
    \sup_{s\in[0,L]} \abs{n^{-1/2} q(2 + sn^{-2/3}) - \Gamma_{\delta,u}(s)} &\to 0 . \label{eqn:pos}
\end{align}
Since any compact interval $I$ is contained in $ [-L,L] $ for some $ L\in(0,\infty) $, combining \Cref{eqn:pos,eqn:neg} establishes \Cref{eqn:q_Gamma}. 

\paragraph{An identity.}
We claim that 
\begin{align}
    \int_{-\infty}^0 \Gamma_{\delta,u}(s)^2 \frac{\sqrt{-s}}{\pi} \diff s &= \norm{L^2([0,1])}{u}^2 . \label{eqn:identity} 
\end{align}
Indeed, by definition \Cref{eqn:Gamma}, the LHS above equals
\begin{align}
    & \delta \int_{-\infty}^0 \paren{ \int_0^1 u(t) \frac{\sin(\delta\sqrt{-s}\,t)}{\sqrt{-s}} \diff t }^2 \frac{\sqrt{-s}}{\pi} \diff s . \notag
\end{align}
Making the change of variable $ z \coloneqq \delta \sqrt{-s} $, we have $ s = -\delta^{-2}z^2 , \diff s = -2 \delta^{-2} z \diff z , \sqrt{-s} \diff s = -2 \delta^{-3} z^2 \diff z $. 
Consequently, the integral becomes
\begin{align}
    \delta \int_{\infty}^0 \paren{ \int_0^1 u(t) \frac{\sin(zt)}{z/\delta} \diff t }^2 \frac{-2\delta^{-3} z^2}{\pi} \diff z
    &= \frac{2}{\pi} \int_0^\infty \paren{ \int_0^1 u(t) \sin(zt) \diff t }^2  \diff z . \notag 
\end{align}
This is equal to $ \norm{L^2([0,1])}{u}^2 $ by the Plancherel identity for sine transforms. 

\paragraph{Counting statistics near the edge.}
Recall that $ X\sim\GOE(n) $. 
For $ E \in \bbR $, define the GOE eigenvalue counting statistic as $ N_X(E) \coloneqq \abs{\brace{ i\in[n] : \lambda_i(X) \ge E }} $. 
In preparation for the remainder of the proof, we collect a few facts regarding counting statistics near the right edge. 
By \cite[Theorem 1.1]{Ramirez_Rider_Virag}, $ n^{2/3}(2 - \lambda_1(X)) $ is tight. 
By sign symmetry of Gaussians, $ n^{2/3} (2 + \lambda_n(X)) $ is also tight, implying 
\begin{align}
    \lim_{M\to\infty} \limsup_{n\to\infty} \prob{ \lambda_n(X) < -2 - Mn^{-2/3} }
    &\le \lim_{M\to\infty} \limsup_{n\to\infty} \prob{ \abs{ n^{2/3}(2 + \lambda_n(X)) } > M }
    = 0 . \label{eqn:tight} 
\end{align}
Let $ F_X(z) \coloneqq n^{-1} \abs{\brace{i\in[n] : \lambda_i(X) \le z}} $ be the c.d.f.\ of the empirical spectral distribution of $X$. 
By \cite[Theorem 1]{Kholopov_Tikhomirov_Timushev}, 
\begin{align}
    \sup_{x\in\bbR} \abs{ \expt{F_X(x)} - \mu_{\sc}( (-\infty,x] ) } &\le C/n , \label{eqn:kolmogorov}
\end{align}
for an absolute constant $ C>0 $. 
Since the joint law of GOE eigenvalues has an absolutely continuous density with respect to the Lebesgue measure, for any fixed $E$ and $j\in[n]$, $ \prob{\lambda_j(X) = E} = 0 $, hence $ N_X(E) = n (1 - F_X(E)) $ almost surely. 
Combining this with \Cref{eqn:kolmogorov}, we have 
\begin{align}
    \abs{ \expt{N_X(E)} - n \mu_{\sc}( (E,\infty) ) } &\le C , \label{eqn:NX} 
\end{align}
uniformly for all $ E\in[-2,2] $. 

We also need the following estimate for the variance of $ N_X $: 
\begin{align}
    \var{ N_X(E) } &\le C \log(2 + n(2 - E)^{3/2}) , \label{eqn:VarX} 
\end{align}
uniformly for all $ E\in[-2,2] $. 
Such a result for GUE counting statistics is a straightforward consequence of \cite{Gustavsson}. 
We transfer it to GOE using the Forrester--Rains relation \cite{Forrester_Rains}. 
Specifically, let $ G\sim \GUE(n) $,\footnote{A matrix $ G $ from $ \GUE(n) $ is an $n\times n$ complex Hermitian matrix with independent upper triangular elements distributed according to $ G_{i,i} \sim \cN(0,1/n) $ for $i\in[n]$ and $ G_{i,j} \sim \cN(0,(2n)^{-1}) + \ii \cN(0,(2n)^{-1}) $ for $ 1\le i<j\le n $.} then \cite[Lemma 2.3]{Gustavsson} asserts that for $ E\ge0 $, 
\begin{align}
    \var{ N_G(E) } &= \frac{1+o(1)}{2\pi^2} \log( n(1 - E/2)^{3/2} ) , \label{eqn:VarNG} 
\end{align}
where the asymptotic in $ o(1) $ is with respect to $n\to\infty$. 
We claim that this implies 
\begin{align}
    \var{ N_G(E) } &\le C \log(2 + n(2 - E)^{3/2}) , \label{eqn:VarG} 
\end{align}
uniformly for all $ E\in[-2,2] $. 
To see this, first consider $E\ge0$ in which case \Cref{eqn:VarNG} applies. 
If $ n(1 - E/2)^{3/2} $ is bounded in $n$, then so is $ \var{ N_G(E) } $, in particular, \Cref{eqn:VarG} holds. 
Otherwise, $ n(1 - E/2)^{3/2} \to \infty $ as $n\to\infty$, and \Cref{eqn:VarG} also holds. 
Next consider $ E<0 $. By sign symmetry of GUE, $ N_G(E) \eqqlaw n - N_G(-E) $ to the RHS of which \Cref{eqn:VarNG} applies. 
Consequently, $ \var{ N_G(E) } = \var{ N_G(-E) } \le C \log(2 + n(2+E)^{3/2}) \le C \log(2 + n(2 - E)^{3/2}) $, i.e., \Cref{eqn:VarG} still holds. 
So \Cref{eqn:VarG} is valid in any case, as claimed. 

To turn \Cref{eqn:VarG} into \Cref{eqn:VarX}, we use the following result relating the spectrum of GUE to that of GOE.
Recall $ G\sim\GUE(n) $. 
Let $ (X/\sqrt{n},X'/\sqrt{n+1}) \sim \GOE(n) \ot \GOE(n+1) $. 
Let $ s_1 > \cdots > s_{2n+1} $ be obtained by taking the union of $ (\lambda_i(X))_{i = 1}^n $ and $ (\lambda_i(X'))_{i=1}^{n+1} $ and sorting them in descending order (note that with probability $1$, these numbers are all distinct). 
Then \cite[Theorem 5.2]{Forrester_Rains} asserts that $ (\sqrt{n} \, \lambda_i(G))_{i=1}^n $ is identically distributed as $ (s_{2i})_{i = 1}^n $. 
Now for any $E\in\bbR$, it is not hard to verify that $ \abs{ N_X(E) + N_{X'}(E) - 2 N_{\sqrt{n}\,G}(E) } \le 1 $. 
This implies $ \var{ N_X(E) + N_{X'}(E) } \le C \var{ N_{\sqrt{n}\,G}(E) } + C $. 
On the other hand, since $ N_X(E) $ and $ N_{X'}(E) $ are independent, $ \var{ N_X(E) + N_{X'}(E) } = \var{ N_X(E) } + \var{ N_{X'}(E) } \ge \var{ N_X(E) } $. 
Combining the preceding two estimates, we have $ \var{ N_X(E) } \le C \var{ N_{\sqrt{n}\,G}(E) } + C $. 
This allows us to conclude \Cref{eqn:VarX} from \Cref{eqn:VarG}.

\paragraph{Controlling the residue.}
Analogous to the counting statistic $ N_X $, let us also define the rescaled counting statistic: 
\begin{align}
    \wt{N}_X(s) &\coloneqq \abs{\brace{ i\in[n] : \lambda_i(X) \ge 2 + s n^{-2/3} }} = \abs{\brace{ i\in[n] : -L_i \ge s }} , \notag 
\end{align}
for $ s\in\bbR $, where the second equality is by the definition of $ L_i $ from \Cref{eqn:ZLsigma}. 
Define the rescaled semicircle density as 
\begin{align}
    \wt{\rho}(s) &\coloneqq n^{1/3} \rho_{\sc}(2 + s n^{-2/3}) \one_{[-4n^{2/3}, 0]}(s) . \label{eqn:wtrho} 
\end{align}
Note that $ \int \wt{\rho} = n $. 
For $ \ell\ge1 $, let 
\begin{align}
&&
    g(s) &\coloneqq n^{-1} q(2 + sn^{-2/3})^2 , & 
    W_\ell &\coloneqq S - \sigma - \sum_{j=1}^n g(-L_j) \indicator{L_j \le \ell} + \int_{-\ell}^0 g(s) \wt{\rho}(s) \diff s . & 
& \label{eqn:g_W} 
\end{align}
We shall prove that for all $ \eps>0 $, 
\begin{align}
    \lim_{\ell\to\infty} \limsup_{n\to\infty} \prob{ \abs{W_\ell} > \eps } &= 0 . \label{eqn:Well} 
\end{align}

To this end, let us first derive some point-wise estimates of $ \abs{g(s)} $ and $ \abs{g'(s)} $ for $ s\in[-4n^{2/3}, -1] $. 
For $ s\in[-4n^{2/3}, 0] $, define $ \theta \in [0,\pi] $ through $ 2 + sn^{-2/3} = 2\cos(\theta) $ so that 
\begin{align}
    s \equiv s(\theta) = - 4 n^{2/3} \sin(\theta/2)^2 . \label{eqn:s_theta}
\end{align}
Then by \Cref{eqn:q_theta}, 
\begin{align}
    q(2 \cos(\theta)) &= \frac{1+\cos(\theta)}{2 \sin(\theta)} r(\theta)
    = \frac{\cot(\theta/2)}{2} r(\theta) , \notag 
\end{align}
where we denote 
\begin{align}
    r(\theta) &\coloneqq \sum_{k = 0}^{d-1} c_k \sin( (k+1)\theta ) . \notag 
\end{align}
Hence
\begin{align}
    g(s(\theta)) &= \frac{\cot(\theta/2)^2}{4n} r(\theta)^2 . \label{eqn:gs} 
\end{align}
By Cauchy--Schwarz, 
\begin{align}
    \abs{r(\theta)} &\le \sqrt{\sum_{k = 0}^{d-1} c_k^2} \sqrt{\sum_{k = 0}^{d-1} \sin( (k+1)\theta )^2} 
    \le C\sqrt{d} , \label{eqn:|r|}
\end{align}
where the last inequality uses \Cref{eqn:u_bdd}. 
Similarly, 
\begin{align}
    \abs{r'(\theta)} &= \abs{ \sum_{k = 0}^{d-1} c_k \cos( (k+1)\theta ) (k+1) }
    \le \sqrt{\sum_{k = 0}^{d-1} c_k^2} \sqrt{\sum_{k = 0}^{d-1} (k+1)^2}
    \le C d^{3/2} . \label{eqn:|r'|}
\end{align}
Using \Cref{eqn:|r|,eqn:s_theta} in \Cref{eqn:gs}, we have that uniformly for all $ \theta $ such that $ s(\theta) \in [-4n^{2/3}, -1] $, 
\begin{align}
    \abs{g(s(\theta))} &= \frac{\cos(\theta/2)^2 r(\theta)^2}{4n \sin(\theta/2)^2}
    \le \frac{r(\theta)^2}{n^{1/3} \abs{s(\theta)}}
    \le C \frac{d/n^{1/3}}{\abs{s(\theta)}}
    \le \frac{C'}{\abs{s(\theta)}} , \label{eqn:|g|} 
\end{align}
where the last inequality holds for all sufficiently large $n$ by the assumption \Cref{eqn:crit}. 

Moving to $ \abs{g'} $, we use \Cref{eqn:gs,eqn:s_theta} to compute 
\begin{align}
    \frac{\dd g(s(\theta))}{\dd \theta} &= \frac{1}{2n} \cot(\theta/2) r(\theta) \paren{ \cot(\theta/2) r'(\theta) - \frac{1}{2} \csc(\theta/2)^2 r(\theta) } , \notag \\
    \frac{\dd s(\theta)}{\dd \theta} &= -4n^{2/3} \sin(\theta/2) \cos(\theta/2) 
    . \notag 
\end{align}
Now applying the chain rule for derivatives, using \Cref{eqn:|r|,eqn:|r'|}, we obtain that uniformly for all $ \theta $ with $ s(\theta)\in[-4n^{2/3},-1] $, 
\begin{align}
    \abs{g'(s(\theta))} &\le \abs{\frac{\dd g(s(\theta))}{\dd \theta} \cdot \frac{1}{s'(\theta)}}
    \le \frac{1}{2n\abs{s'(\theta)}} \paren{ \abs{r(\theta)} \abs{r'(\theta)} \cot(\theta/2)^2 + \frac{1}{2} r(\theta)^2 \abs{\cot(\theta/2)} \csc(\theta/2)^2 } \notag \\
    &= \frac{1}{8n \cdot n^{2/3} \abs{\sin(\theta/2)} \abs{\cos(\theta/2)}} \paren{ \frac{\abs{r(\theta)} \abs{r'(\theta)} \cos(\theta/2)^2}{\sin(\theta/2)^2} + \frac{r(\theta)^2 \abs{\cos(\theta/2)}}{2 \abs{\sin(\theta/2)}^3} } \notag \\
    &= \frac{1}{8n \cdot n^{2/3}} \paren{ \frac{\abs{r(\theta)} \abs{r'(\theta)} \abs{\cos(\theta/2)}}{\abs{\sin(\theta/2)}^3} + \frac{r(\theta)^2}{2 \abs{\sin(\theta/2)}^4} } \notag \\
    &\le \frac{C}{n\cdot n^{2/3}} \paren{ \frac{d^{1/2} \cdot d^{3/2}}{\abs{s(\theta)}^{3/2}n^{-1}} + \frac{d}{s(\theta)^2 n^{-4/3}}} \notag \\
    &= C \paren{ \frac{d^2/n^{2/3}}{\abs{s(\theta)}^{3/2}} + \frac{d/n^{1/3}}{s(\theta)^2} } 
    \le \frac{C'}{\abs{s(\theta)}^{3/2}} , \label{eqn:|g'|} 
\end{align}
where the last inequality holds for all sufficiently large $n$ by the assumptions \Cref{eqn:crit} and $ \abs{s(\theta)} \ge 1 $. 

We also note that by definition \Cref{eqn:g_W}, 
\begin{align}
    g(-4n^{2/3}) &= n^{-1} q(-2)^2 = 0 , \label{eqn:g0} 
\end{align}
since the polynomial $q(x)$ contains a factor of $ (x+2)/4 $ (see \Cref{eqn:pq}). 

We express the random variables $ S, \sigma $ defined in \Cref{eqn:ZLsigma} using the rescaled density $ \wt{\rho} $ and the rescaled polynomial $ g $ defined in \Cref{eqn:g_W,eqn:wtrho}: 
\begin{align}
    S &= \frac{1}{n} \sum_{i = 1}^n q(\lambda_i(X))^2 
    = \sum_{i = 1}^n n^{-1} q(2 + n^{2/3} (\lambda_i(X) - 2) \cdot n^{-2/3})^2
    = \sum_{i = 1}^n g(-L_i) , \notag \\
    \sigma &= \int_{-2}^2 q(x)^2 \rho_{\sc}(x) \diff x
    = \int_{-4n^{2/3}}^0 n^{-1} q(2 + sn^{-2/3})^2 \cdot n^{1/3} \rho_{\sc}(2 + sn^{-2/3}) \diff s
    = \int_{-4n^{2/3}}^0 g(s) \wt{\rho}(s) \diff s . \notag 
\end{align}
Now $ W_\ell $ defined in \Cref{eqn:g_W} can be rewritten as 
\begin{align}
    W_\ell &= \paren{ S - \sum_{i = 1}^n g(-L_i) \indicator{-L_i \ge -\ell} }
    - \paren{ \sigma - \int_{-\ell}^0 g(s) \wt{\rho}(s) \diff s } \notag \\
    &= \sum_{i = 1}^n g(-L_i) \indicator{-L_i < -4n^{2/3}} 
    + \sum_{i = 1}^n g(-L_i) \indicator{-4n^{2/3} \le -L_i < -\ell} 
    - \int_{-4n^{2/3}}^{-\ell} g(s) \wt{\rho}(s) \diff s . \label{eqn:W12}
\end{align}

Consider the first term in \Cref{eqn:W12}: 
\begin{align}
    \sum_{i = 1}^n g(-L_i) \indicator{-L_i < -4n^{2/3}} 
    &= \frac{1}{n} \sum_{i = 1}^n q(\lambda_i(X))^2 \indicator{\lambda_i(X) < -2} . \label{eqn:W1}
\end{align}
We show that this term converges to $0$ in probability. 
Fix a constant $M>0$ and let 
\begin{align}
    \cE_M &\coloneqq \brace{ \lambda_n(X) \ge -2 - Mn^{-2/3} } . \notag 
\end{align}
On the event $ \cE_M $, the only contribution to \Cref{eqn:W1} comes from eigenvalues in the interval $ [-2 - Mn^{-2/3},-2] $. 
For any $ x\in[-2 - Mn^{-2/3},-2] $, let $ t\ge0 $ be defined through $ x = -2\cosh(t) $. 
Using the elementary estimate $ \cosh(t) \ge 1 + t^2/2 $ for $ t\ge0 $, we have $ t \le \sqrt{M} n^{-1/3} $. 
Then, using the explicit expression \Cref{eqn:U_big_neg} of Chebyshev polynomials on $ (-\infty,-1) $ and following similar reasoning leading to \Cref{eqn:sinh}, we have that for any $ 0\le k\le d-1 $, 
\begin{align}
    \abs{p_k(x)} &= \abs{U_k(-\cosh(t))} \le (k+1) e^{k t} \le C_M (k+1) , \label{eqn:pk} 
\end{align}
where the last inequality holds since $ kt \le dt \le \sqrt{M} d n^{-1/3} \le C_M $ by the assumption \Cref{eqn:crit}. 
Applying Cauchy--Schwarz to $p$ in \Cref{eqn:pq}, using \Cref{eqn:pk,eqn:u_bdd}, we get
\begin{align}
    \abs{p(x)} &\le \sqrt{\sum_{k = 0}^{d-1} c_k^2} \sqrt{\sum_{k=0}^{d-1} p_k(x)^2}
    \le C_M \sqrt{\sum_{k=0}^{d-1} (k+1)^2 }
    \le C_M' d^{3/2} . \notag 
\end{align}
Therefore, for any $ x\in[-2-Mn^{-2/3},-2] $, 
\begin{align}
    n^{-1} q(x)^2 &= \frac{(x+2)^2}{16n} p(x)^2 \le C_M'' n^{-4/3} n^{-1} d^{3} \le C_M''' n^{-4/3} , \notag 
\end{align}
where the last inequality is again by \Cref{eqn:crit}. 
It then follows that on $ \cE_M $, 
\begin{align}
    \frac{1}{n} \sum_{i = 1}^n q(\lambda_i(X))^2 \indicator{\lambda_i(X) < -2}
    &\le C_M''' n^{-1/3} \to 0 . \notag 
\end{align}
For any fixed $\eps>0$, 
\begin{align}
    \limsup_{n\to\infty} \prob{ \frac{1}{n} \sum_{i = 1}^n q(\lambda_i(X))^2 \indicator{\lambda_i(X) < -2} > \eps }
    &\le \limsup_{n\to\infty} \prob{\cE_M^c} . \notag 
\end{align}
Upon taking the limit as $ M\to\infty $, the probability on the RHS above converges to $0$ by \Cref{eqn:tight}. 
This proves
\begin{align}
    \sum_{i = 1}^n g(-L_i) \indicator{-L_i < -4n^{2/3}} &\overset{\mathrm{p}}{\to} 0 . \label{eqn:term1} 
\end{align}

We next turn to the remaining terms in \Cref{eqn:W12}: 
\begin{align}
    & \sum_{i = 1}^n g(-L_i) \indicator{-4n^{2/3} \le -L_i < -\ell} 
    - \int_{-4n^{2/3}}^{-\ell} g(s) \wt{\rho}(s) \diff s \notag \\
    &= \int_{-4n^{2/3}}^{-\ell} g(s) \, \wh{\mu}(\dd s) - \int_{-4n^{2/3}}^{-\ell} g(s) \wt{\rho}(s) \diff s \label{eqn:whrho} \\
    &= \int_{-4n^{2/3}}^{-\ell} g(s) \diff (\wh{F} - \wt{F})(s) \label{eqn:whmu} \\
    &= g(-\ell) (\wh{F}(-\ell) - \wt{F}(-\ell)) - \int_{-4n^{2/3}}^{-\ell} g'(s) (\wh{F}(s) - \wt{F}(s)) \diff s , \label{eqn:IBT} 
\end{align}
where in \Cref{eqn:whrho} we denote $ \wh{\mu} \coloneqq \sum_{i = 1}^n \delta_{-L_i} $, and in \Cref{eqn:whmu}, 
\begin{align}
&&
    \wh{F}(s) &\coloneqq \wh{\mu}( (-\infty,s) ) 
    = \abs{\brace{ i\in[n] : -L_i < s }} , & 
    \wt{F}(s) &\coloneqq \int_{-\infty}^s \wt{\rho}(t) \diff t . & 
& \notag 
\end{align}
In obtaining \Cref{eqn:IBT}, we use integration by parts and the boundary condition \Cref{eqn:g0}. 
Defining 
\begin{align}
&&
    \Delta(s) &\coloneqq -(\wh{F}(s) - \wt{F}(s)) , & 
    \ol{\Delta}(s) &\coloneqq \expt{\Delta(s)} , & 
    \wt{\Delta}(s) &\coloneqq \Delta(s) - \ol{\Delta}(s) , &
& \notag 
\end{align}
and 
\begin{align}
&&
    \ol{W}_\ell &\coloneqq - g(-\ell) \ol{\Delta}(-\ell) + \int_{-4n^{2/3}}^{-\ell} g'(s) \ol{\Delta}(s) \diff s , & 
    \wt{W}_\ell &\coloneqq - g(-\ell) \wt{\Delta}(-\ell) + \int_{-4n^{2/3}}^{-\ell} g'(s) \wt{\Delta}(s) \diff s , &
& \label{eqn:olwtWell} 
\end{align}
we can further rewrite \Cref{eqn:IBT} as $ \ol{W}_\ell + \wt{W}_\ell $. 
To control the latter, we treat the mean $ \ol{W}_\ell $ and the fluctuation $ \wt{W}_\ell $ separately. 

For any $ s\in[-4n^{2/3},0] $, 
\begin{align}
    \abs{ \ol{\Delta}(s) }
    &= \abs{ \expt{ n - \wh{F}(s) } - (n - \wt{F}(s)) } \notag \\
    &= \abs{ \expt{\abs{\brace{ i\in[n] : -L_i \ge s }}} - \int_s^\infty \wt{\rho}(t) \diff t } \notag \\
    &= \abs{ \expt{\wt{N}_X(s)} - n \int_{2+sn^{-2/3}}^\infty \rho_{\sc}(t) \diff t } \notag \\
    &= \abs{ \expt{ N_X(2+sn^{-2/3}) } - n\mu_{\sc}((2+sn^{-2/3},\infty)) } 
    \le C , \label{eqn:olDelta} 
\end{align}
where the last inequality follows from \Cref{eqn:NX} by noting that $ 2+sn^{-2/3} \in [-2,2] $. 
Combining this with \Cref{eqn:|g|,eqn:|g'|}, we have 
\begin{align}
    \abs{ \ol{W}_\ell }
    &\le \abs{g(-\ell)} \abs{\ol{\Delta}(-\ell)} + \int_{-4n^{2/3}}^{-\ell} \abs{g'(s)} \abs{\ol{\Delta}(s)} \diff s
    \le C \paren{ \frac{1}{\ell} + \int_\ell^{4n^{2/3}} s^{-3/2} \diff s }
    \le \frac{C'}{\sqrt{\ell}} , \notag 
\end{align}
for all sufficiently large $n$. 
In particular, 
\begin{align}
    \lim_{\ell\to\infty} \limsup_{n\to\infty} \abs{\ol{W}_\ell} &= 0 . \label{eqn:olWell}
\end{align}

Next, by \Cref{eqn:VarX}, 
\begin{align}
    \var{ \wt{N}_X(s) }  
    = \var{ N_X(2 + sn^{-2/3}) }
    &\le C \log( 2 + \abs{s}^{3/2} ) \le C' \log( 2 + \abs{s} ) , \notag 
\end{align}
for any $ s\in[-4n^{2/3},0] $. 
Hence, using Cauchy--Schwarz, we have
\begin{align}
    \expt{\abs{\wt{\Delta}(s)}}
    &= \expt{\abs{ \Delta(s) - \expt{\Delta(s)} }}
    \le \sqrt{\var{ \Delta(s) }} \notag \\
    &= \sqrt{\var{ n - \wh{F}(s) }} 
    = \sqrt{\var{ \wt{N}_X(s) }}
    \le C' \sqrt{\log(2 + \abs{s})} , \notag 
\end{align}
where the last line follows from similar reasoning leading to \Cref{eqn:olDelta}. 
Combining this with \Cref{eqn:|g|,eqn:|g'|}, we have 
\begin{align}
    \expt{\abs{\wt{W}_\ell}}
    &\le \abs{g(-\ell)} \expt{\abs{\wt{\Delta}(-\ell)}} + \int_{-4n^{2/3}}^{-\ell} \abs{g'(s)} \expt{\abs{\wt{\Delta}(s)}} \diff s \notag \\
    &\le C \paren{ \frac{\sqrt{\log(2 + \abs{\ell})}}{\ell} + \int_\ell^{4n^{2/3}} \frac{\sqrt{\log(2+s)}}{s^{3/2}} \diff s } \notag \\
    &\le C \paren{ \frac{\sqrt{\log(2 + \abs{\ell})}}{\ell} + \int_\ell^\infty \frac{\sqrt{\log(2+s)}}{s^{3/2}} \diff s } . \notag 
\end{align}
Note that the terms in the last line above depend only on $\ell$ and converge to $0$ as $\ell\to\infty$. 
Thus $ \expt{\abs{\wt{W}_\ell}} \to 0 $ as $n\to\infty$ followed by $ \ell\to\infty $. 
Invoking Markov's inequality, for any $ \eps>0 $, 
\begin{align}
    \lim_{\ell\to\infty} \limsup_{n\to\infty} \prob{ \abs{ \wt{W}_\ell } > \eps }
    &\le \lim_{\ell\to\infty} \limsup_{n\to\infty} \expt{\abs{\wt{W}_\ell}} / \eps 
    = 0 . \label{eqn:wtWell} 
\end{align}
Finally, recalling \Cref{eqn:W12,eqn:olwtWell}, taking \Cref{eqn:term1,eqn:olWell,eqn:wtWell} collectively, we conclude \Cref{eqn:Well}. 

\paragraph{Convergence of $ Y_\ell $.}
Recall that our eventual aim is to show the convergence of $S$. 
We have just controlled the residue term $ W_\ell $ and have already shown the convergence of $ \sigma $. 
It remains to investigate the convergence of the last two terms in the definition \Cref{eqn:g_W} of $ W_\ell $. 
Indeed, denote these terms by $ Y_\ell $ and define two accompanying truncated versions
\begin{align}
    Y_\ell &\coloneqq \sum_{j=1}^n g(-L_j) \indicator{-\ell \le -L_j} - \int_{-\ell}^0 g(s) \wt{\rho}(s) \diff s , \notag \\
    Y_{\ell,\ell'} &\coloneqq \sum_{j=1}^n g(-L_j) \indicator{-\ell \le -L_j \le \ell'} - \int_{-\ell}^0 g(s) \wt{\rho}(s) \diff s , \notag \\
    Y_{\ell,\ell',N} &\coloneqq \sum_{j=1}^N g(-L_j) \indicator{-\ell \le -L_j \le \ell'} - \int_{-\ell}^0 g(s) \wt{\rho}(s) \diff s , \notag 
\end{align}
for $ \ell'>0 $ and $ N\in\bbZ_{\ge m} $ fixed (relative to $n$). 
The limits of these objects will be shown to be 
\begin{align}
    \Upsilon_\ell &\coloneqq \sum_{j\ge1} \Gamma_{\delta,u}(-\Lambda_{j-1})^2 \indicator{-\ell \le -\Lambda_{j-1}} - \int_{-\ell}^0 \Gamma_{\delta,u}(s)^2 \frac{\sqrt{-s}}{\pi} \diff s , \notag \\
    \Upsilon_{\ell,\ell'} &\coloneqq \sum_{j\ge1} \Gamma_{\delta,u}(-\Lambda_{j-1})^2 \indicator{-\ell \le -\Lambda_{j-1} \le \ell'} - \int_{-\ell}^0 \Gamma_{\delta,u}(s)^2 \frac{\sqrt{-s}}{\pi} \diff s , \notag \\
    \Upsilon_{\ell,\ell',N} &\coloneqq \sum_{j=1}^N \Gamma_{\delta,u}(-\Lambda_{j-1})^2 \indicator{-\ell \le -\Lambda_{j-1} \le \ell'} - \int_{-\ell}^0 \Gamma_{\delta,u}(s)^2 \frac{\sqrt{-s}}{\pi} \diff s , \notag 
\end{align}
respectively. 

To show the desired convergence, first note that by \cite[Theorem 1.1]{Ramirez_Rider_Virag}, for any fixed $m\ge1$, 
\begin{align}
    (L_1, \cdots, L_m) &\overset{\dd}{\to} (\Lambda_0, \cdots, \Lambda_{m-1}) . \label{eqn:Airy}
\end{align}
Now consider $ Y_{\ell,\ell',N} $. 
By \Cref{eqn:q_Gamma}, $ g \to (\Gamma_{\delta,u})^2 $ point-wise uniformly on $ [-\ell,\ell'] $. 
Moreover, it is easy to see that 
\begin{align}
    \sup_{s\in[-\ell,0]} \abs{ \wt{\rho}(s) - \pi^{-1} \sqrt{-s} } &\to 0 . \notag 
\end{align}
Since the Airy point process has no atom at any deterministic point, by the continuous mapping theorem, we have the following joint convergence as $n\to\infty$: 
\begin{align}
    ( Y_{\ell,\ell',N}, L_1, \cdots, L_m ) &\overset{\dd}{\to} (\Upsilon_{\ell,\ell',N}, \Lambda_{0}, \cdots, \Lambda_{m-1}) . \label{eqn:Yellell'N} 
\end{align}

Next, we pass this result to the convergence of $ Y_{\ell,\ell'} $ by taking $N\to\infty$. 
To do so, note that on the event $ \brace{ -L_{N+1} < - \ell } $, $ Y_{\ell,\ell'} = Y_{\ell,\ell',N} $ since none of $ (L_i)_{i=N+1}^n $ contributes to $ Y_{\ell,\ell'} $. 
So $ \prob{ Y_{\ell,\ell',N} \ne Y_{\ell,\ell'} } \le \prob{ -L_{N+1} \ge -\ell } $. 
By \Cref{eqn:Airy}, passing to the $n\to\infty$ limit, we have
\begin{align}
    \limsup_{n\to\infty} \prob{ Y_{\ell,\ell',N} \ne Y_{\ell,\ell'} } &\le \prob{ -\Lambda_N \ge -\ell } . \notag 
\end{align}
Since $ \Lambda_{j-1} \to \infty $ as $j\to\infty$ almost surely, the RHS above converges to $0$ as $N\to\infty$. 
Therefore, 
\begin{align}
    \lim_{N\to\infty} \limsup_{n\to\infty} \prob{ Y_{\ell,\ell',N} \ne Y_{\ell,\ell'} } &= 0 . \notag 
\end{align}
Following a similar argument, 
\begin{align}
    \lim_{N\to\infty} \prob{ \Upsilon_{\ell,\ell',N} \ne \Upsilon_{\ell,\ell'} } &\le \lim_{N\to\infty} \prob{ -\Lambda_N \ge -\ell } = 0 . \notag 
\end{align}
Combining the preceding two displays with \Cref{eqn:Yellell'N}, we have 
\begin{align}
    ( Y_{\ell,\ell'}, L_1, \cdots, L_m ) &\overset{\dd}{\to} ( \Upsilon_{\ell,\ell'}, \Lambda_0, \cdots, \Lambda_{m-1} ) . \label{eqn:Yellell'} 
\end{align}

Finally, we establish the convergence of $ Y_{\ell} $ by taking $ \ell'\to\infty $. 
This is similar to the last step. 
On the event $ \brace{ -L_1 \le \ell' } $, $ Y_{\ell,\ell'} = Y_\ell $ since $ (-L_j)_{j\in[n]} $ is nonincreasing.
So $ \prob{ Y_\ell \ne Y_{\ell,\ell'} } \le \prob{ -L_1 > \ell' } $. 
Taking the $n\to\infty$ limit and using the tightness of $ L_1 $, 
\begin{align}
    \lim_{\ell'\to\infty} \limsup_{n\to\infty} \prob{ Y_\ell \ne Y_{\ell,\ell'} } &\le \lim_{\ell'\to\infty} \limsup_{n\to\infty} \prob{-L_1 > \ell'} = 0 . \notag 
\end{align}
Similarly, by tightness of $ \Lambda_0 $, 
\begin{align}
    \lim_{\ell'\to\infty} \prob{ \Upsilon_\ell \ne \Upsilon_{\ell,\ell'} } &= \lim_{\ell'\to\infty} \prob{ -\Lambda_0 > \ell' } = 0 . \notag 
\end{align}
Using this and sending $ \ell'\to\infty $ in \Cref{eqn:Yellell'}, we conclude 
\begin{align}
    ( Y_\ell, L_1, \cdots, L_m ) &\overset{\dd}{\to} ( \Upsilon_\ell, \Lambda_0, \cdots, \Lambda_{m-1} ) . \label{eqn:Yell}
\end{align}

\paragraph{Summability of a limiting series.}
We prove 
\begin{align}
    \sum_{j\ge1} \Gamma_{\delta,u}(-\Lambda_{j-1})^2 &< \infty , \qquad \mathrm{a.s.} \label{eqn:Gamma_sum} 
\end{align}
Denoting $ H(r) \coloneqq \Gamma_{\delta,u}(-r)^2 $ and recalling the definition \Cref{eqn:Gamma} of $ \Gamma_{\delta,u} $, we have $ H(r) = \delta^3 I(z(r))^2 / z(r)^2 $ for $ r>0 $, where $ z(r) \coloneqq \delta\sqrt{r} $ and
\begin{align}
    I(z) &\coloneqq \int_0^1 u(t) \sin(tz) \diff t . \notag 
\end{align}
Note that both $I$ and $I'$ are uniformly bounded. 
Indeed, by Cauchy--Schwarz and $ L^2 $-boundedness of $u$, 
\begin{align}
    \abs{I(z)} &\le \sqrt{\int_0^1 u(t)^2 \diff t} \sqrt{\int_0^1 \sin(tz)^2 \diff t} \le \norm{L^2([0,1])}{u} < C , \notag 
\end{align}
and similarly, 
\begin{align}
    \abs{I'(z)} &= \abs{\int_0^1 u(t) \cos(tz) t \diff t}
    \le \norm{L^2([0,1])}{u} \sqrt{\int_0^1 t^2 \diff t} < C . \notag 
\end{align}
This implies that for $r\ge1$, $ 0 \le H(r) \le C/r $ and 
\begin{align}
    \abs{H'(r)} &= \frac{\delta^4}{\sqrt{r}} \abs{ \frac{I(z(r)) I'(z(r))}{z(r)^2} - \frac{I(z(r))^2}{z(r)^3} } 
    \le \frac{C'}{\sqrt{r}} \paren{ \frac{1}{r} + \frac{1}{r^{3/2}} }
    \le C / r^{3/2} . \label{eqn:H'} 
\end{align}

Now denote by $ \mu_{\Lambda} \coloneqq \sum_{j\ge1} \delta_{-\Lambda_{j-1}} $ the (random) empirical measure of the Airy point process $ (-\Lambda_{j-1})_{j\ge1} $. 
Denote by $ F_\Lambda(r) \coloneqq \expt{ \mu_\Lambda([-r,0)) } $. 
Then \cite[Theorem 1.6]{Kim} shows that for any $r>0$, 
\begin{align}
    \expt{\mu_\Lambda( [-r,\infty) )} &= \frac{2}{3\pi} r^{3/2} + D_\Lambda(r) , \notag 
\end{align}
for a function $ D_\Lambda $ satisfying $ \sup_{r>0} \abs{D_\Lambda(r)} < \infty $. 
Since the Airy point process is locally finite and its largest point $ -\Lambda_0 $ is almost surely finite, we have $ \expt{\mu_\Lambda([0,\infty))} = D_\Lambda(0) < \infty $ and therefore 
\begin{align}
    F_\Lambda(r) &= \expt{\mu_\Lambda( [-r,\infty) )} - \expt{\mu_\Lambda( [0,\infty) )}
    = \frac{2}{3\pi} r^{3/2} + D(r) , \label{eqn:FLambda}  
\end{align}
where $ D(r) \coloneqq D_\Lambda(r) - D_\Lambda(0) $ is uniformly bounded. 

Let us compute \Cref{eqn:Gamma_sum} restricted to the interval $ [-\ell,0) $ for a large fixed constant $\ell>0$:  
\begin{align}
    \expt{ \sum_{j\ge1} H(\Lambda_{j-1}) \indicator{-\ell \le -\Lambda_{j-1} < 0} } &= \expt{ \int_{-\ell}^0 H(-x) \, \mu_\Lambda(\dd x) } \notag \\
    &= \int_{-\ell}^0 H(-x) \expt{\mu_\Lambda(\dd x)}
    = \int^{\ell}_0 H(r) \diff F_\Lambda(r) . \label{eqn:EsumH} 
\end{align}
We split the integration region into $ [1,\ell) $ and $ [0,1) $. 
For the first part, 
\begin{align}
    \int^{\ell}_{1} H(r) \diff F_\Lambda(r)
    &= \int^{\ell}_{1} H(r) \frac{\sqrt{r}}{\pi} \diff r + \int^{\ell}_1 H(r) \diff D(r) . \notag
\end{align}
Since $ H $ is nonnegative by definition, the first part is at most 
\begin{align}
    \int^{\ell}_{1} H(r) \frac{\sqrt{r}}{\pi} \diff r &\le \int_0^\infty \Gamma_{\delta,u}(-r)^2 \frac{\sqrt{r}}{\pi} \diff r = \norm{L^2([0,1])}{u}^2 , \notag 
\end{align}
by \Cref{eqn:identity}, which is uniformly bounded in $\ell$. 
For the second part, we apply integration by parts and obtain
\begin{align}
    \int^{\ell}_1 H(r) \diff D(r)
    &= D(\ell) H(\ell) - D(1) H(1) - \int_1^\ell D(r) H'(r) \diff r , \notag 
\end{align}
whose absolute value is, by \Cref{eqn:H'}, at most
\begin{align}
    C \paren{ \frac{1}{\ell} + 1 + \int_1^\ell \frac{1}{r^{3/2}} \diff r } , \notag 
\end{align}
in particular, uniformly bounded for all $\ell\ge1$. 

For the integral on $ [0,1) $, we can similarly decompose it into parts with respect to the two components of $ F_\Lambda $ (see \Cref{eqn:FLambda}). 
The part with respect to $ 2 (3\pi)^{-1} r^{3/2} $ is still uniformly bounded in $\ell$ by \Cref{eqn:identity}. 
The other part with respect to $D(r)$ is also uniformly bounded in $\ell$ since by the definition \Cref{eqn:Gamma}, 
\begin{align}
    \lim_{r\downarrow0} H(r) &= \delta^3 \paren{\int_0^1 u(t) t \diff t}^2
    \le \delta^3 \norm{L^2([0,1])}{u}^2 \paren{\int_0^1 t^2 \diff t} < C . \notag 
\end{align}
Therefore, back to \Cref{eqn:EsumH}, we have
\begin{align}
    \sup_{\ell\ge1} \expt{ \sum_{j\ge1} H(\Lambda_{j-1}) \indicator{-\ell \le -\Lambda_{j-1} < 0} } &< \infty . \notag 
\end{align}
Since the expectation is nondecreasing in $\ell$, monotone convergence gives
\begin{align}
    \expt{ \sum_{j\ge1} H(\Lambda_{j-1}) \indicator{-\Lambda_{j-1} < 0} } &< \infty . \notag 
\end{align}
Finally, the positive part
\begin{align}
    \expt{ \sum_{j\ge1} H(\Lambda_{j-1}) \indicator{-\Lambda_{j-1} \ge 0} } \notag 
\end{align}
can be easily seen finite since the Airy point process $ (-\Lambda_{j-1})_{j\ge1} $ is almost surely bounded above and locally finite. 
Since \Cref{eqn:Gamma_sum} has finite expectation, it is finite almost surely. 

Using \Cref{eqn:Gamma_sum,eqn:identity}, we have 
\begin{align}
    \Upsilon_\ell &\to \sum_{j\ge1} \Gamma_{\delta,u}(-\Lambda_{j-1})^2 - \int_{-\infty}^0 \Gamma_{\delta,u}(s)^2 \frac{\sqrt{-s}}{\pi} \diff s
    = \sum_{j\ge1} \Gamma_{\delta,u}(-\Lambda_{j-1})^2 - \norm{L^2([0,1])}{u}^2 \eqqcolon \Upsilon , \label{eqn:Upsilonell} 
\end{align}
as $\ell\to\infty$. 

\paragraph{Concluding the result.}
For fixed $\ell$, the convergence results $ \sigma \to \ol{\sigma} $ and \Cref{eqn:Yell} imply 
\begin{align}
    (Y_\ell + \sigma , L_1, \cdots, L_m) &\overset{\dd}{\to} ( \Upsilon_\ell + \ol{\sigma} , \Lambda_0, \cdots, \Lambda_{m-1} ) , \label{eqn:Y1} 
\end{align}
as $ n\to\infty $. 
Further using \Cref{eqn:Upsilonell} to take the $\ell\to\infty$ limit, we obtain
\begin{align}
    ( \Upsilon_\ell + \ol{\sigma} , \Lambda_0, \cdots, \Lambda_{m-1} ) &\overset{\mathrm{a.s.}}{\to} (\Upsilon + \ol{\sigma}, \Lambda_0, \cdots, \Lambda_{m-1})
    = (\Sigma, \Lambda_0, \cdots, \Lambda_{m-1}) , \label{eqn:Y2}
\end{align}
where the last equality is by the definition \Cref{eqn:Upsilonell} of $ \Upsilon $. 

Denoting 
\begin{align}
&&
    \bS &\coloneqq (S, L_1, \cdots, L_m) , & 
    \bS_\ell &\coloneqq (Y_\ell + \sigma, L_1, \cdots, L_m) , & 
    \bSigma &\coloneqq (\Sigma, \Lambda_0, \cdots, \Lambda_{m-1}) , & 
& \notag 
\end{align}
let us verify the convergence of $ \bS $ to $ \bSigma $ in distribution. 
Let $ f \colon \bbR^{m+1} \to \bbR $ be an arbitrary $1$-bounded $1$-Lipschitz function. 
For any $\eps>0$, let $ \cE_\eps \coloneqq \brace{ \abs{ S - (Y_\ell + \sigma) } > \eps } $. 
Then 
\begin{align}
    \abs{f(\bS) - f(\bS_\ell)} &\le \abs{ (f(\bS) - f(\bS_\ell)) \one_{\cE_\eps} } + \abs{ (f(\bS) - f(\bS_\ell)) \one_{\cE_\eps^c} } \notag \\
    &\le (\abs{f(\bS)} + \abs{f(\bS_\ell)}) \one_{\cE_\eps} + \abs{S - (Y_\ell + \sigma)} \one_{\cE_\eps^c} 
    \le 2 \one_{\cE_\eps} + \eps , \notag 
\end{align}
where the last inequality follows from $ \norminf{f}\le1 , \norm{\mathrm{Lip}}{f} \le 1 $ by the choice of $f$. 
Taking expectations and using Jensen's inequality, 
\begin{align}
    \abs{\expt{f(\bS)} - \expt{f(\bS_\ell)}}
    &\le \expt{ \abs{ f(\bS) - f(\bS_\ell) } }
    \le 2 \prob{\cE_\eps} + \eps . \label{eqn:fZ} 
\end{align}
Note that $ S - (Y_\ell + \sigma) = W_\ell $ (see \Cref{eqn:g_W}), so by \Cref{eqn:Well}, 
\begin{align}
    \lim_{\ell\to\infty} \limsup_{n\to\infty} \prob{\cE_\eps} &= 0 . \notag 
\end{align}
Using this in \Cref{eqn:fZ}, sending $ n\to\infty $, $ \ell\to\infty $ and $ \eps\to0 $ in that order, we arrive at 
\begin{align}
    \lim_{\eps\to0} \lim_{\ell\to\infty} \limsup_{n\to\infty} \abs{\expt{f(\bS)} - \expt{f(\bS_\ell)}}
    &= 0 . \label{eqn:Z} 
\end{align}
Moreover, we have already shown in \Cref{eqn:Y1,eqn:Y2} that $ \bS_\ell \overset{\dd}{\to} \bSigma $ under the sequential limit $ n\to\infty $ followed by $ \ell\to\infty $. 
Combining this with \Cref{eqn:Z}, we conclude $ \bS \overset{\dd}{\to} \bSigma $ as $n\to\infty$, as promised in \Cref{eqn:Zlim}. 
Since $S\ge0$ for all $n$, $\Sigma\ge0$ almost surely. 
This completes the proof of the lemma. 
\end{proof}

\begin{lemma}
\label{lem:Sconv}
Assume \Cref{eqn:crit}. 
Let the polynomials $ p,q $, the step function $ u_n $ and its weak limit $ u $ in $ L^2([0,1]) $ be as in \Cref{eqn:pq,eqn:u_bdd,eqn:u}. 
Let $ X \sim \GOE(n) $ and define $ L_j $ for $j\in[n]$ as in \Cref{eqn:ZLsigma}. 
Let $ S $ be any random variable such that for any fixed $m$, 
\begin{align}
    (S, L_1, \cdots, L_m) &\overset{\dd}{\to} (\Sigma, \Lambda_0, \cdots, \Lambda_{m-1}) . \notag 
\end{align}
Then 
\begin{align}
    \paren{ S, n^{-1} q(\lambda_1(X))^2, \cdots, n^{-1} q(\lambda_m(X))^2 }
    &\overset{\dd}{\to} (\Sigma, \Gamma_{\delta,u}(-\Lambda_0)^2, \cdots, \Gamma_{\delta,u}(-\Lambda_{m-1})^2) . \notag 
\end{align}
\end{lemma}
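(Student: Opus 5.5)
The plan is to deduce the claim from the assumed joint convergence of $(S,L_1,\cdots,L_m)$ via the continuous mapping theorem, using the locally uniform edge asymptotics \Cref{eqn:q_Gamma} established in the proof of \Cref{lem:mass}. Note that \Cref{eqn:q_Gamma} only uses the assumptions \Cref{eqn:crit,eqn:u_bdd} and the weak convergence $u_n \rightharpoonup u$ (along the subsequence under consideration), so it is available here. By definition of $L_j$, we have $\lambda_j(X) = 2 - L_j n^{-2/3}$, and hence
\begin{align}
    n^{-1} q(\lambda_j(X))^2 &= g_n(-L_j) , \qquad g_n(s) \coloneqq \paren{ n^{-1/2} q(2 + s n^{-2/3}) }^2 . \notag
\end{align}
Thus the vector of interest equals $F_n(S,L_1,\cdots,L_m)$, where $F_n(x_0,x_1,\cdots,x_m) \coloneqq (x_0, g_n(-x_1), \cdots, g_n(-x_m))$. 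The target is $F(\Sigma,\Lambda_0,\cdots,\Lambda_{m-1})$ with $F(x_0,\cdots,x_m) \coloneqq (x_0,\Gamma_{\delta,u}(-x_1)^2,\cdots,\Gamma_{\delta,u}(-x_m)^2)$.

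First I check that $F$ is continuous. The function $\Gamma_{\delta,u}$ is continuous on $\bbR$, since $s\mapsto \Psi_s$ is continuous from $\bbR$ into $L^\infty([0,1])$; this is clear from the explicit formulas and their agreement at $s=0$. Next, \Cref{eqn:q_Gamma} gives $n^{-1/2}q(2+sn^{-2/3}) \to \Gamma_{\delta,u}(s)$ uniformly on every compact $I\subset\bbR$. Since $\Gamma_{\delta,u}$ is bounded on $I$, squaring preserves uniform convergence, so $g_n \to \Gamma_{\delta,u}^2$ uniformly on compacts. It follows that $F_n \to F$ uniformly on compact subsets of $\bbR^{m+1}$, and therefore $F_n(x^{(n)}) \to F(x)$ whenever $x^{(n)}\to x$.

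I then apply the extended continuous mapping theorem (e.g.\ van der Vaart, Theorem 18.11), which applies because $F$ is continuous everywhere. To stay self-contained I would argue via Skorokhod representation: realize $(S,L_1,\cdots,L_m)$ and $(\Sigma,\Lambda_0,\cdots,\Lambda_{m-1})$ on a common probability space with almost sure convergence. Because the limit vector is almost surely finite, the representatives lie eventually in a fixed compact set pathwise, and locally uniform convergence then gives $F_n(\cdot)\to F(\cdot)$ almost surely, hence in distribution. Alternatively, one can avoid Skorokhod altogether. By tightness, choose a compact $K$ carrying mass at least $1-\eps$ uniformly in $n$. On $K$, $|f\circ F_n - f\circ F|$ is small for any bounded Lipschitz $f$, so $\mathbb{E}[f(F_n(\cdot))]$ and $\mathbb{E}[f(F(\cdot))]$ differ by $O(\eps)$ plus $o(1)$; combined with the weak convergence of the inputs under the continuous map $F$, this gives the claim.

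The only point needing care is subsequence bookkeeping. The uniform convergence \Cref{eqn:q_Gamma} holds along the subsequence on which $u_n\rightharpoonup u$, and I would state the lemma's conclusion along that same subsequence, which is how it is used later. Apart from this I expect no real obstacle, since the substantive analysis was already carried out in \Cref{lem:mass}.
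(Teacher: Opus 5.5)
Your proposal is correct and follows essentially the same route as the paper. You write the vector as a deterministic map of $(S,L_1,\cdots,L_m)$, use \Cref{eqn:q_Gamma} to get uniform convergence on compacts to a continuous limit map, and conclude via tightness plus the continuous mapping theorem; your ``alternative'' tightness argument is exactly the paper's, and the Skorokhod/extended-CMT version is an equivalent repackaging, with the subsequence bookkeeping handled consistently with how the lemma is used.
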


\begin{proof}
Recall $g$ from \Cref{eqn:g_W}. 
By \Cref{eqn:q_Gamma}, for any compact $I\subset\bbR$, 
\begin{align}
    \sup_{s\in I} \abs{ g(s) - \Gamma_{\delta,u}(s)^2 } &\to 0 . \label{eqn:gconv}
\end{align}
Note that the function $ \Gamma_{\delta,u} $ is continuous. 

Denote $ \bL \coloneqq (S, L_1, \cdots, L_m) $ and $ \bLambda \coloneqq (\Sigma, \Lambda_0, \cdots, \Lambda_{m-1}) $. 
Define functions $ T_n, T \colon \bbR^{m+1} \to \bbR^{m+1} $ as 
\begin{align}
&& 
    T_n(s, \ell_1, \cdots, \ell_m) &= (s, g(-\ell_1), \cdots, g(-\ell_m)) , & 
    T(s, \ell_1, \cdots, \ell_{m}) &= (s, \Gamma_{\delta,u}(-\ell_1)^2, \cdots, \Gamma_{\delta,u}(-\ell_{m})^2) . & 
& \notag
\end{align}
Since the assumption $ \bL \overset{\dd}{\to} \bLambda $ implies tightness, for every $\eps>0$, there is a compact set $ K_\eps \subset \bbR^{m+1} $ such that 
\begin{align}
&&
    \limsup_{n\to\infty} \prob{\bL \notin K_\eps} &< \eps , & 
    \prob{\bLambda \notin K_\eps} &< \eps . 
& \label{eqn:K} 
\end{align}
Applying \Cref{eqn:gconv} with $I\subset\bbR$ being a compact interval containing 
\begin{align}
    \bigcup_{i=1}^m \brace{ -\ell_i : (s,\ell_1, \cdots, \ell_i, \cdots, \ell_m) \in K_\eps } , \notag 
\end{align}
we have 
\begin{align}
    \sup_{(s,\ell_1, \cdots, \ell_m)\in K_\eps} \normtwo{ T_n(s, \ell_1, \cdots, \ell_m) - T(s, \ell_1, \cdots, \ell_m) } &\to 0 . \label{eqn:T} 
\end{align}
Combining \Cref{eqn:K,eqn:T}, we have
\begin{align}
    \limsup_{n\to\infty} \prob{ \normtwo{T_n(\bL) - T(\bL)} > \eps }
    &\le \limsup_{n\to\infty} \prob{ \bL \notin K_\eps }
    < 2\eps , \notag 
\end{align}
that is, $ \normtwo{T_n(\bL) - T(\bL)} \to 0 $ in probability and therefore also in distribution. 
On the other hand, since $T$ is continuous and $ \bL \overset{\dd}{\to} \bLambda $ by assumption, the continuous mapping theorem gives $ T(\bL) \overset{\dd}{\to} T(\bLambda) $. 
Putting together the preceding two convergence results, we have $ T_n(\bL) \overset{\dd}{\to} T(\bLambda) $, which is precisely the sought conclusion. 
\end{proof}

\begin{lemma}
\label{lem:a}
Assume \Cref{eqn:crit}. 
Let the polynomials $ p,q $, the step function $ u_n $ and its weak limit $ u $ in $ L^2([0,1]) $ be as in \Cref{eqn:pq,eqn:u_bdd,eqn:u}. 
Let $ X\sim\GOE(n) $. 
Then for any $\eps>0$, 
\begin{align}
    \lim_{M\to\infty} \limsup_{n\to\infty} \prob{ \max_{i\in[M,n]} n^{-1} q(\lambda_i(X))^2 > \eps } &= 0 . \notag 
\end{align}
\end{lemma}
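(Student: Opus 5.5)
We need to show that $\max_{i\ge M} g(-L_i)$ is small once $M$ is large, where $g(s)=n^{-1}q(2+sn^{-2/3})^2$ is the function from \Cref{eqn:g_W}. The plan is to split the eigenvalues $\lambda_i(X)$, $i\ge M$, into three regions and bound $g$ on each. The right region is $[2-\ell n^{-2/3},\infty)$, the bulk region is $[-2,2-\ell n^{-2/3})$, and the left region is $(-\infty,-2)$. Here $\ell$ is a large constant, chosen depending on $\eps$ and before $M$.

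\emph{Bulk region.} The pointwise estimate \Cref{eqn:|g|} gives $|g(s)|\le C'/|s|$ uniformly for $s\in[-4n^{2/3},-1]$ and all large $n$. Hence on this region $g(-L_i)\le C'/\ell$. Choosing $\ell\ge 2C'/\eps$ makes this contribution at most $\eps/2$ deterministically.

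\emph{Left region.} The argument giving \Cref{eqn:term1} shows that on the event $\cE_{M'}=\{\lambda_n(X)\ge -2-M'n^{-2/3}\}$ we have $n^{-1}q(x)^2\le C_{M'}n^{-4/3}$ for every $x\in[-2-M'n^{-2/3},-2]$. So on $\cE_{M'}$ this contribution vanishes as $n\to\infty$. By \Cref{eqn:tight}, $\limsup_n\prob{\cE_{M'}^c}\to0$ as $M'\to\infty$.

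\emph{Right region.} This step is the main obstacle, because $g$ is of order one near the edge and cannot be made small there. Instead I would show that, with high probability, no eigenvalue with index $i\ge M$ lies in this region. Let $\lambda_M(X)\ge 2-\ell n^{-2/3}$; then $L_M\le \ell$. By \Cref{eqn:Airy} with $m=M$,
\[
\limsup_{n\to\infty}\prob{L_M\le\ell}\le\prob{\Lambda_{M-1}\le\ell}.
\]
Since $\Lambda_{j}\to\infty$ almost surely, which was already used in the proof of \Cref{lem:mass}, the right-hand side tends to $0$ as $M\to\infty$ with $\ell$ fixed. On the event $\{L_M>\ell\}$, monotonicity of the eigenvalues puts every $i\ge M$ outside the right region.

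Combining the three regions:
\[
\limsup_{n\to\infty}\prob{\max_{i\in[M,n]}g(-L_i)>\eps}\le \prob{\Lambda_{M-1}\le \ell}+\limsup_{n\to\infty}\prob{\cE_{M'}^c}.
\]
Sending $M\to\infty$ and then $M'\to\infty$ proves the claim. One should also check that \Cref{eqn:|g|} covers the transition zone down to $s=-4n^{2/3}$, i.e.\ $x=-2$. It does, since its derivation used $|s(\theta)|\ge1$ and the bound $\cot(\theta/2)^2\le 1/\sin(\theta/2)^2$, which stays valid near $\theta=\pi$.
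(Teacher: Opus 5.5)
Your proof is correct and follows essentially the paper's argument. You use the same three-way split: eigenvalues left of $-2$ (where $n^{-1}q(x)^2=O(n^{-4/3})$ on $[-2-M'n^{-2/3},-2]$, combined with tightness of $n^{2/3}(2+\lambda_n(X))$), bulk eigenvalues below $2-\ell n^{-2/3}$ (where $n^{-1}q^2\lesssim d/(\ell n^{1/3})$), and the edge window, which with high probability contains no index $i\ge M$ because $\Lambda_{M-1}\to\infty$; the only differences are cosmetic, namely that you reuse the bound $|g(s)|\le C'/|s|$ and the left-edge estimate from the proof of the preceding lemma where the paper rederives them via the Christoffel--Darboux bound and the edge limit of $p_k$, and you keep separate parameters $\ell,M'$ instead of the paper's single $L$.
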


\begin{proof}
In the proof, we need the following estimate for the diagonal part of the Christoffel--Darboux kernel defined in \Cref{eqn:Kd}: 
\begin{align}
    K_{d-1}(x,x) &= \sum_{k = 0}^{d-1} p_k(x)^2
    \le \min\brace{ d^3, \frac{4d}{4 - x^2} } , \label{eqn:Kd_est} 
\end{align}
for any $x\in[-2,2]$. 
The first bound follows from \Cref{eqn:pm}: 
\begin{align}
    \sum_{k = 0}^{d-1} p_k(x)^2 &\le \sum_{k = 0}^{d-1} (k+1)^2 \le d^3 . \notag  
\end{align}
The second bound follows from the sine representation of Chebyshev polynomials on $[-1,1]$ given in \Cref{eqn:U_small}. 
Indeed, under the change of variable $ x = 2\cos(\theta) $, we have
\begin{align}
    \sum_{k = 0}^{d-1} p_k(x)^2 &= \sum_{k = 0}^{d-1} \frac{\sin( (k+1)\theta )^2}{\sin(\theta)^2}
    \le \frac{d}{1 - \cos(\theta)^2} = \frac{4d}{4 - x^2} . \notag 
\end{align}

Now for $p$ in \Cref{eqn:pq}, by Cauchy--Schwarz and the assumption \Cref{eqn:u_bdd}, we have 
\begin{align}
    p(x)^2 &\le \paren{\sum_{k = 0}^{d-1} c_k^2} \paren{\sum_{k = 0}^{d-1} p_k(x)^2} 
    \le C K_{d-1}(x,x) , \notag 
\end{align}
for all $x\in[-2,2]$. 
Using \Cref{eqn:Kd_est},
\begin{align}
    n^{-1} q(x)^2 &= \frac{(x+2)^2}{16n} p(x)^2
    \le C \min\brace{ \frac{(x+2)^2 d^3}{n} , \frac{(x+2) d}{n (2-x)} } , \label{eqn:nq2} 
\end{align}
for all $ x\in[-2,2) $. 

By the sign symmetry of GOE, the convergence $ n^{2/3}(2 - \lambda_1(X)) \overset{\dd}{\to} \Lambda_0 $ from \cite[Theorem 1.1]{Ramirez_Rider_Virag}, and tightness of $ \Lambda_0 $, for any $\eps>0$, there exists $L>0$ such that 
\begin{align}
    \lim_{n\to\infty} \prob{ \lambda_n(X) < -2-Ln^{-2/3} }
    &= \lim_{n\to\infty} \prob{ \lambda_1(X) > 2 + Ln^{-2/3} }
    = \prob{ \Lambda_0 < -L } < \eps . \notag 
\end{align}
Moreover, under this choice of $L$, there exists $M>0$ such that 
\begin{align}
    \lim_{n\to\infty} \prob{ \lambda_M(X) > 2-Ln^{-2/3} }
    &= \prob{ \Lambda_{M-1} < L }
    < \eps , \notag 
\end{align}
since $ \Lambda_{j-1} \to \infty $ as $j\to\infty$. 
Also, $ \prob{\lambda_1(X) > 3} < \eps $ for all sufficiently large $n$; see \Cref{eqn:Xop}. 
In view of these estimates, defining 
\begin{align}
    \cG_{L,M} &\coloneqq \brace{
        \lambda_n(X) \ge -2-Ln^{-2/3} , \; 
        \lambda_M(X) \le 2 - Ln^{-2/3} , \;
        \lambda_1(X) \le 3
    } , \notag 
\end{align}
we have 
\begin{align}
    \limsup_{n\to\infty} \prob{ \cG_{L,M}^c } &\le 3 \eps . \label{eqn:GLM} 
\end{align}

On the event $ \cG_{L,M} $, consider an integer $i\in [M,n]$. 
If $ \lambda_i(X) \in [-2,2-Ln^{-2/3}] $, from \Cref{eqn:nq2}, it holds
\begin{align}
    n^{-1} q(\lambda_i(X))^2 &\le C \frac{d (\lambda_i(X) + 2)}{n (2 - \lambda_i(X))}
    \le C \frac{4d}{Ln^{1/3}}
    \le \frac{C'}{L} , \label{eqn:q21} 
\end{align}
where the last inequality is by the assumption \Cref{eqn:crit}. 
If $ \lambda_i(X) \in [-2-Ln^{-2/3}, -2) $, then we write $ \lambda_i(X) = -2-sn^{-2/3} $ for some $ s\in(0,L] $. 
By \Cref{eqn:bdd_diff} (and its counterpart for $s\in[0,L]$), 
we have that for any compact interval $I\subset\bbR$,
\begin{align}
    \lim_{n\to\infty} \sup_{s\in I} \sup_{t\in[0,1]} \abs{ d^{-1} p_{\ceil{(d-1)t}}(2 + sn^{-2/3}) - \psi_s(t) } &= 0 , \label{eqn:p_psi} 
\end{align}
where $ \psi_s(t) $ is defined as 
\begin{align}
    \psi_s(t) &\coloneqq \begin{cases}
        \frac{\sin(\delta \sqrt{-s} \, t)}{\delta \sqrt{-s}} , & s < 0 \\
        t , & s = 0 \\
        \frac{\sinh(\delta \sqrt{s} \, t)}{\delta \sqrt{s}} , & s > 0 
    \end{cases} . \notag 
\end{align}
Noting from \Cref{eqn:U_big_neg,eqn:U_big} that $ p_k(-x) = (-1)^k p_k(x) $ for $ x\notin(-2,2) $, using \Cref{eqn:p_psi} with $ I = [0,L] $, we have 
\begin{align}
    \lim_{n\to\infty} \sup_{s\in[0,L]} \sup_{t\in[0,1]} \abs{ (-1)^{\ceil{(d-1)t}} d^{-1} p_{\ceil{(d-1)t}}(-2-sn^{-2/3}) - \psi_s(t) } &= 0 . \notag 
\end{align}
Using this with $ 2+sn^{-2/3} = -\lambda_i(X) $, we further have that for all sufficiently large $n$, 
\begin{align}
    \sup_{0\le k\le d-1} \abs{ p_k(\lambda_i(X)) }
    &\le 2d \sup_{s\in[0,L]} \sup_{t\in[0,1]} \abs{\psi_s(t)}
    \le C_L d . \notag 
\end{align}
Therefore, by Cauchy--Schwarz, 
\begin{align}
    \abs{p(\lambda_i(X))} &\le \sqrt{\sum_{k=0}^{d-1} c_k^2} \sqrt{ \sum_{k=0}^{d-1} p_k(\lambda_i(X))^2 }
    \le C_L d^{3/2} , \notag 
\end{align}
and hence, 
\begin{align}
    n^{-1} q(\lambda_i(X))^2 &\le \frac{L^2n^{-4/3}}{16n} C_L^2 d^3 \le C_L' n^{-4/3} , \label{eqn:q22} 
\end{align}
by the assumption \Cref{eqn:crit}. 

Combining \Cref{eqn:q21,eqn:q22}, for all sufficiently large $n$, on $ \cG_{L,M} $, we have
\begin{align}
    \max_{i\in[M,n]} n^{-1} q(\lambda_i(X))^2 &\le 2C'/L , \notag 
\end{align}
which, upon further combined with \Cref{eqn:GLM}, immediately implies 
\begin{align}
    \limsup_{n\to\infty} \prob{ \max_{i\in[M,n]} n^{-1} q(\lambda_i(X))^2 > 2C'/L } &\le 3\eps . \notag 
\end{align}
Since this holds for all sufficiently large $ L,M $, the claimed result follows. 
\end{proof}

\begin{lemma}
\label{lem:conv}
Let $ a_1, \cdots, a_n $ be nonnegative random variables satisfying 
\begin{align}
    \lim_{M\to\infty} \limsup_{n\to\infty} \prob{ \max_{i\in[M,n]} a_i > \zeta } &= 0 , \label{eqn:ai} 
\end{align}
for every $\zeta>0$. 
Let $ (Z_i)_{i\ge1} $ be i.i.d.\ $ \chi_1^2 $ random variables independent of $ (a_i)_{i=1}^n $. 
Denote $ S \coloneqq \sum_{i=1}^n a_i $ and assume that for every fixed $M$, 
\begin{align}
    (S, a_1, \cdots, a_M) &\overset{\dd}{\to} (\Sigma, \eta_1, \cdots, \eta_M) \label{eqn:Sa}
\end{align}
as $n\to\infty$. 
Then denoting $ \beta \coloneqq \Sigma - \sum_{j\ge1} \eta_j $, we have
\begin{enumerate}
    \item\label{itm:beta1} $ \beta\ge0 $ almost surely; 

    \item\label{itm:beta2} $ \sum_{i=1}^n a_i Z_i \overset{\dd}{\to} \beta + \sum_{j\ge1} \eta_j Z_j $. 
\end{enumerate}
\end{lemma}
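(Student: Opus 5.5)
For \Cref{itm:beta1}, I would fix $M$ and use that $S - \sum_{i=1}^M a_i \ge 0$ almost surely, since the $a_i$ are nonnegative. By \Cref{eqn:Sa} and the continuous mapping theorem, $S - \sum_{i=1}^M a_i \overset{\dd}{\to} \Sigma - \sum_{j=1}^M \eta_j$. Nonnegativity then passes to the limit via Portmanteau applied to the closed set $[0,\infty)$. Since $\eta_j \ge 0$ (again a limit of nonnegative variables), the partial sums $\sum_{j\le M}\eta_j$ increase. Hence $\sum_{j\ge1}\eta_j \le \Sigma < \infty$ almost surely, and letting $M\to\infty$ gives $\beta\ge0$.

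For \Cref{itm:beta2}, I would split
\begin{align}
    \sum_{i=1}^n a_i Z_i = \sum_{i=1}^M a_i Z_i + \sum_{i=M+1}^n a_i + \sum_{i=M+1}^n a_i (Z_i - 1) . \notag
\end{align}
The first two terms together equal $\sum_{i\le M} a_i Z_i + \paren{S - \sum_{i\le M} a_i}$, a continuous function of $(S,a_1,\cdots,a_M,Z_1,\cdots,Z_M)$. Because the $Z$'s are independent of the $a$'s, \Cref{eqn:Sa} upgrades to joint convergence with $(Z_1,\cdots,Z_M)$. Hence this part converges in distribution to $\sum_{j\le M}\eta_j Z_j + \Sigma - \sum_{j\le M}\eta_j$. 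As $M\to\infty$, this limit converges almost surely to $\beta + \sum_{j\ge1}\eta_j Z_j$. For that I need $\sum_j \eta_j Z_j<\infty$, which holds because conditionally on $\eta$ its mean is $\sum_j\eta_j<\infty$.

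The main obstacle is showing that the fluctuation term $R_M \coloneqq \sum_{i>M} a_i(Z_i-1)$ is negligible in the sense $\lim_M\limsup_n \prob{\abs{R_M}>\eps}=0$. I would condition on the $a$'s and use $\var{Z_1}=2$, so that $\expt{R_M^2\mid a} = 2\sum_{i>M}a_i^2 \le 2\paren{\max_{i>M}a_i}\cdot S$. Now work on the event where $\max_{i\ge M} a_i\le\zeta$ and $S\le K$. The first condition fails with small probability by \Cref{eqn:ai}; the second fails with probability at most $\prob{\Sigma \ge K}+o(1)$ by tightness of $S$, which follows from \Cref{eqn:Sa}. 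On this event, conditional Chebyshev gives $\prob{\abs{R_M}>\eps \mid a}\le 2\zeta K/\eps^2$. Sending $n\to\infty$, then $M\to\infty$, then $\zeta\to0$, then $K\to\infty$ makes this vanish.

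Finally I would combine the pieces with the standard approximation argument already used at the end of \Cref{lem:mass}. Test against bounded Lipschitz $f$, bound $\abs{\expt{f(\text{full})}-\expt{f(\text{truncated})}}\le 2\prob{\abs{R_M}>\eps}+\eps$, and take $n\to\infty$, then $M\to\infty$, then $\eps\to0$.
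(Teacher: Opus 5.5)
Your proposal is correct and follows the paper's proof essentially step for step. Both use the same truncation $\sum_{i<M} a_i Z_i + \sum_{i\ge M} a_i$ plus the centered remainder $\sum_{i\ge M} a_i(Z_i-1)$, the same conditional variance bound $2(\max_{i\ge M} a_i)\,S$ with conditional Chebyshev on $\{S\le K,\ \max_{i\ge M} a_i\le\zeta\}$, and the same almost-sure passage of the truncated limit to $\beta+\sum_{j}\eta_j Z_j$. The only difference is that you spell out the Portmanteau step for item 1 and the bounded-Lipschitz combination argument, which the paper leaves implicit in its final ``combining'' sentence.
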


\begin{proof}
Note that $ S - \sum_{j = 1}^m a_j = \sum_{j = m+1}^n a_j \ge 0 $ almost surely. 
Passing to the limit immediately yields \Cref{itm:beta1}. 

For \Cref{itm:beta2}, define 
\begin{align}
&&
    D_n &\coloneqq \sum_{i = 1}^n a_i Z_i , & 
    D_{n,M} &\coloneqq \sum_{i = 1}^{M-1} a_i Z_i + \sum_{i = M}^n a_i . & 
& \notag 
\end{align}
By definition, 
\begin{align}
    D_n - D_{n,M} &= \sum_{i = M}^n a_i (Z_i - 1) , \notag 
\end{align}
whose variance conditioned on $ (a_i)_{i = 1}^n $ is 
\begin{align}
    \var{ D_n - D_{n,M} \mid (a_i)_{i = 1}^n }
    &\le \sum_{i = M}^n a_i^2 \var{ Z_i - 1 }
    = 2 \sum_{i = M}^n a_i^2 
    \le 2 \paren{ \max_{i\in[M,n]} a_i } \sum_{i = M}^n a_i
    \le 2 \paren{ \max_{i\in[M,n]} a_i } S . \label{eqn:var} 
\end{align}

Now fix $ \eps>0 $ and $ \gamma>0 $. 
Since $ S \overset{\dd}{\to} \Sigma $, $S$ is tight uniformly over $n$. 
So there exists $K$ such that 
\begin{align}
    \limsup_{n\to\infty} \prob{S > K} &< \gamma/3 . \label{eqn:gammaS} 
\end{align}
Next, choose $ \zeta>0 $ sufficiently small such that $ 2K\zeta \eps^{-2} < \gamma/3 $. 
By the assumption \Cref{eqn:ai}, for all sufficiently large $M$, 
\begin{align}
    \limsup_{n\to\infty} \prob{ \max_{i\in[M,n]} a_i > \zeta } &< \gamma/3 . \label{eqn:gammaa}  
\end{align}

For a large enough $M$, let 
\begin{align}
    \cG &\coloneqq \brace{ S\le K , \; \max_{i\in[M,n]} a_i \le \zeta } . \notag 
\end{align}
Then 
\begin{align}
    \prob{ \abs{ D_n - D_{n,M} } > \eps }
    &\le \prob{ \cG^c } + \prob{ \brace{ \abs{ D_n - D_{n,M} } > \eps } \cap \cG } . \label{eqn:term12} 
\end{align}
By union bound and subadditivity of $ \limsup $, the first term is at most 
\begin{align}
    \limsup_{n\to\infty} \prob{ \cG^c }
    &\le \limsup_{n\to\infty} \prob{ S > K } + \limsup_{n\to\infty} \prob{ \max_{i\in[M,n]} a_i > \zeta }
    \le 2\gamma/3 , \notag
\end{align}
where the last step is by \Cref{eqn:gammaS,eqn:gammaa}. 
For the second term, conditioning on $ (a_i)_{i=1}^n $, using measurability of $ \cG $ with respect to $(a_i)_{i=1}^n$, and applying Chebyshev's inequality, we have 
\begin{align}
    \prob{ \brace{ \abs{ D_n - D_{n,M} } > \eps } \cap \cG }
    &= \expt{ \prob{ \abs{ D_n - D_{n,M} } > \eps \mid (a_i)_{i = 1}^n } \one_{\cG} } \notag \\
    &\le \eps^{-2} \expt{ \var{ D_n - D_{n,M} \mid (a_i)_{i = 1}^n } \one_\cG } \notag \\
    &\le \eps^{-2} \expt{ 2 \paren{ \max_{i\in[M,n]} a_i } S \one_\cG }
    \le \eps^{-2} \cdot 2 \zeta K 
    < \gamma/3 , \notag 
\end{align}
where the last line is by \Cref{eqn:var} and the choice of $ \zeta $. 

Taking $\limsup$ as $ n\to\infty $ in \Cref{eqn:term12} and combining the preceding two estimates, we obtain 
\begin{align}
    \limsup_{n\to\infty} \prob{ \abs{ D_n - D_{n,M} } > \eps }
    &\le \gamma , \notag 
\end{align}
for all large $M$. 
Since $ \gamma $ is arbitrary, further taking the $M\to\infty$ limit, we arrive at 
\begin{align}
    \lim_{M\to\infty} \limsup_{n\to\infty} \prob{ \abs{ D_n - D_{n,M} } > \eps } &= 0 . \label{eqn:Dn} 
\end{align}

For fixed $M$, by the assumption \Cref{eqn:Sa} and the independence between $ (Z_i)_{i\ge1} $ and $ (a_i)_{i=1}^n $, as $n\to\infty$, $ D_{n,M} $ converges in distribution: 
\begin{align}
    D_{n,M} &= \sum_{i = 1}^{M-1} a_i Z_i + S - \sum_{i = 1}^{M-1} a_i
    \overset{\dd}{\to} \sum_{i = 1}^{M-1} \eta_i Z_i + \Sigma - \sum_{i = 1}^{M-1} \eta_i 
    \eqqcolon D_M . \label{eqn:DnM} 
\end{align}
By \Cref{itm:beta1} and tightness of $ \Sigma $, $ \sum_{j\ge1} \eta_j \le \Sigma < \infty $ almost surely. 
Conditioned on $ (\eta_j)_{j\ge1} $, 
\begin{align}
    \expt{ \sum_{j\ge1} \eta_j Z_j \mid (\eta_j)_{j\ge1} }
    &= \sum_{j\ge1} \eta_j \overset{\mathrm{a.s.}}{<} \infty , \notag 
\end{align}
so $ \sum_{j\ge1} \eta_j Z_j < \infty $ almost surely. 
Since $\sum_{j\ge1} \eta_j$ and $\sum_{j\ge1} \eta_j Z_j$ are both nonnegative series, they are convergent almost surely, implying 
\begin{align}
    D_M &\overset{\mathrm{a.s.}}{\to} \sum_{j\ge1} \eta_j Z_j + \Sigma - \sum_{j\ge1} \eta_j
    = \sum_{j\ge1} \eta_j Z_j + \beta , \notag 
\end{align}
as $M\to\infty$. 
Combining this with \Cref{eqn:Dn,eqn:DnM} proves \Cref{itm:beta2}. 
\end{proof}

\begin{lemma}
\label{lem:Oq}
Assume \Cref{eqn:crit}. 
Let the polynomials $ p,q $, the step function $ u_n $ and its weak limit $ u $ in $ L^2([0,1]) $, the scalar $\sigma$ and its limit $ \ol{\sigma} $ all be as in \Cref{eqn:pq,eqn:u_bdd,eqn:u,eqn:ZLsigma,eqn:ulim,eqn:sigmalim}. 
Let $ X\sim\GOE(n) $. 
Then $ \ol{\sigma} - \norm{L^2([0,1])}{u}^2 \ge 0 $ and 
\begin{align}
    \cO_{n,d}(q,\GOE(n)) &\to \expt{ \frac{\Gamma_{\delta,u}(-\Lambda_0)^2 Z_1}{\ol{\sigma } - \norm{L^2([0,1])}{u}^2 + \sum_{j\ge1} \Gamma_{\delta,u}(-\Lambda_{j-1})^2 Z_j} } , \notag 
\end{align}
where $ (Z_j)_{j\ge1} $ are i.i.d.\ $ \chi_1^2 $ random variables independent of the Airy point process $ (-\Lambda_{j-1})_{j\ge1} $. 
\end{lemma}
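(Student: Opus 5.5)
Since the statement concerns a possibly non-convergent sequence, I would first argue along subsequences: \Cref{lem:mass} is stated only along a subsequence, so I would show that every subsequence has a further subsequence along which the claimed limit holds. Because $u$ and $\ol{\sigma}$ are given as limits in the hypotheses, every sub-subsequential limit is the same expression, and the full sequence converges. Along such a subsequence, \Cref{lem:mass} gives
\begin{align}
    (S, L_1, \cdots, L_m) &\overset{\dd}{\to} (\Sigma, \Lambda_0, \cdots, \Lambda_{m-1})
\end{align}
with $\Sigma = \ol{\sigma} - \norm{L^2([0,1])}{u}^2 + \sum_{j\ge1}\Gamma_{\delta,u}(-\Lambda_{j-1})^2$.

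Next I would combine the available lemmas. \Cref{lem:Sconv} upgrades the joint convergence to
\begin{align}
    \paren{S, a_1,\cdots,a_m} &\overset{\dd}{\to} \paren{\Sigma, \eta_1,\cdots,\eta_m},
\end{align}
where $a_i \coloneqq n^{-1}q(\lambda_i(X))^2$ and $\eta_j \coloneqq \Gamma_{\delta,u}(-\Lambda_{j-1})^2$. \Cref{lem:a} supplies the tail condition \Cref{eqn:ai}. Applying \Cref{lem:conv} with $Z_i \coloneqq n\inprod{v_i(X)}{b}^2$, which are i.i.d.\ $\chi_1^2$ and independent of the spectrum, yields $\beta = \Sigma - \sum_j \eta_j = \ol{\sigma} - \norm{L^2([0,1])}{u}^2 \ge 0$. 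Since $\beta$ is deterministic, almost-sure nonnegativity gives the first claim. For the overlap, write
\begin{align}
    \frac{\inprod{v_1(X)}{q(X)b}^2}{\normtwo{q(X)b}^2} &= \frac{a_1Z_1}{D_n}, & D_n &\coloneqq \sum_i a_iZ_i .
\end{align}
I need the joint convergence $(a_1Z_1, D_n) \overset{\dd}{\to} (\eta_1Z_1, \beta+\sum_j\eta_jZ_j)$. To get it, I would rerun the proof of \Cref{lem:conv} jointly with $a_1Z_1$: the approximant $D_{n,M}$ together with $a_1Z_1$ converges jointly for fixed $M$ by \Cref{eqn:Sa} and independence of the $Z$'s, and the approximation error \Cref{eqn:Dn} does not involve the first coordinate.

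Finally I would pass to expectations. The map $(x,y)\mapsto x/y$ is continuous wherever the limit denominator is positive. That denominator is $\beta + \sum_j\eta_jZ_j \ge \eta_1Z_1$, and $\eta_1 Z_1 >0$ almost surely as long as $\Gamma_{\delta,u}(-\Lambda_0)\neq 0$ a.s. Since $\Gamma_{\delta,u}$ is real-analytic in $s$ and not identically zero for $u\ne0$, its zeros are discrete, and $\Lambda_0$ has a density, so this holds. For $u = 0$ the limit is $0/\beta$, so I would treat it separately: there the numerator $a_1Z_1 \to 0$ in distribution, and a point with zero denominator is also on the boundary. To handle both cases at once I would use the interpretation $0/0 \coloneqq 0$ and the bound $a_1Z_1/D_n \le 1$. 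Because the ratio lies in $[0,1]$, it is uniformly integrable, and the continuous mapping theorem plus bounded convergence gives convergence of $\cO_{n,d}(q,\GOE(n))$.

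The main obstacle is this degenerate case, where the denominator limit can vanish. If $u=0$ and $\ol{\sigma}=0$, the ratio need not converge and the formula becomes $0/0$. In that case I would interpret the expectation as $0$, or appeal to the intended convention that the formula is used only when $u\ne0$. I would also double-check that the joint-convergence extension of \Cref{lem:conv} goes through cleanly.
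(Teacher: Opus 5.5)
Your proposal is correct and follows the same route as the paper: write the overlap as $a_1Z_1/\sum_i a_iZ_i$, feed \Cref{lem:mass}, \Cref{lem:Sconv} and \Cref{lem:a} into \Cref{lem:conv} to get $\ol{\sigma}-\norm{L^2([0,1])}{u}^2\ge0$ and the limit of the denominator, then conclude by continuous mapping and bounded convergence. Your two extra checks fill in what the paper's one-line appeal to the continuous mapping theorem leaves implicit: you obtain joint (not just marginal) convergence of numerator and denominator by carrying $a_1Z_1$ through the proof of \Cref{lem:conv}, and you verify that the limiting denominator is a.s.\ positive (it dominates $\Gamma_{\delta,u}(-\Lambda_0)^2Z_1>0$ when $u\ne0$, and equals $\ol{\sigma}$ when $u=0$); you are also right that for $u=0$, $\ol{\sigma}=0$ the stated limit is $0/0$, so the lemma only has content outside that case, and the convention $0/0\coloneqq0$ does not prove anything there.
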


\begin{proof}
As usual, we start by applying the spectral decomposition to $X$ and write the squared overlap as
\begin{align}
    \frac{\inprod{v_1(X)}{q(X) b}^2}{\normtwo{q(X)b}^2}
    &= \frac{q(\lambda_1(X))^2 \inprod{b}{v_1(X)}^2}{\sum_{i=1}^n q(\lambda_i(X))^2 \inprod{b}{v_i(X)}^2} . \notag 
\end{align}
Since $ b\sim \cN(0_n,I_n/n) $ independent of $X$, $ g_i \coloneqq \sqrt{n} \inprod{b}{v_i(X)} $ (for $i\in[n]$) are i.i.d.\ standard Gaussians and $ Z_i \coloneqq g_i^2 $ (for $i\in[n]$) are i.i.d.\ $ \chi_1^2 $ random variables, all independent of $X$. 
Further defining $ a_i \coloneqq n^{-1} q(\lambda_i(X))^2 $, we can write the squared overlap as 
\begin{align}
    \frac{\inprod{v_1(X)}{q(X) b}^2}{\normtwo{q(X)b}^2}
    &= \frac{a_1 Z_1}{\sum_{i=1}^n a_i Z_i} . \label{eqn:aZ} 
\end{align}

Denote 
\begin{align}
    S &\coloneqq \sum_{i = 1}^n a_i = \frac{1}{n} \tr( q(X)^2 ) . \notag
\end{align}
Recall $ L_j $ from \Cref{eqn:ZLsigma}. 
Then for any fixed $m$, \Cref{lem:mass} gives
\begin{align}
    (S, L_1, \cdots, L_m) &\overset{\dd}{\to} (\Sigma, \Lambda_0, \cdots, \Lambda_{m-1}) , \notag 
\end{align}
where $ \Sigma \coloneqq \ol{\sigma} - \norm{L^2([0,1])}{u}^2 + \sum_{j\ge1} \Gamma_{\delta,u}(-\Lambda_{j-1})^2 $. 
By \Cref{lem:Sconv}, this then implies
\begin{align}
    (S, a_1, \cdots, a_m) &\overset{\dd}{\to} (\Sigma, \Gamma_{\delta,u}(-\Lambda_0)^2, \cdots, \Gamma_{\delta,u}(-\Lambda_{m-1})^2) . \label{eqn:Saconv} 
\end{align}
Also, by \Cref{lem:a}, we have that for any $ \zeta>0 $, 
\begin{align}
    \lim_{M\to\infty} \limsup_{n\to\infty} \prob{ \max_{i\in[M,n]} a_i > \zeta } &= 0 . \notag 
\end{align}
In view of the last two displays, \Cref{lem:conv} is applicable. 
\Cref{itm:beta1} of \Cref{lem:conv} ensures that almost surely, $ 0\le \Sigma - \sum_{j\ge1} \Gamma_{\delta,u}(-\Lambda_{j-1})^2 = \ol{\sigma} - \norm{L^2([0,1])}{u}^2 $. 
\Cref{itm:beta2} of \Cref{lem:conv} yields the convergence of the denominator of \Cref{eqn:aZ}: 
\begin{align}
    \sum_{i = 1}^n a_i Z_i &\overset{\dd}{\to} \ol{\sigma} - \norm{L^2([0,1])}{u}^2 + \sum_{j\ge1} \Gamma_{\delta,u}(-\Lambda_{j-1})^2 Z_j . \notag 
\end{align}
The numerator of \Cref{eqn:aZ} can be easily seen to converge to $ \Gamma_{\delta,u}(-\Lambda_0)^2 Z_1 $ in distribution by \Cref{eqn:Saconv}. 
Therefore, the continuous mapping theorem ensures the convergence of \Cref{eqn:aZ} in distribution. 
Since \Cref{eqn:aZ} is within $[0,1]$, the dominated convergence theorem then gives the desired convergence in expectation. 
\end{proof}

\begin{lemma}
\label{lem:ulim}
Assume \Cref{eqn:crit}. 
Let $ u \in L^2([0,1]) $ with $ \norm{L^2([0,1])}{u} = 1 $. 
Then there exists a sequence of step functions $ (u_n)_{n\ge1} $ with heights $ (\sqrt{d}\,c_k)_{k=0}^{d-1} $ on the mesh $ (k/d)_{k = 0}^d $ such that $ u_n \to u $ strongly in $ L^2([0,1]) $ and for 
\begin{align}
    q(x) &= \frac{x+2}{4} \sum_{k = 0}^{d-1} c_k p_k(x) , \label{eqn:qmesh} 
\end{align}
it holds that 
\begin{align}
    \cO_{n,d}(q,\GOE(n)) &\to \expt{ \frac{\Gamma_{\delta,u}(-\Lambda_0)^2 Z_1}{\sum_{j\ge1} \Gamma_{\delta,u}(-\Lambda_{j-1})^2 Z_j} } , \label{eqn:Osub} 
\end{align}
where $ (Z_j)_{j\ge1} $ are i.i.d.\ $ \chi_1^2 $ random variables independent of the Airy point process $ (-\Lambda_{j-1})_{j\ge1} $. 
\end{lemma}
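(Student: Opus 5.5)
We construct $u_n$ as the $L^2$ projection of $u$ onto step functions on the mesh $(k/d)_{k=0}^d$. That is, we set $\sqrt{d}\,c_k \coloneqq d\int_{k/d}^{(k+1)/d} u(t)\diff t$ for $0\le k\le d-1$. By the Lebesgue differentiation theorem (or density of continuous functions combined with the contraction property of conditional expectation onto the dyadic-type partition), $u_n\to u$ strongly in $L^2([0,1])$. Moreover $\sum_k c_k^2 = \norm{L^2([0,1])}{u_n}^2 \le \norm{L^2([0,1])}{u}^2 = 1$, so \Cref{eqn:u_bdd} holds. With $q$ as in \Cref{eqn:qmesh}, we are in the setting of \Cref{eqn:pq}, and \Cref{lem:Oq} applies along every subsequence. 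Because the weak (indeed strong) limit of $u_n$ is the prescribed $u$ regardless of subsequence, the only remaining freedom is the limit $\ol{\sigma}$ of $\sigma = \int q^2\,\diff\mu_{\sc}$. So the whole task reduces to showing $\sigma\to\norm{L^2([0,1])}{u}^2 = 1$. The extra term $\ol{\sigma}-\norm{L^2([0,1])}{u}^2$ in the denominator of \Cref{lem:Oq} then vanishes, the limit is the same along every subsequence, and \Cref{eqn:Osub} follows for the full sequence.

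To compute $\sigma$, we use the representation from the proof of \Cref{lem:mass}:
\begin{align}
    \sigma &= \frac{2}{\pi}\int_0^\pi \cos(\theta/2)^4 \, r(\theta)^2 \diff\theta, \qquad r(\theta) = \sum_{k=0}^{d-1} c_k \sin((k+1)\theta) . \notag
\end{align}
By sine orthogonality, $\frac{2}{\pi}\int_0^\pi r^2 = \sum_k c_k^2 = \norm{L^2([0,1])}{u_n}^2 \to 1$. Hence it suffices to show that
\begin{align}
    \frac{2}{\pi}\int_0^\pi \paren{1-\cos(\theta/2)^4}\, r(\theta)^2\diff\theta \to 0 . \notag
\end{align}
Since $1-\cos(\theta/2)^4 \le C\theta^2$, we split the integral at $\theta=\eta$.

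On $[0,\eta]$ the weight is at most $C\eta^2$ and $\frac{2}{\pi}\int r^2\le 1$, so this piece contributes at most $C\eta^2$. On $[\eta,\pi]$ we need $\int_\eta^\pi r^2\to0$ for each fixed $\eta>0$. That is, the mass of the trigonometric series $r$ must concentrate at frequency scale $\theta\lesssim 1/d$. This is the main obstacle. Heuristically it holds because the coefficients $c_k$ vary slowly in $k$ after the $L^2$ projection.

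To prove this rigorously, I would first approximate $u$ in $L^2$ by a smooth (say $C^1$) function $\tilde u$ with $\norm{L^2([0,1])}{u-\tilde u}\le\epsilon$. The projections are contractive and sine orthogonality is an isometry, so the corresponding series $\tilde r$ satisfies $\frac{2}{\pi}\int_0^\pi (r-\tilde r)^2\le\epsilon^2$. For $\tilde u$, the coefficients $\tilde c_k$ satisfy $|\tilde c_k|\le C/\sqrt d$ and $|\tilde c_{k+1}-\tilde c_k|\le C d^{-3/2}$, with boundary terms of size $C/\sqrt d$. Abel summation then gives, for $\theta\in[\eta,\pi]$,
\begin{align}
    |\tilde r(\theta)| &\le \frac{1}{\sin(\theta/2)}\paren{ \sum_k |\tilde c_{k+1}-\tilde c_k| + 2\max_k|\tilde c_k| } \le \frac{C}{\eta\sqrt d} , \notag
\end{align}
using that partial sums of $\sin((k+1)\theta)$ are bounded by $1/\sin(\theta/2)$. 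Consequently $\int_\eta^\pi\tilde r^2\le C/(\eta^2 d)\to0$.

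Putting the pieces together, the limsup of the deficit is at most $C\eta^2 + C\epsilon^2$ (plus cross terms controlled by Cauchy--Schwarz). Letting $\epsilon,\eta\downarrow0$ gives $\sigma\to1=\norm{L^2([0,1])}{u}^2$, which completes the proof.
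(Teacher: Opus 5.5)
Your proposal is correct, and its skeleton matches the paper's. You average $u$ over the mesh cells and note that the (strong, hence weak) limit along every subsequence is $u$ itself. You then reduce everything to showing $\sigma=\int q^2\,d\mu_{\mathrm{sc}}\to\|u\|_{L^2}^2=1$, and apply \Cref{lem:Oq} along subsequences so that the defect $\bar{\sigma}-\|u\|_{L^2}^2$ vanishes. The paper also rescales $u_n$ to unit norm. Skipping this is harmless, since the overlap is $0$-homogeneous in $q$ and $\|u_n\|_{L^2}\to1$.

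The genuine difference is how you prove $\sigma\to1$. The paper stays in coefficient space. By the three-term recurrence, $q=\sum_{k=0}^{d}b_kp_k$ with $b_k=(c_{k-1}+2c_k+c_{k+1})/4$, so $\sigma=\sum_k b_k^2$ and $\sum_k(b_k-c_k)^2\lesssim\sum_k(c_{k+1}-c_k)^2$. The latter equals $\int_0^{1-1/d}(u_n(t+1/d)-u_n(t))^2\,dt$ plus the boundary terms $\int_0^{1/d}u_n^2$ and $\int_{1-1/d}^1u_n^2$. These vanish by strong convergence and $L^2$-continuity of translations.

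You work on the frequency side instead, and the two computations are Parseval-dual. The $b_k$ are exactly the sine coefficients of $\cos(\theta/2)^2r(\theta)$. Hence the paper's error $\sum_k(b_k-c_k)^2$ equals $\frac{2}{\pi}\int_0^\pi\sin(\theta/2)^4r(\theta)^2\,d\theta$, whose weight vanishes at $\theta=0$ just like your $1-\cos(\theta/2)^4$.

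The paper's route is shorter, because translation-continuity already packages the density argument you carry out by hand. That hand argument consists of smooth approximation, contractivity of cell averaging, and Abel summation with $|\sum_{j\le m}\sin(j\theta)|\le1/\sin(\theta/2)$. Your route is longer but makes the mechanism explicit. For strongly convergent profiles, the $L^2$ mass of $r$ concentrates at $\theta\lesssim1/d$, i.e.\ at the spectral edge, where the weight $\cos(\theta/2)^4$ equals $1$.
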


\begin{proof}
Fix any $ u\in L^2([0,1]) $ with $ \norm{L^2([0,1])}{u} = 1 $. 
Let $ u_n $ be the quantization of $u$ on the mesh $ (k/d)_{k = 0}^d $, that is, for any $ 0\le k\le d-1 $ and any $ t\in[k/d,(k+1)/d) $, set $ u_n(t) = \sqrt{d}\,c_k $ where 
\begin{align}
&&
    c_k &= \frac{1}{\frc} \int_{k/d}^{(k+1)/d} u(t) \diff t , &
    \frc &\coloneqq \sqrt{\sum_{k = 0}^{d-1} \paren{ \int_{k/d}^{(k+1)/d} u(t) \diff t }^2} , & 
& \notag 
\end{align}
such that $ \norm{L^2([0,1])}{u_n} = 1 $. 
By construction, $ u_n \to u $ strongly in $ L^2([0,1]) $ as $n\to\infty$. 

For $q$ defined in \Cref{eqn:qmesh}, we will show that 
\begin{align}
    \sigma_n &\coloneqq \int_{-2}^2 q(x)^2 \rho_{\sc}(x) \diff x \to 1 . \label{eqn:sigma1} 
\end{align}
Using the three-term recurrence relation \Cref{eqn:recur}, we write $q$ as 
\begin{align}
    q(x) &= \frac{1}{4} \sum_{k = 0}^{d-1} c_k \paren{ x p_k(x) + 2 p_k(x) } \notag \\
    &= \frac{1}{4} \sum_{k = 0}^{d-1} c_k \paren{ p_{k+1}(x) + p_{k-1}(x) + 2p_k(x) }
    = \frac{1}{4} \sum_{k=0}^{d} \paren{ c_{k-1} + 2 c_k + c_{k+1} } p_k(x) , \notag 
\end{align}
with the convention in the last line that quantities with overflowing indices are zero, i.e., $ p_{-1} = c_{-1} = c_d = 0 $. 
Denoting $ b_k = c_{k-1}/4 + c_k/2 + c_{k+1}/4 $, we can simply write $ q(x) = \sum_{k = 0}^{d} b_k p_k(x) $ and hence $ \sigma_n = \norm{L^2(\mu_{\sc})}{q}^2 = \sum_{k = 0}^{d} b_k^2 $. 
Strong convergence of $ u_n $ in $ L^2([0,1]) $ implies $ \sum_{k = -1}^{d-1} (c_{k+1} - c_k)^2 \to 0 $. 
Hence, it follows that
\begin{align}
    \sum_{k = 0}^{d} (b_k - c_k)^2
    &= \sum_{k = 0}^{d} \paren{ \frac{c_{k-1} - c_k}{4} + \frac{c_{k+1} - c_k}{4} }^2
    \le 2 \sum_{k = 0}^{d} \brack{ \paren{ \frac{c_{k-1} - c_k}{4} }^2 + \paren{ \frac{c_{k+1} - c_k}{4} }^2 }
    \to 0 . \notag 
\end{align}
By construction of $ u_n $, we have $ 1 = \norm{L^2([0,1])}{u}^2 = \sum_{k = 0}^{d-1} c_k^2 $. 
Therefore $ \sigma_n = \sum_{k = 0}^{d} b_k^2 \to 1 $. 
This proves \Cref{eqn:sigma1}. 

Now take any subsequence of $ u_n $. 
Since the original sequence $ u_n $ converges to $u$ strongly and $ \sigma_n \to 1 $, this subsequence must have the same limit. 
Moreover, this subsequence has a further subsubsequence along which the conclusion of \Cref{lem:Oq} holds, which in particular implies \Cref{eqn:Osub}. 
Therefore, the same limit \Cref{eqn:Osub} holds for the original sequence $ u_n $. 
\end{proof}

\begin{proof}[Proof of \Cref{thm:crit}.]
Denote by $ \cA_\delta $ the RHS of \Cref{eqn:OPT}. 
We will prove matching upper and lower bounds on the LHS of \Cref{eqn:OPT}. 

\paragraph{Upper bound.}
By \Cref{lem:suppress} and strong convergence of GOE, $ \OPT_{n,d-1}(\GOE(n)) \le \OPT_{n,d}^0(\GOE(n)) + o(1) $. 
Since $ (d-1)/n^{1/3} \to \delta $ by the assumption \Cref{eqn:crit}, for the upper bound, it suffices to prove 
\begin{align}
    \limsup_{n\to\infty} \OPT_{n,d}^0(\GOE(n)) &\le \cA_\delta . \label{eqn:limsup}
\end{align}

Let $ q \in \cP_d \setminus \{0\} $ satisfying $ q(-2) = 0 $ be a sequence of polynomials approaching the supremum in the definition \Cref{eqn:OPT0} of $ \OPT_{n,d}^0(\GOE(n)) $. 
Without loss of generality, we can write $ q(x) = p(x) (x+2)/4 $ for some $ p\in\cP_{d-1} \setminus \{0\} $. 
Writing $ p $ in the basis of Chebyshev polynomials $ p(x) = \sum_{k = 0}^{d-1} c_k p_k(x) $, by homogeneity of $ \cO_{n,d}(\cdot,\GOE(n)) $, we can without loss of generality assume $ \sum_{k = 0}^{d-1} c_k^2 = 1 $. 
As in \Cref{eqn:u}, let $ u_n $ be a step function on the mesh $ (k/d)_{k = 0}^{d} $ with heights $ (\sqrt{d}\,c_k)_{k = 0}^{d-1} $. 
Then by \Cref{eqn:L2}, $ \norm{L^2([0,1])}{u_n} = 1 $. 
Take any subsequence of $ (u_n)_{n\ge1} $. 
By \Cref{lem:mass}, there is a subsubsequence along which $ u_n $ converges weakly to $u$ in $ L^2([0,1]) $ and $ \sigma_n \to \sigma $ (where $ \sigma_n $ and $ \sigma $ are given in \Cref{eqn:ZLsigma,eqn:sigmalim}, respectively). 
Moreover, by \Cref{lem:Oq}, along this subsubsequence, 
\begin{align}
    \cO_{n,d}(q,\GOE(n)) &\to \expt{ \frac{\Gamma_{\delta,u}(-\Lambda_0)^2 Z_1}{\sigma - \norm{L^2([0,1])}{u}^2 + \sum_{j\ge1} \Gamma_{\delta,u}(-\Lambda_{j-1})^2 Z_j} }
    \le \expt{ \frac{\Gamma_{\delta,u}(-\Lambda_0)^2 Z_1}{\sum_{j\ge1} \Gamma_{\delta,u}(-\Lambda_{j-1})^2 Z_j} }
    \le \cA_\delta , \notag 
\end{align}
where the first inequality follows since $ \sigma - \norm{L^2([0,1])}{u}^2 \ge 0 $. 
Therefore, every subsequence of $ \OPT_{n,d}^0(\GOE(n)) $ has a subsubsequential limit that is at most $ \cA_\delta $. 
This implies \Cref{eqn:limsup}. 

\paragraph{Lower bound.}
Fix any $ u\in L^2([0,1]) $ with $ \norm{L^2([0,1])}{u} = 1 $. 
By \Cref{lem:ulim}, there is a sequence of polynomials $ q\in\cP_d $ of the form \Cref{eqn:qmesh} such that \Cref{eqn:Osub} holds. 
Taking supremum over $ u\in L^2([0,1]) \setminus \{0\} $, we immediately obtain
\begin{align}
    \liminf_{n\to\infty} \OPT_{n,d}(\GOE(n)) &\ge \cA_\delta . \notag 
\end{align}

Combining the upper and lower bounds completes the proof of \Cref{eqn:OPT}. 
\end{proof}



\renewcommand*{\bibfont}{\normalfont\small}
\printbibliography

\appendix 

\section{Auxiliary results}

\begin{proposition}[Markov brothers' inequality]
\label{prop:markov}
For any $ a>0 $ and any $ p\in\cP_d $, 
\begin{align}
    \max_{x\in[-a,a]} \abs{p'(x)} &\le \frac{d^2}{a} \max_{x\in[-a,a]} \abs{p(x)} . \notag 
\end{align}
\end{proposition}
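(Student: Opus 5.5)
We would prove Markov brothers' inequality by rescaling to $[-1,1]$ and then using the extremal property of Chebyshev polynomials of the first kind. Set $P(t) \coloneqq p(at)$ for $t\in[-1,1]$. Then $P\in\cP_d$, $\max_{[-1,1]}\abs{P} = \max_{[-a,a]}\abs{p}$ and $P'(t) = a\,p'(at)$. It therefore suffices to show $\max_{[-1,1]}\abs{P'} \le d^2 \max_{[-1,1]}\abs{P}$. By homogeneity we normalize $\max_{[-1,1]}\abs{P} = 1$.

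\textbf{Interior bound.} The first step is Bernstein's inequality: $\abs{P'(t)} \le d/\sqrt{1-t^2}$ for $t\in(-1,1)$. To get it, substitute $t=\cos\theta$. Then $T(\theta)\coloneqq P(\cos\theta)$ is a real trigonometric polynomial of degree $d$ bounded by $1$. Bernstein's inequality for trigonometric polynomials gives $\abs{T'(\theta)}\le d$. We would prove this from the Riesz interpolation formula, which writes $T'(\theta)$ as a linear combination of shifts $T(\theta+\theta_k)$ with coefficients whose absolute values sum to $d$. Since $T'(\theta) = -\sin\theta\,P'(\cos\theta)$, the claimed interior bound follows.

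\textbf{Near the endpoints.} The interior bound already yields $\abs{P'(t)}\le d^2$ whenever $\sqrt{1-t^2}\ge 1/d$, that is, for $\abs{t}\le \cos(\pi/(2d))$ and a little beyond. The main obstacle is the region near $\pm1$, where Bernstein's bound blows up. There we use Lagrange interpolation of $P'$ at the Chebyshev nodes $t_k=\cos((2k-1)\pi/(2d))$, $k=1,\dots,d$, which are the zeros of $T_d$. This gives
\begin{align}
    P'(t) = \sum_{k=1}^d P'(t_k)\,\ell_k(t), \qquad \ell_k(t) = \frac{T_d(t)}{(t-t_k)\,T_d'(t_k)} . \notag
\end{align}
The idea behind the first-step choice of nodes is that at these points Bernstein's bound is at most $d/\sin(\pi/(2d))$; this alone is too weak.

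So instead we follow Schaeffer--Duffin and write $P'$ through the formula for derivatives of polynomials interpolated at the extremal points $\cos(k\pi/d)$ of $T_d$. For $t\ge\cos(\pi/(2d))$, all the relevant weights have a constant sign, and summing their absolute values gives exactly $T_d'(t)$. This produces $\abs{P'(t)}\le T_d'(t)\le T_d'(1)=d^2$, using that $T_d'$ is increasing on $[\cos(\pi/(2d)),1]$. We regard this sign and monotonicity bookkeeping, equivalent to the classical statement that $T_d$ is extremal for $\abs{P'}$ near the endpoints, as the technical heart of the proof.

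The interval near $-1$ is handled by symmetry, replacing $P(t)$ with $P(-t)$. Combining the interior and endpoint regions gives $\max_{[-1,1]}\abs{P'}\le d^2$. Undoing the rescaling yields the stated inequality with factor $d^2/a$.
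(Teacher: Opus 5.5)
The paper gives no proof of this proposition: it is quoted in the appendix as the classical Markov brothers' inequality and only invoked. Your proposal therefore has to stand on its own. Its overall structure is the standard textbook one and is sound:
\begin{itemize}
\item rescale to $[-1,1]$;
\item use Bernstein's inequality $\sqrt{1-t^2}\,\abs{P'(t)}\le d$ via the Riesz formula, which gives $\abs{P'}\le d^2$ on $\abs{t}\le\cos(\pi/(2d))$ because $\sin(\pi/(2d))\ge 1/d$;
\item handle the two endpoint regions by a separate argument.
\end{itemize}

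\textbf{The gap is in the endpoint region.} You assert that the Lagrange basis polynomials $L_k$ for the nodes $\eta_k=\cos(k\pi/d)$ satisfy $\mathrm{sgn}\,L_k'(t)=(-1)^k$ for every $k$ and every $t\in[\cos(\pi/(2d)),1]$. You then defer this, calling it the technical heart, so nothing in the proposal establishes the step on which the whole endpoint bound rests. The claim is true, but it is not automatic. For $k\ge2$, $L_k$ vanishes at both $\eta_1$ and $1$, so $L_k'$ genuinely changes sign inside $(\eta_1,1)$. One must show that all these sign changes occur to the left of $\cos(\pi/(2d))$. One way to do this:
\begin{itemize}
\item Write $L_k'=\omega_k'/\omega_k(\eta_k)$ with $\omega_k=\prod_{j\ne k}(t-\eta_j)$, and note $\mathrm{sgn}\,\omega_k(\eta_k)=(-1)^k$.
\item The largest critical point of $\omega_k$ increases with $k$.
\item For $k=d$ one has $\omega_d\propto(t-1)U_{d-1}(t)$. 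With $\theta=\pi/(2d)$, positivity of $\omega_d'$ at $\cos\theta$ reduces to $\cos\theta/\sin^2\theta<1/(1-\cos\theta)$, which holds trivially.
\end{itemize}

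\textbf{The route you discarded already closes with no sign analysis.} Bernstein's bound at the zeros $t_k=\cos((2k-1)\pi/(2d))$ of $T_d$ is too weak only when used uniformly. Used node by node, $\abs{P'(t_k)}\le d/\sqrt{1-t_k^2}$, it cancels exactly against $\abs{T_d'(t_k)}=d/\sqrt{1-t_k^2}$. For $t\in(t_1,1]$ one has $T_d(t)>0$ and $t-t_k>0$ for all $k$, so
\[
    \abs{P'(t)} \le \sum_{k=1}^d \frac{d}{\sqrt{1-t_k^2}}\cdot\frac{T_d(t)\sqrt{1-t_k^2}}{d\,(t-t_k)} = \sum_{k=1}^d \frac{T_d(t)}{t-t_k} = T_d'(t) \le T_d'(1) = d^2 .
\]
The last step holds because $T_d'(t)=2^{d-1}\sum_k\prod_{j\ne k}(t-t_j)$ is a sum of nonnegative, nondecreasing products on $[t_1,1]$. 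This is Schur's inequality applied to $P'/d$.

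The extremal-node (Duffin--Schaeffer) version you chose buys something extra: near the endpoints it only needs $\abs{P}\le1$ at the $d+1$ points $\eta_k$. That strength is irrelevant for the proposition as stated, and it costs you exactly the lemma you left unproved.
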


\begin{proposition}[{Rigidity of GOE eigenvalues \cite[Theorem 2.2]{Erdos_Yau_Yin}}] 
\label{prop:rig}
Let $ X \sim \GOE(n) $. 
Then there exist positive absolute constants $ C $ and $ c $ such that for any sufficiently large $n$, 
\begin{multline}
    \prob{\exists j\in[n] , \, \abs{\lambda_j(X) - \gamma_j} \ge (\log(n))^{C \log\log(n)} \min\brace{j,n-j+1}^{-1/3} n^{-2/3}} \\
    \le C \exp\paren{ - (\log(n))^{c \log\log(n)} } . \label{eqn:rig_result} 
\end{multline}
\end{proposition}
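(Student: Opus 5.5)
This is GOE eigenvalue rigidity (the Erdős--Yau--Yin estimate), and I will not reprove it from scratch. The plan is to derive it from two standard inputs: a local semicircle law for the Stieltjes transform that holds down to the optimal scale, and a Helffer--Sjöstrand argument that converts this into control of the eigenvalue counting function.

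\textbf{Step 1: local semicircle law.} Let $m_n(z)=n^{-1}\tr(X-z)^{-1}$ and let $m_{\sc}$ be the Stieltjes transform of $\mu_{\sc}$. The first task is to show that with probability at least $1-C\exp(-(\log n)^{c\log\log n})$,
\begin{align}
|m_n(z)-m_{\sc}(z)|\le \frac{(\log n)^{C\log\log n}}{n\eta}, \notag
\end{align}
uniformly over $z=E+\ii\eta$ with $|E|\le 5$ and $(\log n)^{C\log\log n}n^{-1}\le\eta\le 10$. Near the edge I would also prove the improved bound
\begin{align}
|m_n(z)-m_{\sc}(z)|\lesssim \frac{(\log n)^{C\log\log n}}{n(\kappa+\eta)}, \qquad \kappa=||E|-2|, \notag
\end{align}
valid outside the spectrum. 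The proof uses the Schur complement formula for the diagonal resolvent entries, which gives the self-consistent equation $m\approx -1/(z+m)$. The fluctuation terms are controlled by large deviation bounds for Gaussian quadratic forms. A continuity (bootstrap) argument in $\eta$ then carries the estimate down from $\eta\sim1$ to the small-$\eta$ regime, and the stability of the quadratic equation for $m_{\sc}$ near the edges, where it degenerates like $\sqrt{\kappa+\eta}$, is what produces the improved edge bound. To get the high-probability level, I would run high-moment estimates with $p\sim(\log n)^{\log\log n}$.

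\textbf{Step 2: counting function.} Next I would use Helffer--Sjöstrand with a smoothed indicator of $[E,\infty)$, mollified at scale $\eta_0=(\log n)^{C\log\log n}/n$. This converts the local law into
\begin{align}
|\mathfrak n(E)-\mu_{\sc}([E,\infty))|\le (\log n)^{C\log\log n}/n, \notag
\end{align}
uniformly in $E$, where $\mathfrak n(E)=n^{-1}|\{i:\lambda_i\ge E\}|$. To complete this step I also need a separate bound $\|X\|\le 2+n^{-2/3}(\log n)^{C\log\log n}$, and I would derive it from the improved edge law by showing that $\operatorname{Im} m_n$ is too small for any eigenvalue to exist at distance $\kappa\gg n^{-2/3}$ outside the spectrum.

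\textbf{Step 3: inversion.} Since $\mu_{\sc}([\gamma_j,\infty))=(j-\tfrac12)/n$ roughly, and the semicircle density near the edge behaves like $\sqrt{\kappa}$, an error of $(\log n)^{C\log\log n}/n$ in the counting function translates into a location error of order $(\log n)^{C\log\log n}\,n^{-2/3}\min\{j,n-j+1\}^{-1/3}$. Concretely, the edge quantile satisfies $2-\gamma_j\asymp (j/n)^{2/3}$, and its derivative with respect to $j/n$ is of order $(j/n)^{-1/3}$. The bulk is handled by the same inversion.

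\textbf{Main obstacle.} The hardest part is the edge: pushing the local law down to $\eta\sim n^{-1}$ uniformly up to $\kappa\sim n^{-2/3}$ despite the instability of the self-consistent equation there, and ruling out outliers beyond $2+n^{-2/3+o(1)}$. In the paper itself this statement is simply cited from \cite{Erdos_Yau_Yin}.
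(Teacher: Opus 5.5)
Your outline reconstructs how the cited theorem is itself proved: a local law via Schur complements and a bootstrap in $\eta$, then Helffer--Sj\"ostrand for the counting function, then quantile inversion. That is a genuinely different route from the paper's, which proves nothing about the spectrum and only checks that GOE meets the hypotheses of \cite[Theorem 2.2]{Erdos_Yau_Yin}.

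The one GOE-specific point the paper has to handle, and which you never mention, is normalization. With diagonal variance $2/n$ the column sums of variances equal $(n+1)/n$ rather than $1$. The paper therefore applies the theorem to $H=\sqrt{n/(n+1)}\,X$, for which $B=(\sigma_{i,j}^2)$ has eigenvalues $1$ and $1/(n+1)$, $C_0=2$, and the tail condition holds with $\vartheta=1$. It then transfers the conclusion back through
$\abs{\lambda_j(H)-\gamma_j}\ge\sqrt{n/(n+1)}\,\abs{\lambda_j(X)-\gamma_j}-1/n$.
The $1/n$ loss is negligible against the threshold, which is always at least of order $(\log n)^{C\log\log n}/n$. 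If you use the theorem as a black box, as your first sentence suggests, this rescaling step is exactly what is needed. If you run the argument directly on $X$, the mismatch is harmless.

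The citation route costs a few lines. Yours is self-contained in principle, but the hard parts are asserted rather than carried out: the edge bootstrap and the probability level $1-\exp(-(\log n)^{c\log\log n})$. As written it is a roadmap, not a proof.

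One intermediate claim should be corrected. The edge bound $\abs{m_n-m_{\sc}}\lesssim(\log n)^{C\log\log n}/(n(\kappa+\eta))$ cannot hold uniformly for $\abs{E}\ge2$ and all allowed $\eta$. With probability bounded below, $\lambda_1(X)\in[2+n^{-2/3},2+2n^{-2/3}]$. At $E=\lambda_1(X)$ and $\eta=(\log n)^{C\log\log n}/n$ one has $\mathrm{Im}\,m_n\ge 1/(n\eta)=(\log n)^{-C\log\log n}$. Meanwhile $\mathrm{Im}\,m_{\sc}$ and your claimed bound are both $O((\log n)^{C\log\log n}n^{-1/3})$.

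What the self-consistent-equation argument actually delivers outside the spectrum carries an additional term $(\log n)^{C\log\log n}(n\eta)^{-2}(\kappa+\eta)^{-1/2}$. That weaker bound still rules out eigenvalues at $E=2+\kappa$ with $\kappa\ge n^{-2/3}(\log n)^{C'\log\log n}$ if you choose $\eta$ of order $n^{-1/2}\kappa^{1/4}$ up to logarithms. So the norm bound you need in Step 2 survives.
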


\begin{proof}
The result \Cref{eqn:rig_result} is proved in \cite[Theorem 2.2]{Erdos_Yau_Yin} for a real symmetric random matrix $H$ with independent upper triangular elements subject to the following assumptions: 
\begin{enumerate}
    \item for all $i\le j$, $ H_{i,j} $ has mean zero and variance $ \sigma_{i,j}^2 $ (potentially depending on $n$) satisfying 
    \begin{align}
        \sum_{i = 1}^n \sigma_{i,j}^2 &= 1 \notag 
    \end{align}
    for every $j\in[n]$; 

    \item there exist positive constants $ \delta_-,\delta_+ $ independent of $n$ such that $ 1 $ is a simple eigenvalue of $B$ and $ \spec(B) \subset [-1+\delta_-,1-\delta_+] \cup \{1\} $, where $ B \coloneqq (\sigma_{i,j}^2)_{1\le i,j\le n} $; 

    \item there exists an absolute constant $ C_0 $ (independent of $n$) such that 
    \begin{align}
        \max_{1\le i,j\le n} \sigma_{i,j}^2 &\le \frac{C_0}{n} ; \notag 
    \end{align}

    \item there exists an absolute constant $ \vartheta > 0 $ (independent of $n$) such that for any $ 1\le i,j\le n $ and $x\ge1$, it holds that 
    \begin{align}
        \prob{\abs{H_{i,j}} > x \sigma_{i,j}} &\le \vartheta^{-1} \exp\paren{ - x^{\vartheta} } . \notag 
    \end{align}
\end{enumerate}
It is easy to verify that $ H = \sqrt{\frac{n}{n+1}} X $ where $ X \sim \GOE(n) $ satisfies all assumptions above. 
Indeed, 
\begin{align}
    \sigma_{i,j}^2 &= \frac{1 + \indicator{i = j}}{n+1} , \notag 
\end{align} 
which fulfills the first and third assumptions with $ C_0 = 2 $. 
The matrix $B$ has eigenvalues $1$ and $ 1/(n+1) $ with multiplicities $1$ and $ n-1 $, respectively. 
So as long as $n\ge3$, the second assumption is verified where $ \delta_-,\delta_+ $ can be taken to be any positive constants less than $ 3/4 $. 
Finally, standard Gaussian tail bound guarantees the validity of the fourth assumption with $ \vartheta = 1 $. 

Since the $ C,c $ in \cite[Theorem 2.2]{Erdos_Yau_Yin} depend only on $ C_0, \delta_{-}, \delta_{+}, \vartheta $, the same result \Cref{eqn:rig_result} also holds for any sufficiently large $n$ with $ X $ in place of $H$ once we apply the triangle inequality
\begin{align}
    \abs{ \lambda_j(H) - \gamma_j }
    &\ge \sqrt{\frac{n}{n+1}} \abs{ \lambda_j(X) - \gamma_j } - \abs{ \sqrt{\frac{n}{n+1}} - 1 } \abs{\gamma_j}
    \ge \sqrt{\frac{n}{n+1}} \abs{ \lambda_j(X) - \gamma_j } - \frac{1}{n} \notag 
\end{align}
and suitably adjust $ C,c $ so as to absorb the nuisance factors $ \sqrt{n/(n+1)} $ and $1/n$. 
\end{proof}

\begin{proposition}[{Gaussian fluctuation of outlying eigenvalue \cite[Theorem 2.3]{Capitaine_Donati-Martin_Feral}}]
\label{prop:out}
Consider $Y$ in \Cref{eqn:spiked_model} with $ \lambda > 1 $ fixed. 
Then 
\begin{align}
    \sqrt{n} (\lambda_1(Y) - \lambda_\star) &\overset{\dd}{\to} \cN\paren{ 0, 2 \paren{ 1 - \frac{1}{\lambda^2} } } , \notag 
\end{align}
where $ \lambda_\star $ is defined in \Cref{eqn:lambda_star}. 
\end{proposition}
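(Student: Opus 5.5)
This is the outlier fluctuation result of Capitaine--Donati-Martin--F\'eral, and I will argue via the standard resolvent/secular-equation approach. By rotational invariance of $X$, I may take $v=e_1$. An eigenvalue $z$ of $Y=\lambda vv^\top+X$ outside $\mathrm{spec}(X)$ satisfies $\det(zI-X-\lambda vv^\top)=0$. This reduces to the scalar secular equation
\begin{align}
    1 &= \lambda\, v^\top (zI - X)^{-1} v \eqqcolon \lambda\, s_n(z) . \notag
\end{align}
On the event $\lambda_1(X)\le 2+o(1)$, which holds with high probability by \Cref{eqn:edge}, $\lambda_1(Y)$ is the unique root of this equation in $(\lambda_1(X),\infty)$, and it lies near $\lambda_\star$ by \Cref{eqn:lambda1}.

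The deterministic limit of $s_n$ is the Stieltjes-type function $s(z)=\int (z-x)^{-1}\mu_{\sc}(\dd x)=(z-\sqrt{z^2-4})/2$. One checks $s(\lambda_\star)=1/\lambda$ and $s'(\lambda_\star)=-1/(\lambda^2-1)$. Linearizing the secular equation at $\lambda_\star$ gives
\begin{align}
    \lambda_1(Y)-\lambda_\star &= -\frac{s_n(\lambda_\star)-s(\lambda_\star)}{s'(\lambda_\star)}\,(1+o_{\bbP}(1)) = (\lambda^2-1)\bigl(s_n(\lambda_\star)-s(\lambda_\star)\bigr)(1+o_{\bbP}(1)) . \notag
\end{align}
The linearization is justified because $s_n$ converges to $s$ uniformly with its derivatives on a compact neighbourhood of $\lambda_\star$ that is bounded away from $[-2,2]$. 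This uniform convergence follows from the rates below, and the mean value theorem then controls the error term.

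The main step is a CLT for the quadratic form $s_n(\lambda_\star)=e_1^\top(\lambda_\star-X)^{-1}e_1$. By orthogonal invariance, $e_1$ is a uniformly random unit vector relative to the eigenbasis of $X$, independent of the spectrum. Hence $s_n=\sum_i w_i/(\lambda_\star-\lambda_i)$, where $w$ is uniform on the simplex, $w_i=g_i^2/\sum_j g_j^2$. There are two sources of fluctuation.
\begin{itemize}
    \item[(a)] Conditionally on the spectrum, the Gaussian weights give a fluctuation of order $n^{-1/2}$ with variance $\frac{2}{n}\bigl(\int f^2\,\dd\mu_{\sc}-(\int f\,\dd\mu_{\sc})^2\bigr)$, where $f(x)=(\lambda_\star-x)^{-1}$.
    \item[(b)] The spectrum contributes $\frac1n\sum_i f(\lambda_i)-\int f\,\dd\mu_{\sc}$. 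This is $O_{\bbP}(n^{-1})$ by the CLT for linear spectral statistics with smooth $f$ (or by rigidity as in \Cref{lem:LS}), so it is negligible.
\end{itemize}
Alternatively, I would compute directly via a Schur complement: $s_n(z)=(z-X_{11}-x^\top(z-X^{(1)})^{-1}x)^{-1}$, where $x$ is the independent first column. Here $X_{11}\sim\cN(0,2/n)$ and the quadratic form fluctuates at order $n^{-1/2}$ with variance $\frac{2}{n}\int (z-t)^{-2}\dd\mu_{\sc}$. This route gives the constant most transparently.

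The main obstacle, and the part where I expect errors, is computing the variance constant exactly. With $m=\int f\,\dd\mu_{\sc}=1/\lambda$ and $\int f^2\,\dd\mu_{\sc}=-s'(\lambda_\star)=1/(\lambda^2-1)$, route (a) gives
\begin{align}
    \var\bigl(\sqrt n\,(s_n-s)\bigr)\to 2\Bigl(\frac{1}{\lambda^2-1}-\frac1{\lambda^2}\Bigr)=\frac{2}{\lambda^2(\lambda^2-1)} . \notag
\end{align}
Multiplying by $(\lambda^2-1)^2$ yields
\begin{align}
    \frac{2(\lambda^2-1)}{\lambda^2}=2\Bigl(1-\frac1{\lambda^2}\Bigr) , \notag
\end{align}
which matches the claim. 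Finally, I would combine the conditional Lindeberg CLT for weighted $\chi^2$ sums with Slutsky's lemma. The $1+o_{\bbP}(1)$ factor from the linearization, together with the negligible term (b), then gives convergence in distribution to $\cN(0,2(1-\lambda^{-2}))$.
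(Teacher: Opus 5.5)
Your proof is correct. It differs from the paper in that the paper gives no proof and simply cites Capitaine--Donati-Martin--F\'eral.

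The constants check out. From $s(\lambda_\star)=1/\lambda$ and $\int f^2\,\dd\mu_{\sc}=-s'(\lambda_\star)=1/(\lambda^2-1)$, the conditional weighted-$\chi^2$ variance is $2/(\lambda^2(\lambda^2-1))$, and the factor $(\lambda^2-1)^2$ gives $2(1-\lambda^{-2})$. The Schur-complement route agrees. The quantity $\sqrt n\,\bigl(X_{11}+x^\top(\lambda_\star-X^{(1)})^{-1}x-1/\lambda\bigr)$ has limiting variance $2+2/(\lambda^2-1)=2\lambda^2/(\lambda^2-1)$, and $\lambda_1(Y)-\lambda_\star$ is asymptotically $(1-\lambda^{-2})$ times this quantity divided by $\sqrt n$.

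\textbf{What the cited theorem does.} Capitaine--Donati-Martin--F\'eral prove their fluctuation result for general (non-Gaussian) Wigner entries, but only for a coordinate spike $\mathrm{diag}(\lambda,0,\ldots,0)$. Their limit is non-universal: it involves the law of $\sqrt n\,X_{11}$, convolved with a Gaussian whose variance depends on the fourth cumulant of the off-diagonal entries. These are exactly the two pieces your Schur-complement alternative isolates, and for GOE they combine into the stated Gaussian.

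\textbf{What your main route does instead.} You use orthogonal invariance to make the weights $w_i=g_i^2/\sum_j g_j^2$ independent of the spectrum. The problem then reduces to a conditional Lindeberg CLT plus an $o(n^{-1/2})$ bound on a linear spectral statistic.
\begin{itemize}
\item This is GOE-specific and cannot detect the non-universality.
\item It is self-contained.
\item Its first step makes explicit the reduction from an arbitrary deterministic $v$ to $v=e_1$. The paper needs this reduction to apply the cited coordinate-spike theorem but leaves it implicit.
\end{itemize}

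\textbf{What the paper actually needs.} The proposition is only used in \Cref{lem:event}, to get $d\abs{\lambda_1(Y)-\lambda_\star}=O_{\bbP}(d/\sqrt n)$. For that, your linearization plus a second-moment bound on $s_n(\lambda_\star)-s(\lambda_\star)$ already suffices, with no CLT and no exact constant.

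\textbf{One point to tighten.} When you evaluate $s_n'$ at the intermediate point, you need uniform convergence of $s_n'$ on a neighbourhood of $\lambda_\star$. The pointwise rates at $\lambda_\star$ alone do not give this. Add weak convergence of the spectral measure of $X$ at $v$ together with equicontinuity ($s_n''$ is bounded on $\{\normtwo{X}\le 2+\eps\}$).
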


\begin{proposition}
\label{prop:weak}
Let $ H $ be a Hilbert space equipped with inner product $ \inprod{\cdot}{\cdot}_H $. 
If a sequence $ f_n $ converges (as $n\to\infty$) weakly to $ f $ in $H$ and $ K\subset H $ is compact, then 
\begin{align}
    \lim_{n\to\infty} \sup_{g\in K} \abs{\inprod{f_n - f}{g}} &= 0 . \notag 
\end{align}
\end{proposition}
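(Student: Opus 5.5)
This is a standard fact about Hilbert spaces, and I would prove it with an $\varepsilon$-net argument combined with uniform boundedness.

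\emph{Step 1: the sequence is bounded.} Since $f_n \rightharpoonup f$ in $H$, the uniform boundedness principle (Banach--Steinhaus) gives
\begin{align}
    B \coloneqq \sup_n \norm{H}{f_n - f} < \infty .
\end{align}
To see this, view each $f_n$ as a functional $g \mapsto \inprod{f_n}{g}$ on $H$. For every fixed $g$ the values $\inprod{f_n}{g}$ converge, so they are pointwise bounded; the Riesz representation theorem then identifies the functional norm with $\norm{H}{f_n}$.

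\emph{Step 2: a finite net.} Fix $\eta>0$. Compactness of $K$ makes it totally bounded, so there are finitely many $g_1,\dots,g_N \in K$ such that every $g\in K$ satisfies $\norm{H}{g-g_i}\le \eta$ for some $i$.

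\emph{Step 3: the estimate.} For $g\in K$ with $\norm{H}{g-g_i}\le\eta$, Cauchy--Schwarz gives
\begin{align}
    \abs{\inprod{f_n-f}{g}} &\le \abs{\inprod{f_n-f}{g_i}} + \norm{H}{f_n-f}\,\norm{H}{g-g_i} \le \max_{1\le i\le N}\abs{\inprod{f_n-f}{g_i}} + B\eta .
\end{align}
The right-hand side does not depend on $g$, so taking the supremum over $g\in K$ and then $\limsup_{n\to\infty}$ yields
\begin{align}
    \limsup_{n\to\infty} \sup_{g\in K}\abs{\inprod{f_n-f}{g}} \le B\eta ,
\end{align}
because each of the finitely many terms $\inprod{f_n-f}{g_i}$ tends to $0$ by weak convergence.

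\emph{Step 4: conclude.} Since $\eta>0$ was arbitrary, the $\limsup$ is $0$, which is the claim.

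The only nontrivial input is the boundedness of $(f_n)$ in Step 1, which rests on uniform boundedness; the rest is routine. In the application in \Cref{lem:mass}, boundedness is in any case directly available from \Cref{eqn:u_bdd}.
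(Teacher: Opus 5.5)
Your proof is correct and follows essentially the same route as the paper: boundedness of the weakly convergent sequence, a finite $\varepsilon$-net of $K$, and a Cauchy--Schwarz estimate, followed by taking the supremum over $K$ and then the limit in $n$. The only difference is that you justify the boundedness explicitly via the uniform boundedness principle, which the paper simply asserts.
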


\begin{proof}
Since weakly convergent sequences are bounded, we have 
\begin{align}
    \sup_n \norm{H}{f_n - f} &\le M < \infty , \notag 
\end{align}
where $ \norm{H}{\cdot} \coloneqq \sqrt{\inprod{\cdot}{\cdot}_H} $. 
Fix $ \eps>0 $ and let $ K_\eps \subset K $ be a finite $\eps$-net of $K$. 
Such a $ K_\eps $ exists by compactness of $K$. 
For any $ g\in K $, associate to it a representative $ g_\eps\in K_\eps $ satisfying $ \norm{H}{g - g_\eps} \le \eps $. 
Then by triangle inequality for $\abs{\cdot}$ and Cauchy--Schwarz for $ \inprod{\cdot}{\cdot}_H $, 
\begin{align}
    \abs{\inprod{f_n-f}{g}_H} &\le \abs{\inprod{f_n-f}{g-g_\eps}_H} + \abs{\inprod{f_n-f}{g_\eps}_H}
    \le M\eps + \abs{\inprod{f_n-f}{g_\eps}_H} . \notag 
\end{align}
Taking $ \sup $ over $g\in K$ and limit as $n\to\infty$ in this order, by weak convergence of $ f_n $ to $f$ and finiteness of $ K_\eps $, we have 
\begin{align}
    \lim_{n\to\infty} \sup_{g\in K} \abs{\inprod{f_n-f}{g}_H}
    &\le M\eps + \lim_{n\to\infty} \sup_{g_\eps\in K_\eps} \abs{\inprod{f_n-f}{g_\eps}_H}
    = M\eps . \notag 
\end{align}
Finally, by finiteness of $M$, sending $ \eps\downarrow0 $ yields the result. 
\end{proof}

\end{document}